\numberwithin{equation}{subsection}
\definecolor{popblue}{RGB}{55,115,255}
\definecolor{lightbl}{RGB}{155,205,255}
\definecolor{depthbl}{RGB}{145,215,255}
\definecolor{fancyre}{RGB}{225,55,115}
\definecolor{darkblu}{RGB}{15,75,185}
\definecolor{mellowy}{RGB}{225,225,35}
\renewcommand{\j}[1]{\tjump{#1}}
\renewcommand{\tilde}[1]{\widetilde{#1}}
\renewcommand{\Bar}{\overline}
\renewcommand{\S}{\mathbb{S}}
\newcommand{\R}{\mathbb{R}}
\newcommand{\N}{\mathbb{N}}
\newcommand{\C}{\mathbb{C}}
\newcommand{\X}{\mathbb{X}}
\newcommand{\Xs}{\mathbb{X}^{\mathrm{sub}}}
\newcommand{\Y}{\mathbb{Y}}
\newcommand{\W}{\mathbb{W}}
\newcommand{\E}{\mathbb{E}}
\newcommand{\Es}{\mathbb{E}^{\mathrm{sub}}}
\newcommand{\F}{\mathbb{F}}
\newcommand{\Fs}{\mathbb{F}^{\mathrm{sub}}}
\renewcommand{\P}{\mathbb{P}}
\newcommand{\m}{\mathrm}
\newcommand{\lv}{\lVert}
\newcommand{\rv}{\rVert}
\newcommand{\al}{\alpha}
\newcommand{\be}{\beta}
\newcommand{\es}{\varnothing}
\newcommand{\lra}{\;\Leftrightarrow\;}
\newcommand{\ep}{\varepsilon}
\newcommand{\f}{\frac}
\newcommand{\sig}{\sigma}
\newcommand{\gam}{\gamma}
\newcommand{\del}{\delta}
\newcommand{\bn}{\binom}
\newcommand{\pd}{\partial}
\newcommand{\grad}{\nabla}
\newcommand{\bpm}{\begin{pmatrix}}
\newcommand{\epm}{\end{pmatrix}}
\newcommand{\loc}{\m{loc}}
\renewcommand{\bar}{\overline}
\newcommand{\emb}{\hookrightarrow}
\newcommand{\res}{\restriction}
\renewcommand{\le}{\leqslant}
\renewcommand{\ge}{\geqslant}
\newcommand{\tjump}[1]{\llbracket#1\rrbracket}
\newcommand{\norm}[1]{\left\lv#1\right\rv}
\newcommand{\bnorm}[1]{\Big\lv#1\Big\rv}
\newcommand{\tnorm}[1]{\lv#1\rv}
\newcommand{\bp}[1]{\Big(#1\Big)}
\renewcommand{\sp}[1]{\big(#1\big)}
\newcommand{\tp}[1]{(#1)}
\newcommand{\tfloor}[1]{\lfloor #1\rfloor}
\newcommand{\sfloor}[1]{\big\lfloor#1\big\rfloor}
\newcommand{\abs}[1]{\left|#1\right|}
\newcommand{\babs}[1]{\Big|#1\Big|}
\newcommand{\tabs}[1]{|#1|}
\newcommand{\ssb}[1]{\big[{#1}\big]}
\newcommand{\tsb}[1]{[{#1}]}
\newcommand{\bcb}[1]{\Big\{{#1}\Big\}}
\newcommand{\tcb}[1]{\{{#1}\}}
\newcommand{\br}[1]{\left\langle #1 \right\rangle}
\providecommand{\bbr}[1]{\Big\langle #1 \Big\rangle}
\providecommand{\tbr}[1]{\langle #1 \rangle}
\renewcommand{\bf}[1]{\mathbf{#1}}
\newcommand{\ii}{\m{i}}
\DeclareMathOperator{\Div}{div}
\newtheorem{prop}{\color{popblue}{Proposition}}[section]
\newtheorem{thm}[prop]{\color{popblue}{Theorem}}
\newtheorem{defn}[prop]{\color{popblue}{Definition}}
\newtheorem{lem}[prop]{\color{popblue}{Lemma}}
\newtheorem{coro}[prop]{\color{popblue}{Corollary}}
\newtheorem{rmk}[prop]{\color{popblue}{Remark}}
\newtheorem{exa}[prop]{\color{popblue}{Example}}
\newenvironment{customthm}[1]
{\innercustomthm}
{\endinnercustomthm}
\author{Noah Stevenson}
\address{
	Department of Mathematics\\
	Princeton University\\
	Princeton, NJ 08544, USA
}
\email[N. Stevenson]{stevenson@princeton.edu}
\thanks{N. Stevenson was supported by an NSF Graduate Research Fellowship}
\author{Ian Tice}
\address{
	Department of Mathematical Sciences\\
	Carnegie Mellon University\\
	Pittsburgh, PA 15213, USA
}
\email[I. Tice]{iantice@andrew.cmu.edu}
\thanks{I. Tice was supported by an NSF Grant (DMS \#2204912). }
\title[Shallow water traveling waves]{
        The traveling wave problem for the shallow water equations: well-posedness and the limits of vanishing viscosity and surface tension
 }
\subjclass[2020]{Primary 35Q35, 35C07, 35B30; Secondary 47J07, 76A20, 35M30}
\keywords{traveling waves, shallow water equations, Nash-Moser}
\begin{document}
\maketitle
\begin{abstract}
In this paper we study solitary traveling wave solutions to a damped shallow water system, which is in general quasilinear and of mixed type. We develop a small data well-posedness theory and prove that traveling wave solutions are a generic phenomenon that persist with and without viscosity or surface tension and for all nontrivial traveling wave speeds, even when the parameters dictate that the equations are hyperbolic and have a sound speed. This theory is developed by way of a Nash-Moser implicit function theorem, which allows us to prove strong norm continuity of solutions with respect to the data as well as the parameters, even in the vanishing limits of viscosity and surface tension.   
\end{abstract}

\section{Introduction}

\subsection{The shallow water equations, traveling formulation, and the role of applied force}

We begin by stating the time-dependent formulation of the damped shallow water model under consideration in this work. Let $d\in\N^+$ denote the spatial dimension, the most physically relevant options for which are $1$ and $2$ (but the analysis here works more generally). The shallow water system with applied forcing consists of equations coupling the velocity $\Bar{\upupsilon}:\R^+\times\R^d\to\R^d$ and the free surface $\Bar{\upeta}:\R^+\times\R^d\to\R$ to the applied forcing $\Bar{\bf{f}}:\R^+\times\R^d\to\R^d$. The system reads
\begin{equation}\label{the shallow water system}
    \begin{cases}
        \pd_t\Bar{\upeta}+\grad\cdot(\Bar{\upeta} \Bar{\upupsilon})=0,\\
        \Bar{\upeta}\tp{\pd_t\Bar{\upupsilon}+\Bar{\upupsilon}\cdot\grad\Bar{\upupsilon}}+\al \Bar{\upupsilon}-\Bar{\mu}\grad\cdot(\Bar{\upeta}\S\Bar{\upupsilon})+\Bar{\upeta}\grad(g\Bar{\upeta}-\Bar{\sig}\Delta\Bar{\upeta})+\Bar{\bf{f}}=0,
    \end{cases}
\end{equation}
where $\S\Bar{\upupsilon}$ is a viscous stress tensor-like object obeying the formula
\begin{equation}\label{viscous stress tensor like object}
    \S\Bar{\upupsilon}=\grad\Bar{\upupsilon}+\grad\Bar{\upupsilon}^{\m{t}}+2(\grad\cdot\Bar{\upupsilon})I_{d\times d},
\end{equation}
and $\Bar{\mu},\Bar{\sig}\in[0,\infty)$ are the viscosity and the surface tension, respectively. The parameter $\al\in\R^+$ is the characteristic slip speed, which controls the strength of the zeroth order damping effect. The gravitational field strength is $g\in\R^+$.  

The shallow water system~\eqref{the shallow water system}, which is often called the Saint-Venant system when $\bar{\mu} =\bar{\sigma}=0$, is an important model approximating the incompressible free boundary Navier-Stokes  system in a regime in which the fluid depth is much smaller than its characteristic horizontal scale, and as such is both physically and practically relevant.  A derivation of the system~\eqref{the shallow water system} from the free boundary Navier-Stokes  system proceeds through an asymptotic expansion and vertical averaging procedure; for full details we refer, for instance, to the surveys of Bresch~\cite{MR2562163} or Mascia~\cite{mascia_2010}.  

One can see that system~\eqref{the shallow water system} has formal similarities to the barotropic compressible Navier-Stokes equations, with $\Bar{\upupsilon}$ and $\Bar{\upeta}$ playing the roles of the fluid velocity and density, respectively, and with a quadratic pressure law (corresponding to $\Bar{\upeta}\grad g\Bar{\upeta} = \grad (\frac{g}{2} \Bar{\upeta}^2)$) and viscosity coefficients proportional to density (corresponding to $\Bar{\mu}\Bar{\upeta}$).  The first equation in~\eqref{the shallow water system} is thus analogously dubbed the continuity equation; it dictates how the free surface is stretched and transported by the velocity.  Note that this serves as a replacement for incompressibility, which is lost in the shallow limit.

The second equation in the system, which one could call the momentum equation, has several interesting features. The advective derivative $\Bar{\upeta}\tp{\pd_t\Bar{\upupsilon}+\Bar{\upupsilon}\cdot\grad\Bar{\upupsilon}}$ is balanced by both dissipative and hyperbolic terms in addition to the forcing $\Bar{\bf{f}}$. The dissipation structure, which is $\al\Bar{\upupsilon}-\Bar{\mu}\grad\cdot\tp{\Bar{\upeta}\mathbb{S}\Bar{\upupsilon}}$, has two parts. The former, zeroth order contribution is the manifestation of the Navier slip boundary condition in the shallow limit; it represents a sort of frictional drag induced by tangential fluid motion at the bottom boundary.  The latter,  second order contribution is a manifestation of the  viscous stress tensor from the incompressible Navier-Stokes system.  Analogous to how incompressibility is lost in the shallow water limit, the shallow water viscous stress tensor more closely resembles the viscous stress tensor from compressible Navier-Stokes.  On the other hand, the hyperbolic structure, which is $\Bar{\upeta}\grad\tp{g\Bar{\upeta}-\Bar{\sig}\Delta\Bar{\upeta}}$, is directly descended from the gravity-capillary operator in the free boundary Navier-Stokes system's dynamic boundary condition.

When  $\Bar{\bf{f}}=0$ the system~\eqref{the shallow water system} admits an equilibrium solution $\Bar{\upupsilon}=0$ and $\Bar{\upeta}=H$ for any $H\in\R$.  After fixing some $H\in\R^+$ (it is only physically meaningful to consider positive equilibrium height), we can perform a rescaling and perturbative reformulation to obtain a non-dimensional version of system~\eqref{the shallow water system} in which $(\al,g,H)\mapsto(1,1,1)$. Indeed, upon defining the characteristic length $\m{L}$ and time $\m{T}$
\begin{equation}\label{characteristic length time speed}
    \m{L}=\f{\sqrt{gH}}{\al}H\text{ and }\m{T}=\f{H}{\al},
\end{equation}
along with the unitless quantities $\mu,\sig\in[0,\infty)$ given by
\begin{equation}\label{unitless quantities}
    \mu^2=\f{\Bar{\mu}\m{T}}{\m{L}^2}=\f{\Bar{\mu}\al}{gH^2} \text{ and } \sig^2=\f{\Bar{\sig}}{g\m{L}^2}=\f{\Bar{\sig}\al^2}{g^2H^3},
\end{equation}
the system~\eqref{the shallow water system} transforms to the nondimensional perturbative form
\begin{equation}\label{the shallow water system nondimensional version}
    \begin{cases}
        \pd_t\upeta+\grad\cdot((1+\upeta)\upupsilon)=0,\\
        (1+\upeta)\tp{\pd_t\upupsilon+\upupsilon\cdot\grad \upupsilon}+ \upupsilon-\mu^2\grad\cdot((1+\upeta)\S\upupsilon)+(1+\upeta)\grad(\upeta-\sig^2\Delta\upeta)+\bf{f}=0,
    \end{cases}
\end{equation}
where $\upupsilon$, $\upeta$, and $\bf{f}$ in~\eqref{the shallow water system nondimensional version} are obtained from $\Bar{\upupsilon}$, $\Bar{\upeta}$, and $\Bar{\bf{f}}$ in~\eqref{the shallow water system} via the rescalings
\begin{multline}
    \Bar{\upupsilon}(t,x)=\f{\m{L}}{\m{T}}\cdot\upupsilon\tp{t/\m{T}, x/\m{L}},\quad\Bar{\upeta}(t,x)=\f{\al \m{L}}{\sqrt{gH}}\cdot\tp{1+\upeta\tp{t/\m{T}, x/\m{L}}},\\\text{and }\Bar{\bf{f}}(t,x)=\al\sqrt{gH}\cdot\bf{f}\tp{t/\m{T}, x/\m{L}},\quad\text{for }(t,x)\in[0,\infty)\times\R^d.
\end{multline}
We emphasize that in the system~\eqref{the shallow water system nondimensional version} the nondimensionalized viscosity $\mu^2$ and surface tension $\sig^2$ appear as squares only for the sake of convenience in our subsequent analysis. Furthermore, we see from~\eqref{unitless quantities} that $1/\mu^2$ is the Reynolds number of system~\eqref{the shallow water system nondimensional version}, while $1/\sig^2$ is the Bond number.

In the formulation~\eqref{the shallow water system nondimensional version}, one further interesting aspect of the equations is revealed: there is either a characteristic wave speed (the `speed of sound') or a more complicated dispersion relation, depending on whether $\sig=0$ or $\sig\neq0$. Indeed, if we  take the divergence of the second equation, and substitute in the first, we find that
\begin{equation}\label{hyperbolic-parabolic competition}
    \pd_t^2\eta  +(1-4\mu^2\Delta)\pd_t\eta - \Delta\tp{1-\sig^2\Delta}\eta=\text{forcing and nonlinear terms}.
\end{equation}
This reveals that $\eta$ obeys a dispersive or wave-like equation with a damping effect (the middle term on the left), driven by applied forces and nonlinear interactions. If we ignore the damping term and focus on the part $\pd_t^2\eta-\Delta\tp{1-\sig^2\Delta}\eta$, then we arrive at the dispersion relation $\omega(\xi)^2=  |\xi|^2  \tp{1+4\pi^2\sig^2|\xi|^2}$.  This reveals two notable features of $\sigma$:  first, if $\sigma =0$ then the group and phase velocities agree and have unit magnitude; second, if $\sigma >0$ then the group and phase velocities are colinear and have magnitudes of order $O(\abs{\xi})$ as $\abs{\xi} \to \infty$, but the ratio of the group speed to the phase speed converges to $2$,  which is the reciprocal of the ratio one obtains from the standard deep water dispersion relation, $\omega(\xi)^2 =  \abs{\xi}$.

In this work we are interested in traveling wave solutions to~\eqref{the shallow water system nondimensional version}, which are solitary perturbations of the equilibrium $(\upupsilon,\upeta)=(0,0)$ that are stationary when viewed in a coordinate system moving at a fixed velocity.  By the rotational invariance of the equations, we lose no generality in assuming that the velocity of the traveling wave is colinear with $e_1\in \R^d$, the unit vector in the first coordinate direction.  We thus fix $\gam\in\R^+$ to be the speed of our traveling wave and posit the traveling ansatz
\begin{equation}\label{the traveling ansatz}
    \upupsilon(t,x)=v(x-t\gam e_1),\quad\upeta(t,x)=\eta(x-t\gam e_1),\quad\bf{f}(t,x)=\mathcal{F}(x-t\gam e_1), \text{ for } (t,x)\in[0,\infty)\times\R^d,
\end{equation}
for new unknowns $v:\R^d\to\R^d$, $\eta:\R^d\to\R$ and traveling applied forcing $\mathcal{F}:\R^d\to\R^d$.  Under this ansatz, the system~\eqref{the shallow water system nondimensional version} becomes
\begin{equation}\label{traveling wave formulation of the equation}
    \begin{cases}
    \grad\cdot((1+\eta)(v-\gam e_1))=0,\\
    (1+\eta)(v-\gam e_1)\cdot\grad v+v-\mu^2\grad\cdot((1+\eta)\S v)+(1+\eta)(I-\sig^2\Delta)\grad\eta+\mathcal{F}=0.
    \end{cases}
\end{equation}
Due to the fact that the hyperbolic part of~\eqref{the shallow water system nondimensional version}, which is described in~\eqref{hyperbolic-parabolic competition}, has speed of sound equal to unity when neglecting capillary effects, we expect that the traveling problem~\eqref{traveling wave formulation of the equation} behaves quite differently depending on whether or not the speed of the applied forcing is subsonic $(\gam<1)$ or supersonic ($\gam\ge 1$).

We now turn to a discussion of the precise form of the forcing $\mathcal{F}$ appearing in the second equation of~\eqref{traveling wave formulation of the equation}, endeavoring to make a physically meaningful choice.  For this one needs consider the derivation of the traveling shallow water equations from the traveling free boundary incompressible Navier-Stokes equations.  We perform the relevant calculations in Appendix~\ref{Ian's appendix post appendectomy}. The idea is that for the latter Navier-Stokes system in traveling form, a forcing term $\phi:\R^{d+1}\to\R^{d+1}$ acts on the bulk and a symmetric tensor $\Xi:\R^{d+1}\to\R_{\m{sym}}^{(d+1)\times(d+1)}$ generates a stress that acts on the free surface. For instance, $\Xi$ can model wind or other localized surface stress, and $\phi$ can be a local gravity perturbation as in a primitive model of an ocean-moon system.

As we show in \eqref{derived traveling shallow water system}, in the shallow water limit the applied force and stress dictate that $\mathcal{F}$ has the form
\begin{equation}\label{physically relevant form of the forcing}
    \mathcal{F}=\int_{0}^{1+\eta}f(\cdot,y)\;\m{d}y+\Upxi(\cdot,1+\eta)\grad\eta-\xi(\cdot,1+\eta)-\upxi(\cdot,1+\eta)\grad\eta-\tp{1+\eta}\grad\tp{\upxi(\cdot,1+\eta)}, 
\end{equation}
where
\begin{equation}
    \phi=(f,\phi\cdot e_n),\;\Xi=\bpm\Upxi&\xi\\\xi&\upxi\epm,\;\Upxi:\R^{d+1}\to\R^{d\times d}_{\m{sym}},\;\xi:\R^{d+1}\to\R^d,\;\upxi:\R^{d+1}\to\R,\; f:\R^{d+1}\to\R^d.
\end{equation}
While~\eqref{physically relevant form of the forcing} gives the correct physically relevant form of the forcing, it clearly possesses some structural redundancies.  Thus, for the purposes of our analysis it is convenient to consider a class of forcing that is symbolically less cumbersome, yet of the type~\eqref{physically relevant form of the forcing}.  We discuss this now.

As we mentioned previously, the relationship between the wave speed $\gam$ and the speed of sound is important. We elect to distinguish two different parameter regimes, depending on $\gam\in\R^+$, in which the regularity counting schemes for the solutions differ.  The first is the \emph{omnisonic} regime, which is wave speed agnostic, and the second is the \emph{subsonic} regime, which is determined via $\gam\in(0,1)$. Motivated by~\eqref{physically relevant form of the forcing} and the anticipation of differing quantitative regularity, we are thus led us to formulate the forcing terms in the following two different ways, depending on which regime we are in: 
\begin{equation}\label{conditions on the forcing}
    \mathcal{F}=\begin{cases}
        \m{Tr}\tau[\eta]\grad\eta+(\mu+\sig)\tau[\eta]\grad\eta+\varphi[\eta]&\text{when~\eqref{traveling wave formulation of the equation} is thought of as omnisonic},\\
        \tau[\eta]\grad\eta+\varphi[\eta]&\text{when~\eqref{traveling wave formulation of the equation} is thought of as subsonic},
    \end{cases}
\end{equation}
where $\tau[\eta]:\R^d\to\R^{d\times d}$, $\varphi[\eta]:\R^d\to\R^d$ are assumed to have the following form
\begin{equation}\label{special form of the superposition nonlinearities}
    \tau[\eta]=\sum_{i=0}^\ell \tau_i\eta^i,\quad\varphi[\eta]=\sum_{i=0}^\ell \varphi_i\eta^i,
\end{equation}
with $\tau_i:\R^d\to\R^{d\times d}$ and $\varphi_i:\R^d\to\R^d$ for $i\in\tcb{0,\dotsc,\ell}$ and fixed $\ell\in\N$. These are meant to be an $\ell^{\m{th}}$ order approximation of superposition type nonlinearities
\begin{equation}\label{superposition approximation}
    \tau[\eta]\approx\tau(\cdot,\eta),\quad\varphi[\eta]\approx\varphi(\cdot,\eta),\quad\tau:\R^d\times\R\to\R^{d\times d},\quad\varphi:\R^d\times\R\to\R^d
\end{equation}
where for $i\in\tcb{0,1,\dots,\ell}$ we make the identification
\begin{equation}
    \tau_i=(i!)^{-1}(\pd_{d+1}^i\tau)(\cdot,0),\quad\varphi_i=(i!)^{-1}(\pd_{d+1}^{i}\varphi)(\cdot,0).
\end{equation}
We elect to have the generators $\tau[\eta]$ and $\varphi[\eta]$ of the forcing term $\mathcal{F}$ in~\eqref{traveling wave formulation of the equation} have the form~\eqref{special form of the superposition nonlinearities} for the following reasons. First, they are strictly more general than the special case of $\ell=0$, which is perhaps the most physically relevant. The flexibility of $\ell\in\N$ allows for a very large class of superposition type nonlinearities as in~\eqref{superposition approximation} to be considered exactly or approximately. Finally, the appearance of, at worst, polynomial nonlinearities in~\eqref{special form of the superposition nonlinearities} allows us to avoid a lengthy and unenlightening digression in which we would handle the fully nonlinear compositions.   We emphasize that the reader interested in including such compositions in the analysis of~\eqref{traveling wave formulation of the equation} could do so  by importing techniques from our previous work on the free boundary compressible Navier-Stokes system~\cite{stevenson2023wellposedness}.

The difference between the two forms of $\mathcal{F}$ in~\eqref{conditions on the forcing} is that in the omnisonic regime we are asking that tensorial part of $\tau$ acting on the gradient of $\eta$ obeys an extra smallness condition in the limit $\mu$ and $\sig$ tending to zero, as quantified by the $(\mu+\sig)$ prefactor in~\eqref{conditions on the forcing}. This is purely for technical reasons, and the authors are ignorant of whether or not removing this condition presents a genuine obstruction.

Finally, we discuss the necessity of including the forcing term $\mathcal{F}$. An elementary power-dissipation calculation, which we carry out in Appendix~\ref{appendix on dissipation calculation}, shows that solutions to~\eqref{traveling wave formulation of the equation} belonging to an $L^2$-based Sobolev space framework satisfy the identity   
\begin{equation}\label{power dissipation identity}
    \int_{\R^d}|v|^2+{\mu^2}(1+\eta)\bp{\f12|\mathbb{D}^0v|^2+2\bp{1+\f{1}{d}}|\grad\cdot v|^2}+\mathcal{F}\cdot v=0,
\end{equation}
where $\mathbb{D}^0v=\grad v+\grad v^{\m{t}}-(2/d)\tp{\grad\cdot v}I_{d\times d}$. Therefore, if $1+\eta\ge0$ and $\mathcal{F}=0$, then~\eqref{power dissipation identity} implies that $v=0$. We can then return to the second equation in~\eqref{traveling wave formulation of the equation} to deduce that $\grad\eta=0$, and hence $\eta$ is a constant. This tells us that nontrivial solitary traveling wave solutions to the shallow water system studied here cannot exist unless they are generated by traveling applied stress or force. We emphasize that in this identity we are crucially using that the system~\eqref{traveling wave formulation of the equation} has a drag term, which yields the $L^2$ control on the left.

We thus arrive at our main goal in this paper: for every choice of physical parameters $(\gam,\mu,\sig)\in\R^+\times[0,\infty)^2$ we wish to identify an open set of stress and forcing data that determine $\mathcal{F}$ and give rise to locally unique and nontrivial solutions to the traveling shallow water system. Moreover, we wish to establish well-posedness in the sense of continuity of the solution as a function of the data and physical parameters, even in the vanishing limit of viscosity or surface tension. See Figure~\ref{figure that is our goal} for various plots of some of the traveling wave solutions considered in this work.

\begin{figure}[!h]
    \centering
    \scalebox{0.27}{\includegraphics{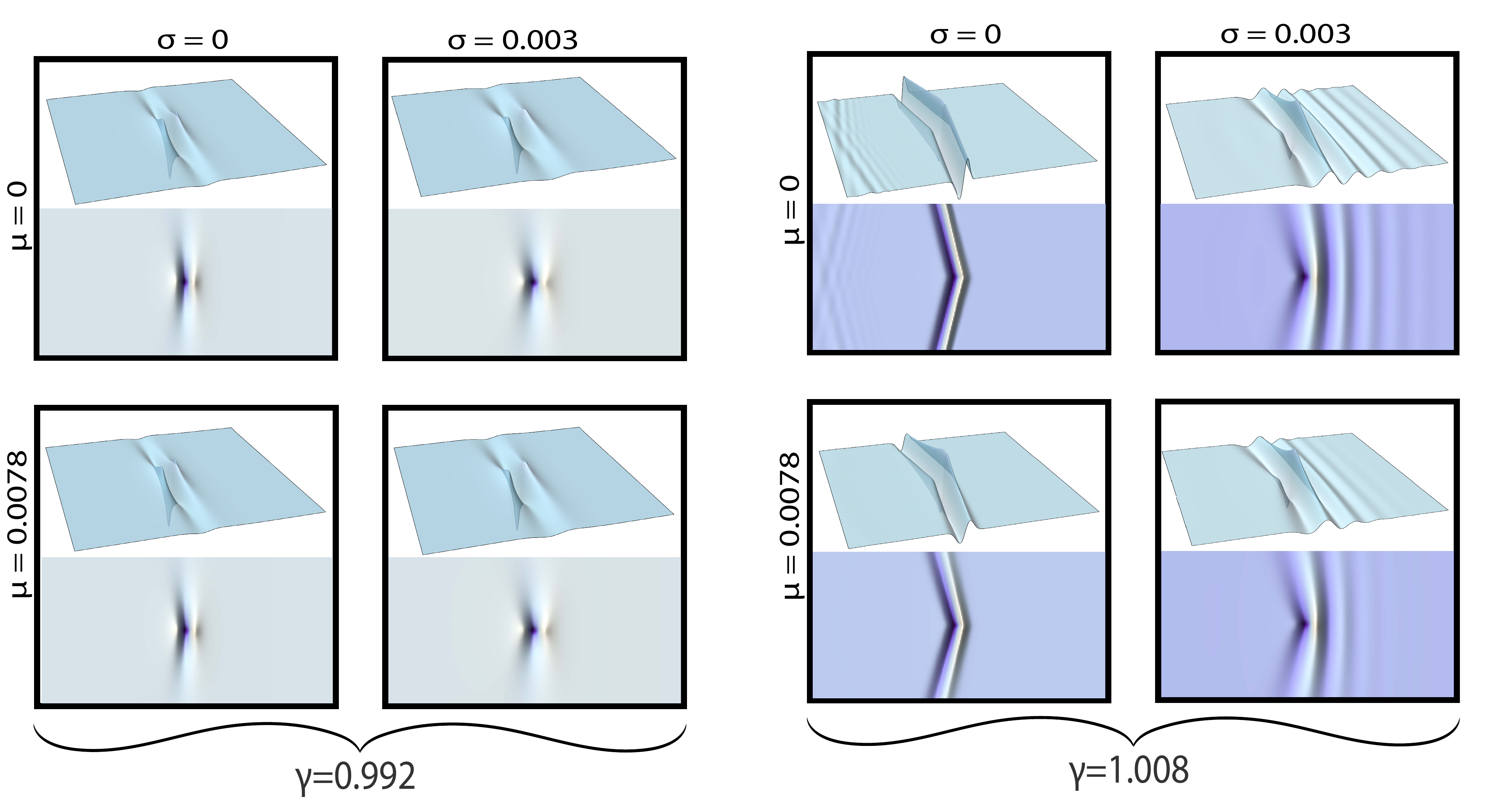}}
    \caption{
    Depicted here are 3D and relief plots of numerical simulations of the free surface component of the solution to~\eqref{traveling wave formulation of the equation} generated by a fixed applied Gaussian stress moving to the right for various values of $\gam\in\tcb{0.992,1.008}$, $\mu\in\tcb{0,0.0078}$, and $\sig\in\tcb{0,0.003}$. Notice that the wave speed $\gam$ transitioning from subsonic to supersonic leads to a shock wave-like formation in the free surface, though no discontinuity actually forms.}
    \label{figure that is our goal}
\end{figure}

\subsection{Previous work}

We now turn to a brief survey of the literature associated with the shallow water systems~\eqref{the shallow water system} and~\eqref{traveling wave formulation of the equation} and their variants.  There is a large body of work,  so we shall restrict our attention to those results most closely related to ours. 

We emphasize from the start that there are numerous versions of the shallow water equations, all of which are derived from the free boundary Euler or Navier-Stokes systems in one way or another; for various examples, we refer to~\cite{MR1324142,MR1821555,MR1975092,MR2118849,MR2281291,MR2397996,MR4105349}, and in particular the articles of Bresch~\cite{MR2562163} and Mascia~\cite{mascia_2010}. As such, it is important that we give at least a rapid review of literature associated with the derived models' parents.

For a thorough review of the fully dynamic free boundary incompressible Navier-Stokes equations in various geometries we refer to the surveys of Zadrzy\'{n}ska~\cite{Zadrynska_2004} and Shibata and Shimizu~\cite{SS_2007}.  While neglecting surface tension, Beale~\cite{Beale_1981} established local well-posedness. Beale~\cite{Beale_1983} then, with surface tension accounted for, established the existence of global solutions and derived their decay properties with Nishida~\cite{BN_1985}. In other settings, solutions with surface tension were also constructed by Allain~\cite{allain_1987}, Tani~\cite{tani_1996}, Bae~\cite{Bae_2011}, and Shibata and Shimizu~\cite{SS_2011}.  Similarly, solutions without surface tension were produced by Abels~\cite{Abels_2005_3},  Guo and Tice~\cite{GT_2013_inf,GT_2013_lwp},  and Wu~\cite{Wu_2014}. 

The inviscid analog of the aforementioned free boundary Navier-Stokes in traveling wave form, which is also known as the traveling water wave problem, has been the subject of major work for more than a century. For a thorough review, see the survey articles of Toland~\cite{Toland_1996}, Groves~\cite{Groves_2004}, Strauss~\cite{Strauss_2010}, and Haziot, Hur, Strauss, Toland, Wahl\'en, Walsh, and Wheeler~\cite{MR4406719}.

In contrast, work on the traveling wave problem for the free boundary Navier-Stokes equations has only recently appeared in the literature. Leoni and Tice~\cite{leoni2019traveling} built a well-posedness theory for small forcing and stress data, provided $\gamma \neq 0$.  This was generalized to multi-layer and stationary ($\gam=0$) settings  by Stevenson and Tice~\cite{MR4337506,stevensontice2023wellposedness} and to inclined and periodic configurations by Koganemaru and Tice~\cite{koganemaru2022traveling}. The corresponding well-posedness theory for the free boundary compressible Navier-Stokes equations was developed by Stevenson and Tice~\cite{stevenson2023wellposedness}.  Traveling waves for the Muskat problem were constructed with similar techniques by Nguyen and Tice~\cite{nguyen_tice_2022}. There are also experimental studies of viscous traveling waves; for details, we refer to the work of Akylas, Cho, Diorio, and Duncan~\cite{CDAD_2011,DCDA_2011}, Masnadi and Duncan~\cite{MD_2017}, and Park and Cho~\cite{PC_2016,PC_2018}.

To reiterate the goal of this work, we wish to extend the previously mentioned theory on forced traveling wave solutions for viscous or dissipative fluid models to the system of shallow water equations. This then brings us to the next body of literature to be surveyed, which are results on shallow water systems similar to the one considered in this work. For the sake of brevity, we shall only consider the following two families of results: 1) the local and global well-posedness for variations on the time dependent shallow water system~\eqref{the shallow water system} and 2) the roll-waves phenomena for a tilted version of the equations.

Ton~\cite{MR0605432} established local well-posedness for a version of the viscous equations without surface tension, with small initial data in 2D bounded domains. This result was extended by Kloeden~\cite{MR0777469} to give global in time existence for data near equilibrium.  The inviscid, undamped, version of the equations without surface tension, also known as the Saint-Venant equations, fits within the realm of hyperbolic conservation laws when restricted to one dimension; consequently, the result of Lions, Perthame, and Souganidis~\cite{MR1383202}  on the existence and stability of solutions to 1-dimensional hyperbolic conservation laws in particular establishes existence for 1D Saint-Venant. Sundbye~\cite{MR1402600} studied global existence for near equilibrium data when considering a Coriolis force and external force in the viscous equations without surface tension in 2D. Bresch and Desjardins~\cite{MR1955592} proved the existence of global weak solutions to a viscous shallow water model in 2D and studied the quasi-geostrophic limit. Wang and Xu~\cite{MR2155011} obtained local and global solutions for any and small, respectively, initial data for the equations in 2D while neglecting both the drag term and surface tension. Hao~\cite{MR2601082} studied global in time existence for near equilibrium data for the Cauchy problem including the effects of surface tension in 2D. Bresch and Nobel~\cite{MR2975338} rigorously justified the shallow water limit of the dynamic problem in 2D without surface tension by utilizing `higher order' versions of the equations. Chen and Perepelitsa~\cite{MR3000715} studied the inviscid limit in a weak topology without a drag term and without surface tension in 1D. For a two dimensional rotating shallow water model, Wang~\cite{MR3708163} analyzed the inviscid limit. Haspot~\cite{MR3612532} proved global in time existence for a certain class of large initial data to a related variation on the shallow water equations. Yang, Liu, and Hao~\cite{MR4280587} study the simultaneous limit of vanishing viscosity and Froude number for a two dimension system without surface tension.

We now briefly discuss  the literature on roll waves, which are a sort of unforced traveling wave phenomenon that can appear in variants on the shallow water system derived with a non-trivially tilted bottom (equivalently, derived with a tangential component of gravity). Dressler~\cite{MR0033717} proved the existence of discontinuous roll wave solutions to an inviscid shallow water model in 1D. Barker, Johnson, Rodrigues, Zumbrun and Barker, Johnson, Noble, Rodrigues, Zumbrun~\cite{MR2813829,MR3600832} performed a stability analysis for solitary roll wave solutions to the viscous equations in 1D with various forms of the drag term.  Also in 1D, Balmforth and Madre~\cite{MR2259999} studied the dynamics and stability of roll waves, including the effects with bottom drag and viscosity while doing so. The linear stability of 1D viscous roll waves was further studied by Noble~\cite{MR2372484}.  Periodic roll wave trains were studied numerically in 1D, and also for a reduced model in 2D, by Ivanova, Nkonga, Richard~\cite{MR3721925}. Johnson, Noble, Rodrigues, Yang, and Zumbrun~\cite{MR3933411} studied the stability of discontinuous roll wave solutions to the 1D inviscid Saint-Venant equations with a certain drag term.

Our results for the shallow water system are the first to establish well-posedness of the traveling equations with small forcing; moreover, we find that this can be done regardless whether or not one accounts for the effects of surface tension and viscosity. In particular, we rule out the possibility of unforced roll waves with a normal-pointing gravitational field and establish the generality of traveling waves with forcing, which are a special type of global in time solutions to~\eqref{the shallow water system}.

We emphasize that, while our analysis is capable of handling viscosity and surface tension either on or off, the drag term is essential and cannot be neglected.  Without viscosity or surface tension, this results in a hyperbolic problem with frictional damping.  There is a large literature related to the dynamics of such problems, which we now briefly survey to conclude our discussion of prior work. Matsumura~\cite{MR0470507} studied a damped quasilinear wave equation and employed the frictional damping term to achieve global existence.  MacCamy~\cite{MR0452184} proved global well-posedness for an integrodifferential equation related to a damped wave equation. Nohel~\cite{MR0535352} studied a forced quasilinear wave equation with a frictional damping term.   Greenberg and Li~\cite{MR0737964} studied the effects of boundary damping on a quasilinear wave equation.  Dafermos~\cite{MR1359325} studied a system of damped conservation laws.  Levine, Park, and Serrin~\cite{MR1659905} studied the Cauchy problem for a nonlinearly damped wave equation.

\subsection{Main results and discussion}\label{section on main results and discussion}

In this paper we present three principal results on the system~\eqref{traveling wave formulation of the equation}, which is the shallow water equations in traveling wave formulation. In order to properly state these results, we must first embark on a brief discussion of a certain nonstandard function space that will serve as our container for the perturbative free surface unknown, as in the prior works \cite{koganemaru2022traveling, leoni2019traveling,nguyen_tice_2022,MR4337506,stevenson2023wellposedness}.  Given $\R\ni s\ge 0$ we define the anisotropic Sobolev space
\begin{equation}\label{some definition of the anisobros}
    \mathcal{H}^s(\R^d)=\tcb{\eta\in\mathscr{S}^\ast(\R^d;\R)\;:\;\mathscr{F}[f]\in L^1_{\loc}(\R^d;\C) \text{ and }\tnorm{\eta}_{\mathcal{H}^s}<\infty}
\end{equation}
equipped with the norm
\begin{equation}\label{some norm on the anisobros}
    \tnorm{f}_{\mathcal{H}^s}=\bp{\int_{\R^d}\sp{|\xi|^{-2}\tp{|\xi|^4+\xi_1^2}\mathds{1}_{B(0,1)}(\xi)+\tbr{\xi}^{2s}\mathds{1}_{\R^d\setminus B(0,1)}(\xi)}\tabs{\mathscr{F}[\eta](\xi)}^2\;\m{d}\xi}^{1/2}.
\end{equation}
We refer to Appendix~\ref{appendix on the anisobros} for more information on these function spaces. For the purposes of stating our main results, we only mention here that we have the embeddings $H^s(\R^d)\emb\mathcal{H}^s(\R^d)\emb H^s(\R^d)+C^\infty_0(\R^d)$ with the former being strict if and only if $d\ge 2$.

Our first two main theorems handle small data well-posedness in the `easy cases' of system~\eqref{traveling wave formulation of the equation} in which the equations obey an advantageous elliptic derivative counting and do not exhibit derivative loss. The first of these works in any dimension and requires that both the viscosity and surface tension parameters are different from zero.

\begin{customthm}{1}[Proved in Theorem~\ref{thm on well-posedness II}]\label{main theorem no. 1}
    Let $\N\ni s>d/2$. There exist open sets
    \begin{equation}
        (\R^+)^3\times\tcb{0}^{\ell+1} \times\tcb{0}^{\ell+1} \subset\mathscr{U}_s\subset(\R^+)^3 \times (H^s(\R^d;\R^{d\times d}))^{\ell+1} \times ( H^s(\R^d;\R^d))^{\ell+1}
    \end{equation}
    and
    \begin{equation}
        0\in \mathscr{V}_s(\bf{p})\subset H^{2+s}(\R^d;\R^d)\times\mathcal{H}^{3+s}(\R^d) \text{ for }\bf{p}\in(\R^+)^3,
    \end{equation}
    as well as a smooth mapping
    \begin{equation}\label{smooth mapping, easy case}
        \mathscr{U}_s\ni(\gam,\mu,\sig,\tcb{\tau_i}_{i=0}^\ell,\tcb{\varphi_i}_{i=0}^\ell)\mapsto(v,\eta)\in\bigcup_{\bf{p}\in\tp{\R^+}^3}\mathscr{V}_s(\bf{p})
    \end{equation}
    with the following property. For all $(\gam,\mu,\sig,\tcb{\tau_i}_{i=0}^\ell,\tcb{\varphi_i}_{i=0}^\ell)\in\mathscr{U}_s$ the pair $(v,\eta)$ associated  through~\eqref{smooth mapping, easy case} is the unique $(v,\eta)\in\mathscr{V}_s(\gam,\mu,\sig)$ such that system~\eqref{traveling wave formulation of the equation} is classically satisfied with wave speed $\gam\in\R^+$, viscosity $\mu\in\R^+$, surface tension $\sig\in\R^+$, and forcing $\mathcal{F}$ determined by $\eta$ and the data $(\tcb{\tau_i}_{i=0}^\ell,\tcb{\varphi_i}_{i=0}^\ell)$ via
    \begin{equation}\label{the easy case forcing term is written here}
        \mathcal{F}=\sum_{i=0}^\ell\eta^i\tp{\tau_i\grad\eta+\varphi_i}.
    \end{equation}
\end{customthm}

As it turns out, if we specialize to one dimension, there is a larger parameter space region in which we can avoid the complications of derivative loss. The point of our next main theorem is to record the refinements that can be made to Theorem~\ref{main theorem no. 1} under this specialization.

\begin{customthm}{2}[Proved in Theorems~\ref{thm on abstract 1d analysis} and~\ref{thm on one dimensional analysis, PDE formulation}]\label{main theorem no. 1.5}
    Let $d=1$ and $\N\ni s\ge 1$. Cover $(\R^+\setminus\tcb{0})\times\R^2\subseteq\mathcal{P}_1\cup\mathcal{P}_2\cup\mathcal{P}_3$, where the sets $\mathcal{P}_i$ are given in the second item of Definition~\ref{defn of reparameterization operators}, and fix $i\in\tcb{1,2,3}$. There exists an open set
    \begin{equation}
    \mathcal{P}_i\times\tcb{0}^{\ell+1}\times\tcb{0}^{\ell+1}\subset\mathscr{U}_s^i\subset\mathcal{P}_i\times(H^s(\R))^{\ell+1}\times(H^s(\R))^{\ell+1}
    \end{equation}
    and a sequence of radii $\tcb{\ep^i_{s,\bf{p}}}_{\bf{p}\in\mathcal{P}_i}\subset\R^+$ such that the following hold.
    \begin{enumerate}
        \item To each $(\gam,\mu,\eta,\tcb{\tau_j}_{j=0}^\ell,\tcb{\varphi_j}_{j=0}^\ell)\in\mathscr{U}^i_s$ there exists a unique $\eta$ satisfying $P^i_{\gam,\mu,\sig}\eta\in B_{H^s}(0,\ep^i_{s,(\gam,\mu,\sig)})$, where the operators $P^i_{\gam,\mu,\sig}$ are given in the first item of Definition~\ref{defn of reparameterization operators}, such that upon defining $v=\gam\eta/(1+\eta)$ we have that $(v,\eta)$ are a solution to~\eqref{traveling wave formulation of the equation} with data $\mathcal{F}$ determined from $\eta$, $\tcb{\tau_j}_{j=0}^\ell$, and $\tcb{\varphi_j}_{j=0}^\ell$ as in~\eqref{the easy case forcing term is written here}.
        \item The following mappings are well-defined and continuous.
        \begin{equation}
            \mathscr{U}^i_s\ni(\gam,\mu,\sig,\tcb{\tau_j}_{j=0}^\ell,\tcb{\varphi_j}_{j=1}^\ell)\mapsto\begin{cases}
                (v,\eta)\in (H^{1+s}(\R))^2,\\\
                \mu^2(v,\eta)\in(H^{2+s}(\R))^2,\\
                \sig^2(v,\eta)\in(H^{3+s}(\R))^2.
            \end{cases}
        \end{equation}
    \end{enumerate}
\end{customthm}

 The proofs of Theorems~\ref{main theorem no. 1} and~\ref{main theorem no. 1.5} proceed via applications of the standard implicit function theorem. As such, the theorems offer the following advantages. The number of derivatives required on the data to produce solutions is relatively low, at least compared to our next two main theorems. Additionally, the methodology is relatively simple and requires a low level of technical finesse. On the other hand, these results say nothing about the boundary of the $(\mu,\sig)$ parameter space nor the corresponding limits.  The reason for this defect is essentially because these limiting cases exhibit derivative loss, and thus the standard implicit function theorem does not provide the requisite tools for studying them.  By swapping to a Nash-Moser implicit function theorem, we are led to our next main result, which covers what we call the omnisonic case, since it applies equally well for all positive wave speeds.

\begin{customthm}{3}[Proved in Section~\ref{section on PDE construction}]\label{main theorem no. 2}
    Let $\varsigma=19+2\tfloor{d/2}\in\N$. There exists a nonincreasing sequence of relatively open subsets
    \begin{equation}\label{thm3 Us}
        \R^+\times[0,\infty)^2\times\tcb{0}^{\ell+1}\times\tcb{0}^{\ell+1}\subset\mathcal{U}_s\subset\R^+\times[0,\infty)^2\times(H^\varsigma(\R^d;\R^{d\times d}))^{\ell+1}\times(H^\varsigma(\R^d;\R^d))^{\ell+1}
    \end{equation}
    for $\N\ni s\ge\varsigma$, and a family of open subsets
    \begin{equation}\label{thm4 Vq}
        0\in\mathcal{V}(\mathfrak{q})\subset H^{\varsigma}(\R^d;\R^d)\times\mathcal{H}^{\varsigma}(\R^d)
        \text{ for }
        \mathfrak{q}\in\R^+\times[0,\infty)^2
    \end{equation}
    such that the following hold.
    \begin{enumerate}
        \item \emph{Existence and uniqueness:} For all $(\gam,\mu,\sig,\tcb{\tau_i}_{i=0}^\ell,\tcb{\varphi_i}_{i=0}^\ell)\in\mathcal{U}_\varsigma$ there exists a unique $(v,\eta)\in\mathcal{V}(\gam,\mu,\sig)$ such that system~\eqref{traveling wave formulation of the equation} is classically satisfied  with wave speed $\gam\in\R^+$, viscosity $\mu\in[0,\infty)$, surface tension $\sig\in[0,\infty)$, and data $\mathcal{F}$ determined from $\eta$, $\mu+\sig$, and $(\tcb{\tau_i}_{i=0}^\ell,\tcb{\varphi_i}_{i=0}^\ell)$ via the omnisonic case of~\eqref{conditions on the forcing}.
        \item \emph{Regularity promotion:} If $\N\ni s\ge\varsigma$ and 
        \begin{equation}
            (\gam,\mu,\sig,\tcb{\tau_i}_{i=0}^\ell,\tcb{\varphi_i}_{i=0}^\ell)\in\mathcal{U}_s\cap\tp{\R^3\times(H^s(\R^d;\R^{d\times d}))^{\ell+1}\times(H^s(\R^d;\R^d))^{\ell+1}}
        \end{equation}
        then the corresponding solution $(v,\eta)\in\mathcal{V}(\gam,\mu,\sig)$ produced by the previous item obeys the inclusions
        \begin{equation}\label{reg prom estimate}
            \tbr{\mu\grad}^2 v\in H^{s}(\R^d;\R^d)
            \text{ and }
            (1+(\mu+\sig)|\grad|)\tbr{\sig\grad}^2\eta\in\mathcal{H}^s(\R^d).
        \end{equation}
  
        \item \emph{Continuous dependence:} For any $\N\ni s\ge\varsigma$ the three maps
        \begin{multline}\label{the super maps}
            \mathcal{U}_s\cap\tp{\R^3\times(H^s(\R^d;\R^{d\times d}))^{\ell+1}\times(H^s(\R^d;\R^d))^{\ell+1}}\ni(\gam,\mu,\sig,\tcb{\tau_i}_{i=0}^\ell,\tcb{\varphi_i}_{i=0}^\ell)\\
             \mapsto
             \begin{cases}
             (v,\eta)\in H^s(\R^d;\R^d)\times\mathcal{H}^{s}(\R^d) \\
             (\mu\grad v,\mu^2\grad^2v)\in H^s(\R^d;\R^{d^2})\times H^s(\R^d;\R^{d^3}) \\
             (\mu+\sig)(\grad\eta,\sig\grad^2\eta,\sig^2\grad^3\eta)\in H^s(\R^d;\R^d)\times H^s(\R^d;\R^{d^2})\times H^s(\R^d;\R^{d^3})
             \end{cases}
        \end{multline}
        are all continuous.
    \end{enumerate}
\end{customthm}

Theorem~\ref{main theorem no. 2} is only optimal, in the sense of regularity counting in the limiting cases of~\eqref{the super maps}, in the supersonic regime defined via $\gam\ge 1$. In the subsonic case of $0<\gam<1$, we can invoke the Nash-Moser strategy with  improved estimates along the way and arrive at our fourth and final main result.

\begin{customthm}{4}[Proved in Section~\ref{section on PDE construction}]\label{main theorem no. 3}
    Let $\varpi=17+2\tfloor{d/2}\in\N$. There exists a nonincreasing sequence of relatively open subsets
    \begin{equation}
        (0,1)\times[0,\infty)^2\times\tcb{0}^{\ell+1}\times\tcb{0}^{\ell+1}\subset\mathcal{U}^{\m{sub}}_s\subset(0,1)\times[0,\infty)^2\times(H^\varpi(\R^d;\R^{d\times d}))^{\ell+1}\times(H^\varpi(\R^d;\R^d))^{\ell+1},
    \end{equation}
    for $\N\ni s\ge\varpi$, and a family of open subsets
    \begin{equation}
        0\in\mathcal{V}^{\m{sub}}(\mathfrak{q})\subset H^{\varpi}(\R^d;\R^d)\times\mathcal{H}^{1+\varpi}(\R^d)
        \text{ for }
        \mathfrak{q}\in(0,1)\times[0,\infty)^2
    \end{equation}
    such that the following hold.
    \begin{enumerate}
        \item \emph{Existence and uniqueness:} For all $(\gam,\mu,\sig,\tcb{\tau_i}_{i=0}^\ell,\tcb{\varphi_i}_{i=0}^\ell)\in\mathcal{U}^{\m{sub}}_\varpi$ there exists a unique $(v,\eta)\in\mathcal{V}^{\m{sub}}(\gam,\mu,\sig)$ such that system~\eqref{traveling wave formulation of the equation} is classically satisfied  with wave speed $\gam\in(0,1)$, viscosity $\mu\in[0,\infty)$, surface tension $\sig\in[0,\infty)$, and data $\mathcal{F}$ determined from $\eta$ and $(\tcb{\tau_i}_{i=0}^\ell,\tcb{\varphi_i}_{i=0}^\ell)$ via the subsonic case of~\eqref{conditions on the forcing}.
        \item \emph{Regularity promotion:} If $\N\ni s\ge\varpi$ and 
        \begin{equation}
            (\gam,\mu,\sig,\tcb{\tau_i}_{i=0}^\ell,\tcb{\varphi_i}_{i=0}^\ell)\in\mathcal{U}^{\m{sub}}_s\cap\tp{\R^3\times(H^s(\R^d;\R^{d\times d}))^{\ell+1}\times(H^s(\R^d;\R^d))^{\ell+1}}
        \end{equation}
        then the corresponding solution $(v,\eta)\in\mathcal{V}^{\m{sub}}(\gam,\mu,\sig)$ produced by the previous item obeys the inclusions
        \begin{equation}
            \tbr{\mu\grad}^2 v\in H^{s}(\R^d;\R^d),
            \text{ and }
            \tbr{\sig\grad}^2\eta\in\mathcal{H}^{1+s}(\R^d).
        \end{equation}
        \item \emph{Continuous dependence:} For any $\N\ni s\ge\varpi$ the three map maps
        \begin{multline}
            \mathcal{U}^{\m{sub}}_s\cap\tp{\R^3\times(H^s(\R^d;\R^{d\times d}))^{\ell+1}\times(H^s(\R^d;\R^d))^{\ell+1}}\ni(\gam,\mu,\sig,\tcb{\tau_i}_{i=0}^\ell,\tcb{\varphi_i}_{i=0}^\ell)\\\mapsto\begin{cases}
                (v,\eta)\in H^s(\R^d;\R^d)\times\mathcal{H}^{1+s}(\R^d)\\
                (\mu\grad v,\mu^2\grad^2v)\in H^s(\R^d;\R^{d^2})\times H^s(\R^d;\R^{d^3})\\
                (\sig\grad\eta,\sig^2\grad^2\eta)\in H^{1+s}(\R^d;\R^d)\times H^{1+s}(\R^d;\R^{d^2})
            \end{cases}
        \end{multline}
        are all continuous.
    \end{enumerate}
\end{customthm}

Theorems~\ref{main theorem no. 1}, \ref{main theorem no. 1.5}, \ref{main theorem no. 2}, and~\ref{main theorem no. 3} package several different properties of solutions to~\eqref{traveling wave formulation of the equation} in a rather technical form.  We now pause to elaborate on their content with a series of remarks. A high level summary of our results can be stated as follows. Small amplitude solitary traveling wave solutions to the shallow water equations  are a generic phenomenon for all nonnegative values viscosity and surface tension, and all positive wave speeds. These traveling wave solutions depend continuously on the data as well as the physical parameters (wave speed, viscosity, surface tension), even in the vanishing limits of the latter two parameters. Finally,  solutions to the traveling shallow water equations~\eqref{traveling wave formulation of the equation} have different  quantitative regularity depending on whether or not the wave speed is subsonic or supersonic.

Our main theorems also lead to some corollaries worth mentioning. If we fix the physical parameters $(\gam,\mu,\sig)\in\R^+\times[0,\infty)^2$, then we are assured of the existence of a non-empty open set of forcing data on which we have a well-posedness theory. See Corollary~\ref{coro on fixed physical parameters} for a precise formulation. On the other hand, our main theorems also show that for fixed forcing data we have a continuous two parameter family of solutions depending on $\mu$ and $\sig$; in particular, we can send $\mu\to0$, which is the inviscid limit, or $\sig\to0$, which is the vanishing surface tension limit. See Corollary~\ref{coro on limits} for a formal statement.  In thinking about this limit it is perhaps useful to recall that $\mu$ and $\sig$ are non-dimensional parameters related to gravity, the slip parameter, the  equilibrium height, the dimensional viscosity, and the dimensional surface tension via~\eqref{unitless quantities}.

We emphasize that our analysis of~\eqref{traveling wave formulation of the equation} relies on the wave speed $\gam$ being strictly positive. This is because it allows us to exploit the useful supercritical embedding and algebraic properties of the anisotropic Sobolev spaces $\mathcal{H}^s(\R^2)$ and thus close estimates in $L^2$-based Sobolev spaces. The fact that the free surface unknown is forced to belong to the anisotropic Sobolev spaces~\eqref{some definition of the anisobros} rather than a standard Sobolev space can be seen via an inspection of the linear-in-$\eta$ terms in~\eqref{traveling wave formulation of the equation}. The lowest order differential operators appearing in these linear terms are first order, namely $\gam\pd_1\eta$ in the first equation and $\grad\eta$ in the second equation. As it turns out, we shall view the first equation of~\eqref{traveling wave formulation of the equation} as an identity in $(\dot{H}^{-1}\cap H^{1+s})(\R^d)$ whereas the second equation is an identity in $H^s(\R^d;\R^d)$. Hence the low Fourier mode behavior of $\eta$ is determined via the inclusions $\grad\eta\in L^2(\R^d;\R^d)$ and $\gam\pd_1\eta\in\dot{H}^{-1}(\R^2)$. This is exactly what is captured for $|\xi|\le 1$ in the integral of definition~\eqref{some norm on the anisobros}.  While we remain optimistic that similar results  for the fully stationary ($\gam=0$) variant of~\eqref{traveling wave formulation of the equation} can be proved in the future, such results will not be possible within this paper's functional framework.  Our recent work on the solitary stationary wave problem for the three dimensional free boundary Navier-Stokes equations~\cite{stevensontice2023wellposedness} suggests that a framework built from $L^p(\R^d)$-based Sobolev spaces for $1<p<2$ might serve as an appropriate replacement for the  stationary shallow water problem when $d=2$.

Next, we aim to  overview both the main difficulties in proving Theorems~\ref{main theorem no. 1}, \ref{main theorem no. 1.5}, \ref{main theorem no. 2}, and~\ref{main theorem no. 3} and our strategies for overcoming them.  The system~\eqref{traveling wave formulation of the equation} is in general quasilinear, does not enjoy a variational formulation, and is posed in $\R^d$, which is non-compact. Consequently, Fredholm, variational, or compactness techniques are unavailable. On the other hand, we have the expectation of a robust theory for the family of linearizations of these equations, which suggests that the production of solutions ought to follow some perturbative argument based on linear theory. This leads us to a strategy modeled on the implicit and inverse function theorems. We begin our discussion of this strategy by examining the linearization of~\eqref{traveling wave formulation of the equation} around vanishing data and the trivial solution with a fixed choice of parameters $\gam\in\R^+$, $\mu,\sig\in[0,\infty)$:
\begin{equation}\label{linearization at the trivial solution introduction sanitized}
    \begin{cases}
        \grad\cdot v-\gam\pd_1\eta=h,\\
        -\gam\pd_1v+v-\mu^2\tp{\Delta v+3\grad\grad\cdot v}+(I-\sig^2\Delta)\grad\eta=f.
    \end{cases}
\end{equation}
In~\eqref{linearization at the trivial solution introduction sanitized}, the unknowns are $v:\R^d\to\R^d$ and $\eta:\R^d\to\R$, while the data are $f:\R^d\to\R^d$ and $h:\R^d\to\R$.

The equations~\eqref{linearization at the trivial solution introduction sanitized} constitute a time-independent linear system of differential equations, so it is natural to investigate its ellipticity, in the sense of Agmon-Douglis-Nirenberg~\cite{adn2}, as a function of the spatial dimension and the parameters  $\mu$ and $\sigma$.   When~\eqref{linearization at the trivial solution introduction sanitized} is ADN elliptic, the parent system~\eqref{traveling wave formulation of the equation} in the small data regime is entirely dominated by the linear part~\eqref{linearization at the trivial solution introduction sanitized}, provided that the nonlinear part of~\eqref{traveling wave formulation of the equation} has order not exceeding that of the linear part.  On the other hand, when~\eqref{linearization at the trivial solution introduction sanitized} fails to be ADN elliptic, the trivial linearization~\eqref{linearization at the trivial solution introduction sanitized} cannot tell the full story, and derivative loss is expected.

In order to  determine when~\eqref{linearization at the trivial solution introduction sanitized} is ADN elliptic,  we rewrite the problem as
\begin{equation}\label{trivial linearization operator}
\bf{P}
\begin{pmatrix}
   v \\ \eta 
\end{pmatrix}
=
\begin{pmatrix}
   f \\ h 
\end{pmatrix}
\text{ for }    \bf{P}=
\begin{pmatrix}
 (1-\gamma \partial_1)I -\mu^2 (\Delta I + 3 \nabla \otimes \nabla) & (1-\sigma^2 \Delta) \nabla \\
\nabla^{\m{t}} & -\gamma \partial_1
\end{pmatrix},
\end{equation}
and we compute the scalar differential operator
\begin{equation}\label{this is the determinant of the operator P}
    \det\bf{P}
\\=(1-\gamma\partial_1-\mu^2 \Delta)^{d-1}(\gamma^2\partial_1^2-\Delta+\sigma^2 \Delta^2-\gamma \partial_1 (1 - 4\mu^2 \Delta)).
\end{equation}
With these in hand, we can readily determine when the system \eqref{linearization at the trivial solution introduction sanitized} is ADN elliptic.  We refer to the table in Figure~\ref{ADN_table} for the full details, but in brief: if $d=1$ then the system is elliptic unless $\mu=\sigma =0$ and $\gam=1$ (the exact sonic speed), and if $d\ge 2$ then the system is elliptic only when both $\mu >0$ and $\sigma >0$.

\begin{figure}[!h]
    \centering
\begin{tikzpicture}
\node[scale=0.925] at (0,0){
$
\begin{array}{| c || c | c |  c | c |}
\hline
& \mu >0,\;\sigma >0 & \mu >0,\;\sigma =0 & \mu =0,\;\sigma >0 & \mu =0,\;\sigma=0\\
\hline\hline
R & 2d+2  & 2d+1  & d+3  & d+1 
 \\ \hline
r & 2d+2 & 2d+1 & d+3 & \begin{cases}
    d+1&\text{if }d\ge2\\
    2&\text{if }d=1,\;\gam\neq1\\
    1&\text{if }d=1,\;\gam=1
\end{cases}
 \\ \hline
(\det \bf{P})_{\star} &\sigma^2 \Delta^2 (-\mu^2 \Delta)^{d-1} & -4\gamma \partial_1 (-\mu^2 \Delta)^{d} & (-\gamma \partial_1)^{d-1} \sigma^2 \Delta^2 &\begin{cases}
    (-\gamma \partial_1)^{d-1} (\gamma^2 \partial_1^2 -\Delta)&\text{if }d\ge 2\\
    \tp{\gam^2-1}\pd_1^2&\text{if }d=1,\;\gam\neq1\\
    -\pd_1&\text{if }d=1,\;\gam=1
\end{cases} 
 \\ \hline
\textnormal{Ellipic, } d = 1 & \textnormal{Yes}  & \textnormal{Yes} & \textnormal{Yes} & \textnormal{Yes}\lra\gam\neq1 
 \\ \hline
\textnormal{Ellipic, } d\ge 2 & \textnormal{Yes} & \textnormal{No} & \textnormal{No} & \textnormal{No} 
 \\ \hline
\end{array}
$
};
\end{tikzpicture}
    \caption{This table summarizes the information needed to determine whether or not the system \eqref{linearization at the trivial solution introduction sanitized} is ADN elliptic.  For this we use the equivalent formulation of Volevi\v{c}~\cite{volevic_63} (see also chapter 9 of \cite{wloka_95}).  The number $R \in \N$ is the maximal order of $\prod_{i=d+1}^n \bf{P}_{i,\uppi(i) }$ over all permutations $\uppi$ of $\{1,\dotsc,d+1\}$.  The number $r \in \N$ is the order of the scalar operator $\det \bf{P}$, and $(\det\bf{P})_{\star}$ denotes the principal part of $\det\bf{P}$.  Volevi\v{c} showed that $\bf{P}$ is ADN elliptic if and only if $r=R$ and $\det\bf{P}$ is elliptic. }      
    \label{ADN_table}
\end{figure}

Our strategy for developing a well-posedness theory for~\eqref{traveling wave formulation of the equation} differs depending on whether or not~\eqref{linearization at the trivial solution introduction sanitized} is ADN elliptic. When the former condition holds, a standard inverse function theorem argument  - taking also into account the low mode behavior of $\bf{P}$ and the anisotropic Sobolev spaces~\eqref{some definition of the anisobros} - is sufficient to close. The full details of this execution are carried out in Section~\ref{section on analysis with both visc and surf}; this then leads us to Theorems~\ref{main theorem no. 1} and~\ref{main theorem no. 1.5}.

On the other hand, if the trivial linearization~\eqref{linearization at the trivial solution introduction sanitized} is not ADN elliptic, then the previous inverse function theorem strategy fails. The lack of ellipticity is connected to the emergence of derivative loss in our linear estimates; that is, we recover less on $v$ or $\eta$ than is spent by the forward differential operator. When presented with derivative loss at the boundary of a parameter space in which a standard inverse function theorem is applicable in the interior, it is natural to attempt upgrading and hence using a Nash-Moser inverse to reach the reach the remaining cases. This is what we do in Sections~\ref{section on nonlinear analysis}, \ref{section on linear analysis}, and~\ref{section on conclusion}.

The reason we expect a Nash-Moser strategy to work at all when the ellipticity of~\eqref{linearization at the trivial solution introduction sanitized} fails can be seen by closer inspection of the symbol associated with the determinant~\eqref{this is the determinant of the operator P}. Regardless of the choice of $\tp{\gam,\mu,\sig}\in\R^+\times[0,\infty)^2$, we always have the following lower bound for $\xi\in\R^d$:
\begin{equation}
    |\mathscr{F}[\det\bf{P}](\xi)|\ge\bp{\pi^2|\xi|^2\bbr{\f{2\pi\gam\xi_1}{\tbr{4\pi\mu\xi}^2}}^{-1}\tbr{2\pi\sig\xi}^2+2\pi\gam|\xi_1|\tbr{4\pi\mu\xi}^2}\tbr{\mu\xi}^{2(d-1)}\ge\pi|\xi|^2\tbr{2\pi\gam\xi_1}^{-1}+2\pi\gam|\xi_1|.
\end{equation}
This tells us that not only is the linearized differential operator $\bf{P}$ of~\eqref{trivial linearization operator} invertible for non-zero frequencies, but $\det\bf{P}$ is `bounded from below' by an elliptic first order pseudodifferential operator. As a result, inversion of the trivial linearization will result in the recovery of some estimates on the solution, albeit without the correct the number of derivatives needed to close with a standard inverse function theorem argument.  We emphasize that the reason that $\det\bf{P}$ does not degenerate completely is because we are considering the shallow water system with frictional damping, which is responsible for the zeroth order term for $v$ in~\eqref{linearization at the trivial solution introduction sanitized}, and with gravity, which gives us the full gradient of $\eta$ in these equations. In the Nash-Moser framework, one needs estimates on solution operators to some family of linearizations indexed by an open neighborhood of smooth backgrounds. In order to close estimates for these nearby linearizations, the mere existence of the damping and gravity terms is not quite sufficient. We also need that the terms in the system of higher order than our estimate recovery level obey certain cancellations during the energy argument;  see, for example, Lemma~\ref{lemma on integration by parts} or Proposition~\ref{prop on first base case estimate} for more details.

There are numerous versions of the Nash-Moser inverse function theorem in the literature; the one we use in this paper, which is recorded in Appendix~\ref{subsection on a NMIFT}, was developed in an earlier paper of the authors~\cite{stevenson2023wellposedness} for use on the free boundary compressible Navier-Stokes traveling wave problem. The advantage of this version in the context of the shallow water system is that is suffices to do analysis and estimates in finite scales of Banach spaces and that the inverse function granted by the theorem is continuous between spaces obeying an optimal relative derivative counting. A pleasant corollary of this latter fact is that with Nash-Moser we can do more than merely handle the cases on the boundary of the $(\mu,\sig)$-parameter space; in fact, we are able to understand very well the limiting behavior  as one approaches the boundary from the interior of the parameter space.

Section~\ref{section on nonlinear analysis} is dedicated to verifying the nonlinear hypotheses of the Nash-Moser theorem and to identifying a minimal essential structure for the linear analysis. Not only do hard inverse function theorems require one to check that the forward mapping is continuously differentiable, but it is typically the case that more regularity is sought and always the case that additional structured estimates on the map and its derivatives are required. In this section of the document we associate to the system~\eqref{traveling wave formulation of the equation} two nonlinear mapping and then verify the requisite smoothness and structured estimates. Afterward, we examine certain partial derivatives of these maps and identify a bare bones structure responsible for the derivative loss and prove that what remains can be handled perturbatively.

Section~\ref{section on linear analysis} is devoted to the verification of the linear hypotheses of the Nash-Moser theorem. We are tasked with showing that the linearizations of the nonlinear mappings identified in the previous section are left and right invertible and obey certain structured estimates for an open neighborhood of background points.  Our strategy for achieving this is to first prove precise a priori estimates for the principal part of the linearizations  by carefully exploiting cancellations of the higher-order terms; we then obtain existence for the principal part by combining with the method of continuity.  Finally, we synthesize these results with the remainder estimates of the previous section to obtain the full linear hypotheses.

Section~\ref{section on conclusion} combines the previous work on the nonlinear and linear hypotheses to two invocations of the Nash-Moser theorem. These are initially phrased abstractly, so we then unpack the details into the PDE-centric results of Theorems~\ref{main theorem no. 2} and~\ref{main theorem no. 3}. 

The remainder of the paper consists of appendices. In Appendix~\ref{appendix on the anisobros}, we review important properties of the anisotropic Sobolev spaces. Appendix~\ref{tame structure abs} is dedicated to the abstraction of the structures involved in the version of Nash-Moser employed here. Appendix~\ref{subsection on a NMIFT} is the precise statement of this aforementioned theorem. Appendix~\ref{toolbox for smooth-tameness} then gives us a concrete tool box for checking structured estimates on the Sobolev-type nonlinearities encountered here. Finally Appendix~\ref{appendix on the shallow water system} contains the derivation of the forced-traveling wave formulation of the shallow water system and a power-dissipation calculation.

\subsection{Conventions of notation}

The set $\tcb{0,1,2,\dots}$ is denoted by $\N$ whereas $\N^+=\N\setminus\tcb{0}$. We denote $\R^+=(0,\infty)$. The notation $\al\lesssim\be$ means that there exists $C\in\R^+$, depending only on the parameters that are clear from context, for which $\al\le C\be$. To highlight the dependence of $C$ on one or more particular parameters $a,\dots, b$, we will occasionally write $\al\lesssim_{a,\dots,b}\be$. We also express that two quantities $\al$, $\be$ are equivalent, written $\al\asymp\be$ if both $\al\lesssim\be$ and $\be\lesssim\al$. Throughout this document, the implicit constants always depend at least on the physical dimension $d\in\N^+$ and the nonlinearity order $\ell\in\N$ (see~\eqref{special form of the superposition nonlinearities} and~\eqref{superposition approximation}). The bracket notation 
\begin{equation}\label{bracket notation}
    \tbr{x}=\sqrt{1+|x_1|^2+\dots+|x_\ell|^2} \text{ for } x\in\C^\ell
\end{equation}
is frequently used. The notation $A\Subset B$ means that the closure $\Bar{A}$ is compact and a subset of the interior $\m{int}B$. If $\tcb{X_i}_{i=1}^\upnu$ are normed spaces and $X$ is their product, endowed with any choice of product norm $\tnorm{\cdot}_X$, then we shall write
\begin{equation}
    \tnorm{x_1,\dots,x_\upnu}_{X}=\tnorm{(x_1,\dots,x_\upnu)}_{X}  \text{ for } (x_1,\dots,x_\upnu)\in X.
\end{equation}
The Fourier transform and its inverse on the space of tempered distributions $\mathscr{S}^\ast(\R^d;)$, which are normalized to be unitary on $L^2(\R^d;\C)$, are denoted by $\mathscr{F}$, $\mathscr{F}^{-1}$, respectively. The $\R^d$-gradient is denoted by $\grad=(\pd_1,\dots,\pd_d)$ and the divergence of a vector field $X:\R^d\to\R^d$ is denoted by $\grad\cdot X=\sum_{j=1}^d\pd_j(X\cdot e_j)$. By $|\grad|$ and $\tbr{\grad}$, we mean the operator given by Fourier multiplication with symbols $\xi\mapsto2\pi|\xi|$ and $\xi\mapsto\tbr{2\pi\xi}$, respectively.

\section{Analysis of the elliptic cases}\label{section on analysis with both visc and surf}

In this section we record the linear and nonlinear analysis of system~\eqref{traveling wave formulation of the equation} in the regimes for which there is no derivative loss in the sense that the linear problem recovers all of the derivatives spent by the full nonlinear equations. For our equations, this is equivalent to saying that the trivial linearization~\eqref{linearization at the trivial solution introduction sanitized} is ADN elliptic. As it turns out, we do not have derivative loss under the following two (not mutually exclusive) assumptions. First, $\mu>0$ and $\sigma >0$. Second, the spatial dimension, $d$, is equal to $1$ and the positive wave speed, $\gam$, is not equal to $1$. We find that in either of these cases, the problem of local well-posedness is felled by a relatively simple application of the implicit function theorem. 

We glimpse how this initial strategy fails to handle the boundary cases of the above assumptions (e.g. the limits $\mu\to0$ or $\sig\to0$ and the corresponding limiting cases for $d\ge 2$ or $d=1$ and $\gam=1$), due to the emergent derivative loss's incompatibility with standard inverse function theorems. In what follows, we will instead attack~\eqref{traveling wave formulation of the equation} with a Nash-Moser inverse function theorem to continuously recover well-posedness in all of the missing cases.

The plan of this section is as follows. In Section~\ref{section on the trivialization} we study the linearization of~\eqref{traveling wave formulation of the equation} at zero when viscosity and surface tension are both nonzero. We find the right spaces between which the linear operator acts as an isomorphism. Next, in Section~\ref{section on basic nonlinear theory}, we combine these isomorphism results with some basic nonlinear mapping properties and the inverse function theorem. Finally, in Section~\ref{subsection on the one dimensional theory}, we have a special look at the case of one spatial dimension in~\eqref{traveling wave formulation of the equation}. We find that whenever $\gam\neq 1$, we can perform similar inverse function theorem analysis but this time gleaning more refined estimates.

\subsection{Theory of the trivial linearization}\label{section on the trivialization}

The linearization around zero of the traveling wave formulation for the shallow water system~\eqref{traveling wave formulation of the equation} is the following linear system.
\begin{equation}\label{linearization at the trivial solution}
    \begin{cases}
        \grad\cdot v-\gam\pd_1\eta=h,\\
        -\gam\pd_1v+v-\mu^2\grad\cdot\S v+(I-\sig^2\Delta)\grad\eta=f.
    \end{cases}
\end{equation}
Here the data are a given scalar field $h:\R^d\to\R$ and vector field $f:\R^d\to\R^d$, while the unknowns are the vector field $v:\R^d\to\R^d$ and the scalar $\eta:\R^d\to\R$.

Our first result derives a system of equations equivalent to~\eqref{linearization at the trivial solution}, but the unknowns are decoupled from one another. In what follows we shall use $\P$ to denote the orthogonal projection in $L^2$ onto the subspace of solenoidal vector fields. $\P$ corresponds to the Fourier multiplier $\xi\mapsto I-|\xi|^{-2}\xi\otimes\xi$. We shall also let $\mathcal{R}=\tp{\mathcal{R}_1,\dots,\mathcal{R}_d}$ denote the vector of Riesz transforms; $\mathcal{R}$ is given by the Fourier multiplier $\xi\mapsto \ii|\xi|^{-1}\xi$ so that $\mathcal{R}=|\grad|^{-1}\grad$ and $\P=I+\mathcal{R}\otimes\mathcal{R}$.

\begin{prop}[Decoupling reformulation]\label{prop on decoupling reformulation}
    The following are equivalent for $s\in\N$, $f\in H^s(\R^d;\R^d)$, $h\in (\dot{H}^{-1}\cap H^{1+s})(\R^d)$, $v\in H^{2+s}(\R^d;\R^d)$, and $\eta\in\mathcal{H}^{3+s}(\R^d)$.
    \begin{enumerate}
        \item System~\eqref{linearization at the trivial solution} is satisfied by $v$ and $\eta$ with the data $f$ and $h$.
        \item It holds that
        \begin{equation}\label{decoupled system of equations}
            \begin{cases}
                (I-\gam\pd_1-\mu^2\Delta)\P v=\P f,\\
                (I-\P)v=H-\gam\mathcal{R}\mathcal{R}_1\eta,\\
                -\gam(I-4\mu^2\Delta)\mathcal{R}\mathcal{R}_1\eta+\tp{I+\gam^2\mathcal{R}_1^2-\sig^2\Delta}\grad\eta=(I-\P)f-(I-\gam\pd_1-4\mu^2\Delta)H,
            \end{cases}
        \end{equation}
        where $H=\grad\Delta^{-1}h\in(I-\P)H^{2+s}(\R^d;\R^d)$. Moreover, we have norm equivalence $\norm{H}_{H^{2+s}} \asymp \norm{h}_{\dot{H}^{-1} \cap H^{1+s}}$.
    \end{enumerate}
\end{prop}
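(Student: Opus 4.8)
The plan is to prove the equivalence by algebraic manipulation in Fourier space, decoupling the system by projecting onto the solenoidal and gradient subspaces, and then to check that the function space bookkeeping works out. First, I would rewrite the viscous term: since $\S v = \grad v + \grad v^{\m{t}} + 2(\grad\cdot v)I$, a direct computation gives $\grad\cdot \S v = \Delta v + 3\grad\grad\cdot v$, so the momentum equation in \eqref{linearization at the trivial solution} becomes $-\gam\pd_1 v + v - \mu^2(\Delta v + 3\grad\grad\cdot v) + (I-\sig^2\Delta)\grad\eta = f$. Now apply the Leray projection $\P$: since $\P$ commutes with all constant-coefficient Fourier multipliers, $\P$ kills $\grad\grad\cdot v$ and $\grad\eta$, leaving $(I-\gam\pd_1-\mu^2\Delta)\P v = \P f$, which is the first equation of \eqref{decoupled system of equations}. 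Applying $I-\P = -\mathcal{R}\otimes\mathcal{R}$ to the continuity equation $\grad\cdot v - \gam\pd_1\eta = h$: note $\grad\cdot v = \grad\cdot (I-\P)v$ since $\P v$ is solenoidal, and $(I-\P)v = -\mathcal{R}(\mathcal{R}\cdot v) = |\grad|^{-1}\grad\Div\cdot(\dots)$; writing $(I-\P)v$ in terms of $\mathcal{R}(\mathcal{R}\cdot v)$ and using $\grad\cdot v = -|\grad|\,(\mathcal R \cdot v)$ one solves for $\mathcal{R}\cdot v$ in terms of $h$ and $\eta$. Setting $H = \grad\Delta^{-1}h$ (so $\grad\cdot H = h$ and $H = (I-\P)H$), one gets $(I-\P)v = H - \gam\mathcal{R}\mathcal{R}_1\eta$, the second equation. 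Substituting $v = \P v + (I-\P)v$ into the $(I-\P)$-projection of the momentum equation and using both the first decoupled equation (to replace $\P$-quantities — actually $(I-\P)$ annihilates the $\P v$ viscous and transport contributions up to the explicit terms) gives, after collecting the $3\mu^2\grad\grad\cdot$ term acting on $(I-\P)v$ and simplifying $\Delta + 3\grad\grad\cdot$ on a gradient field to $4\Delta$, the third equation $-\gam(I-4\mu^2\Delta)\mathcal{R}\mathcal{R}_1\eta + (I + \gam^2\mathcal{R}_1^2 - \sig^2\Delta)\grad\eta = (I-\P)f - (I-\gam\pd_1 - 4\mu^2\Delta)H$. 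The reverse implication follows by running the same manipulations backwards: adding the first two equations of \eqref{decoupled system of equations} recovers the continuity equation, and combining appropriately recovers the momentum equation.

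Next I would address the function space claims, which are mostly routine but need care at low frequencies. The norm equivalence $\norm{H}_{H^{2+s}} \asymp \norm{h}_{\dot H^{-1}\cap H^{1+s}}$ is a Fourier-multiplier computation: $\mathscr{F}[H](\xi) = (2\pi i \xi/(-4\pi^2|\xi|^2))\mathscr F[h](\xi) = (2\pi i)^{-1}|\xi|^{-2}\xi\,\mathscr F[h](\xi)$, so $|\mathscr F[H](\xi)| = (2\pi|\xi|)^{-1}|\mathscr F[h](\xi)|$, and thus $\tbr{2\pi\xi}^{2(2+s)}|\mathscr F[H]|^2 \asymp \tbr{2\pi\xi}^{2(2+s)}(2\pi|\xi|)^{-2}|\mathscr F[h]|^2$; splitting into $|\xi|\le 1$ and $|\xi|\ge 1$ one sees the low-frequency part is comparable to $|\xi|^{-2}|\mathscr F[h]|^2$ (the $\dot H^{-1}$ piece) and the high-frequency part to $\tbr{\xi}^{2(1+s)}|\mathscr F[h]|^2$ (the $H^{1+s}$ piece). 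I would also note that $H \in (I-\P)H^{2+s}$ automatically since $H$ is a gradient; and that the hypothesis $\eta \in \mathcal H^{3+s}$ is exactly calibrated so that $\grad\eta \in H^{2+s}$ at high frequency and $\mathcal{R}\mathcal{R}_1\eta$, $\gam^2\mathcal R_1^2\grad\eta$ etc. make sense at low frequency — here one invokes the properties of the anisotropic space recalled in Appendix~\ref{appendix on the anisobros}, in particular that the low-frequency weight $|\xi|^{-2}(|\xi|^4 + \xi_1^2)$ controls both $|\grad\eta|$ and $|\xi|^{-1}|\xi_1||\eta|$-type quantities.

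The main obstacle, I expect, is not any single hard estimate but rather the careful verification that each term in the decoupled system genuinely lies in the claimed space and that no spurious low-frequency singularity is introduced when inverting $\Delta$ or composing with Riesz transforms — in particular checking that $(I-\gam\pd_1-4\mu^2\Delta)H$ and $\gam(I-4\mu^2\Delta)\mathcal R\mathcal R_1\eta$ land in $H^s$ (or its $(I-\P)$-complement) given $h \in \dot H^{-1}\cap H^{1+s}$ and $\eta\in\mathcal H^{3+s}$. One must be attentive that the operator $I + \gam^2\mathcal R_1^2 - \sig^2\Delta$ applied to $\grad\eta$ is handled termwise, the $\sig^2\Delta\grad\eta$ term being the one that demands the full $3+s$ derivatives on $\eta$ at high frequency. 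Once the bookkeeping at low versus high frequency is organized — cleanly separated by the indicator functions in the norm \eqref{some norm on the anisobros} — the equivalence is a matter of matching Fourier symbols on each dyadic regime.
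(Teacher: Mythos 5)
Your argument is correct and follows the same route as the paper's (much terser) proof: project the momentum equation with $\P$ and $I-\P$, substitute the continuity equation into the $(I-\P)$-part (using that $\Delta+3\grad\grad\cdot$ acts as $4\Delta$ on gradient fields), and verify the norm equivalence for $H$ by splitting into low and high frequencies. The only blemish is the sign in your parenthetical $\grad\cdot v=-|\grad|(\mathcal{R}\cdot v)$, which should be $+$; your final formula $(I-\P)v=H-\gam\mathcal{R}\mathcal{R}_1\eta$ is nonetheless correct.
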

\begin{proof}
    The first equation in~\eqref{linearization at the trivial solution} and the second equation in~\eqref{decoupled system of equations} are manifestly equivalent.  The second equation of~\eqref{linearization at the trivial solution} is equivalent to the pair of equations obtained by applying the projectors $\P$ and $I-\P$; the $\P$ projection is the first equation of~\eqref{decoupled system of equations}, while the third equation of~\eqref{decoupled system of equations} is equivalent to the $I-\P$ projection via a substitution of the equivalent forms of the first equation in~\eqref{linearization at the trivial solution}.  The inclusion $H = \grad \Delta^{-1} h \in (I-\P)H^{s+2}$ and the corresponding norm equivalence follow by decomposing $h$ into its high and low frequency parts and using the $\dot{H}^{-1}$ and $H^{1+s}$ inclusions on each part, respectively.
\end{proof}

The benefit of the decoupling reformulation~\eqref{decoupled system of equations} is that the third equation allows us to solve for the $\eta$ unknown in terms of the data alone. The following proposition does exactly this.

\begin{prop}[Solving for the linear free surface]\label{prop on solving for the linear free surface}
    Given $s \in \N$ and $\varphi\in(I-\P)H^s(\R^d;\R^d)$, there exists a unique $\eta\in\mathcal{H}^{3+s}(\R^d)$ such that
    \begin{equation}\label{the easy eta equation}
        -\gam(I-4\mu^2\Delta)\mathcal{R}\mathcal{R}_1\eta+\tp{I+\gam^2\mathcal{R}_1^2-\sig^2\Delta}\grad\eta=\varphi.
    \end{equation}
\end{prop}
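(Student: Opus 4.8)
The plan is to convert \eqref{the easy eta equation} into a scalar Fourier multiplier equation for $\eta$, verify that the multiplier is bounded below by the weight defining the $\mathcal{H}^{3+s}$ norm relative to the input weight on $\varphi$, and then read off existence and uniqueness from the resulting explicit formula. First I would apply the operator $\grad^{\m{t}} = \Div$ to \eqref{the easy eta equation}. Using $\mathcal{R} = |\grad|^{-1}\grad$, we have $\Div \mathcal{R}\mathcal{R}_1 \eta = |\grad|^{-1}\Delta \mathcal{R}_1 \eta = -|\grad|\mathcal{R}_1\eta = -\pd_1 \eta$, while $\Div(I + \gam^2 \mathcal{R}_1^2 - \sig^2\Delta)\grad\eta = (I + \gam^2\mathcal{R}_1^2 - \sig^2\Delta)\Delta\eta$. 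So the divergence of the left-hand side is the scalar operator
\begin{equation}\label{scalar multiplier for eta}
    \mathcal{M}\eta := \gam(I - 4\mu^2\Delta)\pd_1\eta + (I + \gam^2\mathcal{R}_1^2 - \sig^2\Delta)\Delta\eta
\end{equation}
applied to $\eta$, and since $\varphi \in (I-\P)H^s$ is a gradient, say $\varphi = \grad\psi$ with $\psi$ determined mod constants, taking the divergence loses no information; conversely, an $\eta$ solving $\mathcal{M}\eta = \Div\varphi$ will solve \eqref{the easy eta equation} because both sides of \eqref{the easy eta equation} lie in $(I-\P)L^2_{\loc}$ (they are gradients, up to the Riesz-transform terms which are also curl-free), so they agree once their divergences agree. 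The symbol of $\mathcal{M}$ is, up to the $2\pi$ normalization, $m(\xi) = -2\pi \ii\, \gam\xi_1 \tbr{4\pi\mu\xi}^2 - 4\pi^2|\xi|^2\big(1 + \gam^2 |\xi|^{-2}\xi_1^2 + 4\pi^2\sig^2|\xi|^2\big)$, whose modulus is comparable, by the lower bound already displayed in the introduction for $\mathscr{F}[\det\bf{P}]$ (take $d=1$ there, or just estimate directly), to $|\xi|^2\tbr{2\pi\gam\xi_1}^{-1}\tbr{2\pi\sig\xi}^2 + \gam|\xi_1|\tbr{4\pi\mu\xi}^2 \gtrsim |\xi|^2\tbr{\xi}^{-1}$ for $|\xi| \le 1$ and $\gtrsim \tbr{\xi}^{2}$ for $|\xi|\ge 1$.

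Next I would define $\eta := \mathcal{M}^{-1}\Div\varphi$ by $\mathscr{F}[\eta](\xi) = m(\xi)^{-1}\,\mathscr{F}[\Div\varphi](\xi)$, which makes sense pointwise a.e. since $m$ vanishes only at $\xi = 0$, a null set. Uniqueness is immediate: if $\mathcal{M}\eta = 0$ with $\eta \in \mathscr{S}^\ast$ and $\mathscr{F}[\eta] \in L^1_{\loc}$, then $m(\xi)\mathscr{F}[\eta](\xi) = 0$ a.e., hence $\mathscr{F}[\eta] = 0$ a.e. and $\eta = 0$; this also handles uniqueness of solutions to \eqref{the easy eta equation} directly. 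For the membership $\eta \in \mathcal{H}^{3+s}$, I would split the integral in \eqref{some norm on the anisobros} into $|\xi|\le 1$ and $|\xi| > 1$. On $|\xi| > 1$, the weight is $\tbr{\xi}^{2(3+s)}$, and we need $\tbr{\xi}^{3+s}|m(\xi)|^{-1}|\mathscr{F}[\Div\varphi](\xi)| \in L^2$; since $|\mathscr{F}[\Div\varphi](\xi)| \lesssim |\xi|\,|\mathscr{F}[\varphi](\xi)| \lesssim \tbr{\xi}|\mathscr{F}[\varphi](\xi)|$ and $|m(\xi)| \gtrsim \tbr{\xi}^{2}$ there, the integrand is $\lesssim \tbr{\xi}^{3+s}\cdot\tbr{\xi}^{-2}\cdot\tbr{\xi}\cdot|\mathscr{F}[\varphi](\xi)| = \tbr{\xi}^{2+s}|\mathscr{F}[\varphi](\xi)|$ — wait, that is one power too many; the correct accounting uses $|m(\xi)| \gtrsim \tbr{\mu\xi}^2\tbr{\sig\xi}^2\big(|\xi|^2\tbr{\gam\xi_1}^{-1} + \gam|\xi_1|\big)$, which for bounded $\mu,\sig$ and away from the origin gives only $|m(\xi)|\gtrsim \tbr{\xi}^{2}$, so $|\mathscr{F}[\eta](\xi)| \lesssim \tbr{\xi}^{-1}|\mathscr{F}[\varphi](\xi)|$, hence $\tbr{\xi}^{3+s}|\mathscr{F}[\eta](\xi)| \lesssim \tbr{\xi}^{2+s}|\mathscr{F}[\varphi](\xi)| \in L^2$ because $\varphi \in H^s$ with $2+s$... this needs $\varphi \in H^{2+s}$, not $H^s$. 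The resolution is that the gain is actually two derivatives plus the $\mu^2\Delta$ and $\sig^2\Delta$ terms: I would instead use that on $|\xi|>1$ the full symbol has modulus $\gtrsim \tbr{\mu\xi}^2(1 + \sig^2|\xi|^2)|\xi|^2\tbr{\gam\xi_1}^{-1}$, and crucially the order-$(3+s)$ content of $\eta$ measured by $\mathcal{H}^{3+s}$ is matched by the order-$3$ gain from $|\xi|^2 \cdot |\xi|$ in $|m|$ against $|\Div\varphi| \sim |\xi||\mathscr{F}\varphi|$ only after also absorbing the $\tbr{\gam\xi_1}$ — and here one uses $|\xi_1| \le |\xi|$ so $\tbr{\gam\xi_1}^{-1}|\xi|^2 \gtrsim |\xi|$ giving net gain $|m|\gtrsim|\xi|^{?}$; I will write this carefully in the final proof, the clean statement being $|\mathscr{F}[\eta](\xi)|\lesssim \tbr{\xi}^{-3}\tbr{\gam\xi_1}|\mathscr{F}[\Div\varphi](\xi)| \lesssim \tbr{\xi}^{-1}|\mathscr{F}[\varphi](\xi)|$, which still only gives $H^{1+s}$. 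I therefore expect the genuinely correct bound to be $|m(\xi)|\gtrsim |\xi|^2$ together with the observation that $\mathcal{H}^{3+s}$ membership only requires the $\tbr{\xi}^{2(3+s)}$ weight against $\mathscr{F}[\eta]$, and in fact $\tbr{\xi}^{3+s}|\mathscr{F}[\eta]| \lesssim \tbr{\xi}^{3+s}|\xi|^{-2}|\xi||\mathscr{F}[\varphi]| = \tbr{\xi}^{2+s}|\xi|^{-1}|\mathscr{F}[\varphi]|\lesssim \tbr{\xi}^{1+s}|\mathscr{F}[\varphi]|$; so one actually needs to exploit that the $\sig^2\Delta^2$ and $\mu^2\Delta$ terms in $m$ supply the missing two powers when $\mu,\sig>0$ — but \eqref{the easy eta equation} carries no standing assumption $\mu,\sig>0$ (it is used inside Proposition~\ref{prop on decoupling reformulation} where $s$ and the ambient hypotheses do force $\mu,\sig>0$ via $\eta\in\mathcal{H}^{3+s}$), so the honest move is: \textbf{this proposition is to be applied only in the regime $\mu,\sig>0$}, and I will use $|m(\xi)|\gtrsim_{\mu,\sig}\tbr{\xi}^4$ on $|\xi|>1$, yielding $\tbr{\xi}^{3+s}|\mathscr{F}[\eta]|\lesssim_{\mu,\sig}\tbr{\xi}^{s}|\mathscr{F}[\varphi]|\in L^2$.

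On $|\xi|\le 1$ the weight in \eqref{some norm on the anisobros} is $|\xi|^{-2}(|\xi|^4 + \xi_1^2)$, and here $|m(\xi)|\gtrsim |\xi|^2\tbr{\gam\xi_1}^{-1} + \gam|\xi_1| \gtrsim |\xi|^2 + |\xi_1|$ (with implicit constant depending on $\gam$, bounded for bounded $\gam$), so $|\mathscr{F}[\eta](\xi)|\lesssim (|\xi|^2 + |\xi_1|)^{-1}|\xi||\mathscr{F}[\varphi](\xi)|$; then the contribution to $\|\eta\|_{\mathcal{H}^{3+s}}^2$ from low frequencies is $\lesssim \int_{|\xi|\le1}|\xi|^{-2}(|\xi|^4+\xi_1^2)(|\xi|^2+|\xi_1|)^{-2}|\xi|^2|\mathscr{F}[\varphi]|^2\,\m{d}\xi = \int_{|\xi|\le1}(|\xi|^4+\xi_1^2)(|\xi|^2+|\xi_1|)^{-2}|\mathscr{F}[\varphi]|^2\,\m{d}\xi \lesssim \int_{|\xi|\le 1}|\mathscr{F}[\varphi]|^2 \le \|\varphi\|_{H^s}^2$, using $|\xi|^4 + \xi_1^2 \lesssim (|\xi|^2 + |\xi_1|)^2$ for $|\xi|\le 1$. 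Finally I would check $\mathscr{F}[\eta] = m^{-1}\mathscr{F}[\Div\varphi] \in L^1_{\loc}$ — near the origin $|m|^{-1}|\mathscr{F}[\Div\varphi]|\lesssim (|\xi|^2+|\xi_1|)^{-1}|\xi|\cdot|\mathscr{F}[\varphi]|$ which is locally integrable since $(|\xi|^2+|\xi_1|)^{-1}|\xi|\lesssim |\xi_1|^{-1/2}$ type bounds combined with $\mathscr{F}[\varphi]\in L^2_{\loc}$ and Cauchy–Schwarz give local integrability — so $\eta$ lands in the space \eqref{some definition of the anisobros}, and verifying $\mathcal{M}\eta = \Div\varphi$ then re-deriving \eqref{the easy eta equation} (using that $\varphi$ is curl-free so it is recovered from its divergence via $\varphi = \grad(-\Delta)^{-1}\Div\varphi$, and the left side of \eqref{the easy eta equation} is also curl-free) completes the argument. \textbf{The main obstacle} is the low-frequency analysis: ensuring the inverse multiplier $m^{-1}$ interacts correctly with the anisotropic weight $|\xi|^{-2}(|\xi|^4+\xi_1^2)$, in particular that the weak vanishing of $m$ in the directions $\xi_1\to 0$ (where $|m|\gtrsim |\xi|^2$ only, no help from $|\xi_1|$) is exactly compensated by the $|\xi|^4$ part of the weight rather than the $\xi_1^2$ part; all of this is precisely what the anisotropic space \eqref{some definition of the anisobros} was designed to absorb, as the paper indicates, so the estimate closes.
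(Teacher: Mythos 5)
Your proposal is correct and takes essentially the same route as the paper: reduce \eqref{the easy eta equation} to a scalar Fourier multiplier equation, bound the symbol below by the ratio of the $\mathcal{H}^{3+s}$ and $H^s$ weights (using the standing assumption $\mu,\sig>0$ of this section, which you rightly identify as essential for the high-frequency gain), and invert. The only difference is that the paper contracts with $\mathcal{R}\cdot$ rather than $\Div=|\grad|\,\mathcal{R}\cdot$, which yields the symbol $\chi$ whose modulus is directly equivalent to $\tbr{\xi}^2(|\xi_1||\xi|^{-1}+|\xi|)$ and so avoids the extra factor of $|\xi|$ you have to track at both ends of the frequency range (the source of your mid-proof derivative-counting wobble, which you do ultimately resolve correctly).
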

\begin{proof}

    We propose defining $\eta$ via the equation
    \begin{equation}\label{definition of the free surface}
        \eta=\ssb{\gam(I-4\mu^2\Delta)\mathcal{R}_1 +(I+\gam^2\mathcal{R}_1^2-\sig^2\Delta)\mathcal{R}\cdot\grad}^{-1}\mathcal{R}\cdot\varphi,
    \end{equation}
    but to justify this we must first establish the invertibility of the operator in brackets.  The symbol of said operator is  $\chi : \R^d \to \C$ given by
    \begin{equation}
        \chi(\xi)=\ii\gam\tp{1+16\pi^2\mu^2|\xi|^2}|\xi|^{-1}\xi_1-2\pi\tp{1-\gam^2|\xi|^{-2}\xi_1^2+4\pi^2\sig^2|\xi|^2}|\xi|,
    \end{equation}
    which obeys the  inequalities
    \begin{equation}
        \gam(1+16\pi^2\mu^2|\xi|^2)|\xi|^{-1}|\xi_1|\le|\chi(\xi)| \text{ and }
        2\pi(1+4\pi^2\sig^2 |\xi|^2)|\xi|\le|\chi(\xi)-2\pi\gam^2|\xi|^{-2}\xi_1^2|\xi||.
    \end{equation}
    Hence,
    \begin{equation}\label{the equation hoist}
        \gam(1+16\pi^2\mu^2|\xi|^2)|\xi|^{-1}|\xi_1|+2\pi(1+4\pi^2\sig^2|\xi|^2)|\xi|\le\bp{2+\f{2\pi \gam|\xi_1|}{1+16\pi^2\mu^2|\xi|^2}}|\chi(\xi)|
    \end{equation}
    and so, for a constant $C\in\R^+$ depending on $\gam$, $\mu$, and $\sigma$, we have the equivalence
    \begin{equation}
    C^{-1}\tbr{\xi}^2\tp{|\xi_1||\xi|^{-1}+|\xi|}\le|\chi(\xi)|\le C\tbr{\xi}^2\tp{|\xi_1||\xi|^{-1}+|\xi|}.
    \end{equation}
    From this estimate it's a simple matter to check that the operator $\chi(D) : \mathcal{H}^{s+3}(\R^d) \to H^s(\R^d)$ is an isomorphism, and so our proposed definition of $\eta$ in \eqref{definition of the free surface} is well-defined.  Moreover, we have that  $\eta\in\mathcal{H}^{3+s}(\R^d)$ with the estimate $\tnorm{\eta}_{\mathcal{H}^{3+s}}\lesssim\tnorm{\mathcal{R}\cdot\varphi}_{H^s}$.     To complete the existence proof, we note that $\varphi\in(I-\P)H^s$ results in $\mathcal{R}\mathcal{R}\cdot\varphi=-\varphi$, and hence~\eqref{definition of the free surface} and the identity $\mathcal{R}\mathcal{R}\cdot\grad=-\grad$ imply that $\eta$ solves~\eqref{the easy eta equation}. Uniqueness follows by noting that if \eqref{the easy eta equation} holds with $\varphi =0$, then we may apply $\mathcal{R}$ to see that $\chi(D) \eta =0$.
\end{proof}

We are now ready to study the linear isomorphism associated with~\eqref{linearization at the trivial solution}.

\begin{thm}[Linear well posedness in the case of positive surface tension and viscosity]\label{thm on the easy case of linear well-posedness}
    Given any choice of $s\in\N$ and $\gam,\mu,\sig\in\R^+$, the mapping
    \begin{equation}
        L:H^{2+s}(\R^d;\R^d) \times \mathcal{H}^{3+s}(\R^d) \to\tp{\dot{H}^{-1}\cap H^{1+s}}(\R^d) \times H^s(\R^d;\R^d)
    \end{equation}
    defined via
    \begin{equation}
        L(v,\eta)=(\grad\cdot v-\gam\pd_1\eta,-\gam\pd_1v+v-\mu^2\grad\cdot\S v+(I-\sig^2\Delta)\grad\eta)
    \end{equation}
    is a bounded linear isomorphism.
\end{thm}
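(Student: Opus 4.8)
The plan is to reduce everything to the two preceding propositions together with the trivial invertibility of the damped heat-type operator $I-\gam\pd_1-\mu^2\Delta$ on solenoidal fields, and then to invoke the open mapping theorem. Boundedness of $L$ is routine: linearity is clear, and for the first output component $\grad\cdot v\in H^{1+s}\cap\dot H^{-1}$ because $v\in H^{2+s}\subseteq H^{1+s}\cap L^2$, while $\tnorm{\gam\pd_1\eta}_{\dot H^{-1}\cap H^{1+s}}\lesssim\tnorm{\eta}_{\mathcal H^{3+s}}$ is read off directly from the weight in~\eqref{some norm on the anisobros} using $\xi_1^2\le|\xi|^4+\xi_1^2$ on $B(0,1)$ and $\tbr{\xi}^{2(1+s)}\le\tbr{\xi}^{2(3+s)}$ off of it; for the second component $-\gam\pd_1 v$, $v$, and $\mu^2\grad\cdot\S v$ are each controlled by $\tnorm{v}_{H^{2+s}}$, and $\tnorm{(I-\sig^2\Delta)\grad\eta}_{H^s}\lesssim\tnorm{\eta}_{\mathcal H^{3+s}}$ again follows from~\eqref{some norm on the anisobros}, now using $|\xi|^2\le|\xi|^{-2}(|\xi|^4+\xi_1^2)$ on $B(0,1)$.

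For bijectivity, fix data $(h,f)\in(\dot H^{-1}\cap H^{1+s})(\R^d)\times H^s(\R^d;\R^d)$. By Proposition~\ref{prop on decoupling reformulation}, finding $(v,\eta)\in H^{2+s}\times\mathcal H^{3+s}$ with $L(v,\eta)=(h,f)$ is equivalent to solving~\eqref{decoupled system of equations} with $H=\grad\Delta^{-1}h\in(I-\P)H^{2+s}$, and I would construct the unique such solution in three steps. First, set $\varphi=(I-\P)f-(I-\gam\pd_1-4\mu^2\Delta)H$; since the scalar Fourier multipliers $\pd_1$ and $\Delta$ commute with $I-\P$ and $(I-\P)H=H$, one checks $\varphi\in(I-\P)H^s$, so Proposition~\ref{prop on solving for the linear free surface} yields a unique $\eta\in\mathcal H^{3+s}$ solving the third equation of~\eqref{decoupled system of equations}. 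Second, define $(I-\P)v:=H-\gam\mathcal R\mathcal R_1\eta$; the order-zero multiplier $\mathcal R\mathcal R_1$ has symbol of modulus $|\xi|^{-1}|\xi_1|\le1$ and maps $\mathcal H^{3+s}$ into $H^{2+s}$ (on $B(0,1)$ bound its contribution by the weight in~\eqref{some norm on the anisobros} via $\xi_1^2\le|\xi|^4+\xi_1^2$), so $(I-\P)v\in(I-\P)H^{2+s}$. Third, the symbol $1-2\pi\ii\gam\xi_1+4\pi^2\mu^2|\xi|^2$ of $I-\gam\pd_1-\mu^2\Delta$ has modulus bounded below by $1+4\pi^2\mu^2|\xi|^2$ and above by a $(\gam,\mu)$-multiple of $\tbr{\xi}^2$, so this operator is an isomorphism $\P H^{2+s}\to\P H^s$ and we may set $\P v:=(I-\gam\pd_1-\mu^2\Delta)^{-1}\P f$. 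Then $v:=\P v+(I-\P)v\in H^{2+s}$, and $(v,\eta)$ solves~\eqref{decoupled system of equations}, hence $L(v,\eta)=(h,f)$; this is surjectivity. For injectivity, if $L(v,\eta)=0$ then~\eqref{decoupled system of equations} holds with $f=0$ and $H=0$, so $\varphi=0$ forces $\eta=0$ by uniqueness in Proposition~\ref{prop on solving for the linear free surface}, whence $(I-\P)v=0$ and then $\P v=0$ by injectivity of $I-\gam\pd_1-\mu^2\Delta$.

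Since the domain and target are Banach spaces (completeness of the $\mathcal H^s$ scale is recorded in Appendix~\ref{appendix on the anisobros}) and $L$ is a bounded linear bijection, the open mapping theorem gives that $L^{-1}$ is bounded; alternatively, the bound on $L^{-1}$ can be tracked term-by-term through the construction above. The main obstacle is not conceptual but bookkeeping: one must keep careful track of the low- and high-frequency Fourier weights in~\eqref{some norm on the anisobros} to confirm that $\varphi$, $(I-\P)v$, $\P v$, and $\eta$ each land in the claimed spaces. The one genuinely necessary input beyond the preceding propositions is that $I-\gam\pd_1-\mu^2\Delta$ is elliptic of order exactly two, which holds precisely because its symbol has strictly positive real part; this is where the frictional damping term $v$ (contributing the constant $1$) and the positivity $\mu>0$ are used, consistent with the ADN analysis summarized in Figure~\ref{ADN_table}.
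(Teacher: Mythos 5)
Your proposal is correct and follows essentially the same route as the paper: reduce to the decoupled system of Proposition~\ref{prop on decoupling reformulation}, solve for $\eta$ via Proposition~\ref{prop on solving for the linear free surface}, recover $(I-\P)v$ from the continuity equation and $\P v$ by inverting $I-\gam\pd_1-\mu^2\Delta$, with injectivity from the uniqueness assertion of Proposition~\ref{prop on solving for the linear free surface}. The only differences are cosmetic — you spell out the boundedness of $L$ and the invertibility of $I-\gam\pd_1-\mu^2\Delta$ on solenoidal fields in more detail than the paper, which simply asserts these.
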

\begin{proof}
    That $L$ is well-defined and bounded is clear from the characterizations of the anisotropic Sobolev spaces given in Proposition \ref{proposition on spatial characterization of anisobros}.  It remains to establish that $L$ is a bijection.
    
    If $L(v,\eta)=0$, then $\eta$ satisfies~\eqref{the easy eta equation} with right hand side identically zero. Hence, by the uniqueness assertion of Proposition~\ref{prop on solving for the linear free surface}, we find that $\eta=0$.  In turn, the first and second equations in~\eqref{decoupled system of equations} imply that $\P v=(I-\P)v=0$.  Hence, $(v,\eta)=0$, which demonstrates the injectivity of $L$.

    Next, we show that $L$ is a surjection. Let $h\in (\dot{H}^{-1}\cap H^{1+s})(\R^d)$ and $f\in H^s(\R^d;\R^d)$. We set $H=\grad\Delta^{-1}h\in(I-\P)H^{2+s}(\R^d;\R^d)$ as in Proposition~\ref{prop on decoupling reformulation} and then define
    \begin{equation}
        \varphi=(I-\P)f-(I-\gam\pd_1-4\mu^2\Delta)H\in(I-\P)H^s(\R^d;\R^d).
    \end{equation}
    Then we can construct $\eta\in\mathcal{H}^{3+s}(\R^d)$ solving~\eqref{the easy eta equation} via an application of Proposition~\ref{prop on solving for the linear free surface}. In turn, we construct $v\in H^{2+s}(\R^d;\R^d)$ via the decomposition $v=\P v+(I-\P)v$, where the projections are obtained by solving  the equations
    \begin{equation}
        \begin{cases}
            (I-\gam\pd_1-\mu^2\Delta)\P v=\P f,\\
            (I-\P)v=H-\gam\mathcal{R}\mathcal{R}_1\eta.
        \end{cases}
    \end{equation}
    The pair $(v,\eta)\in H^{2+s}(\R^d;\R^d) \times \mathcal{H}^{3+s}(\R^d)$ satisfy the second item of Proposition~\ref{prop on decoupling reformulation} with the data $f$ and $h$. By invoking this equivalence, we find that $L(v,\eta)=(h,f)$.
\end{proof}

\subsection{A first pass nonlinear theory}\label{section on basic nonlinear theory}

We now port the previous subsection's linear theory to a nonlinear result via an application of the implicit function theorem. At the end, we shall have Theorem~\ref{main theorem no. 1}. We begin by checking that the requisite mapping properties of the nonlinear differential operator are satisfied.

\begin{prop}[Smoothness of the forward mapping]\label{prop on smoothness of the forward mapping}
    Given any $\N\ni s>d/2$, the function
    \begin{multline}
        \Psi:[(\R^+)^3\times (H^s(\R^d;\R^{d\times d}))^{\ell+1}\times\tp{H^s(\R^d;\R^d)}^{\ell+1}]\times[H^{2+s}(\R^d;\R^d)\times\mathcal{H}^{3+s}(\R^d)] \\
        \to(\dot{H}^{-1}\cap H^{1+s})(\R^d)\times H^s(\R^d;\R^d)
    \end{multline}
    defined via
    \begin{multline}
        \Psi((\gam,\mu,\sig,\tcb{\tau_i}_{i=0}^\ell,\tcb{\varphi_i}_{i=0}^\ell),(v,\eta))\\=\bpm\grad\cdot((1+\eta)(v-\gam e_1))\\(1+\eta)(v-\gam e_1)\cdot\grad v+v-\mu^2\grad\cdot((1+\eta)\S v)+(1+\eta)(1-\sig^2\Delta)\grad\eta+\sum_{i=0}^\ell \eta^i\tp{\tau_i\grad\eta+\varphi_i}\epm
    \end{multline}
    is smooth.
\end{prop}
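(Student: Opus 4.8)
The plan is to prove smoothness of $\Psi$ by decomposing it into a finite sum of multilinear maps (in the combined variables $((\gam,\mu,\sig,\tcb{\tau_i},\tcb{\varphi_i}),(v,\eta))$) composed with bounded linear operators and fixed polynomial nonlinearities, and then invoking the standard fact that a bounded multilinear map between Banach spaces is smooth (with derivatives given by the usual Leibniz-type formulas). The only genuine analytic input needed is that the product is a bounded bilinear map $H^s \times H^s \to H^s$ for $s > d/2$, together with analogous product estimates involving the anisotropic space $\mathcal{H}^{3+s}$ and the homogeneous space $\dot{H}^{-1} \cap H^{1+s}$. These are exactly the algebra and product properties of the anisotropic Sobolev spaces recorded in Appendix~\ref{appendix on the anisobros}, so I would cite those.

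Concretely, I would go term by term through the two components of $\Psi$. First the continuity-equation component: writing $\grad\cdot((1+\eta)(v-\gam e_1)) = \grad\cdot v + \grad\cdot(\eta v) - \gam\pd_1\eta - \gam\,\grad\cdot(\eta e_1)$, one sees it is the sum of the bounded linear term $\grad\cdot v - \gam\pd_1\eta$ (which lands in $\dot H^{-1}\cap H^{1+s}$ by the embedding/characterization results for $\mathcal H$) and the bilinear terms $\grad\cdot(\eta v)$ and $-\gam\grad\cdot(\eta e_1)$; for the target $\dot H^{-1}\cap H^{1+s}$ one uses that $\eta v \in H^{1+s}$ with $\|\eta v\|_{H^{1+s}}\lesssim \|\eta\|_{\mathcal H^{3+s}}\|v\|_{H^{2+s}}$ (via the product estimate and the embedding $\mathcal H^{3+s}\emb H^{1+s}$, here $s>d/2$ makes $1+s$ an algebra exponent after accounting for one derivative) and that $\pd_j(\eta v)\in\dot H^{-1}$ since $\eta v\in L^2$. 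The $\gam$-dependence is polynomial (linear), so multilinearity in the full tuple is preserved. Second, the momentum component: each summand $(1+\eta)(v-\gam e_1)\cdot\grad v$, $v$, $\mu^2\grad\cdot((1+\eta)\S v)$, $(1+\eta)(1-\sig^2\Delta)\grad\eta$, and $\eta^i(\tau_i\grad\eta+\varphi_i)$ expands into finitely many monomials that are products of: bounded linear operators applied to one of the unknowns ($v\mapsto \grad v,\ \S v$; $\eta\mapsto\grad\eta,\ \Delta\grad\eta$, the last mapping $\mathcal H^{3+s}\to H^s$), powers of the scalar parameters $\gam,\mu,\sig$, and copies of $\eta$ or the data functions $\tau_i,\varphi_i$; each such monomial is multilinear in its arguments and bounded into $H^s$ by iterating the $H^s$-product estimate (using $s>d/2$) and the embeddings $\mathcal H^{3+s}\emb H^{2+s}\emb H^s$. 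Summing finitely many such maps yields that $\Psi$ is a finite sum of bounded multilinear maps, hence smooth.

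The main obstacle — and the only point that is not purely formal — is tracking that every bilinear or higher product genuinely closes in the stated target norms with the \emph{anisotropic} space on the $\eta$ slot, particularly the continuity-equation component landing in $\dot H^{-1}\cap H^{1+s}$ rather than simply $H^s$. This is where one must be careful: one needs both that $\eta v$ and $\eta e_1$ lie in $H^{1+s}$ (so that their divergence lies in $H^s$, controlling the high-frequency part) and that these products lie in $L^2$ (so that $\pd_j$ of them lies in $\dot H^{-1}$, controlling the low-frequency part); the requisite inequality $\|\eta v\|_{H^{1+s}\cap L^2}\lesssim\|\eta\|_{\mathcal H^{3+s}}\|v\|_{H^{2+s}}$ follows from the product/embedding lemmas for $\mathcal H^s$ in Appendix~\ref{appendix on the anisobros} but should be invoked explicitly. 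Everything else — the Leibniz formula for derivatives of multilinear maps, continuity of those derivatives, the polynomial ($\ell$-dependent) bookkeeping in $\eta$ — is routine and I would state it without grinding through the combinatorics.
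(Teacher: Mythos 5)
Your overall strategy is exactly the paper's: the proof there consists of observing that all the products are well-defined by Corollary~\ref{coro on more algebra properties} and Proposition~\ref{corollary on tame estimates on simple multipliers}, and then noting that $\Psi$ is a polynomial map (a finite sum of bounded multilinear maps) and hence smooth. Your treatment of the continuity-equation component landing in $(\dot H^{-1}\cap H^{1+s})(\R^d)$ --- high frequencies via $\eta v\in H^{1+s}$, low frequencies via $\eta v\in L^2$ and $\pd_1\eta\in\dot H^{-1}$ from the definition of $\mathcal H^{3+s}$ --- is the right accounting and is more explicit than what the paper records.

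There is, however, one step as written that is false and must be replaced. You justify the momentum-equation monomials by ``the embeddings $\mathcal H^{3+s}\emb H^{2+s}\emb H^s$.'' The first of these fails for $d\ge 2$: the paper's embedding chain is $H^s(\R^d)\emb\mathcal H^s(\R^d)\emb H^s(\R^d)+C^\infty_0(\R^d)$, with the first inclusion \emph{strict} precisely when $d\ge 2$, because the low-frequency weight $|\xi|^{-2}(\xi_1^2+|\xi|^4)$ in \eqref{some norm on the anisobros} vanishes as $\xi\to 0$, so elements of $\mathcal H^{3+s}$ need not be square integrable. Consequently a factor of $\eta$ (or $\eta^i$) cannot be fed into the plain $H^s$ algebra property; one must instead use the mixed products $\mathcal H^{3+s}\times H^t\to H^t$ and the power estimate $\eta^{r_d}\in H^s$ of Corollary~\ref{coro on more algebra properties} (equivalently, the high--low splitting of Proposition~\ref{proposition on frequency splitting}, treating the low part as a $W^{k,\infty}$ multiplier). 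Since you already cite these appendix lemmas as your ``only genuine analytic input,'' the repair is immediate, but as written the momentum paragraph rests on a wrong inclusion. Separately, a minor algebra slip: your expansion of the first component lists both $-\gam\pd_1\eta$ and $-\gam\,\grad\cdot(\eta e_1)$, which are the same term; the correct expansion is $\grad\cdot v+\grad\cdot(\eta v)-\gam\pd_1\eta$, as in \eqref{prop on smooth tameness, I 1}.
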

\begin{proof}
Corollary~\ref{coro on more algebra properties} and Proposition~\ref{corollary on tame estimates on simple multipliers} show that all of the products involved in the definition of $\Psi$ are well-defined.  $\Psi$ is then a polynomial mapping comprised of sums of bounded multilinear maps, and is thus smooth.
\end{proof}

The next result is our first version of positive viscosity and positive surface tension well-posedness. This theorem only works perturbatively from a fixed parameter tuple of wave speed, viscosity, and surface tension.

\begin{thm}[Well-posedness, I]\label{thm on well-posedness, I}
    Let $\N\ni s>d/2$. For each $\bf{p}=(\Bar{\gam},\Bar{\mu},\Bar{\sig})\in(\R^+)^3$ there exists $\ep_{s,\bf{p}}$, $\ep_{s,\bf{p}}'\in\R^+$ and a unique mapping
    \begin{equation}
        \upiota_{\bf{p}}:B((\bf{p},0),\ep_{s,\bf{p}})\subset(\R^+)^3\times (H^s(\R^d;\R^{d\times d}))^{\ell+1} \times(H^s(\R^d;\R^d))^{\ell+1} \to B(0,\ep_{s,\bf{p}}')\subset H^{2+s}(\R^d;\R^d)\times\mathcal{H}^{3+s}(\R^d)
    \end{equation}
    such that for all $(\gam,\mu,\sig,\tcb{\tau_i}_{i=0}^\ell, \tcb{\varphi_{i}}_{i=0}^\ell)\in B((\bf{p},0),\ep_{s,\bf{p}})$ if we set
    $(v,\eta)=\upiota_{\bf{p}}(\gam,\mu,\sig,\tcb{\tau_i}_{i=0}^\ell,\tcb{\varphi_i}_{i=0}^\ell)\in B(0,\ep_{s,\bf{p}}')$, then $\Psi(((\gam,\mu,\sig,\tcb{\tau_i}_{i=0}^\ell,\tcb{\varphi_i}_{i=0}^\ell)),(v,\eta))=0$. Moreover, the map $\upiota_\bf{p}$ is smooth.
\end{thm}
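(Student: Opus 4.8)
The plan is to deduce Theorem~\ref{thm on well-posedness, I} from the standard (Banach space) implicit function theorem applied to the forward map $\Psi$ of Proposition~\ref{prop on smoothness of the forward mapping}, using the linear isomorphism result Theorem~\ref{thm on the easy case of linear well-posedness} to invert the relevant partial derivative. First I would fix $\bf{p}=(\Bar\gam,\Bar\mu,\Bar\sig)\in(\R^+)^3$ and observe that $\Psi((\bf{p},0),(0,0))=0$: with vanishing free surface, vanishing velocity, and vanishing forcing data $\{\tau_i\}=\{\varphi_i\}=0$, every term in both components of $\Psi$ vanishes. This identifies $((\bf{p},0),(0,0))$ as the base point about which we apply the implicit function theorem, with the ``parameter'' variable being $(\gam,\mu,\sig,\{\tau_i\},\{\varphi_i\})$ ranging in the open set $(\R^+)^3\times(H^s)^{\ell+1}\times(H^s)^{\ell+1}$ and the ``unknown'' variable being $(v,\eta)\in H^{2+s}(\R^d;\R^d)\times\mathcal{H}^{3+s}(\R^d)$.

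Next I would compute the partial Fr\'echet derivative $D_{(v,\eta)}\Psi$ at the base point. Differentiating the defining formula for $\Psi$ in $(v,\eta)$ and evaluating at $(v,\eta)=(0,0)$, $(\gam,\mu,\sig)=(\Bar\gam,\Bar\mu,\Bar\sig)$, $\{\tau_i\}=\{\varphi_i\}=0$, all quadratic-and-higher terms drop out, leaving exactly the linear operator
\begin{equation}
    D_{(v,\eta)}\Psi((\bf{p},0),(0,0))(w,\zeta)=\bpm\grad\cdot w-\Bar\gam\pd_1\zeta\\-\Bar\gam\pd_1 w+w-\Bar\mu^2\grad\cdot\S w+(I-\Bar\sig^2\Delta)\grad\zeta\epm,
\end{equation}
which is precisely the operator $L$ of Theorem~\ref{thm on the easy case of linear well-posedness} with parameters $(\Bar\gam,\Bar\mu,\Bar\sig)\in(\R^+)^3$. (One small point to check: the term $\eta^i(\tau_i\grad\eta+\varphi_i)$ contributes nothing to this derivative because each summand is at least linear in the data $\tau_i,\varphi_i$, which vanish at the base point, or at least quadratic in $\eta$; the only potential linear-in-$\zeta$ contribution, from $i=0$, is $\tau_0\grad\zeta+$ lower, but $\tau_0=0$ at the base point.) By Theorem~\ref{thm on the easy case of linear well-posedness}, $L$ is a bounded linear isomorphism from $H^{2+s}(\R^d;\R^d)\times\mathcal{H}^{3+s}(\R^d)$ onto $(\dot H^{-1}\cap H^{1+s})(\R^d)\times H^s(\R^d;\R^d)$. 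Since $\Psi$ is smooth by Proposition~\ref{prop on smoothness of the forward mapping}, the hypotheses of the smooth implicit function theorem on Banach spaces are met.

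The implicit function theorem then supplies radii $\ep_{s,\bf{p}},\ep_{s,\bf{p}}'\in\R^+$ and a unique smooth map $\upiota_{\bf{p}}$ from the ball $B((\bf{p},0),\ep_{s,\bf{p}})$ into $B(0,\ep_{s,\bf{p}}')\subset H^{2+s}(\R^d;\R^d)\times\mathcal{H}^{3+s}(\R^d)$ with $\upiota_{\bf{p}}(\bf{p},0)=(0,0)$ such that $\Psi((\gam,\mu,\sig,\{\tau_i\},\{\varphi_i\}),\upiota_{\bf{p}}(\gam,\mu,\sig,\{\tau_i\},\{\varphi_i\}))=0$ for all arguments in that ball, and moreover $\upiota_{\bf{p}}$ parametrizes all zeros of $\Psi$ inside $B((\bf{p},0),\ep_{s,\bf{p}})\times B(0,\ep_{s,\bf{p}}')$, which is the asserted local uniqueness. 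One should also shrink $\ep_{s,\bf{p}}$ if necessary to keep the ball inside $(\R^+)^3\times(H^s)^{\ell+1}\times(H^s)^{\ell+1}$ so that $\mu,\sig,\gam$ stay positive, which is needed for Theorem~\ref{thm on the easy case of linear well-posedness} to apply and, implicitly, for the nearby linearizations. Honestly there is no serious obstacle here — the whole point of this theorem is that in the ADN-elliptic regime the problem is soft; the only mild subtlety is bookkeeping the anisotropic target space $\dot H^{-1}\cap H^{1+s}$ for the first (continuity) equation, but that is exactly the space in which Proposition~\ref{prop on smoothness of the forward mapping} and Theorem~\ref{thm on the easy case of linear well-posedness} are already phrased, so it costs nothing. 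The real work, namely the isomorphism property of $L$ and the tame product estimates making $\Psi$ smooth, has already been done in Theorem~\ref{thm on the easy case of linear well-posedness} and Proposition~\ref{prop on smoothness of the forward mapping}.
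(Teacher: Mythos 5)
Your proposal is correct and is essentially the paper's own proof: the paper likewise applies the implicit function theorem to $\Psi$ at $(\bf{p},0)$, citing Proposition~\ref{prop on smoothness of the forward mapping} for smoothness and Theorem~\ref{thm on the easy case of linear well-posedness} for invertibility of the partial derivative $D_{(v,\eta)}\Psi((\bf{p},0),(0,0))=L$. Your additional verification that the linearization at the trivial point coincides with $L$ and that the forcing terms contribute nothing is a correct fleshing-out of details the paper leaves implicit.
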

\begin{proof}
    Thanks to Theorem~\ref{thm on the easy case of linear well-posedness} and Proposition~\ref{prop on smoothness of the forward mapping} the hypotheses of the implicit function theorem are satisfied when applied to the map $\Psi$ at the point $(\bf{p},0)$. We are thus granted a smooth implicit function $\upiota_{\bf{p}}$ that obeys the stated properties.
\end{proof}

We augment Theorem~\ref{thm on well-posedness, I} with a simple gluing argument that allows us to consider simultaneously all tuples of strictly positive wave speed, viscosity, and surface tension.

\begin{thm}[Well-posedness, II]\label{thm on well-posedness II}
    Let $\N\ni s>d/2$. There exists an open set
    \begin{equation}
        (\R^+)^3\times\tcb{0}^{\ell+1} \times\tcb{0}^{\ell+1} \subset\mathscr{U}_s\subset(\R^+)^3 \times (H^s(\R^d;\R^{d\times d}))^{\ell+1} \times ( H^s(\R^d;\R^d))^{\ell+1}
    \end{equation}
    and a smooth mapping
    \begin{equation}
        \upiota:\mathscr{U}_s\to H^{2+s}(\R^d;\R^d)\times\mathcal{H}^{3+s}(\R^d)
    \end{equation}
    such that for all $\bf{U}=(\gam,\mu,\sig,\tcb{\tau_i}_{i=0}^\ell,\tcb{\varphi_i}_{i=0}^\ell)\in\mathscr{U}_s$ we have that $\upiota(\bf{U})\in B(0,\ep_{s,\bf{p}}')$ (with  $\bf{p}=(\gam,\mu,\sig)$ and $\ep_{s,\bf{p}}'$ as defined in Theorem~\ref{thm on well-posedness, I}) is the unique solution to $\Psi(\bf{U},\upiota(\bf{U}))=0$.
\end{thm}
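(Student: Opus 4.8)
The plan is to patch together the local implicit functions $\upiota_{\bf{p}}$ from Theorem~\ref{thm on well-posedness, I} using a partition-of-unity-free gluing argument, relying on the uniqueness clause in that theorem to guarantee that the patches agree on overlaps. First I would observe that Theorem~\ref{thm on well-posedness, I} produces, for every $\bf{p}=(\bar\gam,\bar\mu,\bar\sig)\in(\R^+)^3$, radii $\ep_{s,\bf{p}},\ep_{s,\bf{p}}'\in\R^+$ and a smooth map $\upiota_{\bf{p}}$ on the ball $B((\bf{p},0),\ep_{s,\bf{p}})$ solving $\Psi=0$, with $\upiota_{\bf{p}}$ the \emph{unique} such map into $B(0,\ep_{s,\bf{p}}')$. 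The candidate open set is then
\begin{equation}
    \mathscr{U}_s=\bigcup_{\bf{p}\in(\R^+)^3}B\bp{(\bf{p},0),\ep_{s,\bf{p}}},
\end{equation}
which is manifestly open, contains $(\R^+)^3\times\{0\}^{\ell+1}\times\{0\}^{\ell+1}$, and is a subset of the stated ambient space. The map $\upiota$ is defined by setting $\upiota(\bf{U})=\upiota_{\bf{p}}(\bf{U})$ whenever $\bf{U}\in B((\bf{p},0),\ep_{s,\bf{p}})$.

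The key step is to check this is well-defined, i.e.\ that if $\bf{U}\in B((\bf{p}_1,0),\ep_{s,\bf{p}_1})\cap B((\bf{p}_2,0),\ep_{s,\bf{p}_2})$ then $\upiota_{\bf{p}_1}(\bf{U})=\upiota_{\bf{p}_2}(\bf{U})$. Here is the subtlety, and the main obstacle: the uniqueness in Theorem~\ref{thm on well-posedness, I} is only \emph{local}, qualified by the radii $\ep_{s,\bf{p}}'$, which may differ for $\bf{p}_1$ and $\bf{p}_2$, so one cannot directly invoke it. The fix is a connectedness argument on the overlap. Fix $\bf{U}$ in the intersection and set $\bf{p}=(\gam,\mu,\sig)$ to be its physical-parameter component; note $\bf{U}\in B((\bf{p},0),\ep_{s,\bf{p}})$ as well since $\Psi$ and the balls are built so that $(\bf{p},0)$ lies on the segment — more carefully, I would argue along the line segment in parameter-and-data space from $(\bf{p}_1,0)$ (or from $\bf{U}$ itself) and use that the set of points where $\upiota_{\bf{p}_1}$ and $\upiota_{\bf{p}_2}$ agree is both open (by continuity of both maps) and closed (same reason) in the connected overlap, and nonempty because near $(\bf{p},0)$ both maps take values in a common small ball and hence coincide with $\upiota_{\bf p}$ by its uniqueness. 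Therefore the agreement set is the whole overlap. A cleaner route, if the statement of Theorem~\ref{thm on well-posedness, I} is read as providing uniqueness of the \emph{germ} at each point, is simply: both $\upiota_{\bf{p}_1}$ and $\upiota_{\bf{p}_2}$ solve $\Psi(\bf{U},\cdot)=0$ and are continuous with $\upiota_{\bf{p}_i}(\bf{p}_i,0)=0$, and by shrinking we may assume each overlap ball is connected and contained in a single $B((\bf{p},0),\ep_{s,\bf{p}})$; then both coincide with $\upiota_{\bf{p}}$ there.

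Once well-definedness is established, smoothness of $\upiota$ is immediate: locally it equals one of the smooth maps $\upiota_{\bf{p}}$, and smoothness is a local property. For the final clause, fix $\bf{U}=(\gam,\mu,\sig,\{\tau_i\},\{\varphi_i\})\in\mathscr{U}_s$ and $\bf{p}=(\gam,\mu,\sig)$. By construction $\bf{U}\in B((\bf{p}',0),\ep_{s,\bf{p}'})$ for some $\bf{p}'$, and by the gluing we may take $\bf{p}'=\bf{p}$; then $\upiota(\bf{U})=\upiota_{\bf{p}}(\bf{U})\in B(0,\ep_{s,\bf{p}}')$ and $\Psi(\bf{U},\upiota(\bf{U}))=0$, and it is the unique element of $B(0,\ep_{s,\bf{p}}')$ with this property by Theorem~\ref{thm on well-posedness, I}. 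This completes the proof. The only real work is the overlap-consistency argument; everything else is bookkeeping.
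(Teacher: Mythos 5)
Your overall strategy — take $\mathscr{U}_s=\bigcup_{\bf{p}}B((\bf{p},0),\ep_{s,\bf{p}})$, define $\upiota$ locally as $\upiota_{\bf{p}}$, and glue via the uniqueness clause of Theorem~\ref{thm on well-posedness, I} — is exactly the paper's, which disposes of the whole matter in two lines. You are also right that the only point requiring thought is consistency on overlaps. However, the argument you give for that point is flawed in two places. First, the set where two continuous maps agree is \emph{closed} by continuity, but it is not open by continuity; openness of an agreement set is precisely the kind of local-uniqueness statement one has to earn (here it would require knowing that $D_2\Psi$ is invertible at the nonzero background $(\bf{U}_0,\upiota_{\bf{p}_1}(\bf{U}_0))$, which the paper's linear theory only establishes at the trivial background, though it could be recovered by openness of the set of isomorphisms). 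Second, your nonemptiness claim is unjustified: the overlap $B((\bf{p}_1,0),\ep_{s,\bf{p}_1})\cap B((\bf{p}_2,0),\ep_{s,\bf{p}_2})$ need not contain any point of the form $(\bf{q},0)$, and a given $\bf{U}$ in the overlap need not lie in $B((\bf{p},0),\ep_{s,\bf{p}})$ for its own parameter triple $\bf{p}$, since $\ep_{s,\bf{p}}$ may be much smaller than $\ep_{s,\bf{p}_1}$.

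The connectedness machinery is unnecessary because the codomain balls are all centered at the origin and hence nested: if $\bf{U}$ lies in both domain balls and, without loss of generality, $\ep'_{s,\bf{p}_1}\le\ep'_{s,\bf{p}_2}$, then $\upiota_{\bf{p}_1}(\bf{U})\in B(0,\ep'_{s,\bf{p}_1})\subseteq B(0,\ep'_{s,\bf{p}_2})$ solves $\Psi(\bf{U},\cdot)=0$, so the pointwise uniqueness of Theorem~\ref{thm on well-posedness, I} applied at $\bf{p}_2$ forces $\upiota_{\bf{p}_1}(\bf{U})=\upiota_{\bf{p}_2}(\bf{U})$. With that one-line replacement your proof is complete and matches the paper's; the remaining statements (openness of $\mathscr{U}_s$, smoothness being local, and the final uniqueness clause) are handled exactly as you describe.
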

\begin{proof}
    We set
    \begin{equation}
        \mathscr{U}_s=\bigcup_{\bf{p}\in(\R^+)^4}B((\bf{p},0),\ep_{s,\bf{p}}).
    \end{equation}
    and define $\upiota=\upiota_{\bf{p}}$ on the set $B((\bf{p},0),\ep_{s,\bf{p}})$. That this is well-defined and smooth follows from Theorem~\ref{thm on well-posedness, I}.
\end{proof}

\subsection{One dimensional refinements}\label{subsection on the one dimensional theory}

In this subsection we restrict our attention to system~\eqref{traveling wave formulation of the equation} in one spatial dimension. The reason for doing so is that, because the derivative loss is less severe in one dimension, we are able to obtain slightly more refined results with a standard inverse function theorem than what we shall later obtain with a Nash-Moser argument. In this case, we may reformulate the system into one single equation involving the free surface unknown alone. To see how, we note that the first equation in~\eqref{traveling wave formulation of the equation} tells us that $\gam\eta'=\tp{(1+\eta)v}'$. Since we are interested in the class of solitary wave solutions, we require that $\eta$ and $v$ both vanish at infinity. We also shall be concerned with solutions for which $1+\eta\ge0$. Thus, in this class, we can integrate this identity and obtain the relation
\begin{equation}
v=\gam\f{\eta}{1+\eta}.
\end{equation}
We can then substitute this into the second equation in~\eqref{traveling wave formulation of the equation} and obtain the following equation involving only the free surface:
\begin{equation}\label{reduced eta equation}
    -\gam^2\bp{\f{\eta}{1+\eta}}'+\gam\f{\eta}{1+\eta}-4\gam\mu^2\bp{(1+\eta)\bp{\f{\eta}{1+\eta}}'}'+(1+\eta)\tp{\eta-\sig^2\eta''}'+\mathcal{F}=0.
\end{equation}

For the rest of this section we shall consider the wave speed cases of~\eqref{reduced eta equation} which do not exhibit derivative loss in the limit of $\mu$ and $\sigma$ tend to zero. As it turns out, this means that we shall only need to avoid the precisely sonic case of $\gam=1$. Thus, we shall take $\mu,\sig\in\R$ and $\gam\in\R^+\setminus\tcb{1}$ for the remainder of this subsection. This does not mean that we are avoiding the precisely sonic case entirely, indeed in the subsequent sections of the paper where we perform more delicate analysis working towards an application of a Nash-Moser inverse function theorem, the cases absent in this section will be covered.

Our first result studies the mapping properties of the linearization of~\eqref{reduced eta equation} at the trivial solution.

 \begin{prop}[One dimensional linear analysis, estimates]\label{prop on one dimensional linear analysis}
    Given $s\in\N$, $R\in\R^+$, and $\gam\in\R^+\setminus\tcb{1}$, there exists a constant $C\in\R^+$ such that whenever $\mu,\sig\in[-R,R]$, $\psi\in H^s(\R)$, and $\eta\in H^{s}(\R)$, satisfy the equation
    \begin{equation}\label{one dimensional linearization}
        \tp{\gam\tp{1-4\mu^2\pd^2}+\tp{(1-\gam^2)-\sig^2\pd^2}\pd}\eta=\psi,
    \end{equation}
    we have the estimates
    \begin{equation}\label{estimates for the one dimensional linearization}
    \begin{cases}
        \tnorm{\eta}_{H^{1+s}}+\mu^2\tnorm{\eta}_{H^{2+s}}+\sig^2\tnorm{\eta}_{H^{3+s}}\le C\tnorm{f}_{H^s}&\text{if }\gam<1,\\
        \tnorm{\eta}_{H^{1+s}}+\mu^2\tnorm{\eta}_{H^{2+s}}\le C\tnorm{f}_{H^s}&\text{if }\gam>1,\;\sig=0,\\
        \tnorm{\eta}_{H^s}+\mu^2\tnorm{\eta}_{H^{2+s}}+(|\mu|+|\sig|)\sig^2\tnorm{\eta}_{H^{3+s}}\le C\tnorm{f}_{H^s}&\text{if }\gam>1,\;|\sig|+|\mu|>0.
    \end{cases}
    \end{equation}
\end{prop}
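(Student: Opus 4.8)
The plan is to diagonalize the constant-coefficient operator in~\eqref{one dimensional linearization} with the Fourier transform and then extract the estimates~\eqref{estimates for the one dimensional linearization} from pointwise lower bounds on its symbol. With the normalization recorded in the introduction, the operator on the left of~\eqref{one dimensional linearization} is Fourier multiplication by the symbol $m:\R\to\C$ given by
\[
m(\xi)=\gam\tp{1+16\pi^2\mu^2\xi^2}+2\pi\ii\xi\tp{(1-\gam^2)+4\pi^2\sig^2\xi^2},
\]
so $\m{Re}\,m(\xi)=\gam(1+16\pi^2\mu^2\xi^2)\ge\gam>0$ for all $\xi$. In particular $m$ never vanishes, the relation~\eqref{one dimensional linearization} forces $\mathscr{F}[\eta]=\mathscr{F}[\psi]/m$ pointwise a.e., and by Plancherel it suffices to prove, in each of the three parameter regimes, a bound $W(\xi)\le C\tabs{m(\xi)}$ for every $\xi\in\R$, where $W$ is the Fourier weight attached to the left-hand norm on the relevant line of~\eqref{estimates for the one dimensional linearization} (e.g.\ $W(\xi)=\tbr{\xi}+\mu^2\tbr{\xi}^2+\sig^2\tbr{\xi}^3$ in the subsonic case). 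The constant is allowed to depend on $\gam$, on $R$, and harmlessly on $s$. Throughout I split into $\tabs{\xi}\le1$, where $\m{Re}\,m\ge\gam$ already dominates $W$ (which is bounded there since $\tabs{\mu},\tabs{\sig}\le R$), and $\tabs{\xi}\ge1$, where $\tbr{\xi}\asymp\tabs{\xi}$.

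For $\gam<1$ the factor $(1-\gam^2)+4\pi^2\sig^2\xi^2$ is $\ge1-\gam^2>0$ and $\ge4\pi^2\sig^2\xi^2$, so $\tabs{\m{Im}\,m(\xi)}\ge2\pi(1-\gam^2)\tabs{\xi}$ and $\tabs{\m{Im}\,m(\xi)}\ge8\pi^3\sig^2\tabs{\xi}^3$; combined with $\m{Re}\,m(\xi)\gtrsim_\gam1+\mu^2\xi^2$ this gives $\tabs{m(\xi)}\gtrsim_{\gam}\tabs{\xi}+\mu^2\tabs{\xi}^2+\sig^2\tabs{\xi}^3$ when $\tabs{\xi}\ge1$, which is the first line. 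For $\gam>1$ and $\sig=0$ one has $\tabs{\m{Im}\,m(\xi)}=2\pi(\gam^2-1)\tabs{\xi}$ with $\gam^2-1>0$ since $\gam$ is a fixed number $\ne1$, so with the real part we get $\tabs{m(\xi)}\gtrsim_\gam\tabs{\xi}+\mu^2\tabs{\xi}^2$ for $\tabs{\xi}\ge1$, which is the second line.

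The third regime ($\gam>1$, $\tabs{\mu}+\tabs{\sig}>0$) is where the real work is, because when $\sig\ne0$ the imaginary part of $m$ has a zero at $\xi_\ast$ with $4\pi^2\sig^2\xi_\ast^2=\gam^2-1$, i.e.\ $\tabs{\xi_\ast}\asymp_\gam\tabs{\sig}^{-1}$; only $\m{Re}\,m(\xi_\ast)=\gam(1+4\mu^2(\gam^2-1)\sig^{-2})$ is available there, which is precisely the source of both the weaker low-order $H^s$ control and the extra $(\tabs{\mu}+\tabs{\sig})$ prefactor on the top-order term. I would split on the size of the bracket. If $\tabs{(1-\gam^2)+4\pi^2\sig^2\xi^2}\ge\tfrac12\cdot4\pi^2\sig^2\xi^2$, then $\tabs{\m{Im}\,m(\xi)}\gtrsim\sig^2\tabs{\xi}^3$, and since $\tabs{\mu},\tabs{\sig}\le R$ we have $(\tabs{\mu}+\tabs{\sig})\sig^2\tabs{\xi}^3\lesssim_R\sig^2\tabs{\xi}^3$, so with $\m{Re}\,m\ge\gam$ and $\m{Re}\,m\gtrsim_\gam\mu^2\tabs{\xi}^2$ we obtain $\tabs{m(\xi)}\gtrsim_{\gam,R}1+\mu^2\tabs{\xi}^2+(\tabs{\mu}+\tabs{\sig})\sig^2\tabs{\xi}^3$. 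In the complementary case the bracket is controlled by $\gam^2-1$, which pins $\tabs{\xi}\asymp_\gam\tabs{\sig}^{-1}$ (in particular $\sig\ne0$); there we use only $\m{Re}\,m(\xi)=\gam(1+16\pi^2\mu^2\xi^2)\asymp_\gam1+\mu^2\sig^{-2}$ and check $\mu^2\tabs{\xi}^2\asymp_\gam\mu^2\sig^{-2}\lesssim\m{Re}\,m$ together with $(\tabs{\mu}+\tabs{\sig})\sig^2\tabs{\xi}^3\asymp_\gam(\tabs{\mu}+\tabs{\sig})\tabs{\sig}^{-1}=\tabs{\mu}\tabs{\sig}^{-1}+1\lesssim1+\mu^2\sig^{-2}$, the last step being the arithmetic--geometric-mean inequality applied to $\tabs{\mu}\tabs{\sig}^{-1}$. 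Assembling the three regimes and invoking Plancherel finishes the proof; the only genuine subtlety, beyond the routine low-frequency and $\tabs{\mu},\tabs{\sig}\le R$ reductions, is exactly this AM--GM step showing that the near-cancellation in $\m{Im}\,m$ is absorbed by the weight $(\tabs{\mu}+\tabs{\sig})\sig^2$.
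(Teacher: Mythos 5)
Your proposal is correct and follows essentially the same route as the paper: pass to the Fourier symbol $m_{\gam,\mu,\sig}$ and prove the pointwise lower bounds on $|m_{\gam,\mu,\sig}(\xi)|$ that match each weight in~\eqref{estimates for the one dimensional linearization}. The only (cosmetic) difference is in the third regime, where the paper splits on $|\xi|$ versus $|\sig|^{-1}$ and chains bracket inequalities, while you split on the size of $(1-\gam^2)+4\pi^2\sig^2\xi^2$ and close the degenerate band $|\xi|\asymp|\sig|^{-1}$ with AM--GM; both isolate the same near-cancellation of $\m{Im}\,m$ and fall back on the real part there.
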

\begin{proof}
    We take the Fourier transform of~\eqref{one dimensional linearization} to find the multiplier formulation $m_{\gam,\mu,\sig}(\xi)\mathscr{F}[\eta](\xi)=\mathscr{F}[\psi](\xi)$ for a.e. $\xi\in\R$, where
    \begin{equation}\label{m_mult_def}
        m_{\gam,\mu,\sig}(\xi)=\gam\tp{1+16\pi^2\mu^2|\xi|^2}+2\pi\ii\xi\tp{(1-\gam^2)+4\pi^2\sig^2|\xi|^2}.
    \end{equation}
    By considerations of the real and imaginary parts of $m_{\gam,\mu,\sig}$, we observe the equivalence
    \begin{equation}\label{the multiplier equivalence}
        |m_{\gam,\mu,\sig}(\xi)|\asymp 1+\mu^2|\xi|^2+|\xi|\babs{\f{1-\gam^2}{4\pi^2}+\sig^2|\xi|^2}.
    \end{equation}
    If $\gam<1$, then we evidently have
    \begin{equation}
        |m_{\gam,\mu,\sig}(\xi)|\asymp \tbr{\mu\xi}^2+\tbr{\xi}\tbr{\sig\xi}^2,
    \end{equation}
    and this implies the first case in~\eqref{estimates for the one dimensional linearization}.
    
    On the other hand, if $\gam>1$ and $\sig=0$, then from~\eqref{the multiplier equivalence} we deduce that
    \begin{equation}
        |m_{\gam,\mu,\sig}(\xi)|\asymp\tbr{\mu\xi}^2+\tbr{\xi}
    \end{equation}
    and are thus led to the second case in~\eqref{estimates for the one dimensional linearization}.

    Finally, let us assume that $\gam>1$ and $|\mu|+|\sig|>0$. Then, by breaking to cases, we deduce that
    \begin{equation}\label{ask you}
        |m_{\gam,\mu,\sig}(\xi)|\gtrsim\tbr{\mu\xi}^2+\tbr{\xi}\begin{cases}
            1&\text{if }|\xi|\lesssim|\sig|^{-1},\\
            0&\text{if }|\xi|\asymp|\sig|^{-1},\\
            \tbr{\sig\xi}^2&\text{if }|\xi|\gtrsim|\sig|^{-1}.
        \end{cases}
    \end{equation}
    Then by using that $\tbr{\sig\xi}\lesssim 1$ for $|\xi|\lesssim|\sig|^{-1}$, we deduce from the above the lower bounds
    \begin{equation}\label{new0}
        |m_{\gam,\mu,\sig}(\xi)|\gtrsim\tbr{\mu\xi}^2\ge\tbr{\mu\xi}\gtrsim\tbr{\mu\xi}+\tbr{\sig\xi}\gtrsim\tp{\tbr{\mu\xi}+\tbr{\sig\xi}}\tbr{\sig\xi}^2
        \text{ for }|\xi|\lesssim|\sig|^{-1}
    \end{equation}
    and
    \begin{equation}\label{new1}
        |m_{\gam,\mu,\sig}(\xi)|\gtrsim\tbr{\xi}\tbr{\sig\xi}^2\gtrsim\tp{\tbr{\mu\xi}+\tbr{\sig\xi}}\tbr{\sig\xi}^2,\text{ for }|\xi|\gtrsim|\sig|^{-1}.
    \end{equation}
    Combining~\eqref{new0} and~\eqref{new1} with $|m_{\gam,\mu,\sig}(\xi)|\gtrsim\tbr{\mu\xi}^2$ then reveals that
    \begin{equation}
        |m_{\gam,\mu,\sig}(\xi)|\gtrsim\tbr{\mu\xi}^2+\tbr{\sig\xi}^2\tp{\tbr{\mu\xi}+\tbr{\sig\xi}},
    \end{equation}
    and hence the final case in~\eqref{estimates for the one dimensional linearization} is shown.
\end{proof}

Motivated by the previous result, we make the following definition.

\begin{defn}[Reparameterization operators and cases for the parameters]\label{defn of reparameterization operators}
We make the following definitions.
\begin{enumerate}
    \item Given $\gam,\mu,\sig\in\tp{\R^+\setminus\tcb{1}}\times\R^2$ and $i\in\tcb{1,2,3}$, we define the Fourier multiplier operator  $P^i_{\gam,\mu,\sig}=p^i_{\gam,\mu,\sig}(\pd/2\pi\ii)$ for
    \begin{equation}
        p^i_{\gam,\mu,\sig}(\xi)=\tbr{\mu\xi}^2+\begin{cases}
            \tbr{\xi}\tbr{\sig\xi}^2&\text{if }i=1\\
            \tbr{\xi}&\text{if }i=2\\
            \tp{\tbr{\sig\xi}+\tbr{\mu\xi}}\tbr{\sig\xi}^2&\text{if }i=3.
        \end{cases}
    \end{equation}
    \item We define the sets $\mathcal{P}_i\subset(\R^+\setminus\tcb{1})\times\R^2$ via
    \begin{equation}
        \mathcal{P}_i=\begin{cases}
            \tcb{(\gam,\mu,\sig)\;:\;\gam<1}&\text{if }i=1,\\
            \tcb{(\gam,\mu,\sig)\;:\;\gam>1,\;\sig=0}&\text{if }i=2,\\
            \tcb{(\gam,\mu,\sig)\;:\;\gam>1,\;|\mu|+|\sig|>0}&\text{if }i=3.
        \end{cases}
    \end{equation}
    Notice that $\mathcal{P}_1\cup\mathcal{P}_2\cup\mathcal{P}_3=(\R^+\setminus\tcb{1})\times\R^2$, but $\mathcal{P}_2\cap\mathcal{P}_3\neq\es$.
\end{enumerate}
    
\end{defn}

By combining the previous result and definition, we are led to the following corollary.

\begin{coro}[One dimensional linear analysis, isomorphisms]\label{coro on 1d isos}
    For any choice of $s\in\N$, $\gam\in\R^+\setminus\tcb{1}$, $R \in\R^+$, and $\mu,\sig\in[-R,R]$, the following hold.
    \begin{enumerate}
        \item For $i\in\tcb{1,2,3}$ and $(\gam,\mu,\sig)\in\mathcal{P}_i\cap\tp{\R\times[-R,R]^2}$ the maps
    \begin{equation}\label{the A map}
        A_{\gam,\mu,\sig}^i=\tp{\gam\tp{1-4\mu^2\pd^2}+\tp{(1-\gam^2)-\sig^2\pd^2}\pd}\circ (P_{\gam,\mu,\sig}^i)^{-1}:H^s(\R)\to H^s(\R)
    \end{equation}
    are bounded linear isomorphisms.
        \item The maps in~\eqref{the A map} satisfy the bounds
        \begin{equation}
            \tnorm{(A^i_{\gam,\mu,\sig})^{-1}}_{\mathcal{L}(H^s)}\le C
        \end{equation}
        and
        \begin{equation}
            \tnorm{A_{\gam,\mu,\sig}^i}_{\mathcal{L}(H^s)}\le C\begin{cases}
                1&\text{if }i=1,\\
                1&\text{if }i=2,\\
                \tp{|\mu|+|\sig|}^{-1}&\text{if }i=3,
            \end{cases}
        \end{equation}
        for a constant $C$ depending only on $\gam$, $R$, and $s$.
    \end{enumerate}
\end{coro}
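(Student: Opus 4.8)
The plan is to deduce Corollary~\ref{coro on 1d isos} directly from Proposition~\ref{prop on one dimensional linear analysis} and Definition~\ref{defn of reparameterization operators} by exploiting the Fourier multiplier structure. First I would observe that each operator $A^i_{\gam,\mu,\sig}$ in~\eqref{the A map} is, by construction, the Fourier multiplier operator with symbol $\xi \mapsto m_{\gam,\mu,\sig}(\xi)/p^i_{\gam,\mu,\sig}(\xi)$, where $m_{\gam,\mu,\sig}$ is as in~\eqref{m_mult_def} and $p^i_{\gam,\mu,\sig}$ is the (strictly positive, polynomially-growing-and-bounded-below) symbol defining the reparameterization operator $P^i_{\gam,\mu,\sig}$. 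Thus everything reduces to establishing two-sided pointwise bounds on this ratio of symbols.

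The key observation connecting the two ingredients is that, when $(\gam,\mu,\sig) \in \mathcal{P}_i$, the three cases of~\eqref{estimates for the one dimensional linearization} in Proposition~\ref{prop on one dimensional linear analysis} say exactly that $\abs{m_{\gam,\mu,\sig}(\xi)} \gtrsim p^i_{\gam,\mu,\sig}(\xi)$ uniformly in $\xi$ (with constant depending only on $\gam$, $R$, $s$ via the proof of that proposition — indeed independent of $s$). So the plan is: (i) from~\eqref{the multiplier equivalence}, which gives $\abs{m_{\gam,\mu,\sig}(\xi)} \asymp 1 + \mu^2\abs{\xi}^2 + \abs{\xi}\babs{\tfrac{1-\gam^2}{4\pi^2} + \sig^2\abs{\xi}^2}$, read off the matching \emph{upper} bound $\abs{m_{\gam,\mu,\sig}(\xi)} \lesssim p^i_{\gam,\mu,\sig}(\xi)$ in cases $i=1,2$ and the bound $\abs{m_{\gam,\mu,\sig}(\xi)} \lesssim (\abs{\mu}+\abs{\sig})^{-1} p^i_{\gam,\mu,\sig}(\xi)$ in case $i=3$ (the extra factor arising because $p^3$ contains $\tbr{\sig\xi}^3$ while $m$ only has $\sig^2\abs{\xi}^3$, so one loses an $\abs{\sig}$ — or one compares $\tbr{\xi}$ against $\tbr{\sig\xi}+\tbr{\mu\xi}$ and loses $(\abs\mu+\abs\sig)^{-1}$ on the bounded range of $\xi$); (ii) from the lower bounds already proved in Proposition~\ref{prop on one dimensional linear analysis}, namely $\abs{m_{\gam,\mu,\sig}(\xi)} \gtrsim p^i_{\gam,\mu,\sig}(\xi)$, conclude. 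Combining (i) and (ii) yields $\tnorm{A^i_{\gam,\mu,\sig}}_{\mathcal{L}(H^s)} \lesssim 1$ for $i=1,2$ and $\lesssim (\abs\mu+\abs\sig)^{-1}$ for $i=3$, and $\tnorm{(A^i_{\gam,\mu,\sig})^{-1}}_{\mathcal{L}(H^s)} \lesssim 1$ in all cases, which in particular forces $A^i_{\gam,\mu,\sig}$ to be a bounded linear isomorphism of $H^s(\R)$: it is injective since its symbol never vanishes, and surjective with bounded inverse since $(A^i_{\gam,\mu,\sig})^{-1}$ is Fourier multiplication by $p^i_{\gam,\mu,\sig}/m_{\gam,\mu,\sig}$, a symbol bounded above by a constant.

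The main obstacle — really the only subtlety — is bookkeeping the constant dependence and, in case $i=3$, correctly tracking the $(\abs{\mu}+\abs{\sig})^{-1}$ factor. One must check that the implicit constants in~\eqref{the multiplier equivalence} and in the lower bounds~\eqref{ask you}--\eqref{new1} genuinely depend only on $\gam$ (through its distance from $1$) and on $R$ (through the range $[-R,R]$ of $\mu,\sig$), and not on $s$; the $s$-dependence enters only through the elementary fact that multiplication by a symbol $a(\xi)$ with $\abs{a(\xi)} \le M \tbr{\xi}^0$ is bounded on $H^s(\R)$ with norm $\le M$, uniformly in $s$. In case $i=3$ one should be careful that on the region $\abs{\xi} \lesssim \abs{\sig}^{-1}$ the estimate $\tbr{\sig\xi} \lesssim 1$ (used already in the proof of Proposition~\ref{prop on one dimensional linear analysis}) is what converts the polynomial growth of $p^3$ into something comparable to $\tbr{\mu\xi}^2$ up to the stated factor; on the complementary region the comparison is direct. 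Beyond this, the argument is a routine packaging of the already-established symbol estimates, so I would keep the written proof short: cite Proposition~\ref{prop on one dimensional linear analysis} for the lower bounds, cite~\eqref{the multiplier equivalence} for the upper bounds, and invoke the Fourier multiplier theorem on $H^s(\R)$.
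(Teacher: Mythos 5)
Your proposal is correct and follows essentially the same route as the paper: identify $A^i_{\gam,\mu,\sig}$ as the Fourier multiplier with symbol $m_{\gam,\mu,\sig}/p^i_{\gam,\mu,\sig}$, obtain the lower bound $\tabs{m_{\gam,\mu,\sig}}\gtrsim p^i_{\gam,\mu,\sig}$ from Proposition~\ref{prop on one dimensional linear analysis} and the upper bound (with the $(\tabs{\mu}+\tabs{\sig})^{-1}$ loss when $i=3$) from a direct symbol comparison via~\eqref{the multiplier equivalence}, then conclude by boundedness of multipliers with bounded symbols on $H^s(\R)$. The only difference is that you spell out the "simple calculation" the paper leaves implicit, and your accounting of where the $(\tabs{\mu}+\tabs{\sig})^{-1}$ factor comes from (converting $\tbr{\xi}$ into $\tbr{\mu\xi}+\tbr{\sig\xi}$) is accurate.
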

\begin{proof}
    $A^i_{\gam,\mu,\sig}=a^i_{\gam,\mu,\sig}(\pd/2\pi\ii)$ is a translation-commuting linear operator with symbol 
    \begin{equation}
        a^i_{\gam,\mu,\sig}(\xi)=\f{m_{\gam,\mu,\sig}(\xi)}{p^i_{\gam,\mu,\sig}(\xi)},
    \end{equation}
    where the numerator is defined by \eqref{m_mult_def} and the denominator is given in Definition \ref{defn of reparameterization operators}.  Hence, the first and second items will follow immediately if we can establish the symbol estimates
    \begin{equation}
        \babs{\f{m_{\gam,\mu,\sig}(\xi)}{p^i_{\gam,\mu,\sig}(\xi)}}\lesssim\begin{cases}
            1&\text{if }i=1,\\
            1&\text{if }i=2,\\
            \tp{|\mu|+|\sig|}^{-1}&\text{if }i=3,
        \end{cases}
        \text{ and }
        \babs{\f{p^i_{\gam,\mu,\sig}(\xi)}{m_{\gam,\mu,\sig}(\xi)}}\lesssim 1.
    \end{equation}
    The former follows from a simple calculation, while the latter is enforced by Proposition~\ref{prop on one dimensional linear analysis} and Definition~\ref{defn of reparameterization operators}.
\end{proof}

To complete this subsection, we now  consider the one dimensional nonlinear analysis. The following property of the reparameterization operators will be implicitly used numerous times in the proof of Proposition~\ref{prop on the one dimensional nonlinear maps} below.

\begin{lem}[Facts about reparameterization operators]\label{lemma on facts about the rep ops}
    Let $s\in\N$ and $i\in\tcb{1,2,3}$. The following mappings are continuous.
    \begin{equation}
        \mathcal{P}_i\times H^s(\R)\ni(\gam,\mu,\sig,\eta)\mapsto\begin{cases}
            (P_{\gam,\mu,\sig}^i)^{-1}\eta\in H^{1+s}(\R),\\
            \mu^2(P_{\gam,\mu,\sig}^i)^{-1}\eta\in H^{2+s}(\R),\\
            \sig^2(P_{\gam,\mu,\sig}^i)^{-1}\eta\in H^{3+s}(\R),
        \end{cases}
    \end{equation}
    where we recall that the sets $\mathcal{P}_i$ and the operators $P^i_{\gam,\mu,\sig}$ are given in Definition~\ref{defn of reparameterization operators}.
\end{lem}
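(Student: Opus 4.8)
First I would observe that, by Definition~\ref{defn of reparameterization operators}, the symbol $p^i_{\gam,\mu,\sig}$ does not depend on the wave speed $\gam$; hence none of the three maps in the statement depends on $\gam$, and it suffices to prove joint continuity in $(\mu,\sig,\eta)$. Since each map sends $\eta$ to a fixed Fourier multiplier applied to $\eta$, the plan is the standard two-step one: (i) establish a uniform operator bound for the three maps on a neighborhood of an arbitrary base point $\bf{p}_0\in\mathcal{P}_i$, and then (ii) combine this with the pointwise convergence of the symbols via dominated convergence.

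For step (i): write $\bf{p}_0=(\gam_0,\mu_0,\sig_0)\in\mathcal{P}_i$. I claim there is a relatively open $N\subseteq\mathcal{P}_i$ with $\bf{p}_0\in N$ and a constant $C$ such that
\begin{equation}\label{plan symbol bounds}
    \babs{\f{\tbr{\xi}}{p^i_{\gam,\mu,\sig}(\xi)}}+\babs{\f{\mu^2\tbr{\xi}^2}{p^i_{\gam,\mu,\sig}(\xi)}}+\babs{\f{\sig^2\tbr{\xi}^3}{p^i_{\gam,\mu,\sig}(\xi)}}\le C\quad\text{for every }(\gam,\mu,\sig)\in N\text{ and }\xi\in\R,
\end{equation}
which, by the Fourier-side definitions of the Sobolev norms, is precisely the statement that $(P^i_{\gam,\mu,\sig})^{-1}:H^s\to H^{1+s}$, $\mu^2(P^i_{\gam,\mu,\sig})^{-1}:H^s\to H^{2+s}$, and $\sig^2(P^i_{\gam,\mu,\sig})^{-1}:H^s\to H^{3+s}$ are bounded with norm $\le C$ on $N$. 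Let $R$ bound $|\mu|+|\sig|$ on $N$. The middle term in~\eqref{plan symbol bounds} is uniformly controlled on all of $\R\times[-R,R]^2$, since $p^i_{\gam,\mu,\sig}\ge\tbr{\mu\xi}^2\ge1$ forces $\mu^2\tbr{\xi}^2=\mu^2+\mu^2|\xi|^2\le(R^2+1)\,p^i_{\gam,\mu,\sig}$. For $i=1$ one has $p^1_{\gam,\mu,\sig}\ge\tbr{\xi}\tbr{\sig\xi}^2\ge\tbr{\xi}$, which controls the first term, and $\sig^2\tbr{\xi}^2\le(R^2+1)\tbr{\sig\xi}^2$ then controls the third; for $i=2$ the identity $\sig\equiv0$ makes the third term vanish, while $p^2_{\gam,\mu,\sig}\ge\tbr{\xi}$ controls the first. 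The delicate case is $i=3$, where $p^3_{\gam,\mu,\sig}$ degenerates relative to $\tbr{\xi}$ as $(\mu,\sig)\to(0,0)$; but $(0,0)\notin\mathcal{P}_3$, so on a small enough $N$ we have $|\mu|+|\sig|\ge\delta$ for some $\delta>0$, and the elementary bounds $|\mu|\tbr{\xi}\le(R+1)\tbr{\mu\xi}$, $|\sig|\tbr{\xi}\le(R+1)\tbr{\sig\xi}$, and $\sig^2\tbr{\xi}^2\le(R^2+1)\tbr{\sig\xi}^2$, together with $p^3_{\gam,\mu,\sig}\ge\tbr{\mu\xi}\tbr{\sig\xi}^2+\tbr{\sig\xi}^3\ge\max\tcb{\tbr{\mu\xi},\tbr{\sig\xi}}$, give $(|\mu|+|\sig|)\sig^2\tbr{\xi}^3\lesssim_R p^3_{\gam,\mu,\sig}(\xi)$ and $(|\mu|+|\sig|)\tbr{\xi}\lesssim_R p^3_{\gam,\mu,\sig}(\xi)$; dividing by $|\mu|+|\sig|\ge\delta$ then yields the first and third terms. (Alternatively, \eqref{plan symbol bounds} may simply be read off from the multiplier equivalence~\eqref{the multiplier equivalence} and Proposition~\ref{prop on one dimensional linear analysis}.)

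For step (ii): fix also $\eta_0\in H^s(\R)$ and a sequence $(\gam_n,\mu_n,\sig_n,\eta_n)\to(\bf{p}_0,\eta_0)$ in $\mathcal{P}_i\times H^s(\R)$; for $n$ large, $(\gam_n,\mu_n,\sig_n)\in N$. Treating the first map (the other two are handled identically), I split
\begin{equation}
    (P^i_{\gam_n,\mu_n,\sig_n})^{-1}\eta_n-(P^i_{\gam_0,\mu_0,\sig_0})^{-1}\eta_0=(P^i_{\gam_n,\mu_n,\sig_n})^{-1}(\eta_n-\eta_0)+\bp{(P^i_{\gam_n,\mu_n,\sig_n})^{-1}-(P^i_{\gam_0,\mu_0,\sig_0})^{-1}}\eta_0.
\end{equation}
The first summand has $H^{1+s}$-norm $\le C\tnorm{\eta_n-\eta_0}_{H^s}\to0$ by~\eqref{plan symbol bounds}. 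The second summand has squared $H^{1+s}$-norm $\int_\R\babs{p^i_{\gam_n,\mu_n,\sig_n}(\xi)^{-1}-p^i_{\gam_0,\mu_0,\sig_0}(\xi)^{-1}}^2\tbr{\xi}^{2+2s}\tabs{\mathscr{F}[\eta_0](\xi)}^2\,\m{d}\xi$; the integrand tends to $0$ for a.e.\ $\xi$, since $p^i_{\gam,\mu,\sig}(\xi)$ is continuous and strictly positive in $(\mu,\sig)$, and it is dominated by $(2C)^2\tbr{\xi}^{2s}\tabs{\mathscr{F}[\eta_0](\xi)}^2\in L^1(\R)$ in view of~\eqref{plan symbol bounds}; dominated convergence then gives convergence to $0$. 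As $H^{1+s}(\R)$, $H^{2+s}(\R)$, and $H^{3+s}(\R)$ are metrizable, sequential continuity suffices, completing the argument.

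The only point that is not entirely routine is the uniform symbol bound~\eqref{plan symbol bounds} in the case $i=3$ near the edges $\mu_0=0$ and $\sig_0=0$: there $p^3_{\gam,\mu,\sig}$ genuinely degenerates relative to $\tbr{\xi}$, and the estimate survives only because $\mathcal{P}_3$ is defined so as to exclude the origin of the $(\mu,\sig)$-plane, which keeps $|\mu|+|\sig|$ bounded below on a neighborhood; the weighted inequalities involving the $(|\mu|+|\sig|)$ prefactor are exactly what makes this work, and they are the same cancellations already present in Proposition~\ref{prop on one dimensional linear analysis}.
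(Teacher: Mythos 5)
Your proof is correct, and it takes a genuinely different route from the paper's for the key step. Both arguments begin the same way, by securing a (locally) uniform operator bound so that the $\eta$-dependence can be peeled off via linearity; your symbol inequalities \eqref{plan symbol bounds} are the Fourier-side form of what the paper extracts from Proposition~\ref{prop on one dimensional linear analysis} and Corollary~\ref{coro on 1d isos}. Where you diverge is in proving continuity in the parameters for fixed $\eta$: the paper uses a three-term mollification splitting ($B_1+B_2+B_3$ with a smoothing operator $\mathcal{S}_\ep$), exploiting that the map becomes locally Lipschitz in $(\gam,\mu,\sig)$ once one pays extra derivatives on $\mathcal{S}_\ep\eta$, and then trades $\ep$ against the parameter increment; you instead invoke dominated convergence directly on the Fourier side, using pointwise continuity of $\xi\mapsto p^i_{\gam,\mu,\sig}(\xi)^{-1}$ in the parameters together with the uniform domination supplied by \eqref{plan symbol bounds}. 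For pure Fourier multipliers your route is shorter and more transparent; the paper's mollification scheme is the more robust template, which is why essentially the same decomposition reappears verbatim for the genuinely nonlinear maps $\mathcal{N}^i$ in Proposition~\ref{prop on the one dimensional nonlinear maps}, where no multiplier structure is available. One further point in your favor: for $i=3$ the operator norm genuinely degenerates as $|\mu|+|\sig|\to0$, so the bound can only be locally uniform on $\mathcal{P}_3$; your explicit restriction to a neighborhood $N$ with $|\mu|+|\sig|\ge\delta$ makes this precise, whereas the paper's assertion that the operator norm depends "only on $s$ and $R$" glosses over it (harmlessly, since continuity is local, but your version is the careful one).
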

\begin{proof}
    The assertions of the Lemma are equivalent to proving that the following maps are continuous. Fix $R\in\R^+$ and define
    \begin{multline}
        \tp{(-R,R)^3\cap\mathcal{P}_i}\times H^s(\R)\ni(\gam,\mu,\sig,\eta)\mapsto B(\gam,\mu,\sig,\eta)\\=\tp{\tbr{\pd}(P^i_{\gam,\mu,\sig})^{-1}\eta,\mu^2\tbr{\pd}^2(P^i_{\gam,\mu,\sig})^{-1}\eta,\sig^2\tbr{\pd}^3(P^i_{\gam,\mu,\sig})^{-1}\eta}\in (H^s(\R))^3.
    \end{multline}
    This is slightly subtle, since $B$ is not actually differentiable or even locally Lipschitz. To verify continuity, we first note that for fixed $(\gam,\mu,\sig)\in(-R,R)^3\cap\mathcal{P}_i$ the linear mapping
    \begin{equation}
        H^s(\R)\ni\eta\mapsto B(\gam,\mu,\sig,\eta)\in (H^s(\R))^3
    \end{equation}
    is bounded with an operator norm depending only on $s$ and $R$. The effect of this is that it suffices to show that for each fixed $\eta\in H^s(\R)$ the mapping
    \begin{equation}\label{the curve}
        (-R,R)^3\cap\mathcal{P}_i\ni(\gam,\mu,\sig)\mapsto B(\gam,\mu,\sig,\eta)\in (H^s(\R))^3
    \end{equation}
    is continuous. To verify this claim, we shall consider the following difference decomposition
    \begin{equation}\label{B1B2B3}
        B(\gam_0+\gam,\mu_0+\mu,\sig_0+\sig,\eta)-B(\gam_0,\mu_0,\sig_0,\eta)=B_1+B_2+B_3,
    \end{equation}
    where
    \begin{equation}
        B_j=\begin{cases}
            B(\gam_0+\gam,\mu_0+\mu,\sig_0+\sig,\eta)-B(\gam_0+\gam,\mu_0+\mu,\sig_0+\sig,\mathcal{S}_\ep\eta)&\text{if }j=1,\\
            B(\gam_0+\gam,\mu_0+\mu,\sig_0+\sig,\mathcal{S}_\ep\eta)-B(\gam_0,\mu_0,\sig_0,\mathcal{S}_\ep\eta)&\text{if }j=2,\\
            B(\gam_0,\mu_0,\sig_0,\mathcal{S}_\ep\eta)-B(\gam_0,\mu_0,\sig_0,\eta)&\text{if }j=3,
        \end{cases}
    \end{equation}
    for $\ep\in\R^+$ a small mollification parameter, $\mathcal{S}_\ep$ a mollification operator, $(\gam_0,\mu_0,\sig_0)$ and $(\gam_0+\gam,\mu_0+\mu,\sig_0+\sig)$ belonging to $(-R,R)^3\cap\mathcal{P}_i$, and $\eta\in H^s(\R)$. For $B_1$ and $B_3$, we simply use linearity in the final argument of $B$:
    \begin{equation}\label{B1B2 bound}
    \tnorm{B_1}_{H^s}+\tnorm{B_3}_{H^s}\lesssim_{R,s}\tnorm{(I-\mathcal{S}_\ep)\eta}_{H^s}.
    \end{equation}
    On the other hand, for $B_2$, we instead leverage the smoothness of $\mathcal{S}_\ep$ and use that $B(\cdot,\mathcal{S}_\ep\eta)$ is locally Lipschitz if you pay for more derivatives on $\mathcal{S}_\ep\eta$. This results in the bound
    \begin{equation}\label{B3 bound}
        \tnorm{B}_2\lesssim_{\ep,R,s}\tnorm{\gam,\mu,\sig}_{\R^3}\tnorm{\eta}_{H^s}.
    \end{equation}
    We combine~\eqref{B1B2 bound} with~\eqref{B1B2B3}. By selecting $\ep\in\R^+$ sufficiently small depending on $\eta$, and then choosing $\tnorm{\gam,\mu,\sig}_{\R^3}$ sufficiently small depending on $\ep$ and $\eta$, we can make the difference~\eqref{B1B2B3} as measured in $H^s(\R)$ smaller than any desired positive value. So it is shown that the mapping of~\eqref{the curve} is continuous, and thus the proof is complete.
\end{proof}

The following result is meant to capture the nonlinear mapping properties of an operator derived from the one dimensional equation~\eqref{reduced eta equation}. This will subsequently be paired with the linear estimates and another inverse function theorem argument.

\begin{prop}[On the one dimensional nonlinear maps]\label{prop on the one dimensional nonlinear maps}
    The following hold for $\N\ni s\ge 1$. 
    \begin{enumerate}
        \item There exists $\uprho\in\R^+$ such that for all $\eta\in B_{H^1}(0,\uprho)$ we have $\tnorm{\eta}_{L^{\infty}}\le1/2$ and the map
        \begin{equation}
            B_{H^1}(0,\uprho)\cap H^s(\R)\ni\eta\mapsto\f{\eta}{1+\eta}\in H^s(\R)
        \end{equation}
        is smooth. Moreover, we have the estimate
        \begin{equation}
            \bnorm{\f{\eta}{1+\eta}}_{H^s}\le C\tnorm{\eta}_{H^s},
        \end{equation}
        for a constant $C\in\R^+$ depending only on $s$.
        \item With $\uprho\in\R^+$ as in the previous item and $i\in\tcb{1,2,3}$, the maps $\mathcal{N}^i :  \mathcal{P}_i\times B_{H^s}(0,\uprho) \to H^s(\R)$ given by 
        \begin{multline}
            \mathcal{N}^i(\gam,\mu,\sig,\eta)
            =-\gam^2\bp{\f{(P_{\gam,\mu,\sig}^i)^{-1}\eta}{1+(P_{\gam,\mu,\sig}^i)^{-1}\eta}}'+\gam\f{(P_{\gam,\mu,\sig}^i)^{-1}\eta}{1+(P_{\gam,\mu,\sig}^i)^{-1}\eta} 
            -4\gam\mu^2\bp{(1+(P_{\gam,\mu,\sig}^i)^{-1}\eta)\bp{\f{(P_{\gam,\mu,\sig}^i)^{-1}\eta}{1+(P_{\gam,\mu,\sig}^i)^{-1}\eta}}'}' 
            \\
            +(1+(P_{\gam,\mu,\sig}^i)^{-1}\eta)\tp{(P_{\gam,\mu,\sig}^i)^{-1}\eta-\sig^2(P_{\gam,\mu,\sig}^i)^{-1}\eta''}' 
        \end{multline}
        are well-defined, continuous, differentiable with respect to the final factor with continuous partial derivative.
        \item The maps $\mathcal{O}^i : \mathcal{P}_i\times B_{H^s}(0,\uprho)\times(H^s(\R))^{\ell+1}\times (H^s(\R))^{\ell+1} \to H^s(\R)$ given by 
        \begin{equation}
                \mathcal{O}^i(\gam,\mu,\sig,\eta,\tcb{\tau_j}_{j=0}^\ell,\tcb{\varphi_j}_{j=0}^\ell)=\sum_{j=0}^\ell(P_{\gam,\mu,\sig}^i)^{-1}\eta)^j\tp{\tau_j\grad((P_{\gam,\mu,\sig}^i)^{-1}\eta)+\varphi_j}
        \end{equation}
        are continuous, and continuously partially differentiable with respect to the final two factors.
    \end{enumerate}
\end{prop}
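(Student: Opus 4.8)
The plan is to reduce everything to three ingredients: the first item of the proposition itself, the continuity of the reparameterization operators recorded in Lemma~\ref{lemma on facts about the rep ops}, and the fact that $H^s(\R)$ is a Banach algebra continuously embedded in $L^\infty(\R)$ for every $\N\ni s\ge1$ (Corollary~\ref{coro on more algebra properties}). For the first item, I would fix $\uprho$ so that $\uprho$ times the norm of the embedding $H^1(\R)\emb L^\infty(\R)$ is at most $1/2$; this is an $s$-independent choice that only encodes the $L^\infty$ bound, and it gives $\tnorm{\eta}_{L^\infty}\le1/2$, hence $1+\eta\ge1/2>0$ pointwise, for $\eta\in B_{H^1}(0,\uprho)$. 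The map $\eta\mapsto\eta/(1+\eta)$ is then the Nemytskii operator attached to $G(t)=t/(1+t)$, which is smooth on $(-1/2,\infty)$, satisfies $G(0)=0$, and has all derivatives bounded on $[-1/2,\infty)$. Its smoothness as a map into $H^s(\R)$, together with the bound $\tnorm{G(\eta)}_{H^s}\lesssim_s\tnorm{\eta}_{H^s}$ — the \emph{linear} dependence being exactly the consequence of $G(0)=0$, e.g. via $G(t)=t\int_0^1 G'(\theta t)\,\m{d}\theta$ — follows from the composition and product estimates of Appendix~\ref{toolbox for smooth-tameness} (these are standard Moser/Schauder-type facts).

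For the second and third items I would set $u=(P^i_{\gam,\mu,\sig})^{-1}\eta$. Since the symbol $1/p^i_{\gam,\mu,\sig}$ is bounded by $1$, the operator $(P^i_{\gam,\mu,\sig})^{-1}$ is a contraction on $H^s(\R)$, so $\tnorm{u}_{H^1}\le\tnorm{u}_{H^s}\le\tnorm{\eta}_{H^s}<\uprho$ for $\eta\in B_{H^s}(0,\uprho)$; thus the first item applies to $u$, both at level $s$ and at level $1+s$. By Lemma~\ref{lemma on facts about the rep ops} the maps $(\gam,\mu,\sig,\eta)\mapsto u,\ \mu^2u,\ \sig^2u$ are continuous into $H^{1+s}(\R),\ H^{2+s}(\R),\ H^{3+s}(\R)$ respectively. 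The one nontrivial algebraic step is the pointwise identity $(1+u)\bp{u/(1+u)}'=u'/(1+u)$, which rewrites the viscous term of $\mathcal{N}^i$ as $-4\gam\bp{\mu^2u'/(1+u)}'$. Then a term-by-term regularity count puts $\mathcal{N}^i$ in $H^s(\R)$: the first two terms involve $u/(1+u)\in H^{1+s}$; the viscous term involves $\mu^2u'/(1+u)=\mu^2u'\tp{1-u/(1+u)}\in H^{1+s}$ (as $\mu^2u\in H^{2+s}$ gives $\mu^2u'\in H^{1+s}$, multiplied by $1+H^{1+s}$); and the capillary term is $(1+u)\tp{u'-\sig^2u'''}\in H^s$ (as $u'\in H^s$, $\sig^2u\in H^{3+s}$ gives $\sig^2u'''\in H^s$, and $1+u\in 1+H^{1+s}\subset 1+H^s$), all products controlled by the algebra property. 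The same bookkeeping, using only $u\in H^{1+s}$ and $u'\in H^s$, puts $\mathcal{O}^i$ in $H^s(\R)$. Continuity of $\mathcal{N}^i$ and $\mathcal{O}^i$ then follows by composing the continuous maps of Lemma~\ref{lemma on facts about the rep ops} with the smooth Nemytskii operator of the first item, with differentiation (bounded linear), and with multiplication (bounded multilinear, hence continuous); for $\mathcal{O}^i$ one additionally notes that it is multilinear in $(\tcb{\tau_j},\tcb{\varphi_j})$ with a locally uniform bound, so joint continuity is immediate.

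For the differentiability claims I would first fix $(\gam,\mu,\sig)$: then $(P^i_{\gam,\mu,\sig})^{-1}$, $\mu^2(P^i_{\gam,\mu,\sig})^{-1}$, $\sig^2(P^i_{\gam,\mu,\sig})^{-1}$ are \emph{fixed} bounded linear maps into $H^{1+s},H^{2+s},H^{3+s}$, so $\eta\mapsto\mathcal{N}^i(\gam,\mu,\sig,\eta)$ is a composition of bounded linear maps, a smooth Nemytskii operator, and bounded multilinear multiplications, hence smooth in $\eta$ — in particular continuously differentiable. The chain and product rules give a formula for $D_\eta\mathcal{N}^i(\gam,\mu,\sig,\eta)[\delta\eta]$ in which $\delta\eta$ enters only through $(P^i_{\gam,\mu,\sig})^{-1}\delta\eta$, $\mu^2(P^i_{\gam,\mu,\sig})^{-1}\delta\eta$, $\sig^2(P^i_{\gam,\mu,\sig})^{-1}\delta\eta$; joint continuity of $(\gam,\mu,\sig,\eta,\delta\eta)\mapsto D_\eta\mathcal{N}^i(\gam,\mu,\sig,\eta)[\delta\eta]$ then follows by re-invoking Lemma~\ref{lemma on facts about the rep ops} with $\delta\eta$ in the role of $\eta$, together with continuity of the $u$-dependent coefficients and boundedness of multiplication. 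For $\mathcal{O}^i$, which is linear in its last two factors, the partial derivative is the multiplication operator sending $\tcb{\delta\tau_j}$ to $\sum_j u^j\,\delta\tau_j\,u'$ and $\tcb{\delta\varphi_j}$ to $\sum_j u^j\,\delta\varphi_j$, i.e. multiplication by the coefficients $\tcb{u^ju'}_j$ and $\tcb{u^j}_j$; since these depend continuously on $(\gam,\mu,\sig,\eta)$ in $H^s(\R)$ (Lemma~\ref{lemma on facts about the rep ops} plus the algebra property) and $H^s(\R)\ni f\mapsto(g\mapsto fg)\in\mathcal{L}(H^s(\R))$ is bounded linear, this partial derivative is continuous even in operator norm.

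The main obstacle — and the reason the statement asks only for continuity and partial differentiability, not for a clean $C^1$ dependence on \emph{all} arguments — is that $\mu^2(P^i_{\gam,\mu,\sig})^{-1}$ and $\sig^2(P^i_{\gam,\mu,\sig})^{-1}$ do \emph{not} depend continuously on $(\gam,\mu,\sig)$ in the operator norms $\mathcal{L}(H^s,H^{2+s})$ or $\mathcal{L}(H^s,H^{3+s})$: a two- or three-derivative gain cannot be kept uniform as $\mu$ or $\sig$ tends to $0$. The device that circumvents this is precisely the mollification argument used to prove Lemma~\ref{lemma on facts about the rep ops}, which delivers continuity of $(\gam,\mu,\sig,\eta)\mapsto\mu^2(P^i_{\gam,\mu,\sig})^{-1}\eta$ and its siblings \emph{for each fixed argument} even though the underlying operators are not even locally Lipschitz in the parameters. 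Once one is disciplined about routing every parameter-dependent step through Lemma~\ref{lemma on facts about the rep ops} rather than through operator-norm bounds, the remainder of the proof is routine bookkeeping with the Banach-algebra and composition estimates.
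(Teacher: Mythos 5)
Your proof is correct, and it reaches the conclusions by a genuinely different organization than the paper's. The paper's proof introduces the auxiliary map $\mathcal{M}(\gam,\mu,\sig,\zeta)$, proves the weighted estimate \eqref{estimate on the thing} together with a Lipschitz-in-$\zeta$ variant, and then establishes continuity of $\mathcal{N}^i=\mathcal{M}(\gam,\mu,\sig,(P^i_{\gam,\mu,\sig})^{-1}\cdot)$ by redoing, at the level of $\mathcal{N}^i$, the same three-term mollification splitting that already underlies the proof of Lemma~\ref{lemma on facts about the rep ops}. You instead observe that, after the rewriting $(1+u)(u/(1+u))'=u'/(1+u)$ (which is also what makes your regularity count close with only $\mu^2 u\in H^{2+s}$ in hand), every occurrence of the parameters in $\mathcal{N}^i$ factors through the three quantities $u$, $\mu^2u$, $\sig^2u$ of Lemma~\ref{lemma on facts about the rep ops}, so that $\mathcal{N}^i=F(\gam,u,\mu^2u,\sig^2u)$ for a \emph{fixed} smooth map $F$ between fixed Banach spaces; continuity, and the joint continuity of $(\gam,\mu,\sig,\eta,\delta\eta)\mapsto D_\eta\mathcal{N}^i(\gam,\mu,\sig,\eta)[\delta\eta]$, then follow by pure composition, with the delicate $\ep$-splitting confined to the lemma where it already lives rather than performed a second time. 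The paper's version yields an explicit quantitative bound on $\mathcal{N}^i$ along the way; yours is shorter and makes the differentiability claims essentially automatic. Two minor remarks: the Moser-type composition facts you invoke for the first item are not actually in Appendix~\ref{toolbox for smooth-tameness} of this paper but in the external results the paper cites for exactly this purpose (Theorem D.3 and Corollary D.9 of \cite{stevenson2023wellposedness}); and your closing observation --- that operator-norm continuity of the partial derivative in the parameters cannot be expected, so the claim must be read in the strong sense delivered by routing everything through Lemma~\ref{lemma on facts about the rep ops} --- is exactly right and matches what the paper's own argument produces.
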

\begin{proof}
    The existence of $\uprho\in\R^+$ as in the first item follows from the embedding $H^1(\R)\emb C^0_0(\R)$ along with Theorem D.3 and Corollary D.9 from Stevenson and Tice~\cite{stevenson2023wellposedness}.
    
    We next prove the second item.  First, we fix a parameter $R \in \R^+$ and consider the auxiliary mapping
    \begin{multline}
        [-R,R]^3\times B_{H^{s+3}}(0,\uprho)\ni(\gam,\mu,\sig,\zeta)\mapsto \mathcal{M}(\gam,\mu,\sig,\zeta)=-\gam^2\bp{\f{\zeta}{1+\zeta}}'\\+\gam\f{\zeta}{1+\zeta}-4\gam\mu^2\bp{(1+\zeta)\bp{\f{\zeta}{1+\zeta}}'}'+(1+\zeta)\tp{\zeta-\sig^2\zeta''}'\in H^{s}(\R).
    \end{multline}
    Thanks to repeated applications of the estimate from the first item, Proposition~\ref{corollary on tame estimates on simple multipliers}, and simple interpolation inequalities we learn that for $s\ge 1$ $\mathcal{M}$ obeys the bounds 
    \begin{equation}\label{estimate on the thing}
        \tnorm{\mathcal{M}(\gam,\mu,\sig,\eta)}_{H^s}\le C_{s,\uprho,R}\tp{\tnorm{\zeta}_{H^{s+1}}+\mu^2\tnorm{\zeta}_{H^{2+s}}+\sig^2\tnorm{\zeta}_{H^{3+s}}}\tp{1+ \tnorm{\zeta}_{H^1}+\sig^2\tnorm{\zeta}_{H^3}}.
    \end{equation}
    Now we can pair estimate~\eqref{estimate on the thing} with the relation
    \begin{equation} \label{cat_scratch_fever}
        \mathcal{M}(\gam,\mu,\sig,(P_{\gam,\mu,\sig}^i)^{-1}\eta)=\mathcal{N}^i(\gam,\mu,\sigma,\eta)
        \text{ for }
        i\in\tcb{1,2,3},\;(\gam,\mu,\sig)\in\mathcal{P}_i
    \end{equation}
    and Lemma \ref{lemma on facts about the rep ops} to deduce the bound
    \begin{equation}
        \tnorm{\mathcal{N}^i(\gam,\mu,\sig,\eta)}_{H^s}\le C_{s,\uprho,R}\begin{cases}
            \tnorm{\eta}_{H^s}&\text{if }i\in\tcb{1,2},\\
            (|\mu|+|\sig|)^{-1}\tnorm{\eta}_{H^s}&\text{if }i=3.
        \end{cases}
    \end{equation}
    Since $R \in \R^+$ was arbitrary, this shows that $\mathcal{N}^i$ is well-defined.

    Next, we sketch the argument that the $\mathcal{N}^i$ are continuous. We begin by decomposing the  difference
    \begin{equation}\label{it makes no difference}
        \mathcal{N}^i(\gam_0,\mu_0,\sig_0,\eta_0)-\mathcal{N}^i(\gam_1,\mu_1,\sig_1,\eta_1)=\bf{I}_1^i+\bf{I}_2^i+\bf{I}_3^i,
    \end{equation}
    where
    \begin{equation}
        \bf{I}_j^i=
        \begin{cases}
            \mathcal{N}^i(\gam_0,\mu_0,\sig_0,\eta_0)-\mathcal{N}^i(\gam_0,\mu_0,\sig_0,\mathcal{S}_\ep\eta_0)&\text{if }j=1,\\
            \mathcal{N}^i(\gam_0,\mu_0,\sig_0,\mathcal{S}_\ep\eta_0)-\mathcal{N}^i(\gam_1,\mu_1,\sig_1,\mathcal{S}_\ep\eta_1)&\text{if }j=2,\\
            \mathcal{N}^i(\gam_1,\mu_1,\sig_1,\mathcal{S}_\ep\eta_1)-\mathcal{N}^i(\gam_1,\mu_1,\sig_1,\eta_1)&\text{if }j=3
        \end{cases}
    \end{equation}
    for $\ep\in\R^+$ a small mollification parameter and $\mathcal{S}_\ep$ a mollification operator. The terms $\bf{I}_1^i$ and $\bf{I}_3^i$ are handled by first considering the corresponding differences for the operator $\mathcal{M}$. By arguing as above, we may deduce that
    \begin{equation}
        \tnorm{\mathcal{M}(\gam,\mu,\sig,\zeta_0)-\mathcal{M}(\gam,\mu,\sig,\zeta_1)}_{H^s}\le C_{\zeta_0,\zeta_1}\tp{\tnorm{\zeta_0-\zeta_1}_{H^{1+s}}+\mu^2\tnorm{\zeta_0-\zeta_1}_{H^{2+s}}+\sig^2\tnorm{\zeta_0-\zeta_1}_{H^{3+s}}},
    \end{equation}
    for some constant $C_{\zeta_0,\zeta_1}$ that is a function of $R$, $\tnorm{\zeta_k}_{H^{1+s}}$, $\mu^2\tnorm{\zeta_k}_{H^{2+s}}$, and $\sig^2\tnorm{\zeta_k}_{H^{3+s}}$  for $k\in\tcb{0,1}$.  Recalling the identity \eqref{cat_scratch_fever} and Lemma \ref{lemma on facts about the rep ops}, we then learn that
    \begin{multline}
        \tnorm{\bf{I}_1^i}_{H^s}+\tnorm{\bf{I}_3^i}_{H^s}\le \tilde{C}\tp{\tnorm{(1-\mathcal{S}_\ep)\eta_0}_{H^s}+\tnorm{(1-\mathcal{S}_\ep)\eta_1}_{H^s}}\begin{cases}
            1&\text{if }i\in\tcb{1,2},\\
            (|\mu|+|\sig|)^{-1}&\text{if }i=3,
        \end{cases}\\
        \le \tilde{C}\tp{\tnorm{\eta_0-\eta_1}_{H^s}+\tnorm{(1-\mathcal{S}_\ep)\eta_0}_{H^s}}\begin{cases}
            1&\text{if }i\in\tcb{1,2},\\
            (|\mu|+|\sig|)^{-1}&\text{if }i=3,
        \end{cases}
    \end{multline}
    where $\tilde{C}$ is a function of $R$, $\uprho$, and $s$.
    
    On the other hand, for the $\bf{I}_2$ term, we  leverage the smoothness $\mathcal{S}_\ep\eta_0$ and $\mathcal{S}_\ep\eta_1$ in order to invoke a cheap bound that spends three derivatives ($\mathcal{M}$ is locally Lipschitz if you pay for three derivatives) at the expense of a bad constant depending on $\ep$:
    \begin{multline}
        \tnorm{\bf{I}_2^i}_{H^s}\lesssim_{R,s,\uprho} \norm{\gam_0-\gam_1,\mu_0-\mu_1,\sig_0-\sig_1}_{\R^3}+\tnorm{\mathcal{S}_\ep(\eta_0-\eta_1)}_{H^{3+s}} 
        \\
        \lesssim_{R,s,\uprho} \norm{\gam_0-\gam_1,\mu_0-\mu_1,\sig_0-\sig_1}_{\R^3}+C_\ep\tnorm{\eta_0-\eta_1}_{H^{s}}.
    \end{multline}
    The difference in~\eqref{it makes no difference} can then be made arbitrarily small by first choosing $\ep$ depending on $(\gam_0,\mu_0,\sig_0,\eta_0)$ and then selecting $(\gam_1,\mu_1,\sig_1,\eta_1)$ sufficiently near the former tuple.

    The previous argument can be readily adapted to show that the maps $\mathcal{N}^i$ are continuously partially differentiable with respect to the second factor. This finishes the proof of the second item.  The third item is proved in a similar fashion to the second item. We omit the details for brevity.
 \end{proof}

Once more, we will connect the linear and nonlinear theories via an application of the inverse function theorem. For our purposes here, we need a slightly specialized version of this result. We shall use the following version of the inverse function theorem, as formulated in Theorem A in Crandall and Rabinowitz~\cite{MR0288640} (for a verbose proof see Theorem 2.7.2 in Nirenberg~\cite{MR1850453}, but note that there is slight misstatement of the uniqueness assertion in the first item there that is correct in~\cite{MR0288640}). 

\begin{thm}[Implicit function theorem]\label{implicit function theorem}
    Let $X,Y,Z$ be Banach spaces over the same field, and let $U\subset X\times Y$ be open.  Suppose that $f:U \to Z$ is continuous and that $f$ has a Fr\'echet derivative with respect to the first factor, $D_1f : U\to\mathcal{L}(X;Z)$, that is also continuous.  Further suppose that $(x_0,y_0)\in U$ and $f(x_0,y_0)=0$.  If $D_1f(x_0,y_0)$ is an isomorphism of $X$ onto $Z$, then there exist balls $B(y_0,r_Y)\subset Y$ and $B(x_0,r_X)\subset X$ such that $B(x_0,r_X)\times B(y_0,r_Y)\subset U$ and a continuous function $u:B(y_0,r_Y)\to B(x_0,r_X)$ such that $u(y_0)=x_0$ and $f(u(y),y)=0$ for all $y\in B(y_0,r_Y)$. Moreover, the implicit function $u$ is continuous.
\end{thm}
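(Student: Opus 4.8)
The plan is to convert the statement into an application of the Banach fixed point theorem with a parameter, following the classical route. Set $T = D_1 f(x_0,y_0) \in \mathcal{L}(X;Z)$; by hypothesis this is an isomorphism, so $T^{-1} \in \mathcal{L}(Z;X)$ is bounded. I would introduce the auxiliary map $\Phi : U \to X$, $\Phi(x,y) = x - T^{-1}f(x,y)$, and observe that for each fixed $y$ the equation $f(x,y)=0$ is equivalent to the fixed-point relation $\Phi(x,y)=x$, while $\Phi(x_0,y_0)=x_0$. Since $f$ is continuous and $D_1 f : U \to \mathcal{L}(X;Z)$ is continuous, $\Phi$ is continuous and possesses the continuous partial derivative $D_1\Phi(x,y) = I_X - T^{-1}D_1 f(x,y)$, which vanishes at $(x_0,y_0)$.

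First I would choose the radii. Using continuity of $D_1\Phi$ at $(x_0,y_0)$ together with openness of $U$, fix $r_X>0$ and $\delta>0$ with $\overline{B}(x_0,r_X)\times\overline{B}(y_0,\delta)\subset U$ and $\norm{D_1\Phi(x,y)}_{\mathcal{L}(X)}\le \tfrac12$ on this product; by the mean value inequality $\Phi(\cdot,y)$ is then $\tfrac12$-Lipschitz on the convex set $\overline{B}(x_0,r_X)$ for every $y\in\overline{B}(y_0,\delta)$. Then, using continuity of $y\mapsto\Phi(x_0,y)$ at $y_0$, shrink to some $r_Y\in(0,\delta]$ so that $\norm{\Phi(x_0,y)-x_0}_X\le r_X/2$ for $y\in\overline{B}(y_0,r_Y)$. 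For $x\in\overline{B}(x_0,r_X)$ and $y\in\overline{B}(y_0,r_Y)$ the triangle inequality gives $\norm{\Phi(x,y)-x_0}\le\tfrac12\norm{x-x_0}+r_X/2\le r_X$, so $\Phi(\cdot,y)$ is a $\tfrac12$-contraction of the complete metric space $\overline{B}(x_0,r_X)$ into itself.

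The contraction mapping principle then yields, for each $y\in\overline{B}(y_0,r_Y)$, a unique $u(y)\in\overline{B}(x_0,r_X)$ with $\Phi(u(y),y)=u(y)$, i.e.\ $f(u(y),y)=0$; uniqueness of the fixed point forces $u(y_0)=x_0$, and local uniqueness of the solution comes for free. For continuity of $u$, I would write, for $y,y'\in\overline{B}(y_0,r_Y)$, the identity $u(y)-u(y') = \Phi(u(y),y)-\Phi(u(y'),y')$ and insert the term $\Phi(u(y'),y)$ to obtain $\norm{u(y)-u(y')}\le\tfrac12\norm{u(y)-u(y')}+\norm{\Phi(u(y'),y)-\Phi(u(y'),y')}$, hence $\norm{u(y)-u(y')}\le 2\norm{\Phi(u(y'),y)-\Phi(u(y'),y')}$, and the right-hand side tends to $0$ as $y\to y'$ by continuity of $\Phi$ in its second argument. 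Passing from the closed balls to open ones (or shrinking $r_X,r_Y$ slightly) produces exactly the asserted neighborhoods $B(y_0,r_Y)$ and $B(x_0,r_X)$.

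There is no essential difficulty in this argument; it is entirely standard. The one point demanding care is that we are not given differentiability of $f$ — hence of $\Phi$ — in the second variable, only continuity, so the continuity of $u$ cannot be extracted by differentiating an implicit relation and must instead be squeezed out of the uniform contraction estimate as above. Relatedly, the two radii must be selected in the correct order, $r_X$ and $\delta$ first from the derivative bound and $r_Y$ afterwards from the continuity of $\Phi(x_0,\cdot)$, so that the self-mapping property of $\Phi(\cdot,y)$ holds simultaneously for all parameters $y$ in a single fixed neighborhood of $y_0$.
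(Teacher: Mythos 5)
Your proof is correct. The paper does not actually prove this theorem; it cites Theorem A of Crandall--Rabinowitz and Theorem 2.7.2 of Nirenberg for the argument, and the contraction-mapping proof you give (reformulating $f(x,y)=0$ as the fixed-point equation for $\Phi(x,y)=x-T^{-1}f(x,y)$, making $\Phi(\cdot,y)$ a uniform $\tfrac12$-contraction of a closed ball into itself, and extracting continuity of $u$ from the uniform contraction estimate rather than from differentiability in $y$) is precisely the standard argument those references use. Your two cautionary remarks -- the order in which $r_X$ and $r_Y$ are chosen, and the fact that continuity in $y$ cannot be obtained by implicit differentiation since only continuity of $f$ in the second variable is assumed -- correctly identify the only delicate points.
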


We now state our results for~\eqref{reduced eta equation} in an abstract formulation. Afterward, we will unpack things to reveal a PDE result.

\begin{thm}[One dimensional analysis, abstract formulation]\label{thm on abstract 1d analysis}
    Let $\N\ni s\ge 1$. For $i\in\tcb{1,2,3}$ there exists open sets
    \begin{equation}
        \mathcal{P}_i\times\tcb{0}^{\ell+1}\times\tcb{0}^{\ell+1}\subset\mathscr{U}_s^i\subset\mathcal{P}_i\times(H^s(\R))^{\ell+1}\times(H^s(\R))^{\ell+1},
    \end{equation}
    radii $\tcb{\ep^i_{s,\bf{p}}}_{\bf{p}\in\mathcal{P}_i}\subset(0,\uprho]$, where $\uprho\in\R^+$ is from the first item of Proposition~\ref{prop on the one dimensional nonlinear maps}, and continuous mappings
    \begin{equation}
        \upiota^i:\mathscr{U}^i_s\to\bigcup_{\bf{p}\in\mathcal{P}_i}B_{H^s}(0,\ep^i_{s,\bf{p}})
    \end{equation}
    with the property that for all $\bf{U}=(\gam,\mu,\sig,\tcb{\tau_j}_{j=0}^\ell,\tcb{\varphi_j}_{j=0}^\ell)\in\mathscr{U}^i_s$ we have that $\eta=\upiota^i(\bf{U})\in B_{H^s}(0,\ep^i_{s,\bf{p}})$ (with $\bf{p}=(\gam,\mu,\sig)$ is the unique solution to
    \begin{equation}
        \mathscr{N}^i(\gam,\mu,\sig,\eta)+\mathcal{O}^i(\gam,\mu,\sig,\eta,\tcb{\tau_j}_{j=0}^\ell,\tcb{\varphi_j}_{j=0}^\ell)=0.
    \end{equation}
\end{thm}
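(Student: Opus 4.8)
The plan is to derive the theorem from the implicit function theorem, Theorem~\ref{implicit function theorem}, applied to the mapping
\[
    F^i(\gam,\mu,\sig,\tcb{\tau_j}_{j=0}^\ell,\tcb{\varphi_j}_{j=0}^\ell,\eta)=\mathcal{N}^i(\gam,\mu,\sig,\eta)+\mathcal{O}^i(\gam,\mu,\sig,\eta,\tcb{\tau_j}_{j=0}^\ell,\tcb{\varphi_j}_{j=0}^\ell),
\]
where the role of the unknown space $X$ is played by $H^s(\R)$, with $\eta$ ranging in $B_{H^s}(0,\uprho)$, the target is $Z=H^s(\R)$, and the role of the parameter space $Y$ is played by $\R^3\times(H^s(\R))^{\ell+1}\times(H^s(\R))^{\ell+1}$ when $i\in\tcb{1,3}$ — in which $\mathcal{P}_i\times(H^s(\R))^{\ell+1}\times(H^s(\R))^{\ell+1}$ is open — and by $\R^2\times(H^s(\R))^{\ell+1}\times(H^s(\R))^{\ell+1}$ with coordinates $(\gam,\mu)$ and the frozen value $\sig=0$ when $i=2$, since $\mathcal{P}_2$ is not open in $\R^3$. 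By Proposition~\ref{prop on the one dimensional nonlinear maps}, together with the multilinear structure of $\mathcal{O}^i$ and Lemma~\ref{lemma on facts about the rep ops}, the map $F^i$ is continuous and possesses a continuous Fr\'echet derivative $D_\eta F^i$ with respect to the $X$-factor; so the only hypothesis of Theorem~\ref{implicit function theorem} left to verify is that $D_\eta F^i$ is an isomorphism at the relevant base point.

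First I would fix $i$ and $\bf{p}_0=(\gam_0,\mu_0,\sig_0)\in\mathcal{P}_i$ and examine $F^i$ at the trivial point $((\bf{p}_0,0,0),0)$. Every summand of the auxiliary map $\mathcal{M}$ from the proof of Proposition~\ref{prop on the one dimensional nonlinear maps} carries a factor of $\zeta$ or one of its derivatives, so the identity~\eqref{cat_scratch_fever} gives $\mathcal{N}^i(\bf{p}_0,0)=\mathcal{M}(\bf{p}_0,0)=0$; likewise $\mathcal{O}^i(\bf{p}_0,0,0,0)=0$ since every summand of $\mathcal{O}^i$ contains a factor $\tau_j$ or $\varphi_j$. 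Hence $F^i((\bf{p}_0,0,0),0)=0$. As for the derivative, the $\mathcal{O}^i$-contribution to $D_\eta F^i$ vanishes at this point because $\mathcal{O}^i(\bf{p}_0,\cdot,0,0)\equiv 0$ independently of $\eta$, while differentiating the relation $\mathcal{M}(\gam,\mu,\sig,(P^i_{\gam,\mu,\sig})^{-1}\eta)=\mathcal{N}^i(\gam,\mu,\sig,\eta)$ and computing the linearization of $\mathcal{M}$ in $\zeta$ at $\zeta=0$ term by term — which, as recorded around~\eqref{one dimensional linearization}, produces exactly $\gam(1-4\mu^2\pd^2)+((1-\gam^2)-\sig^2\pd^2)\pd$ — gives
\[
    D_\eta F^i\bp{(\bf{p}_0,0,0),0}=\bp{\gam_0(1-4\mu_0^2\pd^2)+((1-\gam_0^2)-\sig_0^2\pd^2)\pd}\circ (P^i_{\gam_0,\mu_0,\sig_0})^{-1}=A^i_{\gam_0,\mu_0,\sig_0},
\]
which is the operator from~\eqref{the A map}. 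Since $\bf{p}_0\in\mathcal{P}_i$, the first item of Corollary~\ref{coro on 1d isos} asserts that $A^i_{\gam_0,\mu_0,\sig_0}\colon H^s(\R)\to H^s(\R)$ is a bounded linear isomorphism. Theorem~\ref{implicit function theorem} then supplies a parameter-space radius $r_{\bf{p}_0}>0$, a solution-space radius $\ep_{\bf{p}_0}\in(0,\uprho]$ (shrinking the radius the theorem furnishes if necessary to stay inside the domain of $\mathcal{N}^i$ and $\mathcal{O}^i$), and a continuous map $\upiota^i_{\bf{p}_0}$ on the parameter ball of radius $r_{\bf{p}_0}$ about $(\bf{p}_0,0,0)$ with $\upiota^i_{\bf{p}_0}(\bf{p}_0,0,0)=0$ such that, for every $\bf{U}$ in that ball, $\upiota^i_{\bf{p}_0}(\bf{U})$ is the unique zero of $F^i(\bf{U},\cdot)$ in $B_{H^s}(0,\ep_{\bf{p}_0})$.

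Finally I would glue these local solutions, following the pattern of Theorem~\ref{thm on well-posedness II}: put $\ep^i_{s,\bf{p}}=\ep_{\bf{p}}$ for $\bf{p}\in\mathcal{P}_i$, let $\mathscr{U}^i_s$ be the union over $\bf{p}\in\mathcal{P}_i$ of the parameter balls $B((\bf{p},0,0),r_{\bf{p}})$, and set $\upiota^i=\upiota^i_{\bf{p}}$ on $B((\bf{p},0,0),r_{\bf{p}})$. Continuity of $\upiota^i$ and the uniqueness assertion in the statement then reduce to checking that $\upiota^i$ is well-defined on overlaps; as in Theorem~\ref{thm on well-posedness II}, this follows from the local uniqueness clause of Theorem~\ref{implicit function theorem} after, if necessary, shrinking each $r_{\bf{p}}$ so that whenever $B((\bf{p},0,0),r_{\bf{p}})\cap B((\bf{q},0,0),r_{\bf{q}})\neq\es$ and $\bf{U}$ lies in this intersection one has $\upiota^i_{\bf{p}}(\bf{U})\in B_{H^s}(0,\ep_{\bf{q}})$ and $\upiota^i_{\bf{q}}(\bf{U})\in B_{H^s}(0,\ep_{\bf{p}})$; both then solve $F^i(\bf{U},\cdot)=0$ inside $B_{H^s}(0,\min\tcb{\ep_{\bf{p}},\ep_{\bf{q}}})$, where the solution is unique, so $\upiota^i_{\bf{p}}(\bf{U})=\upiota^i_{\bf{q}}(\bf{U})$. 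I expect this gluing bookkeeping — rather than any analytic estimate — to be the only genuinely fussy point, since all of the analytic content, namely the isomorphism property of the linearization and the regularity of the nonlinear maps, has already been set down in Corollary~\ref{coro on 1d isos} and Proposition~\ref{prop on the one dimensional nonlinear maps}.
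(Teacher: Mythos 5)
Your proposal is correct and follows essentially the same route as the paper's (sketched) proof: apply the implicit function theorem to $\mathcal{N}^i+\mathcal{O}^i$ at each base point $(\bf{p},0,0,0)$, using Proposition~\ref{prop on the one dimensional nonlinear maps} for the nonlinear regularity and Corollary~\ref{coro on 1d isos} to identify the $\eta$-linearization with the isomorphism $A^i_{\gam,\mu,\sig}$, then glue the local implicit functions via local uniqueness. You in fact supply more detail than the paper does, notably the explicit computation of the linearization and the observation that $\mathcal{P}_2$ is not open in $\R^3$ so the $\sig$-variable must be frozen there.
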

\begin{proof}
    The strategy of the proof here is much the same as that of Theorems~\ref{thm on well-posedness, I} and~\ref{thm on well-posedness II}; as such, we only give a sketch of the argument. For the nonlinear mapping 
    \begin{multline}\label{then i think ill wait}
        \mathcal{P}_i\times B_{H^s}(0,\uprho)\times(H^s(\R))^{\ell+1}\times \tp{H^s(\R)}^{\ell+1}\ni(\gam,\mu,\sig,\eta,\tcb{\tau_j}_{j=0}^\ell,\tcb{\varphi_j}_{j=0}^\ell)\\\mapsto\mathscr{N}^i(\gam,\mu,\sig,\eta)+\mathcal{O}^i(\gam,\mu,\sig,\eta,\tcb{\tau_j}_{j=0}^\ell,\tcb{\varphi_j}_{j=0}^\ell)\in H^s(\R)
    \end{multline}
    we aim to satisfy the hypotheses of the implicit function theorem at a point $(\bf{p},0,0,0)$ for a fixed $\bf{p}\in\mathcal{P}_i$. That the mapping in~\eqref{then i think ill wait} is continuously partially differentiable with respect to the second factor is ensured by Proposition~\ref{prop on the one dimensional nonlinear maps}. The partial derivatives obey the requisite mapping properties in light of Corollary~\ref{coro on 1d isos}. In this way we obtain local continuous implicit functions $\upiota^i_{\bf{p}}$ that are defined in a small neighborhood of $(\bf{p},0,0,0)$. We then glue these maps together, via local uniqueness, and obtain an implicit function $\upiota^i$ defined on the union of these small neighborhoods.
\end{proof}

The next result is the PDE formulation of the previous abstract form.

\begin{thm}[One dimensional analysis, PDE formulation]\label{thm on one dimensional analysis, PDE formulation}
    Let $\N\ni s\ge 1$, $i\in\tcb{1,2,3}$, and $\mathscr{U}_s^i$ be the open set granted by Theorem~\ref{thm on abstract 1d analysis}. For $(\gam,\mu,\sig,\tcb{\tau_j}_{j=0}^\ell,\tcb{\varphi_j}_{j=1}^\ell)\in\mathscr{U}^i_s$ we set
    \begin{equation}\label{the free surface and the velocity}
        \eta=(P_{\gam,\mu,\sig}^i)^{-1}[\upiota^i(\gam,\mu,\sig,\tcb{\tau_j}_{j=0}^\ell,\tcb{\varphi_j}_{j=1}^\ell)],\quad v=\gam\f{\eta}{1+\eta},\quad\mathcal{F}=\mathcal{O}^i(\gam,\mu,\sig,P^i_{\gam,\mu,\sig}\eta,\tcb{\tau_j}_{j=0}^\ell,\tcb{\varphi_j}_{j=0}^\ell).
    \end{equation}
    The following hold.
    \begin{enumerate}
        \item System~\eqref{traveling wave formulation of the equation} is satisfied by $(v,\eta)$ with $\mathcal{F}$ as a forcing term.
        \item The following mappings are well-defined and continuous:
        \begin{equation}
            \mathscr{U}^i_s\ni(\gam,\mu,\sig,\tcb{\tau_j}_{j=0}^\ell,\tcb{\varphi_j}_{j=1}^\ell)\mapsto\begin{cases}
                (v,\eta)\in (H^{1+s}(\R))^2,\\\
                \mu^2(v,\eta)\in(H^{2+s}(\R))^2,\\
                \sig^2(v,\eta)\in(H^{3+s}(\R))^2.
            \end{cases}
        \end{equation}
    \end{enumerate}
\end{thm}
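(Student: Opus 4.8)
The plan is to peel back the abstract statement of Theorem~\ref{thm on abstract 1d analysis} into the two claimed PDE facts, the only inputs being the algebraic identities built into Definition~\ref{defn of reparameterization operators} and Proposition~\ref{prop on the one dimensional nonlinear maps}, together with the continuity statements of Lemma~\ref{lemma on facts about the rep ops}. For the first item, write $\zeta=\upiota^i(\gam,\mu,\sig,\tcb{\tau_j}_{j=0}^\ell,\tcb{\varphi_j}_{j=0}^\ell)$, so that $\eta=(P^i_{\gam,\mu,\sig})^{-1}\zeta$ and hence $\zeta=P^i_{\gam,\mu,\sig}\eta$. Comparing the defining formula for $\mathcal{N}^i$ in Proposition~\ref{prop on the one dimensional nonlinear maps} with the left side of~\eqref{reduced eta equation} — which is exactly the content of the identity~\eqref{cat_scratch_fever} — shows that $\mathcal{N}^i(\gam,\mu,\sig,\zeta)$ equals the forcing-free left side of~\eqref{reduced eta equation} evaluated at our $\eta$; and expanding $\mathcal{O}^i$ at the argument $\zeta=P^i_{\gam,\mu,\sig}\eta$ collapses the reparameterization operators, yielding $\mathcal{O}^i(\gam,\mu,\sig,\zeta,\tcb{\tau_j}_{j=0}^\ell,\tcb{\varphi_j}_{j=0}^\ell)=\sum_{j=0}^\ell\eta^j(\tau_j\eta'+\varphi_j)=\mathcal{F}$. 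Thus the solvability relation furnished by Theorem~\ref{thm on abstract 1d analysis} is precisely~\eqref{reduced eta equation}. Since $\eta\in B_{H^1}(0,\uprho)$ by construction (after shrinking the radii $\ep^i_{s,\bf{p}}$ if necessary, which is harmless), the first item of Proposition~\ref{prop on the one dimensional nonlinear maps} gives $1+\eta\ge1/2$, so $v=\gam\eta/(1+\eta)$ is legitimate; I will then run the derivation of~\eqref{reduced eta equation} in reverse — using $(1+\eta)v=\gam\eta$ for the continuity equation and $(1+\eta)(v-\gam e_1)=-\gam$ together with $\S v=4v'$ in one dimension for the momentum equation — to conclude that $(v,\eta)$ solves~\eqref{traveling wave formulation of the equation} with forcing $\mathcal{F}$. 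With $s\ge1$ the regularity $\eta\in H^{1+s}$, $\tau_j,\varphi_j\in H^s$ and the algebra property of $H^s(\R)$ make every product in sight legitimate, so the equations hold classically.

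For the second item, I first note that $\bf{U}\mapsto\zeta=\upiota^i(\bf{U})\in H^s(\R)$ is continuous by Theorem~\ref{thm on abstract 1d analysis}; composing with Lemma~\ref{lemma on facts about the rep ops} makes $\bf{U}\mapsto\eta\in H^{1+s}(\R)$, $\bf{U}\mapsto\mu^2\eta\in H^{2+s}(\R)$ and $\bf{U}\mapsto\sig^2\eta\in H^{3+s}(\R)$ all continuous, which is half of what is asserted. For $v=\gam\eta/(1+\eta)$: since $\eta$ stays in $B_{H^1}(0,\uprho)$, the first item of Proposition~\ref{prop on the one dimensional nonlinear maps} (applied with $1+s$ in place of $s$) shows $\eta\mapsto\eta/(1+\eta)$ is continuous from $B_{H^1}(0,\uprho)\cap H^{1+s}(\R)$ to $H^{1+s}(\R)$, and multiplying by the continuous scalar $\bf{U}\mapsto\gam$ yields continuity of $\bf{U}\mapsto v\in H^{1+s}(\R)$.

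The remaining maps $\bf{U}\mapsto\mu^2 v\in H^{2+s}(\R)$ and $\bf{U}\mapsto\sig^2 v\in H^{3+s}(\R)$ I will handle by a case split at a base point $\bf{U}_0=(\gam_0,\mu_0,\sig_0,\dots)$, treating $\mu$; the argument for $\sig$ is identical with $H^{3+s}$ and exponent $3+s$ in place of $H^{2+s}$ and $2+s$. If $\mu_0\ne0$, then $\mu$ is bounded away from $0$ on a neighborhood, so there $\bf{U}\mapsto\eta=\mu^{-2}(\mu^2\eta)\in H^{2+s}(\R)$ is continuous (product of the continuous scalar $\mu^{-2}$ and the continuous $H^{2+s}$-valued map $\mu^2\eta$), hence so is $\bf{U}\mapsto\mu^2 v=\gam\mu^2\bp{\eta/(1+\eta)}\in H^{2+s}(\R)$ by the first item of Proposition~\ref{prop on the one dimensional nonlinear maps} with exponent $2+s$. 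If instead $\mu_0=0$, then $\mu_0^2 v_0=0$; for nearby $\bf{U}$ with $\mu\ne0$ one has $\eta\in H^{2+s}(\R)$ and the tame bound $\tnorm{\mu^2 v}_{H^{2+s}}=\gam\mu^2\tnorm{\eta/(1+\eta)}_{H^{2+s}}\le C\gam\tnorm{\mu^2\eta}_{H^{2+s}}$ from the first item of Proposition~\ref{prop on the one dimensional nonlinear maps}, which tends to $0$ by continuity of $\bf{U}\mapsto\mu^2\eta\in H^{2+s}(\R)$, while for nearby $\bf{U}$ with $\mu=0$ one has $\mu^2 v=0$ exactly; together these give continuity at $\bf{U}_0$ and complete the proof.

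The one step requiring genuine care is this last one — continuity of the weighted velocity maps at parameter values where $\mu$ or $\sig$ vanishes, since there the reparameterization operator degenerates and $\eta$ need not belong to $H^{2+s}$ or $H^{3+s}$. What rescues it is exactly that the composition estimate in the first item of Proposition~\ref{prop on the one dimensional nonlinear maps} is tame, i.e. linear in the top-order norm, so $\tnorm{\mu^2 v}_{H^{2+s}}$ is controlled by $\tnorm{\mu^2\eta}_{H^{2+s}}$ rather than by $\mu^2\tnorm{\eta}_{H^{2+s}}^2$, which would be worthless as $\mu\to0$; everything else is bookkeeping with the identities already in place.
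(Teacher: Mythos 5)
Your proposal is correct and follows essentially the same route as the paper's (very terse) proof: item 1 by reversing the derivation of~\eqref{reduced eta equation} via the identities $(1+\eta)v=\gam\eta$ and $\mathbb{S}v=4v'$, and item 2 from the continuity of $\upiota^i$ combined with Lemma~\ref{lemma on facts about the rep ops} and the tame composition bound of Proposition~\ref{prop on the one dimensional nonlinear maps}. The only difference is that you spell out the details the paper declares ``immediate'' and ``similarly clear'' — in particular the case split at $\mu_0=0$ (resp.\ $\sig_0=0$) for the weighted velocity maps, where your observation that the tame estimate controls $\tnorm{\mu^2 v}_{H^{2+s}}$ by $\tnorm{\mu^2\eta}_{H^{2+s}}$ is exactly the point that makes the degenerate case work; note also that no shrinking of the radii is needed, since $p^i_{\gam,\mu,\sig}\ge 1$ makes $(P^i_{\gam,\mu,\sig})^{-1}$ a contraction on $H^s$, so $\eta\in B_{H^1}(0,\uprho)$ automatically.
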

\begin{proof}
    We define $\eta$, $v$, and $\mathcal{F}$ as in~\eqref{the free surface and the velocity}. Thanks to the first item of Proposition~\ref{prop on the one dimensional nonlinear maps}, this definition of $v$ makes sense and is a smooth function of $\eta$ in matching Sobolev spaces. The first item is then immediate from the derivation of~\eqref{reduced eta equation} from~\eqref{traveling wave formulation of the equation}. The continuity of the mappings in the second item are similarly clear thanks to properties of the reparameterization operators $P^i_{\gam,\mu,\sig}$ from Definition~\ref{defn of reparameterization operators} and Lemma~\ref{lemma on facts about the rep ops}.
\end{proof}

\section{Nonlinear analysis of the shallow water system}\label{section on nonlinear analysis}

We now aim to understand the behavior of the system~\eqref{traveling wave formulation of the equation} in a regime larger than that covered by Section~\ref{section on analysis with both visc and surf}.  To do so, we must face the problem of derivative loss.  We are thus lead to replace the implicit function theorem strategy of Section~\ref{section on analysis with both visc and surf} with a Nash-Moser inverse function theorem. The version which we elect to use is the one carefully stated in Appendix~\ref{subsection on a NMIFT}.

The purpose of this section is to both handle the nonlinear hypotheses of the Nash-Moser theorem and to lay the groundwork for the verification of the linear hypotheses. In Section~\ref{section on banach scales for the TWP} we introduce the scales of Banach spaces and parameter dependent norms in play for our Nash-Moser and estimates framework. Next, in Section~\ref{section on smooth tameness of the nonlinear operator}, we associate to~\eqref{traveling wave formulation of the equation} some nonlinear differential operators and then verify that these satisfy certain smoothness and tameness assertions. Finally, in Section~\ref{section on derivative splitting}, we take a look at certain derivatives of the previously introduced nonlinear maps and isolate some minimal essential structure.

\subsection{Banach scales for the traveling wave problem}\label{section on banach scales for the TWP}

The goal of this subsection is to define the scales of Banach spaces that are relevant for our analysis of the traveling shallow water equations~\eqref{traveling wave formulation of the equation} and then to verify basic properties. 

Let $s\in\N$. First, we introduce spaces that will play a role in the domain of our nonlinear map.  We write
\begin{equation}\label{main piece of the domain}
    \X_s=H^s(\R^d;\R^d)\times\mathcal{H}^s(\R^d;\R^d)
    \text{ with norm }    
    \tnorm{v,\eta}_{\X_s}=\tp{\tnorm{v}_{H^s}^2+\tnorm{\eta}_{\mathcal{H}^s}^2}^{1/2}
\end{equation}
to be our omnisonic container space. We then write
\begin{equation}
    \Xs_s=H^s(\R^d;\R^d)\times\mathcal{H}^{1+s}(\R^d;\R^d)\text{ with norm }    
    \tnorm{v,\eta}_{\Xs_s}=\tp{\tnorm{v}_{H^s}^2+\tnorm{\eta}_{\mathcal{H}^{1+s}}^2}^{1/2}
\end{equation}
to denote our subsonic container space.

Next, we introduce spaces that will play a role in the codomain of the nonlinear map.
\begin{equation}\label{main piece of the codomain}
    \Y_s=\tp{\dot{H}^{-1}\cap H^{1+s}}(\R^d)\times H^s(\R^d;\R^d)
        \text{ with norm }    
    \tnorm{h,f}_{\Y_s}=\tp{\tsb{h}_{\dot{H}^{-1}}^2+\tnorm{h}_{H^{1+s}}^2+\tnorm{f}_{H^s}^2}^{1/2}.
\end{equation}
We then give the data spaces
\begin{equation}
    \W_s= (H^s(\R^d;\R^{d\times d}))^{\ell+1} \times (H^s(\R^d;\R^d))^{\ell+1}
        \text{ with norm }    
    \tnorm{\tcb{\tau_i}_{i=0}^\ell,\tcb{\varphi_i}_{i=0}^\ell}_{\W_s}=\bp{\sum_{i=0}^\ell\sp{\tnorm{\tau_i}_{H^s}^2+\tnorm{\varphi_i}^2_{H^s}}}^{1/2}.
\end{equation}
We synthesize the above into the domain and codomain of the nonlinear mapping that we shall associate with~\eqref{traveling wave formulation of the equation} in the next subsection. First we have the omnisonic domain and codomain:
\begin{equation}\label{the domain and codomain banach scales}
    \E_s=\R^3\times\W_s\times\X_s\times\X_s  
    \text{ and }
    \F_s=\R^3\times\Y_s\times\W_s\times\X_s.
\end{equation}
Next we have the subsonic domain and codomain:
\begin{equation}\label{the domain and codomain banach scales, subsonic case}
    \Es_s=\R^3\times\W_s\times\Xs_s\times\Xs_s  
    \text{ and }
    \Fs_s=\R^3\times\Y_s\times\W_s\times\Xs_s.
\end{equation}

From these spaces we now denote the following Banach scales:
\begin{equation}\label{the banach scales for our problem}
    \pmb{\X}=\tcb{\X_s}_{s\in\N},\quad\pmb{\Y}=\tcb{\Y_s}_{s\in\N},\quad \pmb{\W}=\tcb{\W_s}_{s\in\N},\quad\pmb{\E}=\tcb{\E_s}_{s\in\N},\quad\pmb{\F}=\tcb{\F_s}_{s\in\N},
\end{equation}
and
\begin{equation}
    \pmb{\X}^{\mathrm{sub}}=\tcb{\Xs_s}_{s\in\N},\quad\pmb{\E}^{\m{sub}}=\tcb{\Es_s}_{s\in\N},\quad\pmb{\F}^{\m{sub}}=\tcb{\Fs_s}_{s\in\N}.
\end{equation}

We have the following result which says that the scales~\eqref{the banach scales for our problem} our admissible for our Nash-Moser framework (of Appendix~\ref{subsection on a NMIFT}).
\begin{lem}[LP-smoothability of the Banach scales]\label{lem on LP smoothability of the Banach scales}
    Each of the Banach scales $\pmb{\X}$, $\pmb{\X}^{\m{sub}}$, $\pmb{\Y}$, $\pmb{\W}$, $\pmb{\E}$, $\pmb{\E}^{\m{sub}}$, $\pmb{\F}$, and $\pmb{\F}^{\m{sub}}$ is LP-smoothable in the sense of Definition~\ref{defn of smoothable and LP-smoothable Banach scales}.
\end{lem}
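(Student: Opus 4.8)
The plan is to reduce the claim to the LP-smoothability of a short list of ``atomic'' one-factor scales and then to invoke the stability of LP-smoothability under finite products and index shifts. First I would recall from Definition~\ref{defn of smoothable and LP-smoothable Banach scales} that LP-smoothability of a scale $\{Z_s\}_{s\in\N}$ amounts to exhibiting a Littlewood-Paley family $\{S_\theta\}_{\theta\ge1}$ satisfying the usual Jackson/Bernstein-type bounds $\tnorm{S_\theta z}_{Z_b}\lesssim\theta^{(b-a)_+}\tnorm{z}_{Z_a}$ and $\tnorm{(I-S_\theta)z}_{Z_a}\lesssim\theta^{-(b-a)_+}\tnorm{z}_{Z_b}$, together with $\sup_\theta\tnorm{S_\theta}_{\mathcal{L}(Z_s)}<\infty$, the strong convergence $S_\theta\to I$, the $\theta$-regularity estimate on $\frac{d}{d\theta}S_\theta$, and the Bernstein comparison $\tnorm{\Delta_j z}_{Z_b}\asymp 2^{j(b-a)}\tnorm{\Delta_j z}_{Z_a}$ for the blocks $\Delta_j:=S_{2^{j+1}}-S_{2^j}$. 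Since every operator family below will be a single Fourier multiplier acting diagonally across all product factors, it is elementary (and implicit in the setup of Appendix~\ref{subsection on a NMIFT}) that: (i) the constant scale $\{\R^3\}$ is LP-smoothable with $S_\theta=\mathrm{id}$; (ii) a finite product of LP-smoothable scales is LP-smoothable, with smoothing operators acting componentwise, the product estimates being obtained by summing the factorwise ones; and (iii) LP-smoothability is preserved under a reindexing $s\mapsto k+s$. Granting these, it remains only to treat the scales $\{H^s(\R^d)\}_s$, $\{\mathcal{H}^s(\R^d)\}_s$, and $\{(\dot H^{-1}\cap H^{1+s})(\R^d)\}_s$.

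For the atomic scales I would fix once and for all a radial cutoff $\chi\in C_0^\infty(\R^d;[0,1])$ with $\chi\equiv1$ on $B(0,1)$ and $\supp\chi\subset B(0,2)$, and set $S_\theta=\chi(D/\theta)$, i.e.\ Fourier multiplication by $\xi\mapsto\chi(\xi/\theta)$, for $\theta\ge1$. For the standard scale $\{H^s(\R^d)\}$ all of the required bounds are the textbook ones and follow from Plancherel's theorem; I would either reproduce them in a couple of lines or cite the toolbox of Appendix~\ref{toolbox for smooth-tameness}. The crucial point, and the only place the nonstandard weights enter, is that since $\theta\ge1$ and $\chi\equiv1$ on $B(0,1)$, the multiplier $\chi(\cdot/\theta)$ equals $1$ on the closed unit ball, so $S_\theta$ acts as the identity on the low-frequency part of a tempered distribution while $I-S_\theta$ annihilates it entirely. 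Combined with the Fourier/spatial characterization of $\mathcal{H}^s(\R^d)$ from Appendix~\ref{appendix on the anisobros} (Proposition~\ref{proposition on spatial characterization of anisobros}), which identifies $\tnorm{\cdot}_{\mathcal{H}^s}$ with an $s$-independent low-frequency seminorm plus $\tnorm{\tbr{D}^s(\cdot)}_{L^2}$ of the high-frequency part, this immediately transfers the $H^s$ estimates and the Bernstein comparisons to the $\mathcal{H}^s$ scale and to its shift $\mathcal{H}^{1+s}$. For $\{(\dot H^{-1}\cap H^{1+s})(\R^d)\}$ the same observation applies: $S_\theta$ leaves the $\dot H^{-1}$ seminorm untouched, and on frequencies $\tabs{\xi}\gtrsim\theta\ge1$ one has $\tabs{\xi}^{-1}\lesssim\tbr{\xi}^{-1}$, so the $\dot H^{-1}$-contribution of $I-S_\theta$ is dominated by its $H^{1+s}$-contribution, and the estimates reduce once more to the $H^\bullet$ case.

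With the atomic scales in hand I would finish by expanding the definitions~\eqref{the domain and codomain banach scales} and~\eqref{the domain and codomain banach scales, subsonic case}: each of $\pmb{\X}$, $\pmb{\X}^{\m{sub}}$, $\pmb{\Y}$, $\pmb{\W}$, $\pmb{\E}$, $\pmb{\E}^{\m{sub}}$, $\pmb{\F}$, and $\pmb{\F}^{\m{sub}}$ is a finite $\ell^2$-product of copies of $\{H^s\}$, $\{\mathcal{H}^s\}$, $\{\mathcal{H}^{1+s}\}$, $\{(\dot H^{-1}\cap H^{1+s})\}$, and the constant scale $\{\R^3\}$; componentwise application of the multiplier $S_\theta$ above then furnishes the required LP-smoothing family, and with the $\ell^2$ product norms the constants in the factorwise estimates are not degraded. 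I expect the only genuinely delicate point to be the verification for the nonstandard scales $\mathcal{H}^s$ and $\dot H^{-1}\cap H^{1+s}$, which is precisely why the cutoff must be chosen with $\chi\equiv1$ on $B(0,1)$ so that $S_\theta$ fixes all low modes; every remaining step is routine bookkeeping with Plancherel's theorem and the product structure.
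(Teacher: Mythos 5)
Your proposal is correct and follows essentially the same route as the paper: reduce to the atomic one-factor scales ($\R$, $H^s$, $\mathcal{H}^s$, $\dot H^{-1}\cap H^s$) using stability of LP-smoothability under finite products and reindexing, then verify each atomic scale with a Fourier cutoff that is identically $1$ on the unit ball, so the nonstandard low-frequency weights in $\mathcal{H}^s$ and $\dot H^{-1}\cap H^{1+s}$ are either fixed by $S_j$ or annihilated by $I-S_j$. The only differences are cosmetic — the paper delegates the atomic verifications to Examples~\ref{example of a fixed Banach space} and~\ref{example of Euclidean Soblev spaces} and Proposition~\ref{lem on lp smoothability of anisotropic Sobolev spaces} (which use the sharp cutoff $\mathds{1}_{B(0,2^j)}(\grad/2\pi\ii)$ rather than your smooth $\chi(D/\theta)$, and the discrete conditions \eqref{smoothing_1}–\eqref{smoothing_LP_2} rather than your continuous-parameter paraphrase), neither of which affects the argument for these $L^2$-based spaces.
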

\begin{proof}
    As the product of LP-smoothable scales is again LP-smoothable (see Lemma 2.17 in Stevenson and Tice~\cite{stevenson2023wellposedness}), it is sufficient to check that each of the Banach scales $\R$, $\tcb{H^s(\R^d;\R^\ell)}_{s\in\N}$, $\tcb{\mathcal{H}^s(\R^d)}$, and $\tcb{(\dot{H}^{-1}\cap H^s)(\R^d)}_{s\in\N}$ is LP-smoothable. Each of these is LP-smoothable as a consequence of Examples~\ref{example of a fixed Banach space} and~\ref{example of Euclidean Soblev spaces}, along with Proposition~\ref{lem on lp smoothability of anisotropic Sobolev spaces}. 
\end{proof}

We shall also need norms (and spaces) that measure appropriately the behavior in the limit as $\mu\to0$ or $\sig\to0$; thus we make the following definitions.

\begin{defn}[Parameter dependent norms and spaces]\label{defn of parameter dependent norms}
    Given $s\in\N$, $\mu,\sig\in\R$ we make the following definitions of extended norms:
    \begin{equation}
        {_\mu}\tnorm{v}_{H^s}=\tp{\tnorm{v}_{H^s}^2+\mu^4\tnorm{v}^2_{H^{2+s}}}^{1/2},\quad {_\sig}\tnorm{\eta}_{\mathcal{H}^s}=\tp{\tnorm{\eta}^2_{\mathcal{H}^s}+\sig^4\tnorm{\eta}^2_{\mathcal{H}^{2+s}}}^{1/2},
    \end{equation}
    and
    \begin{equation}
        {_{\mu,\sig}}\tnorm{\eta}_{\mathcal{H}^s}=\tp{\tnorm{\eta}_{\mathcal{H}^s}^2+\sig^4\tnorm{\eta}_{\mathcal{H}^{2+s}}^2+\tp{\mu^2+\sig^2}\tp{\tnorm{\eta}_{\mathcal{H}^{1+s}}^2+\sig^4\tnorm{\eta}_{\mathcal{H}^{3+s}}^2}}^{1/2},
    \end{equation}
    where $v\in H^s(\R^d;\R^d)$ and $\eta\in\mathcal{H}^s(\R^d)$. We then make the following definitions of parameter dependent spaces.
    \begin{equation}
        {_\mu}H^s(\R^d;\R^d)=\tcb{v\in H^s(\R^d)\;:\;{_\mu}\tnorm{v}_{H^s}<\infty},\quad{_{\mu,\sig}}\mathcal{H}^s(\R^d)=\tcb{\eta\in\mathcal{H}^s(\R^d)\;:\;{_{\mu,\sig}}\tnorm{\eta}_{\mathcal{H}^s}<\infty},
    \end{equation}
    \begin{equation}
        {_{\sig}}\mathcal{H}^s(\R^d)=\tcb{\eta\in\mathcal{H}^s(\R^d)\;:\;{_{\sig}}\tnorm{\eta}_{\mathcal{H}^s}<\infty},
    \end{equation}
    and 
    \begin{equation}\label{sybil}
        {_{\mu,\sig}}\X_s=\tp{{_\mu}H^s(\R^d;\R^d)}\times\tp{{_{\mu,\sig}}\mathcal{H}^s(\R^d)},\quad{_{\mu,\sig}}\Xs_s=\tp{{_\mu}H^s(\R^d;\R^d)}\times\tp{{_\sig}\mathcal{H}^{1+s}(\R^d)}.
    \end{equation}
\end{defn}

\begin{rmk}
We have the following equivalences of Hilbert spaces:
\begin{equation}
    {_\mu}H^s(\R^d;\R^d)=\begin{cases}
        H^s(\R^d;\R^d)&\text{if }\mu=0,\\
        H^{2+s}(\R^d;\R^d)&\text{if }\mu\neq0,
    \end{cases}\quad{_{\mu,\sig}}\mathcal{H}^s(\R^d)=\begin{cases}
        \mathcal{H}^{s}(\R^d)&\text{if }\mu=\sig=0,\\
        \mathcal{H}^{1+s}(\R^d)&\text{if }\mu\neq0,\;\sig=0,\\
        \mathcal{H}^{3+s}(\R^d)&\text{if }\sig\neq0,
    \end{cases}
\end{equation}
and
\begin{equation}
    {_\sig}\mathcal{H}^{1+s}(\R^d)=\begin{cases}
        \mathcal{H}^{1+s}(\R^d)&\text{if }\sig=0,\\
        \mathcal{H}^{3+s}(\R^d)&\text{if }\sig\neq0.
    \end{cases}
\end{equation}
\end{rmk}

\subsection{Smooth-tameness of the nonlinear operator}\label{section on smooth tameness of the nonlinear operator}

In this part of the document we associate to system~\eqref{traveling wave formulation of the equation} nonlinear operators encoding, in particular, the forward mapping of the PDE. We then verify that these operators obey the requisite smoothness and tameness assumptions required by our Nash-Moser scheme. It turns out that this latter task is delightfully easy due to the simple structure of each constituent nonlinearity in the maps defined below.

We shall define two distinct nonlinear operators; the first is meant for a wave speed agnostic analysis that in the end misses optimal derivative counting in the `subsonic regime' while the second is adapted to the case of subsonic wave speed.

\begin{defn}[Nonlinear maps, omnisonic case]\label{nonlinear map defs}
We associate to the system~\eqref{traveling wave formulation of the equation} the following operators, which are defined for $\N\ni s\ge\max\tcb{3,1+\tfloor{d/2}}$. First, we have $\Uppsi:\R^3\times\X_{s+3}\to\Y_s$ defined by
\begin{equation}
    \Uppsi(\gam,\mu,\sig,v,\eta)=\bpm\grad\cdot((1+\eta)(v-\gam e_1))\\(1+\eta)(v-\gam e_1)\cdot\grad v+v-\mu^2\grad\cdot((1+\eta)\S v)+(1+\eta)(1-\sig^2\Delta)\grad\eta\epm.
\end{equation}
Second, we have $\Upphi:\R^2\times\W_s\times\X_{s+1}\to\Y_s$ defined by
\begin{equation}
     \Upphi(\mu,\sig,\tcb{\tau_{i}}_{i=0}^\ell,\tcb{\varphi_{i}}_{i=0}^\ell,v,\eta)=\sum_{i=0}^\ell\eta^i(0,\m{Tr}\tau_i\grad\eta+(\mu+\sig)\tau_i\grad\eta+\varphi_i).
\end{equation}
Next, we define $\Upupsilon:\R^3\times(\X_{s+3}\times\W_s)\to\Y_s\times\W_s$ via
\begin{equation}\label{upsilon is found here}
    \Upupsilon(\gam,\mu,\sig,v,\eta,\tcb{\tau_i}_{i=0}^\ell,\tcb{\varphi_i}_{i=0}^\ell)=\sp{\Uppsi(\gam,\mu,\sig,v,\eta)+\Upphi(\mu,\sig,\tcb{\tau_i}_{i=0}^\ell,\tcb{\varphi_i}_{i=0}^\ell,v,\eta),\tcb{\tau_i}_{i=0}^\ell,\tcb{\varphi_i}_{i=0}^\ell}.
\end{equation}
Finally, we set  $\Bar{\Uppsi}:\E_{s+3}\to\F_s$ to be the map
\begin{equation}\label{definition of uppsi bar}
    \Bar{\Uppsi}(\gam,\mu,\sig,\tcb{\tau_i}_{i=0}^\ell,\tcb{\varphi_i}_{i=0}^\ell,w,\xi,v,\eta)=\bpm\gam,\mu,\sig\\\Upupsilon(\gam,\mu,\sig,v,\eta,\tcb{\tau_i}_{i=0}^\ell,\tcb{\varphi_i}_{i=0}^\ell)\\\tbr{\mu\grad}^2v-w\\(1+(\mu+\sig)|\grad|)\tbr{\sig\grad}^2\eta-\xi\epm.
\end{equation}
\end{defn}

\begin{defn}[Nonlinear maps, subsonic case]\label{subsonic nonlinear maps definition}
Let $\N\ni s\ge\max\tcb{2,1+\tfloor{d/2}}$. We note that $\Uppsi:\R^3\times\Xs_{s+2}\to\Y_s$. In the subsonic case we also associate to system~\eqref{traveling wave formulation of the equation} the following additional operators. First we define $\Upphi^{\m{sub}}:\W_s\times\Xs_s\to\Y_s$ via 
\begin{equation}
    \Upphi^{\m{sub}}(\tcb{\tau_i}_{i=0}^\ell,\tcb{\varphi_i}_{i=0}^\ell,v,\eta)=\sum_{i=0}^\ell \eta^i (0,\tau_i\grad\eta+\varphi_i).
\end{equation}
Next we define $\Upupsilon^{\m{sub}}:\R^3\times(\Xs_{s+2}\times\W_s)\to\Y_s\times\W_s$ via
\begin{equation}
    \Upupsilon^{\m{sub}}(\gam,\mu,\sig,v,\eta,\tcb{\tau_i}_{i=0}^\ell,\tcb{\varphi_i}_{i=0}^\ell)=(\Uppsi(\gam,\mu,\sig,v,\eta)+\Upphi^{\mathrm{sub}}(\tcb{\tau_i}_{i=0}^\ell,\tcb{\varphi_i}_{i=0}^\ell,v,\eta),\tcb{\tau_i}_{i=0}^\ell,\tcb{\varphi_i}_{i=0}^\ell).
\end{equation}
Finally we set $\Bar{\Uppsi}^{\m{sub}}:\Es_{s+2}\to\Fs_s$ to be the map
\begin{equation}\label{uppsi bar sub}
    \Bar{\Uppsi}^{\m{sub}}(\gam,\mu,\sig,\tcb{\tau_i}_{i=0}^\ell,\tcb{\varphi_i}_{i=0}^\ell,w,\xi,v,\eta)=\bpm\gam,\mu,\sig\\\Upupsilon^{\m{sub}}(\gam,\mu,\sig,v,\eta,\tcb{\tau_i}_{i=0}^\ell,\tcb{\varphi_i}_{i=0}^\ell)\\\tbr{\mu\grad}^2v-w\\\tbr{\sig\grad}^2\eta-\xi\epm.
\end{equation}
\end{defn}

We shall use the Banach scales notation of~\eqref{atomic Banach scale notation} from Appendix~\ref{toolbox for smooth-tameness}. The remaining goals of this subsection are to verify the smooth-tameness of the maps $\Bar{\Uppsi}$ and $\Bar{\Uppsi}^{\m{sub}}$. 

\begin{thm}[Smooth-tameness]\label{thm on smooth-tameness}
     We have that
     \begin{enumerate}
         \item $\Bar{\Uppsi} \in {_{\m{s}}}T^\infty_{3,\max\tcb{3,1+\tfloor{d/2}}}(\pmb{\E};\pmb{\F})$,
         \item $\Bar{\Uppsi}^{\m{sub}}\in{_{\m{s}}}T^\infty_{2,\max\tcb{2,1+\tfloor{d/2}}}(\pmb{\E}^{\m{sub}};\pmb{\F}^{\m{sub}})$.
     \end{enumerate}
\end{thm}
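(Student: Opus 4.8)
The plan is to exploit that $\Bar{\Uppsi}$ and $\Bar{\Uppsi}^{\m{sub}}$ are, by construction, finite assemblies — formed through sums, compositions, and products against bounded multilinear maps — of a short list of elementary building blocks, and then to invoke the closure properties of the class of smooth‑tame maps recorded in Appendix~\ref{toolbox for smooth-tameness} (together with the LP‑smoothability of all the relevant scales, which is Lemma~\ref{lem on LP smoothability of the Banach scales}) to reduce to checking smooth‑tameness of each block separately, while keeping the bookkeeping on the derivative loss $b$ and base regularity $a$ straight. The three kinds of blocks are: (a) the linear projections and inclusions attached to the finite‑dimensional factor $\R^3$ and to the pass‑through of the data $(\tcb{\tau_i}_{i=0}^\ell,\tcb{\varphi_i}_{i=0}^\ell)$; (b) translation‑commuting Fourier multipliers whose symbols depend polynomially on $(\mu,\sig)$ and are of the ``simple'' type governed by Proposition~\ref{corollary on tame estimates on simple multipliers} — namely $\tbr{2\pi\mu\xi}^2$, $(1+(\mu+\sig)2\pi\tabs{\xi})\tbr{2\pi\sig\xi}^2$, $\mu^2\tabs{2\pi\xi}^2$, $\sig^2\tabs{2\pi\xi}^2$, together with the zeroth‑order Riesz‑type multipliers $\xi\mapsto\tabs{\xi}^{-2}\xi\ot\xi$ hidden in $\S$, $\grad$, and $\grad\cdot$; and (c) finite products of Sobolev‑class functions, optionally paired against a datum $\tau_i$ or $\varphi_i$ and optionally composed with a constant‑coefficient first‑order operator — the representative cases being $\eta^i(\tau_i\grad\eta+\varphi_i)$, $(1+\eta)(v-\gam e_1)\cdot\grad v$, $\mu^2\grad\cdot((1+\eta)\S v)$, $(1+\eta)(1-\sig^2\Delta)\grad\eta$, and $\grad\cdot((1+\eta)(v-\gam e_1))$.

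I would then dispatch each kind. Blocks of type (a) are smooth‑tame for free. For blocks of type (b), since the symbols are polynomial in $(\mu,\sig)$ with $\xi$‑derivatives of at most polynomial growth, Proposition~\ref{corollary on tame estimates on simple multipliers} supplies the tame estimates, the smoothness in $(\mu,\sig)$ being evident from the polynomial dependence, and the derivative loss of each block equals the order of its symbol ($2$ for $\tbr{\mu\grad}^2$ and $\sig^2\Delta$, $3$ for $(1+(\mu+\sig)\tabs{\grad})\tbr{\sig\grad}^2$, $1$ for $\grad$ and $\grad\cdot$, $0$ for the Riesz pieces). For blocks of type (c), each is a bounded multilinear map in $(v,\eta,\tcb{\tau_i}_{i=0}^\ell,\tcb{\varphi_i}_{i=0}^\ell)$ composed with at most one first‑order operator, hence automatically $C^\infty$ with only finitely many nonzero derivatives; the tame estimates are then precisely the Moser‑type product estimates for $H^s(\R^d)$ and for the anisotropic spaces $\mathcal{H}^s(\R^d)$, which we cite from Corollary~\ref{coro on more algebra properties} and Appendix~\ref{appendix on the anisobros}. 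Tallying the constraints from these blocks pins down the base regularity: one needs $s\ge1+\tfloor{d/2}$ for the supercritical embeddings of $H^s(\R^d)$ and $\mathcal{H}^s(\R^d)$ into $L^\infty(\R^d)$ and the algebra property underpinning the product estimates, and one needs $s\ge3$ (resp.\ $s\ge2$ in the subsonic case, where $\eta$ is carried in the scale $\mathcal{H}^{1+s}$ one derivative higher) for $\Uppsi$ and its subsonic counterpart to be well‑defined between the indicated scales — hence $a=\max\tcb{3,1+\tfloor{d/2}}$ (resp.\ $\max\tcb{2,1+\tfloor{d/2}}$), and the total loss works out to $b=3$ (resp.\ $b=2$).

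One bookkeeping point I would not skip is verifying that the images really land in the stated codomains, the only non‑obvious issue being the $\dot{H}^{-1}$ seminorm on the first slot of $\Y_s$. Here the remedy is that everything assigned to that slot by $\Uppsi$, $\Upphi$, and $\Upphi^{\m{sub}}$ has divergence form: for instance $\grad\cdot((1+\eta)(v-\gam e_1))=\grad\cdot v+\grad\cdot(\eta v)-\gam\pd_1\eta$, where $\grad\cdot v$ and $\grad\cdot(\eta v)$ lie in $\dot{H}^{-1}(\R^d)$ automatically (as $v$ and $\eta v$ lie in $L^2$ by the product estimates) and $\gam\pd_1\eta\in\dot{H}^{-1}(\R^d)$ by the very definition~\eqref{some norm on the anisobros} of $\mathcal{H}^s$ — the $\tabs{\xi}^{-2}\xi_1^2$ weight on $\tcb{\tabs{\xi}\le1}$ is exactly the low‑frequency $\dot{H}^{-1}$ bound on $\pd_1\eta$ — while the second component of $\Upphi$ and $\Upphi^{\m{sub}}$ vanishes there; so the $\dot{H}^{-1}$ contribution is controlled by the same product estimates as the $H^{1+s}$ contribution. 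The slack‑variable rows $\tbr{\mu\grad}^2v-w$, $(1+(\mu+\sig)\tabs{\grad})\tbr{\sig\grad}^2\eta-\xi$, and $\tbr{\sig\grad}^2\eta-\xi$ are covered by (a) and (b). Collecting all of this establishes both assertions. The main — though mild — obstacle, and the reason the computation is clean, is simply threading the derivative loss and the parameter‑uniform tameness through the composite structure; because every nonlinearity is polynomial and every multiplier is ``simple'' in the sense of the appendix, no genuinely nonlinear composition estimate is ever required.
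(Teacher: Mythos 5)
Your proposal is correct and follows essentially the same route as the paper's proof: both reduce $\Bar{\Uppsi}$ and $\Bar{\Uppsi}^{\m{sub}}$ to finitely many multilinear blocks (which are automatically smooth with strong tameness by Remark~\ref{remark on purely multlinear maps}), close under products and compositions via Proposition~\ref{lem on composition of tame maps} and Corollary~\ref{lem on product of smooth tame maps}, invoke the product/multiplier estimates of Corollary~\ref{coro on more algebra properties} and Proposition~\ref{corollary on tame estimates on simple multipliers}, and handle the $\dot{H}^{-1}$ slot through the divergence structure together with the low-frequency weight in the definition of $\mathcal{H}^s$ (Proposition~\ref{proposition on spatial characterization of anisobros}). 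The bookkeeping of the loss $\mu$ and base $r$ matches the paper's term-by-term tally.
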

\begin{proof}
    We only prove the first item, as the second item follows from a nearly identical argument. We begin by proving that $\Uppsi\in {_{\m{s}}}T^{\infty}_{3,\max\tcb{3,1+\tfloor{d/2}}}(\R^4\times\pmb{\X};\pmb{\Y})$. It suffices to show that each of the multilinear maps that comprise $\Uppsi$ is strongly $3-$tame with base $\max\tcb{3,1+\tfloor{d/2}}$, but due to Remark~\ref{remark on purely multlinear maps} we may reduce to proving $3-$tameness. In handling each of these maps we will repeatedly use our results on the preservation of tameness under compositions and products, namely Proposition~\ref{lem on composition of tame maps} and Corollary~\ref{lem on product of smooth tame maps}, without explicit reference.

    We first establish that the map
    \begin{equation}\label{prop on smooth tameness, I 1}
        \bf{H}(\R^d)\times\bm{\mathcal{H}}\ni(v,\eta)\mapsto\grad\cdot((1+\eta)(v-\gam e_1))=-\gam\pd_1\eta+\grad\cdot v+\grad\cdot(\eta v)\in\dot{H}^{-1}\cap\bf{H}(\R)
    \end{equation}
    is  $T^\infty_{1,1+\tfloor{d/2}}$ by examining each of the three terms in the sum.   The map $\mathcal{H}^s(\R^d)\ni\eta\mapsto\pd_1\eta\in\tp{\dot{H}^{-1}\cap H^{s-1}}(\R^d)$ is bounded and linear thanks to Proposition~\ref{proposition on spatial characterization of anisobros}, while the map $H^s(\R^d)\ni v\mapsto\grad\cdot v\in(\dot{H}^{-1}\cap H^{s-1})(\R^d)$ is clearly bounded and linear as well.  The bilinear  map $(v,\eta)\mapsto\grad\cdot(v\eta)$ is  $T^\infty_{1,1+\tfloor{d/2}}$ thanks to what we've just established and Lemma~\ref{lem on smooth tameness of products 3}. This completes the analysis of \eqref{prop on smooth tameness, I 1}.

   To see that the map
    \begin{equation}
        \bf{H}(\R^d)\times\bm{\mathcal{H}}\ni(v,\eta)\mapsto(1+\eta)(v-\gam e_1)\cdot\grad v\in\bf{H}(\R^d)
    \end{equation}
    is $T^\infty_{1,1+\tfloor{d/2}}$ we again expand and examine the resulting linear, bilinear, and trilinear terms. In each case, tameness follows from the boundedness of the linear map $\mathcal{H}^s(\R^d)\ni\eta\mapsto(\Uppi^1_{\m{L}}\eta,\Uppi^1_{\m{H}}\eta)\in W^{\infty,\infty}(\R^d)\times H^s(\R^d)$, which is Proposition~\ref{proposition on frequency splitting}, along with Lemma~\ref{lem on smooth tameness of products 2}.

    Similar arguments then verify that the map
    \begin{equation}
        \R\times\bf{H}(\R^d)\times\bf{H}\ni(\mu,v,\eta)\mapsto v- \mu^2\grad\cdot\tp{(1+\eta)\S v}\in\bf{H}(\R^d)
    \end{equation}
    is  $T^\infty_{2,\max\tcb{2,1+\tfloor{d/2}}}$  and that 
    \begin{equation}
        \R\times\bm{\mathcal{H}}\ni(\sig,\eta)\mapsto(1+\eta)(1-\sig^2\Delta)\grad\eta\in\bf{H}(\R^d)
    \end{equation}
    is $T^\infty_{3,\max\tcb{3,1+\tfloor{d/2}}}$.  Hence, $\Uppsi$ is strongly $3-$tame with base $\max\tcb{3,1+\tfloor{d/2}}$. 

    The next step of the proof is to verify that $\Upphi\in {_{\m{s}}}T^\infty_{1,1+\tfloor{d/2}}(\R^2\times\pmb{\W}\times\pmb{\X};\pmb{\Y})$; however, due to the simple structure of this operator, we see that this is a direct consequence of Lemmas~\ref{lemma on tameness of products 1} and~\ref{lem on smooth tameness of products 3}.

    As a consequence of the first and second steps of the proof, we thus learn that
    \begin{equation}\label{most banking institutions offer}
        \Upupsilon\in{_{\m{s}}}T^\infty_{3,\max\tcb{3,1+\tfloor{d/2}}}(\R^3\times\pmb{\W}\times\pmb{\X};\pmb{\Y}\times\pmb{\W}),
    \end{equation}
    where we recall that $\Upupsilon$ is the mapping defined in~\eqref{upsilon is found here}.

    Finally, we can complete the proof of the theorem statement. The first component of $\Bar{\Uppsi}$ is linear and thus trivially ${_{\m{s}}}T^\infty_{3,\max\tcb{3,1+\tfloor{d/2}}}$. The previous two steps of the proof handle the $\Upupsilon$ component of $\Bar{\Uppsi}$. Corollary~\ref{lem on product of smooth tame maps} handles the final two components.  
\end{proof}

\subsection{Derivative splitting}\label{section on derivative splitting}

Now that we understand the smoothness properties of the maps $\Bar{\Uppsi}$ and $\Bar{\Uppsi}^{\m{sub}}$, we next make a preliminary analysis of their derivatives with the goal of identifying a minimal essential structure. The sets up our linear analysis in the sequel to handle first pleasantly simplified equations.

The following lemma is a technical calculation that ensures we may make a useful change of unknowns in some linear problem. The subscript $\m{WD}$ is an acronym for `well-defined', referring to the fact that one can take the reciprocal of $1+\eta$.
\begin{lem}\label{the preliminary lemma for you and I}
    There exists $\rho_{\m{WD}}\in\R^+$ such that the following hold for $\N\ni s\ge 2+\tfloor{d/2}$.
    \begin{enumerate}
        \item For all $\eta\in B_{\mathcal{H}^{2+\tfloor{d/2}}}(0,\rho_{\m{WD}})$, we have that $\tnorm{\eta}_{L^\infty}\le 1/2$.
        \item For all $0<\ep\le\rho_{\m{WD}}$ and all $\eta\in B_{\mathcal{H}^{2+\tfloor{d/2}}}(0,\ep)\cap\mathcal{H}^s(\R^d)$, we have the estimate
        \begin{equation}\label{the preliminary lemma for you and I bound}
            \tnorm{1-\tp{1+\eta}^{-1}}_{\mathcal{H}^s}\lesssim\ep\tnorm{\eta}_{\mathcal{H}^s},
        \end{equation}
        where the implicit constant depends only on $s$ and $d$.
    \end{enumerate}
\end{lem}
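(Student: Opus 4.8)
The plan is to read off both items from the mapping properties of the anisotropic Sobolev spaces $\mathcal{H}^s$ assembled in Appendix~\ref{appendix on the anisobros} and the product/composition toolbox of Appendix~\ref{toolbox for smooth-tameness}; the lemma is purely a bookkeeping statement, packaging the one pointwise fact — that $1+\eta$ stays away from $0$ — which legitimizes the change of unknowns used later in the linear analysis. Concretely the steps are: (i) an $L^\infty$ embedding estimate for the first item; (ii) an algebraic identity plus a tame product/composition estimate for the second; (iii) a choice of $\rho_{\m{WD}}$ satisfying both.

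For the first item I would split $\eta=\Uppi^1_{\m{L}}\eta+\Uppi^1_{\m{H}}\eta$ via Proposition~\ref{proposition on frequency splitting}. The low piece obeys $\tnorm{\Uppi^1_{\m{L}}\eta}_{L^\infty}\le\tnorm{\Uppi^1_{\m{L}}\eta}_{W^{\infty,\infty}}\lesssim\tnorm{\eta}_{\mathcal{H}^{2+\tfloor{d/2}}}$, while the high piece lies in $H^{2+\tfloor{d/2}}(\R^d)\emb C^0_0(\R^d)$ because $2+\tfloor{d/2}>d/2$; adding the two bounds gives $\tnorm{\eta}_{L^\infty}\le C_d\tnorm{\eta}_{\mathcal{H}^{2+\tfloor{d/2}}}$. (Equivalently one may invoke the embedding $\mathcal{H}^s\emb H^s+C^\infty_0$ recorded in the introduction, or Proposition~\ref{proposition on spatial characterization of anisobros}.) Then $\rho_{\m{WD}}:=\min\{1,1/(2C_d)\}$ settles the first item, and I will shrink it further below if the second item demands it.

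For the second item, the first item yields $1+\eta\ge 1/2$ pointwise once $\tnorm{\eta}_{\mathcal{H}^{2+\tfloor{d/2}}}\le\rho_{\m{WD}}$, so $(1+\eta)^{-1}$ is legitimate and, as $\zeta\mapsto(1+\zeta)^{-1}$ is smooth near $0$, it is a bounded Sobolev multiplier on $\mathcal{H}^s\cap H^s$. I would then use the identity $1-(1+\eta)^{-1}=\eta\,(1+\eta)^{-1}$ together with its self-referential form $(1+\eta)^{-1}=1-\eta(1+\eta)^{-1}$, which realizes $1-(1+\eta)^{-1}=\eta-\eta^2(1+\eta)^{-1}$ as $\eta$ times a uniformly bounded smooth function of $\eta$. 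The anisotropic product estimates (Corollary~\ref{coro on more algebra properties}), the simple-multiplier estimates (Proposition~\ref{corollary on tame estimates on simple multipliers}), and the substitution estimates of Appendix~\ref{toolbox for smooth-tameness} then control each factor in $\mathcal{H}^s$ by $\tnorm{\eta}_{\mathcal{H}^s}$, with all constants governed by $\tnorm{\eta}_{\mathcal{H}^{2+\tfloor{d/2}}}\le\ep$; placing one copy of $\eta$ in $L^\infty$ (where it costs only $\ep$, by the first item), the other in $\mathcal{H}^s$, and letting the bounded multiplier $(1+\eta)^{-1}$ contribute only a harmless factor, one arrives at \eqref{the preliminary lemma for you and I bound} with implicit constant depending only on $s$ and $d$ after $\rho_{\m{WD}}$ is fixed. (As for the lowest admissible exponent: $s\ge 2+\tfloor{d/2}$ sits comfortably above the supercritical threshold governing the multiplicative and substitution estimates on $\mathcal{H}^s$, so every cited bound applies throughout the stated range of $s$.)

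The step I expect to require the most care — and the only genuinely nontrivial one — is that $\mathcal{H}^s$ is not a standard Sobolev space: its norm carries the anisotropic low-frequency weight $|\xi|^{-2}(|\xi|^4+\xi_1^2)$ on $B(0,1)$, so neither the $L^\infty$ embedding nor the multiplicative and composition inequalities are automatic, and everything must be routed through the frequency splitting $\Uppi^1_{\m{L}}+\Uppi^1_{\m{H}}$ and the spatial characterization of Proposition~\ref{proposition on spatial characterization of anisobros} — which is exactly what Appendices~\ref{appendix on the anisobros} and~\ref{toolbox for smooth-tameness} are designed to supply (compare the analogous one-dimensional statement in the first item of Proposition~\ref{prop on the one dimensional nonlinear maps}, reduced there to Theorem D.3 and Corollary D.9 of~\cite{stevenson2023wellposedness}). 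Once the toolbox is invoked correctly, the argument is routine; the essential content of the lemma is precisely the pointwise positivity $1+\eta\ge 1/2$ extracted in the first item.
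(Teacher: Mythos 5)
Your first item is exactly the paper's argument (frequency splitting, $W^{\infty,\infty}$ control of the low piece, Sobolev embedding of the high piece), so no issue there. The gap is in the second item. The identity $1-(1+\eta)^{-1}=\eta-\eta^2(1+\eta)^{-1}$ does not produce enough powers of $\eta$ to escape the anisotropic space: for $d\ge 2$ a function in $\mathcal{H}^s(\R^d)$ need not be square-integrable (its low-frequency part is only controlled by the weight $|\xi|^{-2}(|\xi|^4+\xi_1^2)$), and by Proposition~\ref{proposition on algebra properties} one needs $r_d=1+\tfloor{(d+1)/(d-1)}$ factors before a product of $\mathcal{H}^0$ functions lands in $L^2$ (e.g.\ $r_2=4$). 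So you cannot place $\eta^2(1+\eta)^{-1}$ in $H^s$ and invoke $H^s\emb\mathcal{H}^s$, and the alternative route — declaring $(1+\eta)^{-1}$ "a bounded Sobolev multiplier on $\mathcal{H}^s$" — is precisely the assertion the lemma is proving: the only multiplier statements available for $\mathcal{H}^s$ (Proposition~\ref{proposition on algebra properties} and Corollary~\ref{coro on more algebra properties}) require the multiplier to have compactly supported Fourier transform or to itself lie in $\mathcal{H}^{1+\tfloor{d/2}}$, which for $(1+\eta)^{-1}-1$ is circular. (For $d=1$, where $\mathcal{H}^s=H^s$ and $r_1=1$, your argument does go through.)

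The paper's fix is to Taylor expand to order $r_d$,
\begin{equation*}
    1-\frac{1}{1+\eta}=\sum_{\ell=1}^{r_d}(-\eta)^\ell+\int_0^1\frac{(1-t)^{r_d-1}}{(r_d-1)!}\Bigl(\frac{t\eta}{1+t\eta}-\frac{\eta}{1+\eta}\Bigr)\eta^{r_d}\;\m{d}t,
\end{equation*}
estimating the polynomial part purely inside $\mathcal{H}^s$ via the second item of Corollary~\ref{coro on more algebra properties} (each extra factor costing an $\ep$), while the remainder carries $\eta^{r_d}\in L^2\cap H^s$ by the third item of that corollary, so the bounded smooth composition can be estimated in ordinary $H^s$ (Corollary D.9 of~\cite{stevenson2023wellposedness}) and then pushed into $\mathcal{H}^s$ by the embedding $H^s\emb\mathcal{H}^s$. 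If you iterate your self-referential identity $r_d$ times rather than once, you recover essentially this structure; as written, though, the two-term truncation fails in dimension $d\ge 2$.
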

\begin{proof}
    We define $\rho_{\m{WD}}$ to be sufficiently small so that for all $\eta\in B_{\mathcal{H}^{1+\tfloor{d/2}}}(0,\rho_{\m{WD}})$, we have that 
    \begin{equation}
        \max\tcb{\tnorm{\Uppi^1_{\m{L}}\eta}_{L^\infty},\tnorm{\Uppi^1_{\m{H}}\eta}_{L^\infty}}\le1/4,
    \end{equation}
    which is possible thanks to Proposition~\ref{proposition on frequency splitting} and the Sobolev embedding. This is the first item.

    To prove the second item, we first recall that $r_d$ is defined in~\eqref{the definition of rd} and Taylor expand 
    \begin{equation}
        1-\f{1}{1+\eta}=\sum_{\ell=1}^{r_d}(-\eta)^\ell+\int_0^1\f{(1-t)^{r_d-1}}{(r_d-1)!}\bp{\f{t\eta}{1+t\eta}-\f{\eta}{1+\eta}}\eta^{r_d}\;\m{d}t.
    \end{equation}
    We then estimate the $\mathcal{H}^s$ norm of the series on the right by using the second item of Corollary~\ref{coro on more algebra properties}.  In contrast, we estimate the integral term on the right in the $H^s$ norm, which is a valid strategy in light of the embedding $H^s\emb\mathcal{H}^s$ (see Proposition 5.3 in Leoni and Tice~\cite{leoni2019traveling}). To do so we use the third item of the aforementioned corollary, along with Corollary D.9 of Stevenson and Tice~\cite{stevenson2023wellposedness}.  Combining these yields \eqref{the preliminary lemma for you and I bound}.
\end{proof}

Armed with Lemma~\ref{the preliminary lemma for you and I}, we may now decompose a certain partial derivative of the entire nonlinear operator into a principal part and a perturbative remainder.

\begin{prop}[Derivative splitting]\label{prop on derivative splitting}
    Let $\N\ni s\ge\max\tcb{3,1+\tfloor{d/2}}$, $\ep\in(0,\rho_{\m{WD}}]$, $(\gam,\mu,\sig)\in\R^3$, $(v_0,\eta_0)\in B_{\X_{2+\tfloor{d/2}}}(0,\ep)\cap\X_{\infty}$, and $(v,\eta)\in\X_{s+3}$.  Let $\Uppsi:\R^3\times \X_{s+3}\to\Y_s$  or $\Uppsi:\R^3\times\Xs_{s+3}\to\Y_s$ be as in Definition~\ref{nonlinear map defs} or~\ref{subsonic nonlinear maps definition}, and write  $D_2\Uppsi$
    for its derivative with respect to the second factor (i.e. $\X_{s+3}$ or $\Xs_{s+2}$).  Then the following hold.
    \begin{enumerate}
        \item We have the splitting
    \begin{equation}\label{the second factor decomposition identity}
        D_2\Uppsi(\gam,\mu,\sig,v_0,\eta_0)[v,\eta]=P^{\gam,\mu,\sig}_{v_0,\eta_0}[v,\eta]+(0,R^{\gam,\mu,\sig}_{v_0,\eta_0}[v,\eta]),
    \end{equation}
    where $P_{v_0,\eta_0}^{\gam,\mu,\sig}[v,\eta]=(h,f)$ is given by
    \begin{equation}\label{first version of the principal part equations}
        \begin{cases}
            \grad\cdot((v_0-\gam e_1)\eta)+\grad\cdot((1+\eta_0)v)=h,\\
            (1+\eta_0)(v_0-\gam e_1)\cdot\grad v+v-\mu^2\grad\cdot((1+\eta_0)\S v)+(1+\eta_0)(1-\sig^2\Delta)\grad\eta=f,
        \end{cases}
    \end{equation}
    and the remainder piece $R^{\gam,\mu,\sig}_{v_0,\eta_0}[v,\eta]\in H^s(\R^d;\R^d)$ is 
    \begin{equation}\label{remainder formula}
        R^{\gam,\mu,\sig}_{v_0,\eta_0}[v,\eta]=\tp{(1+\eta_0)v+\eta(v_0-\gam e_1)}\cdot\grad v_0-\mu^2\grad\cdot(\eta\S v_0)+\eta(1-\sig^2\Delta)\grad\eta_0.
    \end{equation}
        \item We have the further decomposition:
    \begin{equation}\label{second splitting}
        P^{\gam,\mu,\sig}_{v_0,\eta_0}[v/(1+\eta_0),\eta]=Q^{\gam,\mu,\sig}_{v_0,\eta_0}[v,\eta]+(0,S^{\gam,\mu,\sig}_{v_0,\eta_0}[v]),
    \end{equation}
    where $Q^{\gam,\mu,\sig}_{v_0,\eta_0}[v,\eta]=(h,f)$ is given by
    \begin{equation}\label{second principal part equations}
        \begin{cases}
        \grad\cdot v+\grad\cdot((v_0-\gam e_1)\eta)=h,\\
        (v_0-\gam e_1)\cdot\grad v+v-\mu^2\grad\cdot\S v+(1+\eta_0)(1-\sig^2\Delta)\grad\eta=f,
        \end{cases}
    \end{equation}
    and the remainder $S^{\gam,\mu,\sig}_{v_0,\eta_0}[v]\in H^s(\R^d;\R^d)$ is
    \begin{multline}\label{remainder formula. 2}
        S^{\gam,\mu,\sig}_{v_0,\eta_0}[v]=-\tp{(1+\eta_0)^{-1}(v_0-\gam e_1)\cdot\grad\eta_0}v+\tp{(1+\eta_0)^{-1}-1}v\\+\mu^2\grad\cdot\tp{(1+\eta_0)^{-1}\tp{v\otimes\grad\eta_0+\grad\eta_0\otimes v+2v\cdot\grad\eta_0 I}}.
    \end{multline}
    \end{enumerate}
\end{prop}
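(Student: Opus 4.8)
The proof of Proposition~\ref{prop on derivative splitting} is essentially a direct computation, so the plan is to differentiate the nonlinear operator $\Uppsi$ explicitly and then reorganize the resulting terms into a ``principal'' part plus a ``remainder'' part, where the principal part collects the highest-order contributions (those hitting the new unknowns $v,\eta$ with the full differential operators from the linearization at zero, but with variable coefficients built from $v_0,\eta_0$) and the remainder collects lower-order terms in which derivatives fall on the background $(v_0,\eta_0)$. I would begin by recalling that $\Uppsi$ is a polynomial (in fact, at most cubic) in $(v,\eta)$ composed with differential operators, so its Fr\'echet derivative in the second factor exists and is computed by the product rule term by term; the smoothness needed to justify this was already established in Theorem~\ref{thm on smooth-tameness}. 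Differentiating the first component $\grad\cdot((1+\eta)(v-\gam e_1))$ in the direction $(v,\eta)$ gives exactly $\grad\cdot((v_0-\gam e_1)\eta)+\grad\cdot((1+\eta_0)v)$, with no remainder, matching the first line of~\eqref{first version of the principal part equations}.

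For the second component, I would differentiate each of the four summands of the momentum nonlinearity separately. The advective term $(1+\eta)(v-\gam e_1)\cdot\grad v$ has derivative $(1+\eta_0)(v_0-\gam e_1)\cdot\grad v + \eta(v_0-\gam e_1)\cdot\grad v_0 + (1+\eta_0)v\cdot\grad v_0$; the first of these three goes into the principal part $f$-equation, and the latter two (where $\grad$ hits $v_0$) go into $R^{\gam,\mu,\sig}_{v_0,\eta_0}$. The linear damping term $v$ contributes $v$ to the principal part. The viscous term $-\mu^2\grad\cdot((1+\eta)\S v)$ differentiates to $-\mu^2\grad\cdot((1+\eta_0)\S v) - \mu^2\grad\cdot(\eta\,\S v_0)$, again splitting the first piece into the principal part and the second into the remainder. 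The gravity-capillary term $(1+\eta)(1-\sig^2\Delta)\grad\eta$ differentiates to $(1+\eta_0)(1-\sig^2\Delta)\grad\eta + \eta(1-\sig^2\Delta)\grad\eta_0$, splitting analogously. Collecting the principal pieces reproduces~\eqref{first version of the principal part equations} and collecting the remainder pieces reproduces~\eqref{remainder formula}; the only point requiring a word is that $R^{\gam,\mu,\sig}_{v_0,\eta_0}[v,\eta]\in H^s(\R^d;\R^d)$, which follows from the smooth-tameness/mapping properties together with $(v_0,\eta_0)\in\X_\infty$, so that every factor involving $v_0$ or $\eta_0$ is arbitrarily smooth and the product lands in $H^s$.

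For the second item I would substitute $v\rightsquigarrow v/(1+\eta_0)$ into the principal part equations~\eqref{first version of the principal part equations} and then ``commute'' the scalar multiplication by $(1+\eta_0)^{-1}$ past the differential operators, picking up commutator terms that become the remainder $S^{\gam,\mu,\sig}_{v_0,\eta_0}[v]$. Concretely: in the first equation, $\grad\cdot((1+\eta_0)\cdot v/(1+\eta_0)) = \grad\cdot v$ exactly, so the $h$-equation of~\eqref{second principal part equations} holds with no remainder. In the second equation, $(1+\eta_0)(v_0-\gam e_1)\cdot\grad(v/(1+\eta_0)) = (v_0-\gam e_1)\cdot\grad v - (1+\eta_0)^{-1}((v_0-\gam e_1)\cdot\grad\eta_0)\,v$, using the quotient rule; the first term is principal and the second is the first summand of~\eqref{remainder formula. 2}. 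The damping term $v/(1+\eta_0) = v + ((1+\eta_0)^{-1}-1)v$ gives the second summand of $S$. The viscous term $-\mu^2\grad\cdot((1+\eta_0)\S(v/(1+\eta_0)))$: here one expands $\S(v/(1+\eta_0)) = (1+\eta_0)^{-1}\S v + (1+\eta_0)^{-1}(\text{terms with }\grad\eta_0)$ using the product rule on $\grad v$ and $\grad v^{\m t}$ and $\grad\cdot v$ inside the definition~\eqref{viscous stress tensor like object} of $\S$, so that $(1+\eta_0)\S(v/(1+\eta_0)) = \S v - (1+\eta_0)^{-1}(v\otimes\grad\eta_0 + \grad\eta_0\otimes v + 2(v\cdot\grad\eta_0)I)$, giving the principal $-\mu^2\grad\cdot\S v$ plus the last summand of~\eqref{remainder formula. 2}. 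Finally the gravity term is unchanged since $v$ does not appear there: $(1+\eta_0)(1-\sig^2\Delta)\grad\eta$ remains. Lemma~\ref{the preliminary lemma for you and I} is what guarantees $1+\eta_0$ is bounded away from zero (so $(1+\eta_0)^{-1}$ makes sense) and that $(1+\eta_0)^{-1}-1$ and the other coefficient functions lie in the appropriate spaces, which in turn gives $S^{\gam,\mu,\sig}_{v_0,\eta_0}[v]\in H^s(\R^d;\R^d)$.

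I expect the only mildly delicate step to be the algebraic verification of the viscous-term identity $(1+\eta_0)\,\S\!\left(\tfrac{v}{1+\eta_0}\right) = \S v - (1+\eta_0)^{-1}\bigl(v\otimes\grad\eta_0 + \grad\eta_0\otimes v + 2(v\cdot\grad\eta_0)I\bigr)$, which requires carefully applying the Leibniz rule to each of $\grad(v/(1+\eta_0))$, its transpose, and $\grad\cdot(v/(1+\eta_0))$ and recognizing that the $\grad(1+\eta_0)^{-1}$ contributions assemble into exactly the claimed symmetric combination; everything else is bookkeeping. No compactness, functional-analytic, or estimate-heavy input is needed beyond the smooth-tameness already in hand and the bound of Lemma~\ref{the preliminary lemma for you and I}; the content of the proposition is purely the identification of which terms are ``large'' (principal) and which can later be treated perturbatively in the linear analysis of Section~\ref{section on linear analysis}.
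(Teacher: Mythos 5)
Your proposal is correct and takes essentially the same approach as the paper, whose entire proof of this proposition reads ``These follow from elementary calculations''; you have simply written out those calculations (the term-by-term differentiation for the first item and the quotient-rule/commutator bookkeeping, including the viscous-term identity, for the second), and they check out.
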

\begin{proof}
    These follow from elementary calculations.
\end{proof}

 The following computation verifies that the remainder pieces obtained in Proposition~\ref{prop on derivative splitting} are indeed perturbative in the framework of tame estimates.

\begin{prop}[Remainder estimates]\label{prop on remainder estimates}

    Let $I\Subset\R^+$, $J,K\Subset\R$ be bounded, open, and non-empty intervals. Suppose that $s\in\N$, $\ep\in(0,\rho_{\m{WD}}]$, $(v_0,\eta_0)\in B_{\X_{3+\tfloor{d/2}}}(0,\ep)\cap\X_\infty$, and $(\gam,\mu,\sig)\in I\times J\times K=\mathcal{Q}$. The following hold.
    \begin{enumerate}
        \item The remainder $R^{\gam,\mu,\sig}_{v_0,\eta_0}$, defined via~\eqref{remainder formula}, extends to a bounded linear maps 
        \begin{equation}
            R^{\gam,\mu,\sig}_{v_0,\eta_0}:\;{_{\mu,\sig}}\X_s\to H^s(\R^d;\R^d),\quad R^{\gam,\mu,\sig}_{v_0,\eta_0}:{_{\mu,\sig}}\Xs_s\to H^s(\R^d;\R^d)
        \end{equation}
        that obeys the estimates
        \begin{equation}\label{remainder estimate}
            \tnorm{R^{\gam,\mu,\sig}_{v_0,\eta_0}[v,\eta]}_{H^s}\lesssim\ep\tnorm{v,\eta}_{{_{\mu,\sig}}\X_s}+\begin{cases}
                0&\text{if }s\le 1+\tfloor{d/2},\\
                \tnorm{v_0,\eta_0}_{\X_{s+3}}\tnorm{v,\eta}_{\X_{2+\tfloor{d/2}}}&\text{if }s>1+\tfloor{d/2},
            \end{cases}
        \end{equation}
        and
        \begin{equation}\label{is this estimate actually useful}
            \tnorm{R^{\gam,\mu,\sig}_{v_0,\eta_0}[v,\eta]}_{H^s}\lesssim\ep\tnorm{v,\eta}_{{_{\mu,\sig}}\Xs_s}+\begin{cases}
                0&\text{if }s\le 1+\tfloor{d/2},\\
                \tnorm{v_0,\eta_0}_{\Xs_{s+2}}\tnorm{v,\eta}_{\X_{2+\tfloor{d/2}}}&\text{if }s>1+\tfloor{d/2},
            \end{cases}
        \end{equation}
        \item The remainder $S^{\gam,\mu,\sig}_{v_0,\eta_0}$, defined via~\eqref{remainder formula. 2}, extends to a bounded linear maps
        \begin{equation}
            S^{\gam,\mu,\sig}_{v_0,\eta_0}:\;{_\mu}H^s(\R^d;\R^d)\to H^s(\R^d;\R^d),
        \end{equation}
        that obeys the estimates
        \begin{equation}\label{remainder estimate, 2}
            \tnorm{S^{\gam,\mu,\sig}_{v_0,\eta_0}[v]}_{H^s}\lesssim\ep\;{_\mu}\tnorm{v}_{H^s}+\begin{cases}
            0&\text{if }s\le 1+\tfloor{d/2},\\
                \tnorm{v_0,\eta_0}_{\X_{2+s}}\tnorm{v}_{H^{1+\tfloor{d/2}}}&\text{if }s>1+\tfloor{d/2}.
                \end{cases}
        \end{equation}
    \end{enumerate}
    The implicit constants in~\eqref{remainder estimate} and~\eqref{remainder estimate, 2} depend only on $s$, $\mathcal{Q}$, and $d$.
\end{prop}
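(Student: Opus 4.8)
The plan is to expand each of \eqref{remainder formula} and \eqref{remainder formula. 2} into its finitely many summands and to estimate each one by a product (Moser) inequality; the two terms in each claimed bound will arise from two alternative ways of distributing the available regularity. Throughout I would invoke the product estimates of Appendix~\ref{toolbox for smooth-tameness} (Proposition~\ref{corollary on tame estimates on simple multipliers} and Corollary~\ref{coro on more algebra properties}), the frequency splitting $\eta\mapsto(\Uppi^1_{\m{L}}\eta,\Uppi^1_{\m{H}}\eta)$ of Proposition~\ref{proposition on frequency splitting} to reduce products involving the anisotropic spaces to products of ordinary Sobolev functions plus bounded low-frequency factors, and Lemma~\ref{the preliminary lemma for you and I} to control $(1+\eta_0)^{-1}$ and $(1+\eta_0)^{-1}-1$. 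Since each resulting bound is manifestly linear in $(v,\eta)$ and its right-hand side is finite for $(v,\eta)$ in the indicated parameter-dependent space, the asserted bounded-extension statements follow by density once the estimates are established for smooth $(v,\eta)$.

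The structural feature that powers the estimates is that, in every summand of $R^{\gam,\mu,\sig}_{v_0,\eta_0}$ and $S^{\gam,\mu,\sig}_{v_0,\eta_0}$, the background $(v_0,\eta_0)$ enters either (i) under at least one derivative — as $\grad v_0$, $\S v_0$, $\grad\eta_0$, or $\sig^2\Delta\grad\eta_0$, possibly with a $\mu^2$ or $\sig^2$ prefactor — or (ii) through the factor $(1+\eta_0)^{-1}-1$, which by Lemma~\ref{the preliminary lemma for you and I} is $O(\ep)$ in every $\mathcal{H}^k$ norm; meanwhile $(v,\eta)$ enters undifferentiated in $R$ and with at most the outer $\grad\cdot$ (and $\mu^2$) in $S$. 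Thus one placement puts $(v,\eta)$ in its ``top'' parameter-weighted slot — the ${_{\mu,\sig}}\X_s$, ${_{\mu,\sig}}\Xs_s$, or ${_\mu}H^s$ norm, which is exactly the norm in which the principal parts $P$ and $Q$ of Proposition~\ref{prop on derivative splitting} will be inverted — while the background lands in a low-order slot consuming at most $3+\tfloor{d/2}$ derivatives of $(v_0,\eta_0)$, hence $\lesssim\ep$ by $(v_0,\eta_0)\in B_{\X_{3+\tfloor{d/2}}}(0,\ep)$; this gives the $\ep\,\tnorm{v,\eta}$-term. When $s>1+\tfloor{d/2}$ the alternative placement puts the background in its top norm $\X_{s+3}$ (for \eqref{remainder estimate}; $\Xs_{s+2}$ for \eqref{is this estimate actually useful}, which is admissible since $\Xs$ carries $\eta$ one derivative higher; $\X_{2+s}$ for \eqref{remainder estimate, 2}) and $(v,\eta)$ in the fixed low norm $\X_{2+\tfloor{d/2}}$ (resp.\ $H^{1+\tfloor{d/2}}$), producing the second term. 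In the range $s\le 1+\tfloor{d/2}$, where $H^s$ sits below the algebra threshold, the Moser inequality with an $L^\infty$ factor must be replaced by the bilinear estimate $H^s\times H^{1+\tfloor{d/2}}\to H^s$ (and its $\mathcal{H}^s$-analogue, again via the frequency splitting), which is precisely matched to the smallness available on the differentiated background because $2+\tfloor{d/2}\le 3+\tfloor{d/2}$.

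The main obstacle, and the only place where the parameter-dependent norms of Definition~\ref{defn of parameter dependent norms} are genuinely needed, is the three top-order summands: $\mu^2\grad\cdot(\eta\,\S v_0)$ and the $\sig^2$-part $\sig^2\eta\,\Delta\grad\eta_0$ of $\eta(1-\sig^2\Delta)\grad\eta_0$ in $R$, and $\mu^2\grad\cdot\tp{(1+\eta_0)^{-1}\tp{v\otimes\grad\eta_0+\grad\eta_0\otimes v+2(v\cdot\grad\eta_0)I}}$ in $S$. Here a naive product bound asks either for more than $3+\tfloor{d/2}$ derivatives of the background (so its smallness is lost) or for more derivatives of $(v,\eta)$ than ${_{\mu,\sig}}\X_s$ controls once $\mu$ or $\sig$ is small. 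The remedy is to split the prefactor as $\mu^2=\sqrt{\mu^2+\sig^2}\cdot\tfrac{\mu^2}{\sqrt{\mu^2+\sig^2}}$ (and similarly for $\sig^2$), note that $\tfrac{\mu^2}{\sqrt{\mu^2+\sig^2}},\tfrac{\sig^2}{\sqrt{\mu^2+\sig^2}}\lesssim 1$ on $\mathcal{Q}$, and use the $\sqrt{\mu^2+\sig^2}$ factor to pay for one extra derivative on the perturbation via $\sqrt{\mu^2+\sig^2}\,\tnorm{\eta}_{\mathcal{H}^{1+s}}\le{_{\mu,\sig}}\tnorm{\eta}_{\mathcal{H}^s}$ (or, for $v$, via $\mu^2\tnorm{v}_{H^{2+s}}\le{_\mu}\tnorm{v}_{H^s}$ combined with the interpolation $\tnorm{v}_{H^{1+s}}^2\le\tnorm{v}_{H^s}\tnorm{v}_{H^{2+s}}$ and $\mu^8\lesssim_{\mathcal{Q}}\mu^4$); the leftover $\tfrac{\mu^2}{\sqrt{\mu^2+\sig^2}}\lesssim\mu$ (or $\tfrac{\sig^2}{\sqrt{\mu^2+\sig^2}}\lesssim\sig$) is then absorbed into the $\ep$-smallness of the differentiated background. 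Carrying this bookkeeping through the finitely many summands — and, for the $(1+\eta_0)^{-1}-1$ pieces, estimating the polynomial part in $\mathcal{H}^s$ by Corollary~\ref{coro on more algebra properties} and the Taylor remainder in $H^s\emb\mathcal{H}^s$ exactly as in the proof of Lemma~\ref{the preliminary lemma for you and I} — produces \eqref{remainder estimate}, \eqref{is this estimate actually useful}, and \eqref{remainder estimate, 2}, with implicit constants depending only on $s$, $d$, and $\mathcal{Q}$.
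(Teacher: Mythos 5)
Your proposal is correct and follows essentially the same route as the paper's (very terse) proof: expand the remainders into their summands, apply Corollary~\ref{coro on more algebra properties} and Proposition~\ref{corollary on tame estimates on simple multipliers} (with Lemma~\ref{the preliminary lemma for you and I} for the $(1+\eta_0)^{-1}$ factors), and observe that in the $\mu^2$- and $\sig^2$-weighted top-order terms the extra derivative can be shifted onto the perturbation and paid for by the $(\mu^2+\sig^2)^{1/2}$ (resp.\ $\mu^2$) weights in the norms of Definition~\ref{defn of parameter dependent norms}, with the leftover factor of $|\mu|$ or $|\sig|$ bounded on $\mathcal{Q}$. You in fact identify and handle more delicate terms (e.g.\ $\sig^2\eta\Delta\grad\eta_0$ and the $\mu^2$ summand of $S$) than the paper explicitly flags, so no gaps.
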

\begin{proof}
    The estimates~\eqref{remainder estimate} and~\eqref{is this estimate actually useful} are granted by Corollary~\ref{coro on more algebra properties} and the second item of Proposition~\ref{corollary on tame estimates on simple multipliers}.  The only potentially delicate term is  $\mu^2\grad\cdot(\eta\S v_0)$ in the case of~\eqref{remainder estimate}, but here we have a derivative of $\eta$ appearing with a factor of $\mu^2$, and so the norm $_{\mu,\sig}\tnorm{\cdot}_{\mathcal{H}^s}$ controls $|\mu|\norm{\cdot}_{\mathcal{H}^{s+1}}$, which is more than enough.

    The estimate~\eqref{remainder estimate, 2} is equally simple; it follows readily from   Lemma~\ref{the preliminary lemma for you and I}, Corollary~\ref{coro on more algebra properties}, and the second item of Proposition~\ref{corollary on tame estimates on simple multipliers}.
\end{proof}

\section{Linear analysis of the shallow water system}\label{section on linear analysis}

This goal of this section is to verify the linear hypotheses of the Nash-Moser inverse function theorem applied to the mappings of Definitions~\ref{nonlinear map defs} and~\ref{subsonic nonlinear maps definition}. Our strategy is to study first the estimates and existence for a unified principal part of the linear systems. This is done in Sections~\ref{subsection on estimates} and~\ref{subsection on existence}. Then, in Section~\ref{subsection on synthesis}, we carefully combine the analysis of the principal part equations with the derivative splitting calculations of Section~\ref{section on derivative splitting} to reach our main theorems, which are Theorems~\ref{thm on synthesis, II} and~\ref{thm on syn 2 sub}, on the linear analysis of the derivatives of the omnisonic and subsonic nonlinear operators

\subsection{Estimates}\label{subsection on estimates}

This section develops a priori estimates for the following system of equations:
\begin{equation}\label{linear equations to be studied as part of the linear analysis}
    \begin{cases}
        \grad\cdot v+\grad\cdot(X\eta)=h,\\
        v+X\cdot\grad v-\mu^2\grad\cdot\S v+\tp{\vartheta-\sig^2\theta\Delta}\grad\eta=f.
    \end{cases}
\end{equation}
Here the given data are $f:\R^d\to\R^d$, $h:\R^d\to\R$, while the unknowns are $w:\R^d\to\R^d$, $\eta:\R^d\to\R$. The parameters in the problem are $X:\R^d\to\R^d$, $\vartheta,\theta:\R^d\to\R$, and $\gam,\mu,\sig\in\R$. The precise form of $X$, $\vartheta$, $\theta$, and $\gam,\mu,\sig$ is given by the following definition.

\begin{defn}[Parameters]\label{parameters definition}
In this subsection the tuneable parameters in system~\eqref{linear equations to be studied as part of the linear analysis} are to be taken of following form.
\begin{enumerate}
    \item We fix non-empty, open, and bounded intervals $I\Subset\R^+$, $J,K\Subset\R$ and enforce that
    \begin{equation}\label{the speed, viscosity, and surface tension}
        (\gam,\mu,\sig)\in I\times J\times K= \mathcal{Q} \subset\R^3
    \end{equation}
    Note that in Theorem~\ref{a priori estimates subsonic principal part} we impose a further subsonic restriction on $I$, that is $I\Subset(0,1)$.
    \item Let $\rho_{\m{WD}} \in \R^+$ be as in Lemma \ref{the preliminary lemma for you and I}. We let $\ep\in(0,\rho_{\m{WD}}]$, $r=\max\tcb{6,5+\tfloor{d/2}}$, and 
    \begin{equation}\label{conditions on the data for the linear analysis}
    (v_0,\eta_0,\be_0)\in \tp{B_{H^r}(0,\ep)\times B_{\mathcal{H}^r}(0,\ep)\times B_{H^r}(0,\ep)}\cap\tp{H^\infty(\R^d;\R^d)\times \mathcal{H}^\infty(\R^d)\times H^\infty(\R^d)}.
    \end{equation}

    \item   We shall take
    \begin{equation}
        X=v_0-\gam e_1,\quad\vartheta=1+\eta_0+\be_0,\quad\theta=1+\eta_0.
        \end{equation}
\end{enumerate}
\end{defn}

The following product estimate exploits integration by parts to crucially save on the number of derivatives appearing in the right hand side of~\eqref{lemma on integration by parts 0}.
\begin{lem}[A product estimate]\label{lemma on integration by parts}
We have the estimate
    \begin{equation}\label{lemma on integration by parts 0}
        \babs{\int_{\R^d}(\vartheta-\sig^2\theta\Delta)\eta\;\grad\cdot(X\eta)}\lesssim \ep\tp{\tnorm{\eta}^2_{\mathcal{H}^0}+\sig^2\tnorm{\eta}^2_{\mathcal{H}^1}}
    \end{equation}
    for all $\eta\in\mathcal{H}^2(\R^d)$ such that $0\not\in\m{supp}\mathscr{F}[\eta]$. The implicit constant depends only on $\mathcal{Q}$ and $d$ from Definition~\ref{parameters definition}.
\end{lem}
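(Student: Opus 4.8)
The plan is to expand the integrand, observe that its only piece not premultiplied by a small factor vanishes by antisymmetry, and then integrate by parts so that as few derivatives as possible land on $\eta$; the model integrals that remain are estimated using the low/high frequency splitting of the anisotropic spaces. First, a remark that legitimizes the calculus below: since $0\notin\supp\mathscr{F}[\eta]$, the $\mathcal{H}^2$-weight is bounded below on $\supp\mathscr{F}[\eta]$, so $\eta$ is in fact an honest $H^2(\R^d)$ function, every integration by parts below produces no boundary term, and the bounds we extract are completely insensitive to the distance from $\supp\mathscr{F}[\eta]$ to the origin. Substituting $X=v_0-\gam e_1$, $\vartheta=1+\eta_0+\be_0$, $\theta=1+\eta_0$ and writing $\grad\cdot(X\eta)=(\grad\cdot v_0)\eta-\gam\pd_1\eta$, one expands $(\vartheta-\sig^2\theta\Delta)\eta\;\grad\cdot(X\eta)$ into finitely many terms, each a product of a smooth coefficient---a polynomial in $v_0$, $\eta_0$, $\be_0$ and their derivatives, times powers of $\gam$ and $\sig$---with two derivatives of $\eta$ of combined order at most three, order three occurring only in terms carrying a factor $\sig^2$.

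The only summand whose coefficient contains no factor of $v_0$, $\eta_0$ or $\be_0$ is $(\eta-\sig^2\Delta\eta)(-\gam\pd_1\eta)$, and
\[
\int_{\R^d}(\eta-\sig^2\Delta\eta)(-\gam\pd_1\eta)=-\tfrac{\gam}{2}\int_{\R^d}\pd_1(\eta^2)-\tfrac{\gam\sig^2}{2}\int_{\R^d}\pd_1(|\grad\eta|^2)=0,
\]
where the second equality uses $\int\Delta\eta\,\pd_1\eta=-\int\grad\eta\cdot\pd_1\grad\eta=-\tfrac12\int\pd_1(|\grad\eta|^2)$. Thus this ``principal'' contribution drops out, and every remaining summand carries a factor of $v_0$, $\eta_0$ or $\be_0$, which furnishes the $\ep$ on the right of \eqref{lemma on integration by parts 0} through the smallness imposed in Definition~\ref{parameters definition}.

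For the remaining summands I would integrate by parts (at most twice per term) so that no copy of $\eta$ keeps more than one derivative, transferring the freed derivatives onto the coefficient. A summand carrying a factor $\sig^2$ is brought to the shape $\sig^2\int_{\R^d}c\,|\grad\eta|^2$ up to remainders of lower order in $\eta$; a summand with no $\sig^2$, together with those lower-order remainders, is brought to the shape $\int_{\R^d}c\,\eta^2$. In each $\int c\,\eta^2$ the surviving coefficient $c$ ends up carrying at least one factor of $v_0$, $\be_0$ or a derivative of $\eta_0$ (the plain $1$'s having been spent in the principal term and the plain $\eta_0$ of $\theta$ having either been differentiated or left on a $|\grad\eta|^2$), so that $\tnorm{c}_{L^2}\lesssim\ep$; moreover, since $r=\max\{6,5+\tfloor{d/2}\}$ leaves ample room for derivatives ($H^{r-3}(\R^d)\emb L^\infty(\R^d)$), every coefficient appearing obeys $\tnorm{c}_{L^\infty}\le\tnorm{\mathscr{F}[c]}_{L^1}\lesssim\ep$. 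I expect this bookkeeping to be the main obstacle: one must verify carefully that no summand is ever forced to keep two derivatives on a single $\eta$ without a compensating $\sig^2$, nor to produce an $\mathcal{H}^1$ norm of $\eta$ that is not multiplied by $\sig^2$---this is precisely where the cancellations among the top-order terms are used, together with the presence of the damping term (contributing the zeroth-order $v$) and of gravity (contributing the full $\grad\eta$).

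It remains to estimate the two model integrals. For the first, $\babs{\sig^2\int_{\R^d}c\,|\grad\eta|^2}\le\sig^2\tnorm{c}_{L^\infty}\tnorm{\grad\eta}_{L^2}^2\lesssim\ep\,\sig^2\tnorm{\eta}_{\mathcal{H}^1}^2$, using $\tnorm{\grad\eta}_{L^2}\lesssim\tnorm{\eta}_{\mathcal{H}^1}$ from Proposition~\ref{proposition on spatial characterization of anisobros}. For the second I would split $\eta=\Uppi^1_{\m L}\eta+\Uppi^1_{\m H}\eta$ via Proposition~\ref{proposition on frequency splitting}, which supplies $\tnorm{\Uppi^1_{\m H}\eta}_{L^2}\lesssim\tnorm{\eta}_{\mathcal{H}^0}$, $\tnorm{\Uppi^1_{\m L}\eta}_{L^\infty}\lesssim\tnorm{\eta}_{\mathcal{H}^0}$, and---by Cauchy--Schwarz against the $\mathcal{H}^0$-weight, using the finiteness of $\int_{B(0,1)}|\xi|^2(|\xi|^4+\xi_1^2)^{-1}\,\m{d}\xi$---also $\tnorm{\mathscr{F}[\Uppi^1_{\m L}\eta]}_{L^1}\lesssim\tnorm{\eta}_{\mathcal{H}^0}$. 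Then $\int c\,(\Uppi^1_{\m H}\eta)^2$ and $\int c\,\Uppi^1_{\m L}\eta\,\Uppi^1_{\m H}\eta$ are $\lesssim\ep\tnorm{\eta}_{\mathcal{H}^0}^2$ directly from $\tnorm{c}_{L^\infty}\lesssim\ep$ and $\tnorm{c}_{L^2}\lesssim\ep$, while for the low--low piece one uses that $\mathscr{F}[(\Uppi^1_{\m L}\eta)^2]=\mathscr{F}[\Uppi^1_{\m L}\eta]*\mathscr{F}[\Uppi^1_{\m L}\eta]$ has $L^1$ norm $\lesssim\tnorm{\eta}_{\mathcal{H}^0}^2$, whence by Plancherel $\babs{\int c\,(\Uppi^1_{\m L}\eta)^2}\le\tnorm{\mathscr{F}[c]}_{L^\infty}\tnorm{\mathscr{F}[(\Uppi^1_{\m L}\eta)^2]}_{L^1}\lesssim\ep\tnorm{\eta}_{\mathcal{H}^0}^2$. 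Summing the finitely many terms yields \eqref{lemma on integration by parts 0}.
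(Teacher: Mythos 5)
Your overall architecture is the same as the paper's: the only term with no small prefactor is $(1-\sig^2\Delta)\eta\cdot(-\gam\pd_1\eta)$, which vanishes by antisymmetry (the paper encodes exactly this by adding $\gam(1-\sig^2\Delta)\eta_m\pd_1\eta_n$ to the integrand), and the high--high and mixed frequency pieces, as well as all the $\sig^2\int c|\grad\eta|^2$ terms, are handled correctly by your integration-by-parts bookkeeping. The gap is in the low--low piece of your model integral $\int c\,\eta^2$. The inequality $\babs{\int c\,(\Uppi^1_{\m L}\eta)^2}\le\tnorm{\mathscr{F}[c]}_{L^\infty}\tnorm{\mathscr{F}[(\Uppi^1_{\m L}\eta)^2]}_{L^1}$ requires $\mathscr{F}[c]\in L^\infty$, i.e.\ essentially $c\in L^1(\R^d)$. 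Your coefficients $c$ are derivatives of $v_0$, $\be_0$, $\eta_0$, which live in $L^2$-based Sobolev spaces; membership in $H^{s}$, even for large $s$, gives $\mathscr{F}[c]\in L^1$ (hence $c\in L^\infty$) but does \emph{not} give $\mathscr{F}[c]\in L^\infty$ (take $c=\mathscr{F}^{-1}[g]$ with $g\ge 0$ compactly supported, in $L^2\setminus L^\infty$: then $c\in H^s$ for all $s$ but $\mathscr{F}[c]=g\notin L^\infty$). Nor can you fall back on H\"older: for $d\ge 2$ one only has $\Uppi^1_{\m L}\eta\in L^\infty\cap L^p$ for $p>p_d=2(d+1)/(d-1)$ with norms controlled by $\tnorm{\eta}_{\mathcal{H}^0}$ (the $L^2$ norm of $\Uppi^1_{\m L}\eta$ is \emph{not} controlled uniformly in the distance of $\supp\mathscr{F}[\eta]$ to the origin), so $(\Uppi^1_{\m L}\eta)^2\in L^b$ only for $b>(d+1)/(d-1)$, while $c\in L^a$ only for $a\ge 2$; the H\"older condition $1/a+1/b=1$ then forces $(d+1)/(d-1)<2$, i.e.\ $d\ge 4$. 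So for $d\in\{2,3\}$ — the physically relevant cases — this step fails. (For $d=1$ it is vacuous since $\mathcal{H}^0=L^2$.)

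The repair is to \emph{not} fully strip the derivatives off $\eta_{\m L}=\Uppi^1_{\m L}\eta$: since $\eta_{\m L}$ has Fourier support in $B(0,1)$, one derivative on it is free, $\tnorm{\grad\eta_{\m L}}_{L^2}\lesssim\tnorm{\eta_{\m L}}_{\mathcal{H}^0}$, and more importantly $\pd_1\eta_{\m L}$ pairs for free against $|\grad|^{-1}$ of an $L^2$ function because of the $|\xi|^{-2}\xi_1^2$ part of the $\mathcal{H}^0$ weight. This is exactly what the paper does: after the cancellation it keeps the divergence structure $\grad\cdot(X\eta_{\m L})$ intact (note $X+\gam e_1=v_0$, so the surviving divergence carries the small $L^2$ factor $v_0$) and writes the pairing as $\int \min\tcb{1,|\grad|}[\cdots\eta_{\m L}]\cdot\max\tcb{1,|\grad|^{-1}}[\grad\cdot(\cdots\eta_{\m L})]$ before applying Cauchy--Schwarz, using the anisotropic algebra properties of Proposition~\ref{proposition on algebra properties} to bound each factor in $L^2$ by $\ep\tnorm{\eta}_{\mathcal{H}^0}$. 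If you want to keep your $\int c\,\eta_{\m L}^2$ formulation, you must instead integrate by parts \emph{back} so that one derivative sits on a copy of $\eta_{\m L}$ and the coefficient is an undifferentiated product lying in $L^2$ (e.g.\ $\int(\grad\cdot v_0)\eta_{\m L}^2=-2\int v_0\cdot\grad\eta_{\m L}\,\eta_{\m L}\lesssim\tnorm{v_0}_{L^2}\tnorm{\grad\eta_{\m L}}_{L^2}\tnorm{\eta_{\m L}}_{L^\infty}$), and where the coefficient involves $\eta_0$ or $\be_0$ one needs the anisotropic duality rather than plain H\"older — which brings you back to the paper's $\min/\max$ multiplier argument.
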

\begin{proof}
    Using Proposition \ref{proposition on frequency splitting}, we decompose $\eta=\eta_{\m{L}}+\eta_{\m{H}}$, for $\eta_{\m{L}}=\Uppi^1_{\m{L}}\eta \in C^\infty_0(\R^d)$ and  $\eta_{\m{H}}=\Uppi^1_{\m{H}}\eta \in H^r(\R^d)$, and expand
    \begin{equation}
        \int_{\R^d}(\vartheta-\sig^2\theta\Delta)\eta\;\grad\cdot(X\eta)=\sum_{(m,n)\in\tcb{\m{L},\m{H}}^2}\int_{\R^d}(\vartheta-\sig^2\theta\Delta)\eta_m\;\grad\cdot(X\eta_n).
    \end{equation}
    We will deal with each of these four terms separately.

    To handle the three of the four terms involving at least one copy of $\eta_{\m{L}}$, we will employ the cancellation
    \begin{multline}\label{lemma on integration by parts 1}
        \int_{\R^d}\tp{\vartheta-\sig^2\theta\Delta}\eta_m\;\grad\cdot(X\eta_n)=\int_{\R^d} (\vartheta-\sig^2\theta\Delta)\eta_m\;\grad\cdot(X\eta_n)+\gam(1-\sig^2\Delta)\eta_m\pd_1\eta_n\\
        =\int_{\R^d}\tp{\vartheta-1-\sig^2(\theta-1)\Delta}\eta_m\grad\cdot(X\eta_n)+(1-\sig^2\Delta)\eta_m\grad\cdot((X+\gam e_1)\eta_n),
    \end{multline}
    which actually holds for all $(m,n)\in\tcb{\m{L},\m{H}}^2$.  The $\m{L}-\m{L}$ term is estimated by the right side of \eqref{lemma on integration by parts 0} by writing
    \begin{multline}
        \int_{\R^d}\tp{\vartheta-\sig^2\theta\Delta}\eta_{\m{L}}\;\grad\cdot(X\eta_{\m{L}})=\int_{\R^d}\min\tcb{1,|\grad|}\tp{\vartheta-1-\sig^2(\theta-1)\Delta}\eta_{_{\m{L}}}\max\tcb{1,|\grad|^{-1}}\grad\cdot(X\eta_{\m{L}})\\+
        \int_{\R^d}\min\tcb{1,|\grad|}(1-\sig^2\Delta)\eta_{\m{L}}\max\tcb{1,|\grad|^{-1}}\grad\cdot((X+\gam e_1)\eta_{\m{L}}),
    \end{multline}
    and then using Cauchy-Schwarz and Corollary~\ref{coro on more algebra properties} on the right hand side's integrals. In the above we are using $\min\tcb{1,|\grad|}$ and $\max\tcb{1,|\grad|^{-1}}$ to denote the Fourier multiplication operators with symbols $\xi\mapsto\min\tcb{1,2\pi|\xi|}$ and $\xi\mapsto\max\tcb{1,(2\pi|\xi|)^{-1}}$, respectively.
    
    A similar strategy can be employed on the mixed $\m{L}-\m{H}$ and $\m{H}-\m{L}$ terms, but now we rearrange the right side of \eqref{lemma on integration by parts 1} to 
    ensure that all of the derivatives fall on $\eta_{\m{L}}$:
    \begin{multline}
        \int_{\R^d}\tp{\vartheta-\sig^2\theta\Delta}\eta_{\m{H}}\;\grad\cdot(X\eta_{\m{L}})=\int_{\R^d}\eta_{\m{H}}\tp{(\vartheta-1)\grad\cdot(X\eta_{\m{L}})-\sig^2\Delta\tp{(\theta-1)\grad\cdot(X\eta_{\m{L}})}}\\
        +\int_{\R^d}\eta_{\m{H}}(1-\sig^2\Delta)\grad\cdot((X+\gam e_1)\eta_{\m{L}})
    \end{multline}
    and
    \begin{multline}
        \int_{\R^d}(\vartheta-\sig^2\theta\Delta)\eta_{\m{L}} \grad\cdot(X\eta_{\m{H}})=-\int_{\R^d} X\cdot\grad\tp{\tp{\vartheta-1-\sig^2(\theta-1)\Delta}}\eta_{_{\m{L}}}\;\eta_{\m{H}}\\-
        \int_{\R^d} (X+\gam e_1)\cdot\grad\tp{(1-\sig^2\Delta)\eta_{\m{L}}}\;\eta_{\m{H}}.
    \end{multline}
    With these in hand, we may argue as above to bound them in terms of the right side of \eqref{lemma on integration by parts 0}.    
    
    It remains only to consider the $\m{H}-\m{H}$ term. Thanks to several integrations by parts, we have that
    \begin{multline}
        -\f12\int_{\R^d}(\grad\cdot X)\tp{\vartheta\eta_{\m{H}}^2+\sig^2\theta|\grad\eta_{\m{H}}|^2}=\int_{\R^d}\tp{\vartheta-\sig^2\theta\Delta}\eta_{\m{H}}\;\grad\cdot(X\eta_{\m{H}})+\f12X\cdot\tp{\grad\vartheta\eta_{\m{H}}^2+\sig^2\grad\theta|\grad\eta_{\m{H}}|^2}\\
        -\int_{\R^d}\vartheta(\grad\cdot X)\eta_{\m{H}}^2+\sig^2 [\grad\theta\cdot\grad\eta_{\m{H}}(X\cdot\grad)\eta_{\m{H}}+\theta(\grad\eta_{\m{H}}\otimes\grad \eta_{\m{H}}):\grad X+\grad\tp{(\grad\cdot X)\theta\eta_{\m{H}}}\cdot\grad\eta_{\m{H}}].
    \end{multline}
    We isolate the first term on the right hand side of this equality and then estimate each of the remaining terms using  Corollary~\ref{coro on more algebra properties} and Proposition~\ref{corollary on tame estimates on simple multipliers}.
\end{proof}

We now utilize the previous lemma in the derivation of the following estimate. The acronym $\m{AP}$ stands for `a priori' while the $0$ refers to the fact that $f$ and $h$ belong to $\Y_0$.
\begin{prop}[A priori estimates, I]\label{prop on first base case estimate}
Suppose that $f\in L^2(\R^d;\R^d)$, $h\in \tp{\dot{H}^{-1}\cap H^1}(\R^d)$, $v\in H^2(\R^d;\R^d)$, and $\eta\in\mathcal{H}^{3}(\R^d)$ with $0\not\in\m{supp}\mathscr{F}[\eta]$ solve system~\eqref{linear equations to be studied as part of the linear analysis}. There exists $\R^+\ni\rho_{\m{AP0}}\le\rho_{\m{WD}} $ with  the property that for all $\R^+\ni\ep\le\rho_{\m{AP0}}$ and $(v_0,\eta_0,\be_0)$ satisfying~\eqref{conditions on the data for the linear analysis} we have the estimates
    \begin{equation}\label{prop on first base case estimate 0}
        \tnorm{v}_{L^2}+|\mu|\tnorm{v}_{H^1}\lesssim 
        \tnorm{f}_{L^2}+\ep^{1/2}{_{\sig}}\tnorm{\eta}_{\mathcal{H}^0}+\babs{\int_{\R^d} (\vartheta-\sig^2\theta\Delta)\eta\; h}^{1/2}
    \end{equation}
and 
    \begin{equation}\label{the second version a priori estimates are here for your eyes and mine beholdeth, begone, and felled}
        |\mu|\tnorm{\grad v}_{L^2}+\mu^2\tnorm{\grad v}_{H^1}\lesssim\tnorm{f}_{L^2}+\ep^{1/2}{_{\sig}}\tnorm{\eta}_{\mathcal{H}^0}+|\mu|\ep^{1/2}{_{\sig}}\tnorm{\eta}_{\mathcal{H}^1}  +|\mu|\babs{\int_{\R^d}(\vartheta-\sig^2\theta\Delta)\grad\eta\cdot\grad h}^{1/2}.
    \end{equation}
   The implicit constants in~\eqref{prop on first base case estimate 0} and~\eqref{the second version a priori estimates are here for your eyes and mine beholdeth, begone, and felled}, depend only on $\mathcal{Q}$ and $d$ from Definition~\ref{parameters definition}.
\end{prop}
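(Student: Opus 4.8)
The plan is to prove both estimates by $L^2$ energy arguments applied to the momentum equation of \eqref{linear equations to be studied as part of the linear analysis}: test it against $v$ to get \eqref{prop on first base case estimate 0}, and against $-\mu^2\Delta v$ to get \eqref{the second version a priori estimates are here for your eyes and mine beholdeth, begone, and felled} (the latter being vacuous when $\mu=0$, so assume $\mu\neq0$ there). All manipulations are licit by the regularity in the hypotheses together with a routine density argument, and $\rho_{\m{AP0}}\le\rho_{\m{WD}}$ is chosen small enough to justify the absorptions. Recall $X=v_0-\gam e_1$, $\vartheta-1=\eta_0+\be_0$, $\theta-1=\eta_0$, and note that, since $r$ exceeds the Sobolev threshold, \eqref{conditions on the data for the linear analysis} forces all derivatives of $X+\gam e_1$, $\vartheta-1$, $\theta-1$ to have $L^\infty$ norm $\lesssim\ep$. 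The drag term will always produce the coercive $L^2$ (or $\mu^2 H^1$) quantity on the left, and the viscous term will produce coercivity on $\grad v$ (or $\grad^2 v$) via $\grad\cdot\S v=\Delta v+3\grad(\grad\cdot v)$ together with $\int\S v:\grad v=\tnorm{\grad v}_{L^2}^2+3\tnorm{\grad\cdot v}_{L^2}^2\ge\tnorm{\grad v}_{L^2}^2$; the transport term $X\cdot\grad v$ is always perturbative because $\grad\cdot X=\grad\cdot v_0$ and $\grad v_0$ are $O(\ep)$ in $L^\infty$; the forcing $f\in L^2$ is absorbed by Young's inequality after integrating by parts to keep all derivatives off $f$.

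The crux in each case is the capillary–gravity term. For \eqref{prop on first base case estimate 0}, testing against $v$ leaves $\int(\vartheta-\sig^2\theta\Delta)\grad\eta\cdot v$; commute the gradient, $(\vartheta-\sig^2\theta\Delta)\grad\eta=\grad[(\vartheta-\sig^2\theta\Delta)\eta]-(\grad\vartheta)\eta+\sig^2(\grad\theta)\Delta\eta$. The last two contributions are controlled by the anisotropic product estimates of Corollary~\ref{coro on more algebra properties} and Proposition~\ref{corollary on tame estimates on simple multipliers} by $\ep\,{_\sig}\tnorm{\eta}_{\mathcal{H}^0}\tnorm{v}_{L^2}$ (using $\sig^2\tnorm{\eta}_{\mathcal{H}^2}\le{_\sig}\tnorm{\eta}_{\mathcal{H}^0}$), while integrating by parts in the gradient term and substituting the continuity equation $\grad\cdot v=h-\grad\cdot(X\eta)$ turns it into $-\int(\vartheta-\sig^2\theta\Delta)\eta\,h+\int(\vartheta-\sig^2\theta\Delta)\eta\,\grad\cdot(X\eta)$: the first is exactly the integral on the right of \eqref{prop on first base case estimate 0}, and the second is bounded by $\ep(\tnorm{\eta}_{\mathcal{H}^0}^2+\sig^2\tnorm{\eta}_{\mathcal{H}^1}^2)\lesssim\ep\,{_\sig}\tnorm{\eta}_{\mathcal{H}^0}^2$ via Lemma~\ref{lemma on integration by parts} and interpolation. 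Absorbing the $O(\ep)$ terms, applying Young's inequality, taking square roots, and using $|\mu|\tnorm{v}_{H^1}\lesssim\tnorm{v}_{L^2}+|\mu|\tnorm{\grad v}_{L^2}$ gives \eqref{prop on first base case estimate 0}. For \eqref{the second version a priori estimates are here for your eyes and mine beholdeth, begone, and felled}, testing against $-\mu^2\Delta v$ leaves $\mu^2\int(\vartheta-\sig^2\theta\Delta)\grad\eta\cdot\Delta v$, which is split using $\Delta v=\grad(\grad\cdot v)+(\Delta v-\grad(\grad\cdot v))$. In the second piece, whose $j$th component is $\sum_k\pd_k(\pd_k v_j-\pd_j v_k)$, an integration by parts turns the contracting tensor into the symmetric $(\vartheta-\sig^2\theta\Delta)\pd_j\pd_k\eta$ plus commutators, and the symmetric leading part annihilates the antisymmetric $\pd_k v_j-\pd_j v_k$, leaving only $O(\ep)$-coefficient terms bounded via Corollary~\ref{coro on more algebra properties} by $\ep\mu^2\,{_\sig}\tnorm{\eta}_{\mathcal{H}^1}\tnorm{\grad v}_{L^2}$. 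In the first piece, substituting $\grad(\grad\cdot v)=\grad h-\grad\grad\cdot(X\eta)$ produces the kept term $\mu^2\int(\vartheta-\sig^2\theta\Delta)\grad\eta\cdot\grad h$ from the conclusion together with the self-interaction $\mu^2\int(\vartheta-\sig^2\theta\Delta)\grad\eta\cdot\grad\grad\cdot(X\eta)$, which is a one-derivative-higher version of the quantity in Lemma~\ref{lemma on integration by parts} and is estimated by $\ep\mu^2\,{_\sig}\tnorm{\eta}_{\mathcal{H}^1}^2$ by rerunning that lemma's frequency-splitting (Proposition~\ref{proposition on frequency splitting}), integration-by-parts, and cancellation argument. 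After absorbing the $\delta$- and $\ep$-small terms and taking square roots, $\tnorm{\grad^2 v}_{L^2}\asymp\tnorm{\Delta v}_{L^2}$ and $\mu^4\tnorm{\grad v}_{L^2}^2\le\mu^2\tnorm{\grad v}_{L^2}^2$ give \eqref{the second version a priori estimates are here for your eyes and mine beholdeth, begone, and felled}; the extra nonnegative term $\ep^{1/2}{_\sig}\tnorm{\eta}_{\mathcal{H}^0}$ may simply be retained.

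I expect the main obstacle to be this capillary–gravity term in the second estimate. The integrations by parts must be arranged so that (i) no derivative ever lands on $h$ outside the single structured integral $\int(\vartheta-\sig^2\theta\Delta)\grad\eta\cdot\grad h$ that appears in the conclusion — this is precisely why one decomposes $\Delta v$ and exploits the symmetric/antisymmetric cancellation, rather than naively commuting derivatives — and (ii) the residual self-interaction $\int(\vartheta-\sig^2\theta\Delta)\grad\eta\cdot\grad\grad\cdot(X\eta)$ is controlled, which amounts to reproving Lemma~\ref{lemma on integration by parts} one derivative higher. A secondary, purely bookkeeping, difficulty is tracking which $\sig$-weighted anisotropic norm each error term lands in so that, after interpolation, everything is dominated by ${_\sig}\tnorm{\eta}_{\mathcal{H}^0}$, ${_\sig}\tnorm{\eta}_{\mathcal{H}^1}$, and the $\mu$-weighted coercive quantities on the left; here it is essential that the products involving the $O(\ep)$ coefficients are controlled uniformly through the anisotropic product estimates of Corollary~\ref{coro on more algebra properties}, since $\mathcal{H}^0$ does not embed into $L^2$.
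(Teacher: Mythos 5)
Your proposal is correct and follows essentially the same route as the paper: test against $v$ and against $-\mu^2\Delta v$, commute $\vartheta-\sig^2\theta\Delta$ past the gradient to pick up $O(\ep)$ commutators, substitute the continuity equation to isolate the structured integrals $\int(\vartheta-\sig^2\theta\Delta)\eta\,h$ and $\mu^2\int(\vartheta-\sig^2\theta\Delta)\grad\eta\cdot\grad h$, and control the self-interaction $\int(\vartheta-\sig^2\theta\Delta)\pd_j\eta\,\grad\cdot(X\pd_j\eta)$ by applying Lemma~\ref{lemma on integration by parts} to $\pd_j\eta$. The only (cosmetic) difference is in the second estimate: you split $\Delta v=\grad\grad\cdot v+(\Delta v-\grad\grad\cdot v)$ and kill the curl part via the symmetric/antisymmetric cancellation, whereas the paper integrates by parts componentwise in $\pd_j$ to land directly on $(\vartheta-\sig^2\theta\Delta)\pd_j\eta\,\grad\cdot\pd_j v$; both arrive at the same key terms, and your appeal to a density argument matches the paper's $H^3\times\mathcal{H}^4$ approximation step.
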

\begin{proof}
We begin with the proof of \eqref{prop on first base case estimate 0}.  We take the dot-product of the second equation in~\eqref{linear equations to be studied as part of the linear analysis} with $v$ and integrate by parts:
\begin{equation}\label{prop on first base case estimate 1}
\int_{\R^d} \abs{v}^2 \bp{1- \f{\grad\cdot X}{2}}  + \mu^2 \int_{\R^d} \frac{1}{2} \abs{\mathbb{D} v}^2 +  2\abs{\nabla \cdot v}^2    = \int_{\R^d} f\cdot v - \tp{\vartheta-\sig^2\theta\Delta}\grad\eta\cdot v.
\end{equation}
Taking $\rho_{\m{AP0}}$ sufficiently small, employing the Korn inequality in $\R^d$, and recalling that we allow implicit constants to depend on $\mathcal{Q}$, we have that 
\begin{equation}
    \tnorm{v}_{L^2}^2+\mu^2\tnorm{v}^2_{H^1} \lesssim \tnorm{v}_{L^2}^2+\mu^2\tnorm{\grad v}^2_{L^2}  \lesssim \int_{\R^d} \abs{v}^2 \bp{1- \f{\grad\cdot X}{2}}  + \mu^2 \int_{\R^d} \frac{1}{2} \abs{\mathbb{D} v}^2 +  2\abs{\nabla \cdot v}^2 .
\end{equation}
It remains only to handle the terms on the right side of \eqref{prop on first base case estimate 1}.  
The $f$ term is trivially handled with Cauchy-Schwarz and an absorbing argument.   For the $\eta$-term, we introduce a commutator and integrate by parts:
\begin{equation}
    \int_{\R^d}\tp{\vartheta-\sig^2\theta\Delta}\grad\eta\cdot v=-\int_{\R^d}\tp{\vartheta-\sig^2\theta\Delta}\eta\;\grad\cdot v+\int_{\R^d}[(\vartheta-\sig^2\theta\Delta),\grad]\eta\cdot v.
\end{equation}
The first equation in~\eqref{linear equations to be studied as part of the linear analysis} and Lemma~\ref{lemma on integration by parts} yield the bound
\begin{equation}
    \babs{\int_{\R^d}\tp{\vartheta-\sig^2\theta\Delta}\eta\;\grad\cdot v}\le \ep\tp{\tnorm{\eta}_{\mathcal{H}^0}^2+\sig^2\tnorm{\eta}^2_{\mathcal{H}^1}}+\babs{\int_{\R^d} (\vartheta-\sig^2\theta\Delta)\eta\; h}.
\end{equation}
On the other hand, thanks to Corollary~\ref{coro on more algebra properties}, we have that
\begin{equation}
    \babs{\int_{\R^d}[(\vartheta-\sig^2\theta\Delta),\grad]\eta\cdot v}=\babs{\int_{\R^d}\grad\vartheta\eta\cdot v-\sig^2\grad\theta\Delta\eta\cdot v}\lesssim\ep\tp{\tnorm{\eta}_{\mathcal{H}^0}+\sig^2\tnorm{\eta}_{\mathcal{H}^2}}\tnorm{v}_{L^2}.
\end{equation}
Synthesizing these with \eqref{prop on first base case estimate 1} then yields \eqref{prop on first base case estimate 0}.

Next, we turn to the proof of \eqref{the second version a priori estimates are here for your eyes and mine beholdeth, begone, and felled}.  We will prove this initially under the stronger assumption that $v\in H^3(\R^d;\R^d)$ and $\eta\in\mathcal{H}^4(\R^d)$, and then we will conclude the proof by showing how to reduce the general case to this special case.

We dot the second equation in~\eqref{linear equations to be studied as part of the linear analysis} with $-\mu^2\Delta v$ and integrate to see that 
\begin{equation}\label{prop on second base case estimate 1}
-\int_{\R^d}\tp{v-\mu^2\grad\cdot\S v}\cdot \mu^2\Delta v =   \mu^2 \int_{\R^d} -f \cdot \Delta v + (X\cdot\grad) v\cdot\Delta v + \tp{\vartheta-\sig^2\theta\Delta}\grad\eta\cdot \Delta v.
\end{equation}
We will deal with the left and right sides separately.  Integrating by parts and using Korn's inequality grants the coercive estimate for the left side:
\begin{equation}
    \mu^2\tnorm{\grad v}_{L^2}^2 +\mu^4\tnorm{\grad v}_{H^1}^2 \lesssim -\int_{\R^d}\tp{v-\mu^2\grad\cdot\S v}\cdot \mu^2\Delta v.
\end{equation}

Now we estimate the right side of \eqref{prop on second base case estimate 1}.  The $f$ term is easily handled with Cauchy-Schwarz and an absorbing argument since $\tnorm{\Delta v}_{L^2} \lesssim \tnorm{\grad v}_{H^1}$.  The $X\cdot\grad v$ term controlled by integrating by parts:
\begin{equation}
    \babs{\mu^2\int_{\R^d} (X\cdot\grad) v\cdot\Delta v}=\babs{-\mu^2\int_{\R^d}\f{\grad\cdot X}{2}|\grad v|^2+\mu^2\sum_{j=1}^d \int_{\R^d}(\pd_j X\cdot\grad)v\cdot\pd_j v}\lesssim\ep\mu^2\tnorm{\grad v}^2_{L^2}.
\end{equation}

The $\eta$-term in \eqref{prop on second base case estimate 1} is more involved.  We have that
\begin{multline}\label{prop on second base case estimate 2}
    -\mu^2\int_{\R^d}\tp{\vartheta-\sig^2\theta\Delta}\grad\eta\cdot \Delta v
    =\mu^2 \sum_{j=1}^d\int_{\R^d}\tp{\pd_j\vartheta-\sig^2\pd_j\theta\Delta}\grad\eta\cdot\pd_j v\\
    -\mu^2\sum_{j=1}^d \int_{\R^d}\grad\vartheta\pd_j\eta\cdot\pd_jv-\sig^2\grad\theta\Delta\pd_j\eta\cdot\pd_j v-\mu^2 \sum_{j=1}^d\int_{\R^d}\tp{\vartheta-\sig^2\theta\Delta}\pd_j\eta\;\grad\cdot\pd_j v.
\end{multline}
The first two terms on the right here are straightforward to estimate:
\begin{equation}
    \mu^2 \sum_{j=1}^d \babs{\int_{\R^d}\tp{\pd_j\vartheta-\sig^2\pd_j\theta\Delta}\grad\eta\cdot\pd_j v +  \int_{\R^d}\tp{\grad\vartheta\pd_j\eta-\sig^2\grad\theta\Delta\eta}\cdot\pd_jv}\lesssim\ep|\mu|\tp{\tnorm{\eta}_{\mathcal{H}^1}+\sig^4\tnorm{\eta}_{\mathcal{H}^3}}\cdot|\mu|\tnorm{\grad v}_{L^2}.
\end{equation}
For the third term on the right side of \eqref{prop on second base case estimate 2}, we use the identity $\grad\cdot\pd_j v+\grad\cdot(X\pd_j\eta)=\pd_jh-\grad\cdot(\pd_jX\eta)$ and appeal to Lemma~\ref{lemma on integration by parts} to bound
\begin{equation}
    \mu^2\babs{\sum_{j=1}^d \int_{\R^d}\tp{\vartheta-\sig^2\theta\Delta}\pd_j\eta\;\grad\cdot\pd_j v}\lesssim\mu^2\babs{\int_{\R^d}(\vartheta-\sig^2\theta\Delta)\grad\eta\cdot\grad h}+\ep\mu^2\tp{\tnorm{\eta}_{\mathcal{H}^1}^2+\sig^4\tnorm{\eta}^2_{\mathcal{H}^3}}.
\end{equation}

Upon synthesizing these estimates for the various terms in \eqref{prop on second base case estimate 1}, we arrive at the bound
\begin{multline}
    \mu^2\tnorm{\grad v}_{L^2}^2+\mu^4\tnorm{\grad v}_{H^1}^2\lesssim\tnorm{f}_{L^2}^2+\ep\mu^2\tnorm{\grad v}_{L^2}^2+\ep|\mu|\tp{\tnorm{\eta}_{\mathcal{H}^1}+\sig^2\tnorm{\eta}_{\mathcal{H}^3}}\cdot|\mu|\tnorm{\grad v}_{L^2}\\
    +\mu^2\babs{\int_{\R^d}\tp{\vartheta-\sig^2\theta\Delta}\grad\eta\cdot\grad h}+\ep\mu^2\tp{\tnorm{\eta}^2_{\mathcal{H}^1}+\sig^4\tnorm{\eta}_{\mathcal{H}^{3}}^2}.
\end{multline}
Using an absorbing argument and taking $\rho_{\m{AP0}}$ small enough, we deduce from this that \eqref{the second version a priori estimates are here for your eyes and mine beholdeth, begone, and felled} holds  in this special case of additional smoothness assumptions.

To conclude, we now explain how to reduce the general case of \eqref{the second version a priori estimates are here for your eyes and mine beholdeth, begone, and felled} to the special case.  Suppose, then, that $v\in H^2(\R^d;\R^d)$ and $\eta\in\mathcal{H}^3(\R^d)$ with $0\not\in\m{supp}\mathscr{F}[\eta]$. Given any $\del\in\R^+$ there exists $v_\del\in H^3(\R^d;\R^d)$ and $\eta_\del\in\mathcal{H}^4(\R^d)$ with $\m{dist}(0,\m{supp}\mathscr{F}[\eta_\del])=\m{dist}(0,\m{supp}\mathscr{F}[\eta])$ and $\tnorm{v-v_\del}_{H^2}<\del$, $\tnorm{\eta-\eta_\del}_{\mathcal{H}^3}<\del$. The equations satisfied by $v_\del$ and $\eta_\del$ are~\eqref{linear equations to be studied as part of the linear analysis} with right hand side data
\begin{equation}
    h_\del=h-\grad\cdot(v-v_\del)-\grad\cdot(X(\eta-\eta_\del))
\end{equation}
and
\begin{equation}
    f_\del=f-\tp{1+X\cdot\grad-\mu^2\grad\cdot\S}(v-v_\del)-\tp{\vartheta-\sig^2\theta\Delta}\grad(\eta-\eta_\del).
\end{equation}
We invoke the a priori estimate from the special case to deduce that 
    \begin{multline}\label{the delta is a fan of the book that was indeed written for it to read delta the}
          |\mu|\tnorm{\grad v_\delta}_{L^2}^2+\mu^2\tnorm{\grad v_\delta}_{H^1}
        \lesssim\tnorm{f_\del}_{L^2}+\ep^{1/2}\tp{\tnorm{\eta_\del}_{\mathcal{H}^0}+\sig^2\tnorm{\eta_\del}_{\mathcal{H}^2}}\\
        +|\mu|\ep^{1/2}\tp{\tnorm{\eta_\del}_{\mathcal{H}^1}+\sig^2\tnorm{\eta_\del}_{\mathcal{H}^3}}
         +|\mu|\babs{\int_{\R^d}(\vartheta-\sig^2\theta\Delta)\grad\eta_\del\cdot\grad h_\del}^{1/2}.
    \end{multline}
    We now send $\del\to 0$ and observe that $f_\del\to f$ in $L^2(\R^d;\R^d)$, $h_\del\to h$ in $(\dot{H}^{-1}\cap H^1)(\R^d)$, $v_\del\to v$ in $H^2(\R^d;\R^d)$, and $\eta_\del\to\eta$ in $\mathcal{H}^3(\R^d)$ (actually in $H^3(\R^d)$). By passing to the limit in~\eqref{the delta is a fan of the book that was indeed written for it to read delta the}, we thus acquire~\eqref{the second version a priori estimates are here for your eyes and mine beholdeth, begone, and felled} in the general case, which completes the proof.
\end{proof}

Our next result obtains closed (when $h=0$) a priori estimates at a base case level of derivatives.

\begin{thm}[A priori estimates, II]\label{thm on base case a priori estimate}
    Suppose that $f\in H^1(\R^d;\R^d)$, $h\in\tp{\dot{H}^{-1}\cap H^2}(\R^d)$, $v\in H^3(\R^d;\R^d)$, and $\eta\in\mathcal{H}^4(\R^d)$ with $0\not\in\m{supp}\mathscr{F}[\eta]$ solve system~\eqref{linear equations to be studied as part of the linear analysis}.  Let $\rho_{\m{AP0}}\in \R^+$ be as in Proposition \ref{prop on first base case estimate}.  Then there exists $\R^+\ni\rho_{\m{AP1}} \le \rho_{\m{AP0}}$ with the property that for all $\R^+\ni\ep\le\rho_{\m{AP1}}$ and $(v_0,\eta_0,\be_0)$ satisfying~\eqref{conditions on the data for the linear analysis}, we have the estimate
    \begin{multline}\label{the base case a priori estimate is here, today and its gone}
        {_{\mu}}\tnorm{v}_{H^1}+{_{\mu,\sig}}\tnorm{\eta}_{\mathcal{H}^1} \lesssim \tnorm{f}_{H^1}+\babs{\int_{\R^d}\tp{\vartheta-\sig^2\theta\Delta}\eta\;h}^{1/2}\\+\sum_{j=1}^d\babs{\int_{\R^d}\tp{\vartheta-\sig^2\theta\Delta}\pd_j\eta\;\pd_j h}^{1/2}+|\mu|\sum_{j=1}^d\babs{\int_{\R^d}\tp{\vartheta-\sig^2\Delta}\grad\pd_j\eta\cdot\grad\pd_jh}^{1/2}+|\sig|\tnorm{\grad h}_{L^2}.
    \end{multline}
    The implicit constant in~\eqref{the base case a priori estimate is here, today and its gone} depends only on $\mathcal{Q}$ and $d$ from Definition~\ref{parameters definition}.
\end{thm}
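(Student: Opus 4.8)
The plan is to bootstrap from Proposition~\ref{prop on first base case estimate}, which provides closed estimates ``one derivative lower'' than what~\eqref{the base case a priori estimate is here, today and its gone} demands. First I would apply $\pd_j$ to the system~\eqref{linear equations to be studied as part of the linear analysis} for each $j\in\tcb{1,\dots,d}$: the pair $(\pd_j v,\pd_j\eta)$ solves a system of exactly the same shape, with the same coefficients $X$, $\vartheta$, $\theta$ and the same parameters, but with data modified to $h\rightsquigarrow\pd_j h-\grad\cdot((\pd_j X)\eta)$ and $f\rightsquigarrow\pd_j f-(\pd_j X)\cdot\grad v-(\pd_j\vartheta-\sig^2(\pd_j\theta)\Delta)\grad\eta$. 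Since $\pd_j X=\pd_j v_0$, $\pd_j\vartheta=\pd_j(\eta_0+\be_0)$, and $\pd_j\theta=\pd_j\eta_0$ are each $O(\ep)$-small in $H^r$ by~\eqref{conditions on the data for the linear analysis}, the extra commutator terms in the modified data are bounded by $\ep$ times Sobolev norms of $(v,\eta)$ using Corollary~\ref{coro on more algebra properties} and Proposition~\ref{corollary on tame estimates on simple multipliers} (with the same care about the anisotropic low-frequency behavior as in the proof of Lemma~\ref{lemma on integration by parts}). Applying both estimates of Proposition~\ref{prop on first base case estimate} to the undifferentiated system and to each differentiated system, then summing over $j$, controls $\tnorm{v}_{H^1}+|\mu|\tnorm{v}_{H^2}+\mu^2\tnorm{v}_{H^3}$ — in particular ${_\mu}\tnorm{v}_{H^1}$ — by $\tnorm{f}_{H^1}$, the three pairing integrals appearing on the right of~\eqref{the base case a priori estimate is here, today and its gone}, $\ep$-small commutator contributions, and $\ep^{1/2}$ times the various $\eta$-norms that comprise ${_{\mu,\sig}}\tnorm{\eta}_{\mathcal{H}^1}$. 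One uses here that $|\mu|$ or $|\sig|$ prefactors applied to intermediate norms of $v$ (e.g.\ $\mu^2\tnorm{v}_{H^2}$) are, by interpolation, dominated by combinations of $\tnorm{v}_{H^1}$ and $\mu^2\tnorm{v}_{H^3}$, and that the discrepancy between the weight $\vartheta-\sig^2\theta\Delta$ in the pairing terms produced by Proposition~\ref{prop on first base case estimate} and the weight $\vartheta-\sig^2\Delta$ in~\eqref{the base case a priori estimate is here, today and its gone} is absorbable, since $\theta-1=\eta_0$ is $\ep$-small.

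Second, I would recover the $\eta$-norms. Rearranging the momentum equation of~\eqref{linear equations to be studied as part of the linear analysis} gives $(\vartheta-\sig^2\theta\Delta)\grad\eta=f-v-X\cdot\grad v+\mu^2\grad\cdot\S v=:g$. Since $\tnorm{\eta_0}_{L^\infty}$ and $\tnorm{\be_0}_{L^\infty}$ are $O(\ep)$ by Lemma~\ref{the preliminary lemma for you and I}, the operator $\vartheta-\sig^2\theta\Delta$ is an $O(\ep)$-perturbation of $1-\sig^2\Delta$, which is inverted by a Neumann series whose quantitative content is the smoothing bounds $\tnorm{(1-\sig^2\Delta)^{-1}}_{\mathcal{L}(H^k;H^{k+1})}\lesssim|\sig|^{-1}$ and $\tnorm{(1-\sig^2\Delta)^{-1}}_{\mathcal{L}(H^k;H^{k+2})}\lesssim\sig^{-2}$. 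This produces the Euclidean (high-frequency and gradient) content of ${_{\mu,\sig}}\tnorm{\eta}_{\mathcal{H}^1}$ — namely $\tnorm{\grad\eta}_{L^2}$, $\sqrt{\mu^2+\sig^2}\,\tnorm{\grad\eta}_{H^1}$, $\sig^2\tnorm{\grad\eta}_{H^2}$, and $\sqrt{\mu^2+\sig^2}\,\sig^2\tnorm{\grad\eta}_{H^3}$ — in terms of $\tnorm{g}_{L^2}$ and $\tnorm{g}_{H^1}$ carrying matching $\mu,\sig$-prefactors, hence, after step one, in terms of $\tnorm{f}_{H^1}$ and the allowed quantities. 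The remaining anisotropic low-frequency content of the $\mathcal{H}$-norms of $\eta$ — the $\dot H^{-1}$-type weighting of $\pd_1\eta$ near $\xi=0$ built into the norm~\eqref{some norm on the anisobros} — is recovered instead from the continuity equation $\gam\pd_1\eta=h-\grad\cdot v-\grad\cdot(v_0\eta)$, which is meaningful thanks to $0\notin\m{supp}\mathscr{F}[\eta]$; it is the $\sig$-weighted version of this step that generates the $|\sig|\tnorm{\grad h}_{L^2}$ term in~\eqref{the base case a priori estimate is here, today and its gone}.

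Finally, combining the $v$-bound and the $\eta$-bound yields an inequality in which ${_\mu}\tnorm{v}_{H^1}+{_{\mu,\sig}}\tnorm{\eta}_{\mathcal{H}^1}$ appears on both sides, but on the right only multiplied by positive powers of $\ep$; taking $\rho_{\m{AP1}}\le\rho_{\m{AP0}}$ small enough lets these be absorbed, leaving~\eqref{the base case a priori estimate is here, today and its gone}. I expect the principal obstacle to be the $\mu,\sig$-weight bookkeeping in the inversion step: the first-order transport term $X\cdot\grad v=-\gam\pd_1 v+v_0\cdot\grad v$ inside $g$ costs a derivative on $v$, so a crude inversion leaves one needing $|\sig|\tnorm{v}_{H^2}$ to bound $\sqrt{\mu^2+\sig^2}\,\sig^2\tnorm{\grad\eta}_{H^3}$, which is \emph{not} controlled by ${_\mu}\tnorm{v}_{H^1}$ alone when $|\sig|>|\mu|$. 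The remedy is to split $v=\P v+(I-\P)v$ using the Leray projection: by the continuity equation the potential part equals $\grad\Delta^{-1}(h-\grad\cdot(X\eta))$, controlled by $h$ (this is where the norm equivalence from Proposition~\ref{prop on decoupling reformulation} and the datum $|\sig|\tnorm{\grad h}_{L^2}$ enter) and by $\eta$ (absorbable), while the solenoidal part $\P v$ already obeys the estimates from step one; the genuinely dangerous first-order contributions are then either of divergence form — handled by the cancellation of Lemma~\ref{lemma on integration by parts}, exactly as in Proposition~\ref{prop on first base case estimate} — or of strictly lower order. Verifying that these cancellations survive the $\pd_j$-differentiation of step one, and that all commutators stay $\ep$-small in the parameter-dependent norms of Definition~\ref{defn of parameter dependent norms}, is the remaining technical work.
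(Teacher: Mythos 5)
Your first two steps track the paper's proof closely: differentiating the system, applying Proposition~\ref{prop on first base case estimate} to the differentiated problems, absorbing the $\ep$-small commutator contributions, and then recovering the Euclidean content of the $\eta$-norms from the elliptic relation $(\vartheta-\sig^2\theta\Delta)\grad\eta=f-v-X\cdot\grad v+\mu^2\grad\cdot\S v$ together with the low-frequency (anisotropic) content from the continuity equation is exactly what the paper does. You also correctly isolate the genuine difficulty: the $|\sig|$-weighted top-order piece $|\sig|\sig^2\tnorm{\grad\eta}_{H^3}$ forces one to measure $X\cdot\grad v$ in $H^1$ with a $|\sig|$ prefactor, i.e. to control $|\sig|\tnorm{v}_{H^2}$, which ${_\mu}\tnorm{v}_{H^1}$ does not provide when $|\sig|\gg|\mu|$.

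However, your proposed remedy does not close this gap. The Leray splitting only helps with the potential part: the solenoidal contribution $X\cdot\grad\P v$ still costs $|\sig|\tnorm{\P v}_{H^2}$, and no estimate for that quantity is available from step one. The appeal to Lemma~\ref{lemma on integration by parts} cannot rescue it, because that lemma is a duality identity for the specific pairing $\int_{\R^d}(\vartheta-\sig^2\theta\Delta)\eta\,\grad\cdot(X\eta)$ arising in energy arguments; it says nothing about the $H^1$-norm of $X\cdot\grad\P v$, which is what direct elliptic inversion of the first-order equation for $\grad\eta$ requires. (Interpolation does handle the intermediate quantity $|\sig|\tnorm{\grad\eta}_{H^1}\lesssim\tnorm{\grad\eta}_{L^2}+\sig^2\tnorm{\grad\eta}_{H^2}$, but the top-order term $|\sig|^3\tnorm{\grad\eta}_{H^3}$ cannot be interpolated upward, since $\sig^4\tnorm{\grad\eta}_{H^4}$ lies beyond the target norm.) The paper's resolution is the one essential idea missing from your proposal: take the divergence of the momentum equation and substitute the continuity equation, arriving at the second-order scalar equation \eqref{the eta equation strikes back}, in which the transport operator acts on $\grad\cdot v=h-\grad\cdot(X\eta)$ rather than on $v$ itself. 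The remaining $v$-dependence is only through $(1-4\mu^2\Delta)\grad\cdot v$ and $\grad X^{\m{t}}:\grad v$, both controlled by $\tnorm{v}_{H^1}+\mu^2\tnorm{v}_{H^3}$; the term $X\cdot\grad h$ is the true source of $|\sig|\tnorm{\grad h}_{L^2}$ in \eqref{the base case a priori estimate is here, today and its gone}; and $(X\cdot\grad)\grad\cdot(X\eta)$ contributes only two derivatives of $\eta$, whose $|\sig|$-weighted $L^2$ norm interpolates between the already-controlled $\tnorm{\eta}_{\mathcal{H}^1}$ and $\sig^2\tnorm{\eta}_{\mathcal{H}^3}$. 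Elliptic regularity for $(\vartheta-\sig^2\theta\Delta)\Delta\eta$ then yields $|\sig|(\tnorm{\eta}_{\mathcal{H}^2}+\sig^2\tnorm{\eta}_{\mathcal{H}^4})$ and completes the proof.
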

\begin{proof}
For $j \in \{1,\dotsc,d\}$ we apply $\partial_j$ to~\eqref{linear equations to be studied as part of the linear analysis} to obtain the system
\begin{equation}
    \begin{cases}
            \grad\cdot \pd_jv+\grad\cdot(X\pd_j\eta)=\pd_jh-\grad\cdot(\pd_jX\eta),\\
            \pd_jv+X\cdot\grad \pd_jv-\mu^2\grad\cdot\S\pd_jv+(\vartheta-\sig^2\theta\Delta)\grad\pd_j\eta=\pd_jf-\pd_jX\cdot\grad v-\tp{\pd_j\vartheta-\sig^2\pd_j\theta\Delta}\grad\eta.
        \end{cases}
\end{equation}
To each of these we apply the a priori estimates of Proposition~\ref{prop on first base case estimate} and then sum over $j$; we then sum this with the estimate for the non-differentiated problem provided by the same proposition.  This results in the bound
\begin{multline}\label{thm on base case a priori estimate 1}
    \tnorm{v}_{H^1}+\mu^2\tnorm{v}_{H^3}\lesssim\tnorm{f}_{H^1}+\ep^{1/2}\tp{\tnorm{v}_{H^1}+\tnorm{\eta}_{\mathcal{H}^1}+\sig^2\tnorm{\eta}_{\mathcal{H}^3}+|\mu|\tp{\tnorm{\eta}_{\mathcal{H}^2}+\sig^2\tnorm{\eta}_{\mathcal{H}^4}}}\\
    + \sum_{j=1}^d\babs{\int_{\R^d}\tp{\vartheta-\sig^2\theta\Delta}\pd_j\eta\;\grad\cdot(\pd_jX\eta)}^{1/2}+|\mu|\sum_{j=1}^d\babs{\int_{\R^d}\tp{\vartheta-\sig^2\Delta}\grad\pd_j\eta\cdot\grad\grad\cdot(\pd_jX\eta)}^{1/2}+R_{\eta,h},
\end{multline}
where
\begin{equation}
    R_{\eta,h}=\babs{\int_{\R^d}\tp{\vartheta-\sig^2\theta\Delta}\eta\;h}^{1/2}+\sum_{j=1}^d\babs{\int_{\R^d}\tp{\vartheta-\sig^2\theta\Delta}\pd_j\eta\;\pd_j h}^{1/2}+|\mu|\sum_{j=1}^d\babs{\int_{\R^d}\tp{\vartheta-\sig^2\Delta}\grad\pd_j\eta\cdot\grad\pd_jh}^{1/2}.
\end{equation}
It is straightforward to estimate
\begin{multline}
    \sum_{j=1}^d\babs{\int_{\R^d}\tp{\vartheta-\sig^2\theta\Delta}\pd_j\eta\;\grad\cdot(\pd_jX\eta)}^{1/2}+|\mu|\sum_{j=1}^d\babs{\int_{\R^d}\tp{\vartheta-\sig^2\Delta}\grad\pd_j\eta\cdot\grad\grad\cdot(\pd_jX\eta)}^{1/2} \\
    \lesssim \ep^{1/2}\tp{\tnorm{\eta}_{\mathcal{H}^1}+\sig^2\tnorm{\eta}_{\mathcal{H}^3}}+\ep^{1/2}|\mu|\tp{\tnorm{\eta}_{\mathcal{H}^2}+\sig^2\tnorm{\eta}_{\mathcal{H}^4}}.
\end{multline}
Thus, if $\rho_{\m{AP1}}$ is small enough, this and \eqref{thm on base case a priori estimate 1} yield the bound
\begin{equation}\label{thm on base case a priori estimate 2}
    \tnorm{v}_{H^1}+\mu^2\tnorm{v}_{H^3}\lesssim 
    \tnorm{f}_{H^1}+\ep^{1/2}\tp{\tnorm{\eta}_{\mathcal{H}^1}+\sig^2\tnorm{\eta}_{\mathcal{H}^3}+|\mu|\tp{\tnorm{\eta}_{\mathcal{H}^2}+\sig^2\tnorm{\eta}_{\mathcal{H}^4}}}+R_{\eta,h}.
\end{equation}

We now turn our attention to higher regularity estimates for $\eta$.  We know that $\grad \eta$ satisfies the equation 
\begin{equation}\label{thm on base case a priori estimate 3}
    \tp{\vartheta-\sig^2\theta\Delta}\grad\eta=f-v-X\cdot\grad v+\mu^2\grad\cdot\S v,
\end{equation}
and, decreasing $\rho_{\m{AP1}}$ if necessary, we may assume that the operator $\vartheta-\sig^2\theta\Delta$ is uniformly elliptic.  Sobolev interpolation allows us to bound 
\begin{equation}
    \tnorm{X\cdot\grad v}_{L^2}+|\mu|\tnorm{X\cdot\grad v}_{H^1}
    \lesssim
    \tnorm{v}_{H^1} + |\mu| \tnorm{v}_{H^2} \lesssim \tnorm{v}_{H^1} + |\mu| \tnorm{v}_{H^1}^{1/2} \tnorm{v}_{H^3}^{1/2}
    \lesssim
    \tnorm{v}_{H^1}+\mu^2\norm{v}_{H^3},
\end{equation}
and hence we may employ standard elliptic regularity on \eqref{thm on base case a priori estimate 3} to obtain the bound
\begin{equation}
    \tnorm{\grad\eta}_{L^2}+\sig^2\tnorm{\grad\eta}_{H^2}+|\mu|\tp{\tnorm{\grad\eta}_{H^1}+\sig^2\tnorm{\grad\eta}_{H^3}}\lesssim\tnorm{f}_{H^1}+\tnorm{v}_{H^1}+\mu^2\tnorm{v}_{H^3}.
\end{equation}
On the other hand, from the identity $\grad\cdot v+\grad\cdot(X\eta)=0$, we find that
\begin{equation}
    |\gam|\tnorm{\mathcal{R}_1\eta}_{L^2}\lesssim\tnorm{v}_{L^2}+\ep\tnorm{\eta}_{\mathcal{H}^0}.
\end{equation}
Combining these two estimates with the norm equivalence in the third item of Proposition \ref{proposition on spatial characterization of anisobros} and the estimate \eqref{thm on base case a priori estimate 2} then shows that 
\begin{multline}\label{thm on base case a priori estimate 4}
    \tnorm{\eta}_{\mathcal{H}^1}+\sig^2\tnorm{\eta}_{\mathcal{H}^3}+|\mu|\tp{\tnorm{\eta}_{\mathcal{H}^2}+\sig^2\tnorm{\eta}_{\mathcal{H}^4}}\lesssim \ep\tnorm{\eta}_{\mathcal{H}^0} + \tnorm{f}_{H^1}+\tnorm{v}_{H^1}+\mu^2\tnorm{v}_{H^3} \\
  \lesssim 
    \tnorm{f}_{H^1}
    +  \ep\tnorm{\eta}_{\mathcal{H}^0}  
    +\ep^{1/2}\tp{\tnorm{\eta}_{\mathcal{H}^1}+\sig^2\tnorm{\eta}_{\mathcal{H}^3}+|\mu|\tp{\tnorm{\eta}_{\mathcal{H}^2}+\sig^2\tnorm{\eta}_{\mathcal{H}^4}}}+R_{\eta,h}.
\end{multline}

Summing \eqref{thm on base case a priori estimate 2} and \eqref{thm on base case a priori estimate 4} and taking $\rho_{\m{AP1}}$ sufficiently small, we may absorb the $\eta$   terms on the right with those on the left, resulting in the estimate
\begin{equation}\label{somewhere, beyond the sea}
    \tnorm{v}_{H^1}+\mu^2\tnorm{v}_{H^3} +
    \tnorm{\eta}_{\mathcal{H}^1}+\sig^2\tnorm{\eta}_{\mathcal{H}^3}+|\mu|\tp{\tnorm{\eta}_{\mathcal{H}^2}+\sig^2\tnorm{\eta}_{\mathcal{H}^4}}
        \lesssim
    \tnorm{f}_{H^1} + R_{\eta,h}.
\end{equation}
Thus, to finish the proof of~\eqref{the base case a priori estimate is here, today and its gone} we only need to prove the estimate
\begin{equation}\label{thm on base case a priori estimate 5}
    |\sig| \tp{\tnorm{\eta}_{\mathcal{H}^2}+\sig^2\tnorm{\eta}_{\mathcal{H}^4}} \lesssim     \tnorm{f}_{H^1} + R_{\eta,h} +|\sig|\tnorm{\grad h}_{L^2}.
\end{equation}
 To this end, we take the divergence of the second equation in~\eqref{linear equations to be studied as part of the linear analysis} and substitute in the first equation to derive the identity
\begin{multline}\label{the eta equation strikes back}
(\vartheta - \sig^2\theta\Delta)\Delta \eta =(X\cdot\grad) \grad\cdot(X\eta)+\grad\cdot f-\grad X^{\m{t}}:\grad v-(1-4\mu^2\Delta)\grad\cdot v-X\cdot\grad h \\
- (\grad \vartheta \cdot \grad \eta - \sigma^2 \grad \theta \cdot \grad \Delta \eta).
\end{multline}
Then we may again use elliptic regularity to bound 
\begin{multline}
    |\sig|\tp{\tnorm{\grad^2\eta}_{L^2}+\sig^2\tnorm{\grad^4\eta}_{L^2}}\lesssim|\sig|\tnorm{(X\cdot\grad)\grad\cdot(X\eta)+\grad\cdot f-\grad X^{\m{t}}:\grad v-(1-4\mu^2\Delta)\grad\cdot v-X\cdot\grad h}_{L^2} \\
    + |\sig| \tnorm{\grad \vartheta \cdot \grad \eta - \sigma^2 \grad \theta \cdot \grad \Delta \eta}_{L^2},
\end{multline}
and from here we can use Sobolev interpolation again (but now with powers of $\sigma$) together with \eqref{somewhere, beyond the sea} to deduce \eqref{thm on base case a priori estimate 5}.
\end{proof}

We can now use the previous result with a simple differentiation argument to derive a priori estimates for higher derivatives of solutions to~\eqref{linear equations to be studied as part of the linear analysis}.

\begin{thm}[Principal part a priori estimates, omnisonic case]\label{fourth theorem on a priori estimates}
Suppose that $\N\ni s\ge 2+\tfloor{d/2}$, $f\in H^s(\R^d;\R^d)$, $h\in\tp{\dot{H}^{-1}\cap H^{1+s}}(\R^d)$, $v\in H^{2+s}(\R^d;\R^d)$, and $\eta\in\mathcal{H}^{3+s}(\R^d)$ solve system~\eqref{linear equations to be studied as part of the linear analysis}. There exists $\R^+\ni \rho_{\m{AP}s}\le\rho_{\m{WD}} $, depending only on $\mathcal{Q}$, $\rho_{\m{WD}}$, $r$ (from Definition~\ref{parameters definition}) and $s$, with the property that for all $\R^+\ni\ep\le\rho_{\m{AP}s}$ and $(v_0,\eta_0,\be_0)$ satisfying~\eqref{conditions on the data for the linear analysis}, we have the estimate
\begin{equation}\label{final a priori estimate question mark}
    \tnorm{v,\eta}_{{_{\mu,\sig}}\X_s}\lesssim\tnorm{h,f}_{\Y_s}+\tnorm{v_0,\eta_0,\be_0}_{\X_{1+s}\times H^{1+s}}\tnorm{h,f}_{\Y_{2+\tfloor{d/2}}}
\end{equation}
The implicit constant in~\eqref{final a priori estimate question mark} depends only on $s$, $\mathcal{Q}$, and $d$ from Definition~\ref{parameters definition}.
\end{thm}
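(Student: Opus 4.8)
The plan is to induct on $s$, with Theorem~\ref{thm on base case a priori estimate} playing the role both of the base case and, after differentiation, of the engine of the inductive step; the one genuinely new ingredient at each level is a tame commutator analysis, so essentially all the work sits there. Before starting, I would clear two preliminary reductions. First, although Theorem~\ref{thm on base case a priori estimate} and its precursors carry the hypothesis $0\notin\m{supp}\mathscr{F}[\eta]$, which is absent here, one removes it by keeping $v_\del=v$ and setting $\eta_\del=\mathscr{F}^{-1}(\mathds{1}_{\{|\xi|>\del\}}\mathscr{F}[\eta])$: the pair $(v_\del,\eta_\del)$ solves~\eqref{linear equations to be studied as part of the linear analysis} with data obtained by substitution, and since dominated convergence in the weighted integral~\eqref{some norm on the anisobros} gives $\eta_\del\to\eta$ in $\mathcal{H}^{3+s}$, the perturbed data converge to $(h,f)$ in $\Y_s$ and in $\Y_{2+\tfloor{d/2}}$, so it suffices to treat the case $0\notin\m{supp}\mathscr{F}[\eta]$. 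Second, reading Theorem~\ref{thm on base case a priori estimate} and rewriting its integral terms in terms of norms — exactly as will be done below — gives the clean base estimate $\tnorm{v,\eta}_{{_{\mu,\sig}}\X_1}\lesssim\tnorm{h,f}_{\Y_1}$; iterating the argument below upgrades this to $\tnorm{v,\eta}_{{_{\mu,\sig}}\X_{2+\tfloor{d/2}}}\lesssim\tnorm{h,f}_{\Y_{2+\tfloor{d/2}}}$ with no residual background factor, since at level $2+\tfloor{d/2}$ the tame term carries the prefactor $\tnorm{v_0,\eta_0,\be_0}_{\X_{3+\tfloor{d/2}}\times H^{3+\tfloor{d/2}}}\le C\ep$, thanks to $3+\tfloor{d/2}\le r$.

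For the inductive step at level $s\ge 2+\tfloor{d/2}$, I would apply the Fourier multiplier $\tbr{\grad}^{s-1}$ to both equations of~\eqref{linear equations to be studied as part of the linear analysis}. Because $\tbr{\grad}^{s-1}$ commutes with all the constant-coefficient operators present ($\mathrm{Id}$, $\gam\pd_1$, $\mu^2\grad\cdot\S$, $\sig^2\Delta$), only the coefficients $v_0$, $\vartheta-1=\eta_0+\be_0$, and $\theta-1=\eta_0$ generate commutators, and $(\tbr{\grad}^{s-1}v,\tbr{\grad}^{s-1}\eta)$ again solves~\eqref{linear equations to be studied as part of the linear analysis}, now with data $\tilde h=\tbr{\grad}^{s-1}h-\grad\cdot([\tbr{\grad}^{s-1},v_0]\eta)$ and $\tilde f=\tbr{\grad}^{s-1}f-[\tbr{\grad}^{s-1},v_0\cdot\grad]v-[\tbr{\grad}^{s-1},\vartheta-\sig^2\theta\Delta]\grad\eta$, where $[\tbr{\grad}^{s-1},\vartheta-\sig^2\theta\Delta]=[\tbr{\grad}^{s-1},\eta_0+\be_0]-\sig^2[\tbr{\grad}^{s-1},\eta_0]\Delta$. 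Applying the clean base estimate above to this differentiated system, and using the comparisons ${_\mu}\tnorm{\tbr{\grad}^{s-1}v}_{H^1}\asymp{_\mu}\tnorm{v}_{H^s}$, ${_{\mu,\sig}}\tnorm{\tbr{\grad}^{s-1}\eta}_{\mathcal{H}^1}\asymp{_{\mu,\sig}}\tnorm{\eta}_{\mathcal{H}^s}$, and $\tnorm{\tbr{\grad}^{s-1}h,\tbr{\grad}^{s-1}f}_{\Y_1}\asymp\tnorm{h,f}_{\Y_s}$ — each immediate from the fact that $\tbr{\grad}^{s-1}$ has symbol comparable to $\tbr{\xi}^{s-1}$ together with the fact, visible in~\eqref{some norm on the anisobros} and Proposition~\ref{proposition on spatial characterization of anisobros}, that the low-frequency parts of the $\mathcal{H}^t$ are mutually comparable — the problem reduces to estimating, in the $\Y_1$-norm (equivalently, as they enter the right side of~\eqref{the base case a priori estimate is here, today and its gone}), the commutator contributions to $(\tilde h,\tilde f)$.

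The commutator estimates are where the argument lives; here I would invoke the tame product and commutator bounds of Corollary~\ref{coro on more algebra properties} and Proposition~\ref{corollary on tame estimates on simple multipliers}, together with the frequency splitting of Proposition~\ref{proposition on frequency splitting} to deal with the low-frequency part of the anisotropic spaces. Each commutator $[\tbr{\grad}^{s-1},g]\psi$ (with $g\in\{v_0,\eta_0+\be_0,\eta_0\}$) splits into: (i) pieces carrying a single derivative on $g$, which are $\lesssim\ep\,\tnorm{v,\eta}_{{_{\mu,\sig}}\X_s}$ and are absorbed into the left side once $\ep\le\rho_{\m{AP}s}$ is small enough — the admissible size of $\ep$ being allowed to shrink with $s$ because of the combinatorial constants, as the statement permits — and where for the $\sig^2\Delta$-term of the third commutator one uses $[\tbr{\grad}^{s-1},(1+\eta_0)\Delta]=[\tbr{\grad}^{s-1},\eta_0]\Delta$, so that its two excess derivatives always ride on $\eta$ with a $\sig^2$ weight, precisely the content of the $\sig^4\tnorm{\eta}_{\mathcal{H}^{2+s}}^2$ component of ${_{\mu,\sig}}\tnorm{\cdot}_{\mathcal{H}^s}$; and (ii) pieces with the top derivatives on $g$ and at most $1+\tfloor{d/2}$ derivatives left on $(v,\eta)$, which by $\mathcal{H}^{2+\tfloor{d/2}}\emb W^{1,\infty}$ and $H^{2+\tfloor{d/2}}\emb W^{1,\infty}$ are $\lesssim\tnorm{v_0,\eta_0,\be_0}_{\X_{1+s}\times H^{1+s}}\,\tnorm{v,\eta}_{{_{\mu,\sig}}\X_{2+\tfloor{d/2}}}$, the last factor being $\lesssim\tnorm{h,f}_{\Y_{2+\tfloor{d/2}}}$ by the clean level-$(2+\tfloor{d/2})$ estimate. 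The integral terms on the right of~\eqref{the base case a priori estimate is here, today and its gone}, evaluated at the differentiated unknown and data, are expanded via $\tilde h=\tbr{\grad}^{s-1}h-\grad\cdot([\tbr{\grad}^{s-1},v_0]\eta)$ and handled by integration by parts so that no factor of $h$ ends up with more derivatives than $\tnorm{h,f}_{\Y_s}$ affords; the surviving higher-derivative integrals carry an intrinsic $|\mu|$ or $|\sig|$ prefactor, which after splitting $|\mu|=|\mu|^{1/2}|\mu|^{1/2}$ (resp. for $\sig$) pairs one copy against ${_{\mu,\sig}}\tnorm{\cdot}$ and the other against $\tnorm{h}_{H^{1+s}}$; everything else is closed by Cauchy--Schwarz, Corollary~\ref{coro on more algebra properties}, and the $\dot{H}^{-1}$--$\dot{H}^1$ duality, with the quadratic-in-$(v,\eta)$ leftovers being $\ep$-small and absorbed. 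Collecting these and absorbing the small terms yields~\eqref{final a priori estimate question mark}.

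The main obstacle — and the reason for the elaborate $\mu,\sig$-weighted norms of Definition~\ref{defn of parameter dependent norms} — is that a crude reading of the commutator terms appears to demand more regularity on $(v,\eta)$, or on the background, than is available. This is defused on three fronts: the constant $\gam e_1$ drops out of every commutator, so only the smooth, $\ep$-small coefficients $v_0,\eta_0,\be_0$ appear; the Kato--Ponce form of the commutator estimate keeps the number of derivatives falling on $(v,\eta)$ in the non-absorbed pieces down to $1+\tfloor{d/2}$, which is exactly what produces the factor $\tnorm{h,f}_{\Y_{2+\tfloor{d/2}}}$ in~\eqref{final a priori estimate question mark}; and the excess $\mu$- or $\sig$-weighted derivatives are matched precisely by the extra components built into ${_\mu}\tnorm{\cdot}_{H^s}$ and ${_{\mu,\sig}}\tnorm{\cdot}_{\mathcal{H}^s}$, which are themselves stable under $\tbr{\grad}^{s-1}$. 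A minor bookkeeping point is that for $s\le r-1$ the background prefactor $\tnorm{v_0,\eta_0,\be_0}_{\X_{1+s}\times H^{1+s}}$ is itself $\le C\ep$, so the asserted bound is no stronger than its clean form there and becomes genuinely tame only for $s\ge r$.
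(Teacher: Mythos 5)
Your proposal follows the same backbone as the paper's argument --- the base--case estimate of Theorem~\ref{thm on base case a priori estimate} applied to a differentiated version of \eqref{linear equations to be studied as part of the linear analysis}, tame commutator bounds, absorption of the $\ep$-small terms, closure first at the level $2+\tfloor{d/2}$, and removal of the auxiliary Fourier-support hypothesis by frequency truncation --- but it differs in two structural choices. First, you differentiate with $\tbr{\grad}^{s-1}$ rather than with pure derivatives $\pd_j^{s-1}$; this is fine in principle, but the paper's commutator toolbox (Proposition~\ref{prop tame commutator estimates}) only covers $[\pd_j^m,\psi]$, so you would need to supply a tame Kato--Ponce commutator estimate for $[\tbr{\grad}^{s-1},\psi]$, together with its interaction with the low-frequency part of $\mathcal{H}^s$ (which you correctly flag must be split off via Proposition~\ref{proposition on frequency splitting}). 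Second --- the more substantive difference --- you carry the inhomogeneity $h$ through the entire induction and estimate the $h$-integral terms of \eqref{the base case a priori estimate is here, today and its gone} directly, whereas the paper proves the result for $h=0$ and only afterwards recovers general $h$ by the substitution $w=v-\grad\Delta^{-1}h$, which converts the problem back to the homogeneous case. Your route can be made to close (the Young's-inequality splitting of the $|\mu|$- and $|\sig|$-weighted integrals and the low-frequency $\dot{H}^{-1}$-type pairing both work out), but be aware that it invokes Theorem~\ref{thm on base case a priori estimate} in a regime the paper deliberately avoids: the low-frequency control of $\eta$ inside that theorem's proof (the bound on $\tnorm{\mathcal{R}_1\eta}_{L^2}$) exploits the homogeneous constraint $\grad\cdot v+\grad\cdot(X\eta)=0$, and with a general right-hand side one must add a $\tsb{h}_{\dot{H}^{-1}}$ contribution there; this is harmless since $\tnorm{\cdot}_{\Y_s}$ controls it, but it is a term your ``clean base estimate'' must explicitly track. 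The paper's reduction buys exactly this bookkeeping for free, at the modest cost of the norm equivalence $\tnorm{\grad\Delta^{-1}h}_{H^{2+s}}\asymp\tnorm{h}_{\dot{H}^{-1}\cap H^{1+s}}$ from Proposition~\ref{prop on decoupling reformulation}.
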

\begin{proof}
    We divide the proof into two stages.  In the first of these we consider only the special case in which $h=0$ and $0\not\in\m{supp}\mathscr{F}[\eta]$.  An application of Theorem~\ref{thm on base case a priori estimate} yields the estimate
    \begin{equation}\label{e1}
        \tnorm{v,\eta}_{{_{\mu,\sig}}\X_1}\lesssim\tnorm{f}_{H^1}.
    \end{equation}
    In order to obtain higher-order estimates we will apply this same a priori estimate to the derivatives of~\eqref{linear equations to be studied as part of the linear analysis}; it will suffice to consider only pure derivatives of order $s-1$.  To this end, we let $q=s-1$, $j\in\tcb{1,\dots,d}$, and consider the equations satisfied by $\pd_j^qv$ and $\pd_j^q\eta$:
    \begin{equation}\label{the higher order differentiated equation}
        \begin{cases}
            \grad\cdot\pd_j^qv+\grad\cdot(X\pd_j^q\eta)=\grad\cdot([X,\pd_j^q]\eta),\\
            \pd_j^q v+X\cdot\pd_j^qv-\mu^2\grad\cdot\S\pd_j^qv+(\vartheta-\sig^2\theta\Delta)\grad\pd_j^q\eta=\pd_j^q f+[X,\pd_j^q]\cdot\grad v+\tp{[\vartheta,\pd_j^q]-\sig^2[\theta,\pd_j^q]\Delta}\grad\eta.
        \end{cases}
    \end{equation}
    Using Theorem~\ref{thm on base case a priori estimate} again, we acquire the estimate
    \begin{multline}\label{stage three asparagus}
        {_\mu}\tnorm{\pd_j^q v}_{H^1}+{_{\mu,\sig}}\tnorm{\pd_j^q\eta}_{\mathcal{H}^1}\lesssim\tnorm{\pd_j^q f+[X,\pd_j^q]\cdot\grad v+\tp{[\vartheta,\pd_j^q]-\sig^2[\theta,\pd_j^q]\Delta}\grad\eta}_{H^1}\\
        +\sum_{k=1}^d\babs{\int_{\R^d}\tp{\vartheta-\sig^2\theta\Delta}\pd_k\pd_j^{q}\eta\pd_k\grad\cdot([X,\pd_j^q]\eta)}^{1/2}+|\mu|\sum_{k=1}^d\babs{\int_{\R^d}(\vartheta-\sig^2\theta\Delta)\grad\pd_k\pd_j^q\eta\cdot\grad\pd_k\grad\cdot([X,\pd_j^q]\eta)}^{1/2}\\
        +|\sig|\tnorm{\grad\grad\cdot([X,\pd_j^q]\eta)}_{L^2}.
    \end{multline}
    The rest of our work in this stage consists of obtaining structured estimates for the terms appearing on the right here.
    
    For the first term on the right hand side of~\eqref{stage three asparagus} we utilize the tame estimates on commutators from Proposition~\ref{prop tame commutator estimates} to  bound
    \begin{multline}\label{e2}
        \tnorm{\pd_j^q f+[X,\pd_j^q]\cdot\grad v+\tp{[\vartheta,\pd_j^q]-\sig^2[\theta,\pd_j^q]\Delta}\grad\eta}_{H^1}\lesssim\tnorm{f}_{H^s}+\ep\tp{\tnorm{v}_{H^s}+{_\sig}\tnorm{\eta}_{\mathcal{H}^s}}\\
        +{\tnorm{v_0,\be_0,\eta_0}_{H^{s}\times H^{s}\times\mathcal{H}^{s}}}\tp{\tnorm{v}_{H^{2+\tfloor{d/2}}}+{_\sig}\tnorm{\eta}_{\mathcal{H}^{2+\tfloor{d/2}}}}.
    \end{multline}
    For the remaining terms on the right hand side of~\eqref{stage three asparagus}, we first claim that the following preliminary estimate holds:
    \begin{equation}\label{e3}
        \tnorm{[X,\pd_j^q]\eta}_{H^2}\lesssim\ep\tnorm{\eta}_{\mathcal{H}^s}+\tnorm{v_0}_{H^{s+1}}\tnorm{\eta}_{\mathcal{H}^{1+\tfloor{d/2}}}.
    \end{equation} To see this, we note that $[X,\pd_j^q]=[v_0,\pd_j^q]$ and employ the Fourier localization operators from \eqref{notation for the Fourier projection operators} to bound
    \begin{equation}\label{e4}
        \tnorm{[X,\pd_j^q]\eta}_{H^2}\le\tnorm{[v_0,\pd_j^q]\Uppi^1_{\m{H}}\eta}_{H^2}+\tnorm{\pd_j^q(v_0\Uppi^1_{\m{L}}\eta)}_{H^2}+\tnorm{v_0\pd_j^q\Uppi^1_{\m{L}}\eta}_{H^2}.
    \end{equation}
    Propositions~\ref{proposition on frequency splitting}, \ref{corollary on tame estimates on simple multipliers}, and~\ref{prop tame commutator estimates}, together with Corollary~\ref{coro on more algebra properties} then allow us to estimate 
    \begin{equation}
        \tnorm{[v_0,\pd_j^q]\Uppi^1_{\m{H}}\eta}_{H^2}+\tnorm{\pd_j^q(v_0\Uppi^1_{\m{L}}\eta)}_{H^2}+\tnorm{v_0\pd_j^q\Uppi^1_{\m{L}}\eta}_{H^2} 
        \lesssim
        \ep\tnorm{\eta}_{\mathcal{H}^s}+\tnorm{v_0}_{H^{s+1}}\tnorm{\eta}_{\mathcal{H}^{1+\tfloor{d/2}}},
    \end{equation}
    and so we obtain \eqref{e3} from this and \eqref{e4}, completing the proof of the claim.   With the claim in hand, we deduce that
    \begin{multline}\label{e5}
        \sum_{k=1}^d\babs{\int_{\R^d}\tp{\vartheta-\sig^2\theta\Delta}\pd_k\pd_j^{q}\eta\pd_k\grad\cdot([X,\pd_j^q]\eta)}^{1/2}+|\mu|\sum_{k=1}^d\babs{\int_{\R^d}(\vartheta-\sig^2\theta\Delta)\grad\pd_k\pd_j^q\eta\cdot\grad\pd_k\grad\cdot([X,\pd_j^q]\eta)}^{1/2}
        \\+|\sig|\tnorm{\grad\grad\cdot([X,\pd_j^q]\eta)}_{L^2}\lesssim\;{_{\mu,\sig}}\tnorm{\eta}_{\mathcal{H}^s}^{1/2}\tnorm{[X,\pd_j^q]\eta}_{H^2}^{1/2}+|\sig|\tnorm{[X,\pd_j^q]\eta}_{H^2}.
    \end{multline}
    We may thus combine~\eqref{e1}, \eqref{stage three asparagus}, \eqref{e2}, \eqref{e3}, and \eqref{e5}  to arrive at the bound
    \begin{multline}\label{e6}
        {_\mu}\tnorm{v}_{H^s}+{_{\mu,\sig}}\tnorm{\eta}_{\mathcal{H}^s}
        \asymp
        {_\mu}\tnorm{v}_{H^1}+{_{\mu,\sig}}\tnorm{\eta}_{\mathcal{H}^1}
        + 
        \sum_{j=1}^d ({_\mu}\tnorm{\pd_j^{s-1} v}_{H^1}+{_{\mu,\sig}}\tnorm{\pd_j^{s-1}\eta}_{\mathcal{H}^1})
        \lesssim
        \tnorm{f}_{H^s}+\ep\tp{\tnorm{v}_{H^s}+{_\sig}\tnorm{\eta}_{\mathcal{H}^s}}\\+{\tnorm{v_0,\eta_0,\be_0}_{\X_{s+1}\times H^{s+1}}}\tp{\tnorm{v}_{H^{2+\tfloor{d/2}}}+{_\sig}\tnorm{\eta}_{\mathcal{H}^{2+\tfloor{d/2}}}}
        +{_{\mu,\sig}}\tnorm{\eta}_{\mathcal{H}^s}^{1/2}\tp{\ep\tnorm{\eta}_{\mathcal{H}^s}+\tnorm{v_0}_{H^{s+1}}\tnorm{\eta}_{\mathcal{H}^{1+\tfloor{d/2}}}}^{1/2}.
    \end{multline}
    Taking $\rho_{\m{APs}}$ sufficiently small to absorb the $\ep$ terms, \eqref{e6} implies that  
    \begin{equation}\label{e7}
        {_\mu}\tnorm{v}_{H^s}+{_{\mu,\sig}}\tnorm{\eta}_{\mathcal{H}^s}\lesssim\tnorm{f}_{H^s}+{\tnorm{v_0,\be_0,\eta_0}_{H^{s+1}\times H^{s+1}\times\mathcal{H}^{s+1}}}\tp{\tnorm{v}_{H^{2+\tfloor{d/2}}}+{_\sig}\tnorm{\eta}_{\mathcal{H}^{2+\tfloor{d/2}}}}.
    \end{equation}
    This is nearly \eqref{final a priori estimate question mark}; it remains to remove the $v$ and $\eta$ terms on the right.  For this we consider \eqref{e7} in the case $s=s_0=2+\tfloor{d/2}$ and exploit~\eqref{conditions on the data for the linear analysis} to make another absorption argument in order to obtain the closed estimate
    \begin{equation}\label{e8}
        {_\mu}\tnorm{v}_{H^{2+\tfloor{d/2}}}+{_{\mu,\sig}}\tnorm{\eta}_{\mathcal{H}^{2+\tfloor{d/2}}}\lesssim\tnorm{f}_{H^{2+\tfloor{d/2}}}.
    \end{equation}
    Thus, by combining~\eqref{e7} and~\eqref{e8}, we find that for any $\N\ni s\ge2+\tfloor{d/2}$ we have the bound
    \begin{equation}\label{e9}
        {_\mu}\tnorm{v}_{H^s}+{_{\mu,\sig}}\tnorm{\eta}_{\mathcal{H}^s}\lesssim\tnorm{f}_{H^s}+{\tnorm{v_0,\eta_0,\be_0}_{\X_{s+1}\times H^{s+1}}}\tnorm{f}_{H^{2+\tfloor{d/2}}},
    \end{equation}
    which is \eqref{final a priori estimate question mark}.  This completes the first stage of the proof.

    In the second stage, the task is to add in the dependence of $h$ and to remove the extra Fourier support hypotheses on $\eta$.   Suppose first that the system~\eqref{linear equations to be studied as part of the linear analysis} is satisfied by $v$, $\eta$, and $f$ as in the previous stage, but now with $h\in(\dot{H}^{-1}\cap H^{1+s})(\R^d)$.  As in Proposition \ref{prop on decoupling reformulation}, we define $H\in H^{2+s}(\R^d)$ via $H=\grad\Delta^{-1}h$ and note that  $\norm{H}_{H^{2+s}} \asymp \norm{h}_{\dot{H}^{-1} \cap H^{1+s}}$.  We then set  $w=v-H$ and observe that $w$ and $\eta$ satisfy the system
    \begin{equation}\label{the special system is special for very special reasons}
        \begin{cases}
            \grad\cdot w+\grad\cdot(X\eta)=0,\\
        w+X\cdot\grad w-\mu^2\grad\cdot\S w+\tp{\vartheta-\sig^2\theta\Delta}\grad\eta=f-(I+X\cdot\grad-\mu^2\grad\cdot\S)H,
        \end{cases}
    \end{equation}
    which means we are in a position to apply estimate~\eqref{e9} from the previous stage. Doing so and handling the $X\cdot\grad H$ term with Proposition~\ref{corollary on tame estimates on simple multipliers}, we obtain the bound
    \begin{equation}\label{e10}
        {_\mu}\tnorm{w}_{H^s}+{_{\mu,\sig}}\tnorm{\eta}_{\mathcal{H}^s}\lesssim\tnorm{h,f}_{\dot{H}^{-1}\cap H^{1+s}\times H^s}+{\tnorm{v_0,\eta_0,\be_0}_{\X_{s+1}\times H^{s+1}}}\tnorm{h,f}_{\dot{H}^{-1}\cap H^{3+\tfloor{d/2}}\times H^{2+\tfloor{d/2}}}.
    \end{equation}
    Since ${_\mu}\tnorm{v}_{H^s}\lesssim {_\mu}\tnorm{w}_{H^s}+\tnorm{h}_{\dot{H}^{-1}\cap H^{1+s}}$, the estimate~\eqref{e10} implies~\eqref{final a priori estimate question mark}.

    The final task in the proof is to remove the auxiliary Fourier support assumption on $\eta$. This is straightforward, since we can frequency split $\eta=\Uppi^\kappa_{\m{H}}\eta+\Uppi^\kappa_{\m{L}}\eta$ with $\kappa\in(0,1)$ and consider the equations satisfied by $v$ and $\Uppi^\kappa_{\m{H}}\eta$:
    \begin{equation}\label{freq_split_system}
    \begin{cases}
        \grad\cdot v+\grad\cdot(X\Uppi^\kappa_{\m{H}}\eta)=h-\grad\cdot(X\Uppi^\kappa_{\m{L}}\eta),\\
        v+X\cdot\grad v-\mu^2\grad\cdot\S v+\tp{\vartheta-\sig^2\theta\Delta}\grad\Uppi^\kappa_{\m{H}}\eta=f-(\vartheta-\sig^2\theta\Delta)\grad\Uppi^\kappa_{\m{L}}\eta.
    \end{cases}
    \end{equation}
    To this system we invoke the a priori estimates of the previous stage and send $\kappa\to0$.  Since the right hand side of \eqref{freq_split_system} converges in $\tp{\dot{H}^{-1}\cap H^{1+s}}(\R^d)\times H^s(\R^d)$ to $(h,f)$ as $\kappa\to0$, we obtain the estimate \eqref{final a priori estimate question mark} as a result of this limiting procedure.
\end{proof}

For the final result of this subsection, we provide specialized a priori estimates for~\eqref{linear equations to be studied as part of the linear analysis} in the subsonic regime. The acronym $\m{SAP}$ stands for `subsonic a priori'.

\begin{thm}[Principal part a priori estimates, subsonic case]\label{a priori estimates subsonic principal part}
    Assume that $\mathcal{Q}=I\times J\times K$ is subsonic in the sense that $I\Subset(0,1)$ and suppose that $\N\ni s\ge 2+\tfloor{d/2}$. Further suppose that $f\in H^s(\R^d;\R^d)$, $h\in\tp{\dot{H}^{-1}\cap H^{1+s}}(\R^d)$, $v\in H^{2+s}(\R^d;\R^d)$, and $\eta\in\mathcal{H}^{3+s}(\R^d)$ solve system~\eqref{linear equations to be studied as part of the linear analysis}. There exists $\R^+\ni \rho_{\m{SAP}s}\le\rho_{\m{WD}} $, depending only on $\mathcal{Q}$, $\rho_{\m{WD}}$, $r$ (from Definition~\ref{parameters definition}) and $s$, with the property that for all $\R^+\ni\ep\le\rho_{\m{SAP}s}$ and $(v_0,\eta_0,\be_0)$ satisfying~\eqref{conditions on the data for the linear analysis}, we have the estimate
\begin{equation}\label{final a priori estimate question mark subsonic}
    \tnorm{v,\eta}_{{_{\mu,\sig}}\Xs_s}\lesssim\tnorm{h,f}_{\Y_s}+\tnorm{v_0,\eta_0,\be_0}_{\X_{1+s}\times H^{1+s}}\tnorm{h,f}_{\Y_{2+\tfloor{d/2}}}.
\end{equation}
Here the implicit constant in~\eqref{final a priori estimate question mark} depends only on $s$, $\mathcal{Q}$, and $d$ from Definition~\ref{parameters definition}.
\end{thm}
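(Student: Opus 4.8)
The plan is to mirror the two–stage scheme in the proof of Theorem~\ref{fourth theorem on a priori estimates}, the only genuinely new ingredient being a strengthened base–case estimate that uses the strict subsonicity $I\Subset(0,1)$ to recover one extra derivative on $\eta$ by elliptic regularity.

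\emph{Step 1: a subsonic base–case estimate.} I would first assume $h=0$ and $0\notin\m{supp}\mathscr{F}[\eta]$. Theorem~\ref{thm on base case a priori estimate} already supplies \eqref{somewhere, beyond the sea}, in which $\tnorm{\eta}_{\mathcal{H}^2}+\sig^2\tnorm{\eta}_{\mathcal{H}^4}$ is controlled only with a $|\mu|+|\sig|$ prefactor. To remove that prefactor I would revisit the elliptic identity \eqref{the eta equation strikes back}. Writing $X=v_0-\gam e_1$ and expanding, $(X\cdot\grad)\grad\cdot(X\eta)=\gam^2\pd_1^2\eta+E[\eta]$, where every summand of $E[\eta]$ carries a factor of $v_0$, so $\tnorm{E[\eta]}_{L^2}\lesssim\ep\tnorm{\eta}_{\mathcal{H}^2}$. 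Moving $\gam^2\pd_1^2\eta$ to the left of \eqref{the eta equation strikes back} produces a second–order operator whose symbol, modulo lower order and $\mathcal{O}(\ep)$ corrections coming from $\vartheta-1$ and $\theta-1$, has size
\[
    (2\pi)^2\tp{(1-\gam^2)\xi_1^2+\xi_2^2+\dots+\xi_d^2}+(2\pi)^4\sig^2|\xi|^4\gtrsim|\xi|^2\tp{1+\sig^2|\xi|^2},
\]
the lower bound being exactly where $\gam\in I\Subset(0,1)$ and the smallness of $v_0,\eta_0,\be_0$ enter. Taking $\rho_{\m{SAP}s}$ small enough to absorb the perturbative principal part, elliptic regularity for this Fourier multiplier bounds $\tnorm{\eta}_{\dot H^2}+\sig^2\tnorm{\eta}_{\dot H^4}$ by the $L^2$ norm of the remaining right side of \eqref{the eta equation strikes back} — namely $\grad\cdot f$, the $(1-4\mu^2\Delta)\grad\cdot v$ and $\grad X^{\m{t}}:\grad v$ terms, $\grad\vartheta\cdot\grad\eta-\sig^2\grad\theta\cdot\grad\Delta\eta$, and $E[\eta]$ — which by Corollary~\ref{coro on more algebra properties} and Proposition~\ref{corollary on tame estimates on simple multipliers} is dominated by $\tnorm{f}_{H^1}+\tnorm{v}_{H^1}+\mu^2\tnorm{v}_{H^3}+\sig^2\tnorm{\eta}_{\mathcal{H}^3}+\ep\tp{\tnorm{\eta}_{\mathcal{H}^2}+\sig^2\tnorm{\eta}_{\mathcal{H}^4}}$. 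Combining with \eqref{somewhere, beyond the sea}, shrinking $\rho_{\m{SAP}s}$ again to absorb the $\ep$ term, and supplying the low–frequency part of the $\mathcal{H}^2$ norm (the $\xi_1^2/|\xi|^2$ weight via $|\gam|\tnorm{\mathcal{R}_1\eta}_{L^2}\lesssim\tnorm{v}_{L^2}+\ep\tnorm{\eta}_{\mathcal{H}^0}$ from the continuity equation, exactly as in the proof of Theorem~\ref{thm on base case a priori estimate}, and the $|\xi|$–weights from the $\dot H^2$ bound together with Proposition~\ref{proposition on spatial characterization of anisobros}) yields the subsonic base estimate ${_\mu}\tnorm{v}_{H^1}+{_\sig}\tnorm{\eta}_{\mathcal{H}^2}\lesssim\tnorm{f}_{H^1}$.

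\emph{Step 2: bootstrap and removal of the auxiliary hypotheses.} From here the argument is a cosmetic adaptation of Stages 1 and 2 of Theorem~\ref{fourth theorem on a priori estimates}. I would differentiate \eqref{linear equations to be studied as part of the linear analysis} $q=s-1$ times in a single direction $\pd_j$, obtaining the system \eqref{the higher order differentiated equation}, and feed it back into the Step~1 estimate; the commutators $[X,\pd_j^q]\cdot\grad v$ and $\tp{[\vartheta,\pd_j^q]-\sig^2[\theta,\pd_j^q]\Delta}\grad\eta$ are handled via Proposition~\ref{prop tame commutator estimates}, Corollary~\ref{coro on more algebra properties}, and the frequency splitting of Proposition~\ref{proposition on frequency splitting}, exactly as in \eqref{e2}–\eqref{e5}, using $\sig^2\tnorm{\eta}_{\mathcal{H}^{s+2}}\lesssim\tnorm{\eta}_{\mathcal{H}^{s+1}}+\sig^4\tnorm{\eta}_{\mathcal{H}^{s+3}}$ by interpolation to keep the $\sig$–weighted pieces inside ${_\sig}\tnorm{\eta}_{\mathcal{H}^{1+s}}$. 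Summing over $j$, absorbing the $\ep$ contributions by shrinking $\rho_{\m{SAP}s}$, and then absorbing the low–regularity $v,\eta$ factors on the right by first running the estimate at $s_0=2+\tfloor{d/2}$ and invoking \eqref{conditions on the data for the linear analysis}, gives \eqref{final a priori estimate question mark subsonic} when $h=0$. The dependence on $h$ is reinstated through the substitution $w=v-\grad\Delta^{-1}h$, which converts \eqref{linear equations to be studied as part of the linear analysis} into \eqref{the special system is special for very special reasons} and costs only the equivalence $\tnorm{\grad\Delta^{-1}h}_{H^{2+s}}\asymp\tnorm{h}_{\dot H^{-1}\cap H^{1+s}}$ and one use of Proposition~\ref{corollary on tame estimates on simple multipliers}; and the hypothesis $0\notin\m{supp}\mathscr{F}[\eta]$ is removed by the frequency–splitting limit at the end of the proof of Theorem~\ref{fourth theorem on a priori estimates}.

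\emph{Main obstacle.} The hard part will be Step~1: verifying that, after transferring the full first–order transport–squared term $(X\cdot\grad)\grad\cdot(X\eta)$ in \eqref{the eta equation strikes back} to the left, the resulting second–order operator is \emph{uniformly} elliptic — with its $\sig^2$–weighted fourth–order term cooperating — which is true precisely because $\gam$ is bounded away from the sonic speed $1$ and the backgrounds $v_0,\eta_0,\be_0$ are small, and then carefully sorting the many remainder terms into those absorbable as $\mathcal{O}(\ep)\times(\text{left side})$ versus those already dominated by \eqref{somewhere, beyond the sea}. Once the subsonic base estimate is secured, everything downstream is the omnisonic argument with one extra derivative of headroom on $\eta$.
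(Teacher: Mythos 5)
Your proposal is correct, and it rests on exactly the same key structural observation as the paper's proof: under $\gam\in I\Subset(0,1)$ the operator $\Delta-\gam^2\pd_1^2-\sig^2\Delta^2$ extracted from \eqref{the eta equation strikes back} is uniformly elliptic (with the $\sig^2\Delta^2$ term cooperating), and the background-dependent corrections from $\vartheta-1$, $\theta-1$, and the non-principal part of $(X\cdot\grad)\grad\cdot(X\eta)$ are $O(\ep)$-perturbative. Where you differ is in the organization. You build a strengthened \emph{base-case} estimate ${_\mu}\tnorm{v}_{H^1}+{_\sig}\tnorm{\eta}_{\mathcal{H}^2}\lesssim\tnorm{f}_{H^1}$ by running the elliptic argument at the $L^2$ level, and then rerun the entire differentiation/commutator bootstrap of Theorem~\ref{fourth theorem on a priori estimates} (including the reinstatement of $h$ and the removal of the Fourier-support hypothesis) on top of it. The paper instead takes $\rho_{\m{SAP}s}\le\rho_{\m{AP}s}$, cites the omnisonic estimate \eqref{final a priori estimate question mark} wholesale — so that all of the $v$-control and the low-order $\eta$-control, together with the $h$-dependence and support issues, are already settled — and then performs a \emph{single} elliptic-regularity promotion at top order: it takes the $H^{s-1}$ norm of \eqref{sharon} to bound $\tnorm{\grad^2\eta}_{{_\sig}H^{s-1}}$, estimates the right side by $\ep\tnorm{\grad^2\eta}_{{_\sig}H^{s-1}}$ plus quantities already controlled by \eqref{final a priori estimate question mark}, absorbs first at $s=2+\tfloor{d/2}$ and then at general $s$. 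The paper's route is shorter and avoids re-deriving the elliptic equation for the differentiated unknowns $\pd_j^q\eta$ (whose source terms pick up the commutators from both equations of \eqref{the higher order differentiated equation}); your route duplicates that bookkeeping but is self-contained and yields the same constants. Both are valid; if you adopt yours, the one place to be careful is that the elliptic equation for $\pd_j^{s-1}\eta$ inherits the extra term $X\cdot\grad\bigl(\grad\cdot([X,\pd_j^{s-1}]\eta)\bigr)$ from the differentiated continuity equation, which must be bounded via the commutator estimate \eqref{e3} rather than by naively differentiating \eqref{the eta equation strikes back}.
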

\begin{proof}
    We shall seek $\rho_{\m{SAP}s}\le\rho_{\m{AP}s}$, where the latter parameter is the one granted by Theorem~\ref{linear equations to be studied as part of the linear analysis}, so that estimate~\eqref{final a priori estimate question mark} holds for our solution.  To prove \eqref{final a priori estimate question mark subsonic} we only need to promote the regularity of $\eta$, as the required estimates on $v$ are already met; our main tool in achieving this promotion is a more careful study of equation~\eqref{the eta equation strikes back}. We can rewrite said equation as
    \begin{multline}\label{sharon}
        \tp{\Delta-\gam^2\pd_1^2}\eta-\sig^2\Delta^2\eta=-\eta_0\tp{1-\sig^2\Delta}\Delta\eta-\be_0\Delta\eta+\tp{X\otimes X-\gam^2 e_1\otimes e_1}:\grad^2\eta+X\cdot\grad\tp{\eta\grad\cdot X}\\
        +\grad\cdot f-\grad X^{\m{t}}:\grad v-(1-3\mu^2\Delta)\grad\cdot v-X\cdot\grad h 
- (\grad \vartheta \cdot \grad \eta - \sigma^2 \grad \theta \cdot \grad \Delta \eta)
    \end{multline}
    in order to make evident the utility of inclusion $\gam\in I\Subset(0,1)$ in the negative definiteness of the symbol of the operator on the left away from zero.  Note that this condition also implies the ellipticity of the operator on the left when $\sigma =0$. 
    
    We shall take the norm of both sides of equation~\eqref{sharon} in the space $H^{s-1}(\R^d)$.  Due to ellipticity, we obtain the bound
    \begin{multline}\label{ll1}
        \tnorm{\grad^2\eta}_{{_\sig}H^{s-1}}\lesssim\tnorm{-\eta_0\tp{1-\sig^2\Delta}\Delta\eta-\be_0\Delta\eta+\tp{X\otimes X-\gam^2 e_1\otimes e_1}:\grad^2\eta}_{H^{s-1}}\\
        + \tnorm{X\cdot\grad\tp{\eta\grad\cdot X}
        +\grad\cdot f-\grad X^{\m{t}}:\grad v-(1-3\mu^2\Delta)\grad\cdot v-X\cdot\grad h 
- (\grad \vartheta \cdot \grad \eta - \sigma^2 \grad \theta \cdot \grad \Delta \eta)}_{H^{s-1}}\\=\bf{I}+\bf{II},
    \end{multline}
    and we will estimate the two terms on the right separately. Thanks to Corollary~\ref{coro on more algebra properties} and Proposition~\ref{corollary on tame estimates on simple multipliers}, we have the bounds
    \begin{equation}\label{ll2}
        \bf{I}\lesssim\ep\tnorm{\grad^2\eta}_{{_\sig}H^{s-1}}+\tnorm{v_0,\eta_0,\be_0}_{\X_{s-1}\times H^{s-1}}\tnorm{\grad^2\eta}_{{_\sig}H^{1+\tfloor{d/2}}}
    \end{equation}
    and
    \begin{equation}\label{ll3}
        \bf{II}\lesssim\tnorm{v,\eta,h,f}_{{_{\mu,\sig}}\X_s\times\Y_s}+\tnorm{v_0,\eta_0,\be_0}_{\X_{s+1}\times H^{s+1}}\tnorm{v,\eta,h,f}_{{_{\mu,\sig}}\X_{2+\tfloor{d/2}}\times\Y_{2+\tfloor{d/2}}}.
    \end{equation}
    We combine~\eqref{ll1}, \eqref{ll2}, and~\eqref{ll3} with~\eqref{final a priori estimate question mark} to acquire the estimate
    \begin{multline}\label{i hear thunder}
        \tnorm{\grad^2\eta}_{{_\sig}H^{s-1}}\lesssim\ep\tnorm{\grad^2\eta}_{{_\sig}H^{s-1}}+\tnorm{v_0,\eta_0,\be_0}_{\X_{s-1}\times H^{s-1}}\tnorm{\grad^2\eta}_{{_\sig}H^{1+\tfloor{d/2}}}\\+\tnorm{h,f}_{\Y_s}+\tnorm{v_0,\eta_0,\be_0}_{\X_{1+s}\times H^{1+s}}\tnorm{h,f}_{\Y_{2+\tfloor{d/2}}}.
    \end{multline}
    By taking $s=2+\tfloor{d/2}$ and $\ep\le\rho_{\m{SAP}s}$ sufficiently small in~\eqref{i hear thunder}, we may use an absorbing argument to learn that
    \begin{equation}\label{i see lightning}
        \tnorm{\grad^2\eta}_{{_\sig}H^{1+\tfloor{d/2}}}\lesssim\tnorm{h,f}_{\Y_{2+\tfloor{d/2}}}.
    \end{equation}
    Inputting~\eqref{i see lightning} into~\eqref{i hear thunder} for $s\ge 2+\tfloor{d/2}$ and taking $\ep$ smaller, if necessary, we arrive at the bound
    \begin{equation}\label{i am groot}
        \tnorm{\grad^2\eta}_{{_\sig}H^{s-1}}\lesssim\tnorm{h,f}_{\Y_s}+\tnorm{v_0,\eta_0,\be_0}_{\X_{1+s}\times H^{1+s}}\tnorm{h,f}_{\Y_{2+\tfloor{d/2}}}.
    \end{equation}
    Combining~\eqref{i am groot} with~\eqref{final a priori estimate question mark} then yields~\eqref{final a priori estimate question mark subsonic}.
\end{proof}

\subsection{Existence}\label{subsection on existence}

The first goal of this subsection is to establish the existence of solutions to system~\eqref{linear equations to be studied as part of the linear analysis}. The case of $\mu\cdot\sig\neq0$ is a direct application of the method of continuity along with Theorems~\ref{thm on the easy case of linear well-posedness} and~\ref{fourth theorem on a priori estimates}.

\begin{prop}[Preliminary existence result]\label{first proposition on existence}
    Suppose that $\N\ni s\ge 2+\tfloor{d/2}$,  $\R^+\ni\ep\le \rho_{\m{APs}}$, and  $(v_0,\eta_0,\be_0)$ satisfy~\eqref{conditions on the data for the linear analysis}. If $(\mu,\sig)\in (J\setminus\tcb{0})\times(K\setminus\tcb{0})$, and $\gam\in I\Subset\R$, then for every $(h,f)\in(\dot{H}^{-1}\cap H^{1+s})(\R^d)\times H^s(\R^d;\R^d)$ there exists a unique $(v,\eta)\in H^{2+s}(\R^d;\R^d)\times\mathcal{H}^{3+s}(\R^d)$ satisfying system~\eqref{linear equations to be studied as part of the linear analysis}.
\end{prop}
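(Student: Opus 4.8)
The plan is to prove this by the method of continuity, deforming the operator of~\eqref{linear equations to be studied as part of the linear analysis} to the trivial linearization of Theorem~\ref{thm on the easy case of linear well-posedness}. Since the system depends on $\mu$ and $\sig$ only through $\mu^2$ and $\sig^2$, I would first reduce without loss of generality to $\mu,\sig\in\R^+$; this also makes Theorem~\ref{thm on the easy case of linear well-posedness} available, as $\gam,\mu,\sig\in\R^+$ there. Because $\mu,\sig\neq 0$, the parameter dependent spaces of Definition~\ref{defn of parameter dependent norms} collapse to ${_\mu}H^s(\R^d;\R^d)=H^{2+s}(\R^d;\R^d)$ and ${_{\mu,\sig}}\mathcal{H}^s(\R^d)=\mathcal{H}^{3+s}(\R^d)$ with equivalent norms, so ${_{\mu,\sig}}\X_s=H^{2+s}(\R^d;\R^d)\times\mathcal{H}^{3+s}(\R^d)$ is exactly the space in which the solution is sought, and the a priori estimate~\eqref{final a priori estimate question mark} of Theorem~\ref{fourth theorem on a priori estimates} controls precisely that norm.

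Next I would introduce, for $t\in[0,1]$, the coefficients $X_t=tv_0-\gam e_1$, $\vartheta_t=1+t(\eta_0+\be_0)$, $\theta_t=1+t\eta_0$, and let $L_t\colon H^{2+s}(\R^d;\R^d)\times\mathcal{H}^{3+s}(\R^d)\to(\dot{H}^{-1}\cap H^{1+s})(\R^d)\times H^s(\R^d;\R^d)$ be the operator obtained by replacing $(X,\vartheta,\theta)$ in~\eqref{linear equations to be studied as part of the linear analysis} by $(X_t,\vartheta_t,\theta_t)$. Boundedness of each $L_t$ between these spaces is routine: it follows from the spatial characterization of the anisotropic Sobolev spaces in Proposition~\ref{proposition on spatial characterization of anisobros} (in particular $\pd_1\eta\in\dot{H}^{-1}$ for $\eta\in\mathcal{H}^{3+s}$, and $\grad\cdot$ of an $L^2$ field lands in $\dot{H}^{-1}$) together with the product estimates of Corollary~\ref{coro on more algebra properties}. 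Two structural facts drive the continuation: first, $t\mapsto L_t$ is affine, so $L_t-L_{t'}=(t-t')(L_1-L_0)$ and $\tnorm{L_t-L_{t'}}_{\mathcal{L}}\le M\abs{t-t'}$ with $M=\tnorm{L_1-L_0}_{\mathcal{L}}$; second, at $t=0$ one has $X_0=-\gam e_1$ and $\vartheta_0=\theta_0=1$, so $L_0$ is precisely the isomorphism $L$ of Theorem~\ref{thm on the easy case of linear well-posedness}.

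The quantitative heart is a $t$-uniform a priori bound. For every $t\in[0,1]$ the scaled background $(tv_0,t\eta_0,t\be_0)$ still obeys~\eqref{conditions on the data for the linear analysis}, since multiplying by $t\le 1$ does not increase the norms and preserves $H^\infty$ membership; hence Theorem~\ref{fourth theorem on a priori estimates} applies with the same $\rho_{\m{AP}s}$, and, using $s\ge 2+\tfloor{d/2}$ to absorb the $\Y_{2+\tfloor{d/2}}$ factor together with the finiteness of $\tnorm{v_0,\eta_0,\be_0}_{\X_{1+s}\times H^{1+s}}$ (the background lies in $H^\infty$), it yields $\tnorm{v,\eta}_{H^{2+s}\times\mathcal{H}^{3+s}}\le C\,\tnorm{L_t(v,\eta)}_{(\dot{H}^{-1}\cap H^{1+s})\times H^s}$ with $C$ independent of $t$. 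Applied with $L_t(v,\eta)=0$ this gives the asserted uniqueness, and in general it makes every $L_t$ injective with closed range. I would then close with the usual continuation step: if $L_{t_0}$ is surjective it is an isomorphism with $\tnorm{L_{t_0}^{-1}}_{\mathcal{L}}\le C$, and for $\abs{t-t_0}<1/(CM)$ the self-map $(v,\eta)\mapsto L_{t_0}^{-1}\bp{(h,f)-(L_t-L_{t_0})(v,\eta)}$ of $H^{2+s}(\R^d;\R^d)\times\mathcal{H}^{3+s}(\R^d)$ is a contraction, whose fixed point solves $L_t(v,\eta)=(h,f)$; since $L_0$ is surjective and one advances by the fixed increment $1/(CM)$, one reaches $t=1$ in finitely many steps, so $L_1$ is a bounded linear isomorphism, which is the claim.

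The main obstacle is essentially already dispatched: the genuinely delicate estimate~\eqref{final a priori estimate question mark}, which controls the solution by the data up to a background-dependent lower-order correction, is the content of Theorem~\ref{fourth theorem on a priori estimates}, and everything else is soft functional analysis. The only points demanding a little care are verifying that $L_t$ genuinely lands in $(\dot{H}^{-1}\cap H^{1+s})(\R^d)\times H^s(\R^d;\R^d)$ rather than merely $H^{1+s}\times H^s$ — which hinges on the $\dot{H}^{-1}$ information built into the anisotropic spaces and on the product estimates for the $v_0\eta$ contribution — and the innocuous reduction to $\mu,\sig>0$ that puts Theorem~\ref{thm on the easy case of linear well-posedness} at the base point of the deformation.
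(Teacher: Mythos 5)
Your proposal is correct and follows essentially the same route as the paper: the same affine homotopy obtained by scaling the background $(v_0,\eta_0,\be_0)$ by $t$, the trivial-linearization isomorphism of Theorem~\ref{thm on the easy case of linear well-posedness} at $t=0$, the $t$-uniform a priori bound from Theorem~\ref{fourth theorem on a priori estimates} (valid since the scaled background still satisfies~\eqref{conditions on the data for the linear analysis}), and the method of continuity to reach $t=1$. Your additional remarks — the harmless reduction to $\mu,\sig>0$ and the identification ${_{\mu,\sig}}\X_s=H^{2+s}\times\mathcal{H}^{3+s}$ when $\mu\sig\neq0$ — are details the paper leaves implicit.
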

\begin{proof}
    We consider the affine homotopy of bounded linear operators 
    \begin{equation}\label{d1.1}
        L_t:H^{2+s}(\R^d;\R^d)\times\mathcal{H}^{3+s}(\R^d)\to\tp{\dot{H}^{-1}\cap H^{1+s}}(\R^d)\times H^s(\R^d;\R^d) \text{ for } t\in[0,1]
    \end{equation}
    defined via
    \begin{equation}\label{d1.2}
        L_t(v,\eta)=\bpm\grad\cdot w-\gam\pd_1\eta+t\grad\cdot(v_0\eta)\\v-\gam\pd_1v-\mu^2\grad\cdot\S v+(1-\sig^2\Delta)\grad\eta+t\tp{v_0\grad\cdot v+(\eta_0+\be_0-\sig^2\eta_0\Delta)\grad\eta}\epm.
    \end{equation}
    Theorem~\ref{fourth theorem on a priori estimates} provides a constant $C>0$ such that 
    \begin{equation}
        {_\mu}\tnorm{v}_{H^s}+{_{\mu,\sig}}\tnorm{\eta}_{\mathcal{H}^s} \le C \tnorm{L_t(v,\eta)}_{\dot{H}^{-1}\cap H^{1+s}\times H^s} \text{ for all } t\in [0,1].
    \end{equation}
    On the other hand, Theorem~\ref{fourth theorem on a priori estimates} shows that $L_0$ is a linear isomorphism.  Hence,  the method of continuity (see, for instance, Theorem 5.2 in Gilbarg and Trudinger~\cite{MR1814364}) guarantees that the map $L_1$ is also an isomorphism, and $L_1$ corresponds to the operator in system~\eqref{linear equations to be studied as part of the linear analysis}.
\end{proof}

Our main existence theorems are the following two results.

\begin{thm}[Omnisonic and subsonic existence result]\label{main theorem of all of math in the math document where is it}
    Suppose that $\N\ni s\ge \max\tcb{3,2+\tfloor{d/2}}$. The following hold.
    \begin{enumerate}
        \item If $\R^+\ni\ep\le\rho_{\m{APs}}$, where the latter parameter is from Theorem~\ref{fourth theorem on a priori estimates}, $(v_0,\eta_0,\be_0)$ satisfy~\eqref{conditions on the data for the linear analysis}, $(\mu,\sig)\in J\times K$, and $\gam\in I\Subset\R^+$, then for every $(h,f)\in\Y_s$ there exists a unique $(v,\eta)\in\X_s$ satisfying system~\eqref{linear equations to be studied as part of the linear analysis}. Moreover, the solution obeys estimate~\eqref{final a priori estimate question mark}.
        \item If $\R^+\ni\ep\le\rho_{\m{SAPs}}$, where the latter parameter is from Theorem~\ref{a priori estimates subsonic principal part},   $(v_0,\eta_0,\be_0)$ satisfy~\eqref{conditions on the data for the linear analysis}, $(\mu,\sig)\in J\times K$, and $\gam\in I\Subset(0,1)$, then for every $(h,f)\in\Y_s$ there exists a unique $(v,\eta)\in\Xs_s$ satisfying system~\eqref{linear equations to be studied as part of the linear analysis}. Moreover, the solution obeys estimate~\eqref{final a priori estimate question mark subsonic}.
    \end{enumerate}
\end{thm}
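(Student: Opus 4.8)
The plan is to bootstrap from Proposition~\ref{first proposition on existence}, which already settles the case $\mu\sig\neq0$, to the remaining boundary cases $\mu=0$ and/or $\sig=0$ via a vanishing–regularization argument in the parameters. First I would observe that when $(\mu,\sig)\in(J\setminus\tcb{0})\times(K\setminus\tcb{0})$ Proposition~\ref{first proposition on existence} yields a solution $(v,\eta)\in H^{2+s}(\R^d;\R^d)\times\mathcal{H}^{3+s}(\R^d)$, which embeds into $\X_s$ (resp.\ $\Xs_s$) and is covered by the a priori estimate of Theorem~\ref{fourth theorem on a priori estimates} (resp.\ Theorem~\ref{a priori estimates subsonic principal part}); in this regime uniqueness is immediate from the same estimate applied to the difference of two solutions. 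For the general case, fix $(\mu,\sig)\in J\times K$ with $\mu\cdot\sig=0$ and choose $J\setminus\tcb{0}\ni\mu_n\to\mu$, $K\setminus\tcb{0}\ni\sig_n\to\sig$ with $(\gam,\mu_n,\sig_n)\in\mathcal{Q}$. Since $X=v_0-\gam e_1$, $\vartheta=1+\eta_0+\be_0$, and $\theta=1+\eta_0$ are independent of $\mu$ and $\sig$, Proposition~\ref{first proposition on existence} provides solutions $(v_n,\eta_n)$ of~\eqref{linear equations to be studied as part of the linear analysis} with parameters $(\gam,\mu_n,\sig_n)$ and data $(h,f)$, and each obeys~\eqref{final a priori estimate question mark} (resp.\ \eqref{final a priori estimate question mark subsonic}) with a constant depending only on $s$, $\mathcal{Q}$, and $d$, hence uniform in $n$.

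Next I would extract a limit. The uniform bound gives that $(v_n,\eta_n)$ is bounded in $\X_s$ (resp.\ $\Xs_s$), and that $\tnorm{v_n}_{H^{2+s}}$ (resp.\ $\tnorm{\eta_n}_{\mathcal{H}^{3+s}}$) is bounded as well whenever $\mu\neq0$ (resp.\ $\sig\neq0$), because then the weights $\mu_n^2$ (resp.\ $\sig_n^2$) stay bounded below. Passing to a subsequence, $v_n\rightharpoonup v$ in $H^s$ and $\eta_n\rightharpoonup\eta$ in $\mathcal{H}^s$, with the stronger weak convergence in $H^{2+s}$ or $\mathcal{H}^{3+s}$ in the non-degenerate directions; here the characterization of $\mathcal{H}^s$ from Proposition~\ref{proposition on spatial characterization of anisobros} in terms of $\grad\eta\in H^{s-1}$ and $\pd_1\eta\in\dot{H}^{-1}$ is convenient for identifying the limit. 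Then I would pass to the limit in~\eqref{linear equations to be studied as part of the linear analysis}: all terms with $(\mu,\sig)$-independent coefficients converge in $\mathscr{D}'(\R^d)$, and for the singular terms, testing against $\phi\in C^\infty_0(\R^d)$ and moving derivatives onto $\phi$ shows $\mu_n^2\grad\cdot(\S v_n)\to\mu^2\grad\cdot(\S v)$ and $\sig_n^2\theta\Delta\grad\eta_n\to\sig^2\theta\Delta\grad\eta$ — when $\mu=0$ (resp.\ $\sig=0$) the prefactor forces the limit to vanish (indeed $\mu_n^2\grad\cdot(\S v_n)\to0$ in $H^{s-2}$ since $\mu_n^2\tnorm{v_n}_{H^s}\to0$, and similarly $\sig_n^2\theta\Delta\grad\eta_n\to0$ in $H^{s-3}$), and when $\mu\neq0$ (resp.\ $\sig\neq0$) the uniform higher bounds give weak convergence in $H^s$. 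Thus $(v,\eta)\in\X_s$ (resp.\ $\Xs_s$) solves~\eqref{linear equations to be studied as part of the linear analysis} with parameters $(\gam,\mu,\sig)$, and since each summand in~\eqref{final a priori estimate question mark} (resp.\ \eqref{final a priori estimate question mark subsonic}) is a norm that is weakly lower semicontinuous on the space along which the corresponding sequence converges, and $\mu_n,\sig_n\to\mu,\sig$, taking the $\liminf$ in the uniform bound produces the estimate for $(v,\eta)$. Uniqueness in $\X_s$ (resp.\ $\Xs_s$) I would then obtain by applying the a priori estimates to the difference of two solutions; although those theorems are stated for solutions with extra regularity, this regularity enters only through integration-by-parts identities that remain valid for $\X_s$-solutions once $s\ge\max\tcb{3,2+\tfloor{d/2}}$, after the frequency-splitting reduction $\eta=\Uppi^\kappa_{\m{H}}\eta+\Uppi^\kappa_{\m{L}}\eta$, $\kappa\to0$, used at the end of the proof of Theorem~\ref{fourth theorem on a priori estimates}, together with (in the subsonic case) a preliminary elliptic bootstrap of~\eqref{the eta equation strikes back}.

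The hard part will be the simultaneous passage to the limit in the singular viscous and capillary terms and the transfer of the parameter-weighted a priori bound: because the weighted norms ${_{\mu_n}}\tnorm{\cdot}_{H^s}$ and ${_{\mu_n,\sig_n}}\tnorm{\cdot}_{\mathcal{H}^s}$ depend on $n$, one cannot simply invoke lower semicontinuity of a single fixed norm but must argue term by term, separating the degenerate cases $\mu=0$ and $\sig=0$ from the interior of the parameter box, and one must check that the various weak limits in the higher-order spaces are mutually compatible and assemble into a genuine element of $\X_s$ (resp.\ $\Xs_s$) that solves the system.
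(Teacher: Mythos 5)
Your proposal is correct and follows essentially the same route as the paper: regularize to $(\mu_n,\sig_n)\in(J\setminus\tcb{0})\times(K\setminus\tcb{0})$, invoke Proposition~\ref{first proposition on existence}, apply the uniform a priori bounds of Theorems~\ref{fourth theorem on a priori estimates} and~\ref{a priori estimates subsonic principal part}, extract a weak limit by reflexivity, pass to the limit in the equations, and conclude the estimate by weak lower semicontinuity, with uniqueness coming from the a priori estimates applied to differences (the paper reduces to $h=0$ and cites Theorem~\ref{thm on base case a priori estimate}). Your term-by-term discussion of the singular viscous and capillary terms and of the $n$-dependent weighted norms simply fills in details the paper's proof leaves implicit.
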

\begin{proof} We only prove the first item, as the second item follows from a nearly identical argument. We begin by justifying the uniqueness assertion. By shifting to the system~\eqref{the special system is special for very special reasons} as in the proof of Theorem~\ref{fourth theorem on a priori estimates}, we see that it is sufficient to prove uniqueness in the case that $h=0$.  Uniqueness in this case is a consequence of estimate~\eqref{the base case a priori estimate is here, today and its gone} from Theorem~\ref{thm on base case a priori estimate}.

    We now prove existence. Let $\tcb{(\mu_n,\sig_n)}_{n\in\N}\subset(J\setminus\tcb{0})\times(K\setminus\tcb{0})$ be any sequence satisfying $(\mu_n,\sig_n)\to(\mu,\sig)$ as $n\to\infty$. Thanks to Proposition~\ref{first proposition on existence}, we obtain a corresponding sequence $\tcb{(v_n,\eta_n)}_{n\in\N}\subset H^{2+s}(\R^d;\R^d)\times\mathcal{H}^{3+s}(\R^d)$ such that
    \begin{equation}\label{standing solo in the sun}
        \begin{cases}
            \grad\cdot v_n+\grad\cdot(X\eta_n)=h,\\
        v_n+X\cdot\grad v_n-\mu_n^2\grad\cdot\S v_n+\tp{\vartheta-\sig_n^2\theta\Delta}\grad\eta_n=f.
        \end{cases}
    \end{equation}
    Theorem~\ref{fourth theorem on a priori estimates}, applied along this sequence, yields the uniform bounds
    \begin{equation}\label{final a priori estimate question mark, but now appearing with yep you guessed an index n}
    \tnorm{v_n,\eta_n}_{{_{\mu_n,\sig_n}}\X_s}\lesssim\tnorm{h,f}_{\Y_s}+\tnorm{v_0,\eta_0,\be_0}_{\X_{1+s}\times H^{1+s}}\tnorm{h,f}_{\Y_{2+\tfloor{d/2}}}.
    \end{equation}
    Thus, by the reflexivity of the container space, we may extract a subsequence that converges weakly to  $(v,\eta)\in H^s(\R^d;\R^d)\times\mathcal{H}^s(\R^d)$ in the weak $H^s(\R^d;\R^d)\times\mathcal{H}^s(\R^d)$ topology.  Taking weak limits along the subsequence in~\eqref{standing solo in the sun}, we deduce that  $(v,\eta)$ is the desired solution.  Sequential weak lower semicontinuity and~\eqref{final a priori estimate question mark, but now appearing with yep you guessed an index n} then provide the bound~\eqref{final a priori estimate question mark}. 
\end{proof}

\subsection{Synthesis}\label{subsection on synthesis}

The goal of this subsection is to combine the principal part linear analysis of Sections~\ref{subsection on estimates} and~\ref{subsection on existence} with the splitting analysis of Section~\ref{section on derivative splitting} to complete the satisfaction of the linear hypotheses of the Nash-Moser theorem.

The main emphasis of this subsection is to prove the results in full detail in the omnisonic regime. We then, at the end, consider the completely analogous results in the subsonic regime. The proofs for these are then greatly abbreviated, as they are very similar to the omnisonic case except for a few minor distinctions.

We begin with the following definition of maps and function spaces, which are crucially utilized in the subsequent proofs. Throughout the rest of this subsection we will employ the  notation set in Proposition~\ref{prop on derivative splitting}.

\begin{defn}[Operators and adapted domains, omnisonic case]\label{definition of operators and adapted domains I}
    Given $\gam$, $\mu$, $\sig$, $v_0$, $\eta_0$, and $\be_0$ as in Definition~\ref{parameters definition}, we define the following.
    \begin{enumerate}
        \item For $s \in \N$ the map $L^{\gam,\mu,\sig}_{v_0,\eta_0,\be_0}:H^{2+s}(\R^d;\R^d)\times\mathcal{H}^{3+s}(\R^d)\to\tp{\dot{H}^{-1}\cap H^{1+s}}(\R^d)\times H^s(\R^d;\R^d)$  is given by
        \begin{equation}
            L^{\gam,\mu,\sig}_{v_0,\eta_0,\be_0}(v,\eta)=P^{\gam,\mu,\sig}_{v_0,\eta_0}[v,\eta]+\tp{0,\be_0\grad\eta}.
        \end{equation}
        \item For $\N\ni s\ge 3$ we define the space ${_{v_0,\eta_0,\be_0}^{\gam,\mu,\sig}}\X_s=\tcb{(v,\eta)\in{_{\mu,\sig}}\X_s\;:L_{v_0,\eta_0,\be_0}^{\gam,\mu,\sig}(v,\eta)\in\Y_s}$ and equip it with the graph norm
        \begin{equation}\label{gn2}
            \tnorm{v,\eta}_{{_{v_0,\eta_0,\be_0}^{\gam,\mu,\sig}}\X_s}=\tnorm{v,\eta}_{{_{\mu,\sig}\X_s}}+\tnorm{L^{\gam,\mu,\sig}_{v_0,\eta_0,\be_0}(v,\eta)}_{\Y_s}.
        \end{equation}
    \end{enumerate}
\end{defn}

We now come to the first part of the synthesis. The subscript $\m{S}$ stands for synthesis and the $s$ refers to the data belonging to $\Y_s$.
\begin{prop}[Synthesis I, omnisonic case]\label{prop on the first part of the synthesis}
    Let $(\gam, \mu, \sig) \in \mathcal{Q}$ with $\mathcal{Q}$ as in Definition~\ref{parameters definition}.  For each $\N\ni s\ge\max\tcb{3,2+\tfloor{d/2}}$ there exists $\R^+\ni \tilde{\rho}_{\m{S}s}\le\rho_{\m{AP}s}$, depending only on $s$ and $\mathcal{Q}$, such that if $\R^+\ni\ep\in(0,\tilde{\rho}_{\m{S}s}]$, $(v_0,\eta_0,\be_0)$ satisfy \eqref{conditions on the data for the linear analysis}, $\upvarphi_0\in B_{H^r}(0,\ep)\cap H^\infty(\R^d;\R^d)$, and $\uptau_0\in B_{H^r}(0,\ep)\cap H^\infty(\R^d;\R^{d\times d})$, then the following hold.
    \begin{enumerate}
        \item The following linear map is well-defined and bounded:
        \begin{equation}\label{dies illa}
            {^{\gam,\mu,\sig}_{v_0,\eta_0,\be_0}}\X_s\ni(v,\eta)\mapsto L^{\gam,\mu,\sig}_{v_0,\eta_0,\be_0}(v,\eta)+(0,\upvarphi_0\eta+(\mu+\sig)\uptau_0\grad\eta)+(0,R^{\gam,\mu,\sig}_{v_0,\eta_0}[v,\eta])\in\Y_s.
        \end{equation}
        Moreover, this map is an isomorphism.
        \item  For all $(h,f)\in\Y_s$ the unique $(v,\eta)\in{{_{v_0,\eta_0,\be_0}^{\gam,\mu,\sig}}\X_s}$ satisfying the system
    \begin{equation}\label{equations of the first synthesis}
    L^{\gam,\mu,\sig}_{v_0,\eta_0,\be_0}(v,\eta)+(0,\upvarphi_0\eta+(\mu+\sig)\uptau_0\grad\eta)+(0,R^{\gam,\mu,\sig}_{v_0,\eta_0}[v,\eta])=(h,f)
    \end{equation}
    obeys the tame estimate
    \begin{equation}\label{estimate of the first synthesis}
        \tnorm{v,\eta}_{{_{\mu,\sig}}\X_s}\lesssim\tnorm{h,f}_{\Y_s}+\tnorm{v_0,\eta_0,\be_0,\uptau_0,\upvarphi_0}_{\X_{3+s}\times H^{3+s}\times H^{3+s}\times H^{3+s}}\tnorm{h,f}_{\Y_{\max\tcb{3,2+\tfloor{d/2}}}},
    \end{equation}
    where the implicit constant depends only on $s$, $d$, and $\mathcal{Q}$. 
    \end{enumerate}
\end{prop}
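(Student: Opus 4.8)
The plan is to reduce the full operator in \eqref{dies illa} to the principal-part operator $L^{\gam,\mu,\sig}_{v_0,\eta_0,\be_0}$, whose invertibility and tame estimate are already available, and then absorb the zeroth-/first-order perturbations $\upvarphi_0\eta+(\mu+\sig)\uptau_0\grad\eta$ together with the remainder $R^{\gam,\mu,\sig}_{v_0,\eta_0}[v,\eta]$ by smallness. First I would observe that $L^{\gam,\mu,\sig}_{v_0,\eta_0,\be_0}(v,\eta)=P^{\gam,\mu,\sig}_{v_0,\eta_0}[v,\eta]+(0,\be_0\grad\eta)$ is exactly the operator appearing on the left-hand side of the principal-part system \eqref{linear equations to be studied as part of the linear analysis} with $X=v_0-\gam e_1$, $\vartheta=1+\eta_0+\be_0$, $\theta=1+\eta_0$ (this is the content of the derivative splitting in Proposition~\ref{prop on derivative splitting} combined with Definition~\ref{parameters definition}). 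Hence Theorem~\ref{main theorem of all of math in the math document where is it} (first item) tells us that, for $\ep\le\rho_{\m{AP}s}$ and data in $\Y_s$, the map $L^{\gam,\mu,\sig}_{v_0,\eta_0,\be_0}:{^{\gam,\mu,\sig}_{v_0,\eta_0,\be_0}}\X_s\to\Y_s$ is a bounded bijection — indeed, by the very definition of the graph norm \eqref{gn2} it is an isometric-type isomorphism (bounded below with bounded inverse), with the inverse satisfying the tame estimate \eqref{final a priori estimate question mark}.

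Next I would treat the perturbation $T(v,\eta)=(0,\upvarphi_0\eta+(\mu+\sig)\uptau_0\grad\eta+R^{\gam,\mu,\sig}_{v_0,\eta_0}[v,\eta])$ as a bounded linear operator from ${^{\gam,\mu,\sig}_{v_0,\eta_0,\be_0}}\X_s$ to $\Y_s$. For the product terms $\upvarphi_0\eta$ and $(\mu+\sig)\uptau_0\grad\eta$ this follows from the algebra/multiplier estimates (Corollary~\ref{coro on more algebra properties} and Proposition~\ref{corollary on tame estimates on simple multipliers}) applied to $\upvarphi_0\in B_{H^r}(0,\ep)$, $\uptau_0\in B_{H^r}(0,\ep)$: these give $\tnorm{\upvarphi_0\eta}_{H^s}\lesssim\ep\tnorm{\eta}_{\mathcal{H}^s}+\tnorm{\upvarphi_0}_{H^s}\tnorm{\eta}_{\mathcal{H}^{2+\tfloor{d/2}}}$ and similarly for the $(\mu+\sig)\uptau_0\grad\eta$ term, where the crucial point is that the $(\mu+\sig)$ prefactor pairs with a derivative on $\eta$ so that $(\mu+\sig)\tnorm{\grad\eta}_{\mathcal{H}^{s-1}}$ is controlled by ${_{\mu,\sig}}\tnorm{\eta}_{\mathcal{H}^s}$ (this is exactly why the norm ${_{\mu,\sig}}\tnorm{\cdot}_{\mathcal{H}^s}$ in Definition~\ref{defn of parameter dependent norms} carries the $(\mu^2+\sig^2)\tnorm{\cdot}_{\mathcal{H}^{1+s}}^2$ piece). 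For the remainder $R^{\gam,\mu,\sig}_{v_0,\eta_0}$ I would invoke Proposition~\ref{prop on remainder estimates}, item (1), estimate \eqref{remainder estimate}, which gives $\tnorm{R^{\gam,\mu,\sig}_{v_0,\eta_0}[v,\eta]}_{H^s}\lesssim\ep\tnorm{v,\eta}_{{_{\mu,\sig}}\X_s}+\tnorm{v_0,\eta_0}_{\X_{s+3}}\tnorm{v,\eta}_{\X_{2+\tfloor{d/2}}}$. Summing these, $T$ has the shape $\tnorm{T(v,\eta)}_{\Y_s}\lesssim\ep\tnorm{v,\eta}_{{_{\mu,\sig}}\X_s}+(\text{high-norm data})\cdot\tnorm{v,\eta}_{{_{\mu,\sig}}\X_{\max\{3,2+\tfloor{d/2}\}}}$; in particular, for $s=s_0=\max\{3,2+\tfloor{d/2}\}$, $T$ is a bounded operator with operator norm $\lesssim\ep$.

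The invertibility in item (1) then follows by a Neumann series / perturbation-of-isomorphism argument at the base level $s_0$: choosing $\tilde\rho_{\m{S}s_0}$ small enough that $\ep\cdot\tnorm{(L^{\gam,\mu,\sig}_{v_0,\eta_0,\be_0})^{-1}}\le 1/2$, the operator $L^{\gam,\mu,\sig}_{v_0,\eta_0,\be_0}+T$ is invertible on ${^{\gam,\mu,\sig}_{v_0,\eta_0,\be_0}}\X_{s_0}$, which gives existence and uniqueness of $(v,\eta)$ and the closed base estimate $\tnorm{v,\eta}_{{_{\mu,\sig}}\X_{s_0}}\lesssim\tnorm{h,f}_{\Y_{s_0}}$. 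For general $s\ge s_0$, the solution constructed at level $s_0$ in fact lies in ${^{\gam,\mu,\sig}_{v_0,\eta_0,\be_0}}\X_s$ whenever the data is in $\Y_s$: rewrite \eqref{equations of the first synthesis} as $L^{\gam,\mu,\sig}_{v_0,\eta_0,\be_0}(v,\eta)=(h,f)-T(v,\eta)$, whose right-hand side is in $\Y_s$ by the mapping properties just established (bootstrapping from $\X_{s_0}$-regularity of $(v,\eta)$), and then apply Theorem~\ref{main theorem of all of math in the math document where is it} again; this yields uniqueness at level $s$ too. Finally, to get the tame estimate \eqref{estimate of the first synthesis}, apply \eqref{final a priori estimate question mark} to $L^{\gam,\mu,\sig}_{v_0,\eta_0,\be_0}(v,\eta)=(h,f)-T(v,\eta)$ and feed back the perturbation bounds on $T$: the $\ep\tnorm{v,\eta}_{{_{\mu,\sig}}\X_s}$ contribution is absorbed for $\ep$ small, the high-norm-data$\times$low-norm-solution terms are controlled via the base estimate $\tnorm{v,\eta}_{{_{\mu,\sig}}\X_{s_0}}\lesssim\tnorm{h,f}_{\Y_{s_0}}$ together with interpolation, and the product-type terms $\tnorm{\upvarphi_0}_{H^s}$, $\tnorm{\uptau_0}_{H^s}$, $\tnorm{v_0,\eta_0,\be_0}_{\X_{s+3}\times H^{s+3}\times\cdots}$ collect into the data factor displayed in \eqref{estimate of the first synthesis}. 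The main obstacle is bookkeeping: one must verify that every derivative budget matches — in particular that the remainder estimate \eqref{remainder estimate} and the $(\mu+\sig)$-weighted product estimate genuinely land in $\Y_s$ with the stated high-norm indices $\X_{3+s}$ etc., and that the graph-norm space ${^{\gam,\mu,\sig}_{v_0,\eta_0,\be_0}}\X_s$ is exactly the right domain so that the perturbed operator is still an isomorphism and not merely Fredholm. No single estimate is hard, but the chain of absorptions and the tracking of which norms of the backgrounds appear at which order is the delicate part.
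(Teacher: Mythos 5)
Your proposal has a genuine structural gap at its very first step: you assert that $L^{\gam,\mu,\sig}_{v_0,\eta_0,\be_0}=P^{\gam,\mu,\sig}_{v_0,\eta_0}+(0,\be_0\grad\eta)$ ``is exactly the operator appearing on the left-hand side of the principal-part system \eqref{linear equations to be studied as part of the linear analysis}'' and then invoke Theorem~\ref{main theorem of all of math in the math document where is it} to invert it. This identification is false. Comparing \eqref{first version of the principal part equations} with \eqref{linear equations to be studied as part of the linear analysis}, the operator built from $P$ has $\grad\cdot((1+\eta_0)v)$ in the continuity equation (not $\grad\cdot v$), the advective term $(1+\eta_0)(v_0-\gam e_1)\cdot\grad v$ (not $(v_0-\gam e_1)\cdot\grad v$), and the viscous term $-\mu^2\grad\cdot((1+\eta_0)\S v)$ (not $-\mu^2\grad\cdot\S v$). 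The operator that genuinely coincides with \eqref{linear equations to be studied as part of the linear analysis} under the choices of Definition~\ref{parameters definition} is $Q^{\gam,\mu,\sig}_{v_0,\eta_0}+(0,\be_0\grad\eta)$, with $Q$ from the \emph{second} splitting \eqref{second splitting}, which your argument never uses. So the cited theorem does not give you invertibility of $L$, and your whole perturbation scheme is anchored to an operator whose invertibility you have not established.

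One might hope to repair this by throwing the discrepancy $L-\bigl(Q+(0,\be_0\grad\eta)\bigr)$ into the perturbation $T$, but that fails in the degenerate regimes the proposition must cover: the extra term $\grad\cdot(\eta_0 v)$ in the first component must be measured in $(\dot H^{-1}\cap H^{1+s})(\R^d)$, which costs $\tnorm{\eta_0 v}_{H^{2+s}}$, i.e.\ two more derivatives of $v$ than the norm ${_\mu}\tnorm{v}_{H^s}$ supplies when $\mu=0$. This is precisely why the paper's proof is a two-stage argument: it first inverts the $Q$-based operator $M_1$ (absorbing $\upvarphi_0\eta+(\mu+\sig)\uptau_0\grad\eta$ and the remainder $S^{\gam,\mu,\sig}_{v_0,\eta_0}[v]$ from \eqref{remainder formula. 2}), and then passes to $L$ via the exact change of unknown $w=(1+\eta_0)v$ from the second item of Proposition~\ref{prop on derivative splitting}, which converts $\grad\cdot((1+\eta_0)v)$ into $\grad\cdot w$ with no derivative-losing error, before absorbing $R^{\gam,\mu,\sig}_{v_0,\eta_0}$ in a second continuity argument. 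Your proposal is missing this change of unknown entirely. Two secondary issues: (i) your bootstrap to higher $s$ is circular as written, since checking that $(h,f)-T(v,\eta)\in\Y_s$ already requires $(v,\eta)\in{_{\mu,\sig}}\X_s$ (the paper avoids this by running the method of continuity with uniform graph-norm estimates at each level $s$ rather than only at $s_0$); (ii) a Neumann series at high $s$ is not available because the high-norm data contribution to $T$ is not small, so some continuity-type argument in the graph norm is needed anyway.
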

\begin{proof}

We begin by verifying the map~\eqref{dies illa} is well-defined and bounded. Due to the definition of the norm on the space ${^{\gam,\mu,\sig}_{v_0,\eta_0,\be_0}}\X_s$, which is provided in Definition~\ref{definition of operators and adapted domains I}, and the embedding ${^{\gam,\mu,\sig}_{v_0,\eta_0,\be_0}}\X_s\emb{_{\mu,\sig}}\X_s$, it is sufficient to verify that the map
\begin{equation}
    {_{\mu,\sig}}\X_s\ni(v,\eta)\mapsto\upvarphi_0\eta+(\mu+\sig)\uptau_0\grad\eta+R^{\gam,\mu,\sig}_{v_0,\eta_0}[v,\eta]\in H^s(\R^d;\R^d)
\end{equation}
is well-defined and bounded.  This is a consequence of the first item of Proposition~\ref{prop on remainder estimates}, Corollary~\ref{coro on more algebra properties}, and the definition of the norm on ${_{\mu,\sig}}\X_s$ from~\eqref{sybil}.

It remains to show that the map~\eqref{dies illa} is an isomorphism that obeys estimate~\eqref{estimate of the first synthesis}.   We will prove this in two steps. For the first step we shall use the following pair of definitions.
\begin{itemize}
    \item For $s \in \N$ the map $M_{v_0,\eta_0,\be_0}^{\gam,\mu,\sig}:H^{2+s}(\R^d;\R^d)\times\mathcal{H}^{3+s}(\R^d)\to\tp{\dot{H}^{-1}\cap H^{1+s}}(\R^d)\times H^s(\R^d;\R^d)$  is given via
        \begin{equation}
            M_{v_0,\eta_0,\be_0}^{\gam,\mu,\sig}(v,\eta)=Q^{\gam,\mu,\sig}_{v_0,\eta_0}[v,\eta]+\tp{0,\be_0\grad\eta}.
        \end{equation}
        \item For $\N\ni s\ge 3$ we define the space ${^{\gam,\mu,\sig}_{v_0,\eta_0,\be_0}}\tilde{\X}_s=\tcb{(v,\eta)\in{_{\mu,\sig}}\X_s\;:\;M^{\gam,\mu,\sig}_{v_0,\eta_0,\be_0}(v,\eta)\in\Y_s}$ and equip it with the graph norm
        \begin{equation}\label{gn1}
            \tnorm{v,\eta}_{{^{\gam,\mu,\sig}_{v_0,\eta_0,\be_0}}\tilde{\X}_s}=\tnorm{v,\eta}_{{_{\mu,\sig}\X_s}}+\tnorm{M^{\gam,\mu,\sig}_{v_0,\eta_0,\be_0}(v,\eta)}_{\Y_s}.
        \end{equation}
\end{itemize}
The claim of the first step is then as follows. Let $b=\max\tcb{3,2+\tfloor{d/2}}$. For each $\N\ni s\ge b$ there exists $\R^+\ni\rho_{\m{XS}s}\le\rho_{\m{AP}s}$ with the property that for all $\R^+\ni\ep\in(0,\rho_{\m{XS}s}]$ and $(h,f)\in\Y_s$ there exists a unique $(v,\eta)\in{{^{\gam,\mu,\sig}_{v_0,\eta_0,\be_0}}\tilde{\X}_s}$ satisfying
\begin{equation}\label{synthesis I, step I equations}
    M^{\gam,\mu,\sig}_{v_0,\eta_0,\be_0}(v,\eta)+(0,\upvarphi_0\eta+(\mu+\sig)\uptau_0\grad\eta)+(0,S^{\gam,\mu,\sig}_{v_0,\eta_0}[v])=(h,f)
\end{equation}
as well as the estimate
\begin{equation}\label{synthesis I, step I estimate}
    \tnorm{v,\eta}_{{_{\mu,\sig}}\X_s}\lesssim\tnorm{h,f}_{\Y_s}+\tnorm{v_0,\eta_0,\be_0,\uptau_0,\upvarphi_0}_{\X_{2+s}\times H^{2+s}\times H^{2+s}\times H^{2+s}}\tnorm{h,f}_{\Y_{b}}
\end{equation}
for an implicit constant depending only on $s$, $d$, and $\mathcal{Q}$. 

We begin the proof of the claim by considering the homotopy of operators $M_t:\;{^{\gam,\mu,\sig}_{v_0,\eta_0,\be_0}}\tilde{\X}_s\to\Y_s$ defined for $t\in[0,1]$ via
\begin{equation}\label{synthesis I, step I equations, I}
    M_t(v,\eta)=M^{\gam,\mu,\sig}_{v_0,\eta_0,\be_0}(v,\eta)+t(0,\upvarphi_0\eta+(\mu+\sig)\uptau_0\grad\eta)+t(0,S^{\gam,\mu,\sig}_{v_0,\eta_0}[v]).
\end{equation}
Thanks to the embedding ${^{\gam,\mu,\sig}_{v_0,\eta_0,\be_0}}\tilde{\X}_s\emb\;{_{\mu,\sig}}\X_s$ and the remainder estimate of the second item of Proposition~\ref{prop on remainder estimates}, we readily deduce that each $M_t$ is a well-defined and bounded linear map.  Our goal is to use the method of continuity on the family $\{M_t\}_{t \in [0,1]}$, but to do so we must first establish the requisite estimates.

Provided that $\rho_{\m{XS}s}\le\rho_{\m{AP}s}$, the estimates of Theorem~\ref{main theorem of all of math in the math document where is it} grant us the bound
\begin{multline}\label{intermediate estimate adsfjkgna}
    \tnorm{v,\eta}_{{_{\mu,\sig}}\X_s}\lesssim\tnorm{M_t(v,\eta)}_{\Y_s}+\tnorm{\upvarphi_0\eta+(\mu+\sig)\uptau_0\grad\eta+S^{\gam,\mu,\sig}_{v_0,\eta_0}[v]}_{H^s}\\+\tnorm{v_0,\eta_0,\be_0}_{\X_{1+s}\times H^{1+s}}\tp{\tnorm{M_t(v,\eta)}_{\Y_{2+\tfloor{d/2}}}+\tnorm{\upvarphi_0\eta+(\mu+\sig)\uptau_0\grad\eta+S^{\gam,\mu,\sig}_{v_0,\eta_0}[v]}_{H^{2+\tfloor{d/2}}}}.
\end{multline}
We next take $s=b$ in~\eqref{intermediate estimate adsfjkgna} and estimate
\begin{equation}
    \tnorm{\upvarphi_0\eta+(\mu+\sig)\uptau_0\grad\eta}_{H^b}\lesssim\ep\tnorm{\eta}_{{_{\mu,\sig}}\mathcal{H}^b}
    \text{ and }
    \tnorm{S^{\gam,\mu,\sig}_{v_0,\eta_0}[v]}_{H^b}
    \lesssim \ep\cdot{_\mu}\tnorm{v}_{H^b},
\end{equation}
by employing Corollary~\ref{coro on more algebra properties} for the former and the second item of Proposition~\ref{prop on remainder estimates} for the latter.  Combining these with \eqref{intermediate estimate adsfjkgna} shows that 
\begin{equation}
\tnorm{v,\eta}_{{_{\mu,\sig}}\X_b}\lesssim\tnorm{M_t(v,\eta)}_{\Y_b}+\ep\tnorm{v,\eta}_{{_{\mu,\sig}}\X_b},    
\end{equation}
and thus  if $\ep\le\rho_{\m{XS}b}$ is taken sufficiently small we may use an absorbing argument to obtain the estimate
\begin{equation}\label{base estimate 1}
    \tnorm{v,\eta}_{{_{\mu,\sig}}\X_b}\lesssim\tnorm{M_t(v,\eta)}_{\Y_b}.
\end{equation}
Armed with~\eqref{base estimate 1}, we return to~\eqref{intermediate estimate adsfjkgna} for $s\ge b$, but this time we estimate
\begin{multline}\label{vivaldi has a mohawk}
    \tnorm{\upvarphi_0\eta+(\mu+\sig)\uptau_0\grad\eta}_{H^s}\lesssim\ep\tnorm{\eta}_{{_{\mu,\sig}}\mathcal{H}^s}+\tnorm{\upvarphi_0,\uptau_0}_{H^s\times H^s}\tnorm{\eta}_{{_{\mu,\sig}}\mathcal{H}^b}
    \\\text{and }
    \tnorm{S^{\gam,\mu,\sig}_{v_0,\eta_0}[v]}_{H^s}\lesssim\ep\cdot{_{\mu}}\tnorm{v}_{H^s}+\tnorm{v_0,\eta_0}_{\X_{2+s}}\tnorm{v}_{H^b}
\end{multline}
in order to deduce from~\eqref{intermediate estimate adsfjkgna}, \eqref{base estimate 1}, and~\eqref{vivaldi has a mohawk} that
\begin{equation}\label{rho sub arse}
    \tnorm{v,\eta}_{{_{\mu,\sig}}\X_s}\lesssim\tnorm{M_t(v,\eta)}_{\Y_s}+\tnorm{v_0,\eta_0,\be_0,\uptau_0,\upvarphi_0}_{\X_{2+s}\times H^{2+s}\times H^{2+s}\times H^{2+s}}\tnorm{M_t(v,\eta)}_{\Y_b}+\ep\tnorm{v,\eta}_{{_{\mu,\sig}}\X_s}.
\end{equation}
Further decreasing $\ep\le\rho_{\m{XS}s}\le\rho_{\m{XS}b}$ if necessary, we can again use an absorbing argument to see that
\begin{equation}\label{trinity}
    \tnorm{v,\eta}_{{_{\mu,\sig}}\X_s}\lesssim\tnorm{M_t(v,\eta)}_{\Y_s}+\tnorm{v_0,\eta_0,\be_0,\uptau_0\upvarphi_0}_{\X_{2+s}\times H^{2+s}\times H^{2+s}\times H^{2+s}}\tnorm{M_t(v,\eta)}_{\Y_b},
\end{equation}
for an implicit constant depending only on $s$ and $\mathcal{Q}$.

Estimate~\eqref{trinity} is not quite what we need for the method of continuity, so we continue to work.  To control the full norm on ${_{v_0,\eta_0,\be_0}^{\gam,\mu,\sig}}\tilde{\X}_s$ it remains to estimate $\tnorm{M_0(v,\eta)}_{\Y_s}$.  To this end, we first use~\eqref{synthesis I, step I equations, I} along with estimate~\eqref{vivaldi has a mohawk}:
\begin{multline}\label{dies irae}
    \tnorm{M_0(v,\eta)}_{\Y_s}\le\tnorm{M_t(v,\eta)}_{\Y_s}+\tnorm{(M_0-M_t)(v,\eta)}_{\Y_s}\lesssim\tnorm{M_t(v,\eta)}_{\Y_s}\\+\ep\tnorm{v,\eta}_{{_{\mu,\sig}}\X_s}+\tnorm{v_0,\eta_0,\upvarphi_0,\uptau_0}_{\X_{2+s}\times H^s\times H^s}\tnorm{v,\eta}_{\X_b}.
\end{multline}
Next, we use~\eqref{base estimate 1} and~\eqref{trinity} to handle the $(v,\eta)$ terms on the right hand side of~\eqref{dies irae}, which results in the bound
\begin{equation}\label{Lacrimosa}
    \tnorm{M_0(v,\eta)}_{\Y_s}\lesssim\tnorm{M_t(v,\eta)}_{\Y_s}+\tnorm{v_0,\eta_0,\be_0,\uptau_0\upvarphi_0}_{\X_{2+s}\times H^{2+s}\times H^{2+s}\times H^{2+s}}\tnorm{M_t(v,\eta)}_{\Y_b}.
\end{equation}
Therefore, by combining~\eqref{trinity} and~\eqref{Lacrimosa} we obtain a  constant $C\in\R^+$, depending on $s$, the tuple $(v_0,\eta_0,\be_0,\uptau_0,\upvarphi_0)$, $d$, and $\mathcal{Q}$ but independent of $(v,\eta)$, such that
\begin{equation}\label{uniform closed range estimates}
    \tnorm{v,\eta}_{{_{v_0,\eta_0,\be_0}^{\gam,\mu,\sig}}\tilde{\X}_s}\le C\tnorm{M_t(v,\eta)}_{\Y_s} \text{ for all } t \in [0,1].  
\end{equation}
This, finally, is the estimate needed for the method of continuity.  Theorem~\ref{main theorem of all of math in the math document where is it} shows that the operator $M_0$ is an isomorphism, and so the bounds \eqref{uniform closed range estimates} and the the method of continuity establish that the operator $
M_1$ is also an isomorphism.  Thus, the existence and uniqueness of solutions to~\eqref{synthesis I, step I equations} is established.  The claimed estimate~\eqref{synthesis I, step I estimate} then follows from~\eqref{trinity}, and  this concludes the first step of the proof.

In the second step we aim to build on the result of the first step to complete the proof, and so we will actually seek $\tilde{\rho}_{\m{S}s}\le\rho_{\m{XS}s}$ in order to justify use of the first step.   Once more we will proceed via the method of continuity by considering the operator homotopy $L_t:{_{v_0,\eta_0,\be_0}^{\gam,\mu,\sig}}\X_s\to\Y_s$ defined for $t\in[0,1]$ via
\begin{equation}
L_t(v,\eta)=M_1((1+\eta_0)v,\eta)+t(0,R^{\gam,\mu,\sig}_{v_0,\eta_0}[v,\eta]).
\end{equation}
Thanks to the embedding ${^{\gam,\mu,\sig}_{v_0,\eta_0,\be_0}}\X_s\emb\;{_{\mu,\sig}}\X_s$, the remainder estimate in the first item of Proposition~\ref{prop on remainder estimates}, and the identity
\begin{equation}
    M_1((1+\eta_0)v,\eta)=P^{\gam,\mu,\sig}_{v_0,\eta_0}[v,\eta]+(0,\be_0\grad\eta+\upvarphi_0\eta)=L^{\gam,\mu,\sig}_{v_0,\eta_0,\be_0}(v,\eta)+(0,\upvarphi_0\eta+(\mu+\sig)\uptau_0\grad\eta),
\end{equation}
which is a consequence of the second item of Proposition~\ref{prop on derivative splitting}, we have that the $L_t$ are well-defined and bounded linear maps.  As above, we now turn to the derivation of the estimates needed for the method of continuity.

The condition $\ep\le\tilde{\rho}_{\m{S}s}\le\rho_{\m{XS}s}$ allows us to invoke estimate~\eqref{trinity} to acquire the following bound for $s\ge b=\max\tcb{3,2+\tfloor{d/2}}$:
\begin{multline}\label{sec aux aukland n'zehland}
    \tnorm{(1+\eta_0)v,\eta}_{{_{\mu,\sig}}\X_s}\lesssim\tnorm{L_t(v,\eta)}_{\Y_s}+\tnorm{R^{\gam,\mu,\sig}_{v_0,\eta_0}[v,\eta]}_{H^s}\\+\tnorm{v_0,\eta_0,\be_0,\uptau_0,\upvarphi_0}_{\X_{2+s}\times H^{2+s}\times H^{2+s}\times H^{2+s}}\tp{\tnorm{L_t(v,\eta)}_{\Y_b}+\tnorm{R^{\gam,\mu,\sig}_{v_0,\eta_0}[v,\eta]}_{H^b}}.
\end{multline}
By taking $s=b$ in~\eqref{sec aux aukland n'zehland}, using $\tnorm{\eta_0v}_{{_\mu}H^b}\lesssim\ep\tnorm{v}_{{_\mu}H^b}$, and invoking the remainder estimate~\eqref{remainder estimate}, we find that $\tnorm{v,\eta}_{{_{\mu,\sig}}\X_b}\lesssim\tnorm{L_t(v,\eta)}_{\Y_b}+\ep\tnorm{v,\eta}_{{_{\mu,\sig}}\X_b}$; and hence, if $\ep\le\rho_{\m{S}b}$ is sufficiently small we have the bound
\begin{equation}\label{another base case estimate}
    \tnorm{v,\eta}_{{_{\mu,\sig}}\X_b}\lesssim\tnorm{L_t(v,\eta)}_{\Y_b}.
\end{equation}
We now return to~\eqref{sec aux aukland n'zehland} with the base estimate~\eqref{another base case estimate} in hand;  again invoking the remainder estimate~\eqref{remainder estimate} and using the bound $\tnorm{\eta_0v}_{{_\mu}H^s}\lesssim\ep\tnorm{v}_{H^s}+\tnorm{\eta_0}_{\mathcal{H}^{2+s}}\tnorm{v}_{H^b}$, together with~\eqref{another base case estimate}, we find that if $\ep\le\tilde{\rho}_{\m{S}s}\le\rho_{\m{S}b}$ is sufficiently small then
\begin{equation}\label{gadget}
    \tnorm{v,\eta}_{{_{\mu,\sig}}\X_s}\lesssim\tnorm{L_t(v,\eta)}_{\Y_s}+\tnorm{v_0,\eta_0,\be_0,\uptau_0,\upvarphi_0}_{\X_{3+s}\times H^{3+s}\times H^{3+s}\times H^{3+s}}\tnorm{L_t(v,\eta)}_{\Y_b}.
\end{equation}

As in the proof of the first step, estimate~\eqref{gadget} will lead us to the estimates we need for the operator homotopy $\{L_t\}_{t \in [0,1]}$ with a bit more work.  The remaining piece of the norm on ${_{v_0,\eta_0,\be_0}^{\gam,\mu,\sig}}\X_s$ is $\tnorm{L_0(v,\eta)-(0,\upvarphi_0\eta+(\mu+\sig)\uptau_0\grad\eta)}_{\Y_s}$. By the same strategy used in~\eqref{dies irae}, we derive the estimate
\begin{multline}\label{stop and smell the roses}
    \tnorm{L_0(v,\eta)-(0,\upvarphi_0\eta+(\mu+\sig)\uptau_0\grad\eta)}_{\Y_s}\lesssim\tnorm{L_t(v,\eta)}_{\Y_s}+\ep\tnorm{v,\eta}_{_{\mu,\sig}\X_s}\\+\tnorm{v_0,\eta_0,\be_0,\uptau_0,\upvarphi_0}_{\X_{3+s}\times H^{3+s}\times H^{3+s}\times H^{3+s}}\tnorm{v,\eta}_{\X_b}.
\end{multline}
Combining~\eqref{gadget} and~\eqref{stop and smell the roses} then provides  a constant $C'\in\R^+$, depending only on $s$, $d$, $\mathcal{Q}$, and $(v_0,\eta_0,\be_0,\upvarphi_0)$, such that 
\begin{equation}\label{charlie chaplin}
    \tnorm{v,\eta}_{{_{v_0,\eta_0,\be_0}^{\gam,\mu,\sig}}\X_s}\le C'\tnorm{L_t(v,\eta)}_{\Y_s} \text{ for all } t\in [0,1].
\end{equation}
This and the method of continuity show that in order to show that $L_1$ is an isomorphism it suffices to prove that the map $L_0$ is an isomorphism.

Estimate~\eqref{charlie chaplin} proves the injectivity of $L_0$.  For surjectivity we let $(h,f)\in\Y_s$. The first step of the proof show that the map $M_1$ is an isomorphism, so there exists $(w,\eta)\in{_{v_0,\eta_0,\be_0}^{\gam,\mu,\sig}}\tilde{\X}_s$ such that $M_1(w,\eta)=(h,f)$. We set $v=w/(1+\eta_0)$ and note that Lemma~\ref{the preliminary lemma for you and I} and Corollary~\ref{coro on more algebra properties} guarantee the inclusion $v\in{_\mu}H^s(\R^d;\R^d)$. Then
\begin{equation}
    L^{\gam,\mu,\sig}_{v_0,\eta_0,\be_0}(v,\eta)=(h,f)-(0,\upvarphi_0\eta+(\mu+\sig)\uptau_0\grad\eta)\in\Y_s, 
\end{equation}
and hence $(v,\eta)\in{_{v_0,\eta_0,\be_0}^{\gam,\mu,\sig}}\X_s$ and $L_0(v,\eta)=(h,f)$. This shows that $L_0$ is surjective, and so $L_1$ is an isomorphism.  This establishes that the map \eqref{dies illa} is an isomorphism; the estimate \eqref{estimate of the first synthesis} follows from ~\eqref{gadget} with $t=1$.
\end{proof}

\begin{coro}[Synthesis II, omnisonic case]\label{coro thm on synthesis 1.5 omnisonic case}
    Let $(\gam, \mu, \sig) \in \mathcal{Q}$ with $\mathcal{Q}$ as in Definition~\ref{parameters definition}.  For each $\N\ni s\ge\max\tcb{3,2+\tfloor{d/2}}$ there exists $\R^+\ni\rho_{\m{S}s}\le\tilde{\rho}_{\m{S}s}$, depending only on $s$ and $\mathcal{Q}$, such that if $\ep\in(0,\rho_{\m{S}s}]$ and
    \begin{equation}\label{a new condition for tuba mirum}
        (v_0,\eta_0,\tcb{\tau_{0i}}_{i=0}^\ell,\tcb{\varphi_{0i}}_{i=0}^\ell)\in B_{\X_r\times\W_r}(0,\ep)\cap\tp{\X_\infty\times\W_\infty} 
        \text{ for } r=\max\tcb{6,5+\tfloor{d/2}},
    \end{equation}
    then the following hold.
    \begin{enumerate}
        \item Upon setting
        \begin{equation}\label{dictionary}
            \uptau_0=\sum_{i=0}^\ell\eta_0^i\tau_{0i},
            \quad\be_0=\m{Tr}\uptau_0,
            \text{ and }
            \upvarphi_0=\sum_{i=1}^\ell i\eta_0^{i-1}\tp{\tau_{0i}\grad\eta_0+\varphi_{0i}},
        \end{equation}
        the linear map~\eqref{dies illa} is well-defined, bounded, and an isomorphism.
        \item  For all $(h,f)\in\Y_s$ the unique $(v,\eta)\in{{_{v_0,\eta_0,\be_0}^{\gam,\mu,\sig}}\X_s}$ satisfying the system~\eqref{equations of the first synthesis} obeys the tame estimate
    \begin{equation}\label{estimate of the first synthesis x}
        \tnorm{v,\eta}_{{_{\mu,\sig}}\X_s}\lesssim\tnorm{h,f}_{\Y_s}+\tnorm{v_0,\eta_0,\tcb{\tau_{0i}}_{i=0}^\ell,\tcb{\varphi_{0i}}_{i=0}^\ell}_{\X_{3+s}\times\W_{3+s}}\tnorm{h,f}_{\Y_{\max\tcb{3,2+\tfloor{d/2}}}},
    \end{equation}
    where the implicit constant depends only on $s$, $d$, and $\mathcal{Q}$. 
    \end{enumerate}
\end{coro}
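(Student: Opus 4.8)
The proof reduces the corollary directly to Proposition~\ref{prop on the first part of the synthesis} by means of the change of parametrization~\eqref{dictionary}; the content is simply that this substitution carries the single small smooth ball~\eqref{a new condition for tuba mirum} into the product of small smooth balls in which Proposition~\ref{prop on the first part of the synthesis} operates. First I would observe that $\uptau_0$, $\be_0=\m{Tr}\,\uptau_0$, and $\upvarphi_0$ are fixed polynomial expressions, of degree at most $\ell+1$, in $v_0$, $\eta_0$, $\grad\eta_0$, $\tcb{\tau_{0i}}_{i=0}^\ell$, $\tcb{\varphi_{0i}}_{i=0}^\ell$, each of whose monomials has degree at least one in the small factors. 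Since $r=\max\tcb{6,5+\tfloor{d/2}}$ exceeds the algebra base $1+\tfloor{d/2}$, Corollary~\ref{coro on more algebra properties} and the second item of Proposition~\ref{corollary on tame estimates on simple multipliers} apply; combined with the fact that every factor lies in a ball of radius $\ep\le 1$ in $H^r$ or $\mathcal{H}^r$, they furnish a constant $C=C(s,d,\ell)\ge1$, independent of $\ep$, such that $\uptau_0$, $\be_0$, and $\upvarphi_0$ lie in the small smooth balls of radius $C\ep$ in the spaces demanded by Definition~\ref{parameters definition} (with the mild proviso that the presence of $\grad\eta_0$ in $\upvarphi_0$ drops the regularity of that factor by one, which is harmless given the way these quantities enter Proposition~\ref{prop on the first part of the synthesis}). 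Membership in $H^\infty$ of the derived quantities follows from the hypotheses $\X_\infty\times\W_\infty$ together with the stability of $H^\infty$ under products and under multiplication by the smooth bounded low-frequency component of $\eta_0$.

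With this in hand I would set $\rho_{\m{S}s}=\tilde{\rho}_{\m{S}s}/C\le\tilde{\rho}_{\m{S}s}$, so that for any $\ep\in(0,\rho_{\m{S}s}]$ the hypotheses of Proposition~\ref{prop on the first part of the synthesis} hold with radius parameter $C\ep\le\tilde{\rho}_{\m{S}s}\le\rho_{\m{WD}}$: the triple $(v_0,\eta_0,\be_0)$ satisfies~\eqref{conditions on the data for the linear analysis}, and $\uptau_0$, $\upvarphi_0$ lie in the requisite smooth balls. Item (1) of the corollary is then verbatim the first item of Proposition~\ref{prop on the first part of the synthesis}, while its second item provides, for each $(h,f)\in\Y_s$, the unique $(v,\eta)\in{_{v_0,\eta_0,\be_0}^{\gam,\mu,\sig}}\X_s$ solving~\eqref{equations of the first synthesis} and obeying the tame estimate~\eqref{estimate of the first synthesis}.

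Finally I would convert the background factor in~\eqref{estimate of the first synthesis} into the one in~\eqref{estimate of the first synthesis x}. Because $3+s\ge 5+\tfloor{d/2}$ again dominates the algebra base, each monomial of $\uptau_0$, $\be_0$, $\upvarphi_0$ is estimated at the relevant regularity by Corollary~\ref{coro on more algebra properties} and Proposition~\ref{corollary on tame estimates on simple multipliers}, with the excess low-order factors absorbed using $\ep\le1$; this yields
\[
\tnorm{v_0,\eta_0,\be_0,\uptau_0,\upvarphi_0}_{\X_{3+s}\times H^{3+s}\times H^{3+s}\times H^{3+s}}\lesssim\tnorm{v_0,\eta_0,\tcb{\tau_{0i}}_{i=0}^\ell,\tcb{\varphi_{0i}}_{i=0}^\ell}_{\X_{3+s}\times\W_{3+s}},
\]
with implicit constant depending only on $s$, $d$, $\ell$, and $\mathcal{Q}$, and substituting into~\eqref{estimate of the first synthesis} gives~\eqref{estimate of the first synthesis x}.

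Because all the genuine analysis — a priori estimates, existence via the method of continuity, and absorption of the remainders — was already carried out in Proposition~\ref{prop on the first part of the synthesis}, the only real work here is bookkeeping. The point requiring the most care is choosing $\rho_{\m{S}s}$ \emph{after} the algebra constant $C$, so that the derived data, which are merely $O(\ep)$ rather than literally of norm $\le\ep$, still fall inside the radius demanded by Proposition~\ref{prop on the first part of the synthesis}; one must also keep the regularity bookkeeping straight, in particular the one-derivative drop entailed by the $\grad\eta_0$ factor appearing in $\upvarphi_0$. I expect no other difficulty.
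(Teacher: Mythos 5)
Your argument is correct and coincides with the paper's own proof: both reduce the corollary to Proposition~\ref{prop on the first part of the synthesis} by applying Corollary~\ref{coro on more algebra properties} and Proposition~\ref{corollary on tame estimates on simple multipliers} twice — first to place $\uptau_0$, $\be_0$, $\upvarphi_0$ in the balls of radius $\tilde{\rho}_{\m{S}s}$ after shrinking $\ep$ to $\rho_{\m{S}s}=\tilde{\rho}_{\m{S}s}/C$, and then to replace the background norm of $(v_0,\eta_0,\be_0,\uptau_0,\upvarphi_0)$ in~\eqref{estimate of the first synthesis} by that of the original data. Your explicit remark about the one-derivative drop caused by the $\grad\eta_0$ factor in $\upvarphi_0$ is a point the paper passes over silently, so if anything your write-up is the more careful of the two.
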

\begin{proof}
    Thanks to Corollary~\ref{coro on more algebra properties} and Proposition~\ref{corollary on tame estimates on simple multipliers}, we make take $0<\ep\le\rho_{\m{S}s}$ sufficiently small to guarantee that $\uptau_0\in B_{H^r}(0,\tilde{\rho}_{\m{S}s})\cap H^\infty(\R^d;\R^{d\times d})$, $\be_0\in B_{H^r}(0,\tilde{\rho}_{\m{S}s})\cap H^\infty(\R^d)$, and $\upvarphi_0\in B_{H^r}(0,\tilde{\rho}_{\m{S}s})\cap H^\infty(\R^d;\R^d)$, where the parameter $\tilde{\rho}_{\m{S}s}\in\R^+$ is granted by Proposition~\ref{prop on the first part of the synthesis}. Therefore, by invoking this latter result, we obtain the first item and estimate~\eqref{estimate of the first synthesis}. To swap to the claimed estimate~\eqref{estimate of the first synthesis x}, one simply need note that another application of Corollary~\ref{coro on more algebra properties} and Proposition~\ref{corollary on tame estimates on simple multipliers} permits one the bound
    \begin{equation}
        \tnorm{v_0,\eta_0,\be_0,\uptau_0,\upvarphi_0}_{\X_{3+s}\times H^{3+s}\times H^{3+s}\times H^{3+s}}\lesssim\tnorm{v_0,\eta_0,\tcb{\tau_{0i}}_{i=0}^\ell,\tcb{\varphi_{0i}}_{i=0}^\ell}_{\X_{3+s}\times\W_{3+s}}.
    \end{equation}
\end{proof}

 \begin{rmk}\label{rmk on nondecreasing}
     We are free to assume that the sequence $\tcb{\rho_{\m{S}s}}_{s=3+\tfloor{d/2}}^\infty\subset(0,\rho_{\m{WD}}]$ generated by by Corollary~\ref{coro thm on synthesis 1.5 omnisonic case} is nonincreasing.
 \end{rmk}

 We are now ready for the first of our main theorems of our linear analysis.

\begin{thm}[Synthesis III, omnisonic case]\label{thm on synthesis, II}
    Set $b=\max\tcb{3,2+\tfloor{d/2}}$, and let $\Bar{\Uppsi} : \E_{b} \to \F_{b-3}$ be the map given in~\eqref{definition of uppsi bar}, which is well-defined and smooth in light of the first item of Theorem \ref{thm on smooth-tameness}. Let $\N\ni s\ge b$ and $\rho_{\m{S}s}\in \R^+$ be given by Corollary~\ref{coro thm on synthesis 1.5 omnisonic case}.  Suppose that  $0<\ep\le \min\{\rho_{\m{S}s},\rho_{\m{WD}} \}$ (recall that the latter is defined in Lemma~\ref{the preliminary lemma for you and I}),  $I\Subset\R^+$ and $J,K\Subset\R$ are bounded open intervals, and
    \begin{equation}\label{thm on synthesis, II p0}
        \bf{p}_0=(\gam_0,\mu_0,\sig_0,\tcb{\tau_{0i}}_{i=0}^\ell,\tcb{\varphi_{0i}}_{i=0}^\ell,w_0,\xi_0,v_0,\eta_0)\in [I\times J\times K\times B_{\W_r}(0,\ep)\times B_{\X_r\times\X_r}(0,\ep)]\cap\E_\infty,
    \end{equation}
    where $r \in \N$ is as in Definition \ref{parameters definition}. We set $\be_0=\m{Tr}\sum_{i=0}^\ell\eta_0^i\tau_{0i}\in H^\infty(\R^d)$. The following hold.
    \begin{enumerate}
        \item  The restriction of $D\Bar{\Uppsi}(\bf{p}_0) $ to $\R^3\times \W_s\times \X_s\times[{_{v_0,\eta_0,\be_0}^{\gam_0,\mu_0,\sig_0}}\X_s]$ is well-defined and takes values in $\F_s$, and the induced linear map
        \begin{equation}\label{adapted isomorphism of the nonlinear operators derivative}
            D\Bar{\Uppsi}(\bf{p}_0):\R^3\times \W_s\times \X_s\times[{_{v_0,\eta_0,\be_0}^{\gam_0,\mu_0,\sig_0}}\X_s]\to\F_s
        \end{equation}
        is  an isomorphism. 
        \item Assume that $\bf{f}\in\F_s$ and $\bf{e}\in \R^3\times \W_s\times \X_s\times[{_{v_0,\eta_0,\be_0}^{\gam_0,\mu_0,\sig_0}}\X_s]$ is defined via $\bf{e}=(D\Bar{\Uppsi}(\bf{p}_0))^{-1}\bf{f}$. We then have the tame estimate
        \begin{equation}\label{fly me to the moon}
            \tnorm{\bf{e}}_{\E_s}\lesssim\tnorm{\bf{f}}_{\F_s}+\tbr{\tnorm{\bf{p}_0}_{\E_{s+3}}}\tnorm{\bf{f}}_{\F_{b}}
        \end{equation}
        for an implicit constant depending only on $s$, $d$, $I$, $J$, and $K$.
    \end{enumerate}
\end{thm}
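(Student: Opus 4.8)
The plan is to read off the block structure of $D\Bar{\Uppsi}(\bf{p}_0)$ and reduce everything to Proposition~\ref{prop on the first part of the synthesis} and Corollary~\ref{coro thm on synthesis 1.5 omnisonic case}. Writing a generic tangent vector as $\bf{e}=(\delta\gam,\delta\mu,\delta\sig,\tcb{\delta\tau_i}_{i=0}^\ell,\tcb{\delta\varphi_i}_{i=0}^\ell,\delta w,\delta\xi,\delta v,\delta\eta)$, differentiation of~\eqref{definition of uppsi bar} gives $D\Bar{\Uppsi}(\bf{p}_0)\bf{e}$ with: in the $\R^3$ slot, $(\delta\gam,\delta\mu,\delta\sig)$; in the $\W_s$ slot, $(\tcb{\delta\tau_i},\tcb{\delta\varphi_i})$; in the $\Y_s$ slot, $D_2(\Uppsi+\Upphi)(\gam_0,\mu_0,\sig_0,\dots,v_0,\eta_0)[\delta v,\delta\eta]$ plus a term linear in $(\delta\gam,\delta\mu,\delta\sig,\tcb{\delta\tau_i},\tcb{\delta\varphi_i})$ whose coefficients are built from $(v_0,\eta_0,\tcb{\tau_{0i}},\tcb{\varphi_{0i}})$ and finitely many of their derivatives; and in the last $\X_s$ slot, $\tp{\tbr{\mu_0\grad}^2\delta v-\delta w,\,(1+(\mu_0+\sig_0)|\grad|)\tbr{\sig_0\grad}^2\delta\eta-\delta\xi}$ plus a term linear in $(\delta\mu,\delta\sig)$ with coefficients built from $(v_0,\eta_0)$. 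Using the first item of Proposition~\ref{prop on derivative splitting} for $D_2\Uppsi$ together with a direct computation of $D_2\Upphi$ (which is $v$-independent), one checks that $D_2(\Uppsi+\Upphi)(\bf{p}_0)[\delta v,\delta\eta]$ is \emph{exactly} the operator~\eqref{dies illa} of Proposition~\ref{prop on the first part of the synthesis}, after putting $\uptau_0=\sum_i\eta_0^i\tau_{0i}$, $\be_0=\m{Tr}\uptau_0$ and taking for $\upvarphi_0$ the coefficient of $\delta\eta$ produced by $D_2\Upphi$; the latter differs from the dictionary choice~\eqref{dictionary} only by products of the small, smooth data $(\eta_0,\tcb{\tau_{0i}})$, hence still lies in $B_{H^r}(0,\ep)\cap H^\infty$ once $\ep$ is small, so Proposition~\ref{prop on the first part of the synthesis} applies verbatim (and we may then pass to the $\W_r$-norms of $(\tcb{\tau_{0i}},\tcb{\varphi_{0i}})$ exactly as in the proof of Corollary~\ref{coro thm on synthesis 1.5 omnisonic case}).

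Granting this identification, part~(1) follows by solving $D\Bar{\Uppsi}(\bf{p}_0)\bf{e}=\bf{f}$ for $\bf{f}=(\bf{a},(h,f),\bf{c},(\zeta,\chi))\in\F_s$ in cascade: the $\R^3$ and $\W_s$ equations force $(\delta\gam,\delta\mu,\delta\sig)=\bf{a}$ and $(\tcb{\delta\tau_i},\tcb{\delta\varphi_i})=\bf{c}$; substituting these into the $\Y_s$ equation leaves precisely a system of the form~\eqref{equations of the first synthesis} for $(\delta v,\delta\eta)$ with a modified right-hand side $(\tilde h,\tilde f)\in\Y_s$, which by Proposition~\ref{prop on the first part of the synthesis} admits a unique solution $(\delta v,\delta\eta)\in{_{v_0,\eta_0,\be_0}^{\gam_0,\mu_0,\sig_0}}\X_s$; finally the last $\X_s$ equation determines $\delta w$ and $\delta\xi$ uniquely and boundedly. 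Two verifications are needed and are routine. First, the restricted map is well-defined \emph{into} $\F_s$: this is exactly the role of the adapted domain, since $(\delta v,\delta\eta)\in{_{v_0,\eta_0,\be_0}^{\gam_0,\mu_0,\sig_0}}\X_s$ forces $\tbr{\mu_0\grad}^2\delta v\in H^s$ (from $\delta v\in{_{\mu_0}}H^s$) and $(1+(\mu_0+\sig_0)|\grad|)\tbr{\sig_0\grad}^2\delta\eta\in\mathcal{H}^s$ (from $\delta\eta\in{_{\mu_0,\sig_0}}\mathcal{H}^s$, by a direct symbol comparison against the weights defining $_{\mu_0,\sig_0}\tnorm{\cdot}_{\mathcal{H}^s}$), while the graph-norm condition makes the $\Y_s$ slot land in $\Y_s$, the remainder and nonlinear terms being controlled by Proposition~\ref{prop on remainder estimates}, Corollary~\ref{coro on more algebra properties}, and Proposition~\ref{corollary on tame estimates on simple multipliers}. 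Second, injectivity and surjectivity are immediate from the cascade together with the existence/uniqueness in Proposition~\ref{prop on the first part of the synthesis}.

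For the tame estimate~\eqref{fly me to the moon} I would track norms through the cascade. The $\R^3$ and $\W_s$ components of $\bf{e}$ are controlled by $\tnorm{\bf{f}}_{\F_b}$ and $\tnorm{\bf{f}}_{\F_s}$, respectively. For $(\delta v,\delta\eta)$ the key input is the tame bound~\eqref{estimate of the first synthesis x} of Corollary~\ref{coro thm on synthesis 1.5 omnisonic case} applied to $(\tilde h,\tilde f)$; via the algebra and multiplier estimates one checks $\tnorm{\tilde h,\tilde f}_{\Y_\sigma}\lesssim\tnorm{\bf{f}}_{\F_\sigma}+\tnorm{\bf{p}_0}_{\E_{\sigma+3}}\tnorm{\bf{f}}_{\F_b}$ for $\sigma\in\tcb{s,b}$, the point being that the high norm $\tnorm{\bf{f}}_{\F_s}$ appears with \emph{no} $\tnorm{\bf{p}_0}$ prefactor because $\bf{a},\bf{c}$ are finite-dimensional data bounded already at the low level. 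Combining this with the conversion $\tnorm{v_0,\eta_0,\be_0,\uptau_0,\upvarphi_0}_{\X_{3+s}\times(H^{3+s})^3}\lesssim\tnorm{\bf{p}_0}_{\E_{s+3}}$, and then bounding $\delta w,\delta\xi$ in $H^s$ by $\|\tbr{\mu_0\grad}^2\delta v\|_{H^s}+\|\zeta\|_{H^s}+\tnorm{\bf{f}}_{\F_b}\tnorm{\bf{p}_0}_{\E_{s+2}}$ and the analogue for $\delta\xi$, where $\|\tbr{\mu_0\grad}^2\delta v\|_{H^s}\asymp{_{\mu_0}}\|\delta v\|_{H^s}\le\tnorm{\delta v,\delta\eta}_{{_{\mu_0,\sig_0}}\X_s}$ is already controlled, yields~\eqref{fly me to the moon}. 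The only place where genuine care is required is this last bookkeeping — confining the derivative loss to the factor $\tbr{\tnorm{\bf{p}_0}_{\E_{s+3}}}$ multiplying the low-norm $\tnorm{\bf{f}}_{\F_b}$ — together with pinning down the precise form of $D_2\Upphi$; all of the real analytic content (a priori estimates, existence, method of continuity, derivative splitting) has already been dispatched in Sections~\ref{subsection on estimates}--\ref{subsection on synthesis}, so no new obstacle arises, and the subsonic statement is handled identically using Theorem~\ref{a priori estimates subsonic principal part} and the subsonic analogues.
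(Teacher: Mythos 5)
Your proposal is correct and follows essentially the same route as the paper: identify the $\Y_s$-block of $D\Bar{\Uppsi}(\bf{p}_0)$ with the operator~\eqref{dies illa} via the splitting of Proposition~\ref{prop on derivative splitting}, invoke Proposition~\ref{prop on the first part of the synthesis}/Corollary~\ref{coro thm on synthesis 1.5 omnisonic case}, solve the remaining equations in cascade, and track the tame constants so that $\tnorm{\bf{p}_0}_{\E_{s+3}}$ only multiplies the low norm $\tnorm{\bf{f}}_{\F_b}$. Your explicit handling of the exact coefficient produced by $D_2\Upphi$ (applying Proposition~\ref{prop on the first part of the synthesis} directly with that coefficient rather than the dictionary~\eqref{dictionary}) is a legitimate, if anything slightly more careful, variant of the paper's bookkeeping and changes nothing of substance.
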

\begin{proof}

   We break the proof into two steps. In the first step we use~\eqref{most banking institutions offer} to view $\Upupsilon:\R^3\times [\X_{b}\times\W_{b}] \to \Y_{b-3}\times \W_{b-3}$, defined in~\eqref{upsilon is found here}, as a smooth map with a domain comprised of two factors (namely, $\R^3$ and  $\X_{b}\times\W_{b}$) and study the derivative with respect to each factor evaluated at the point $\bf{q}_0=(\gam_0,\mu_0,\sig_0,v_0,\eta_0,\tcb{\tau_{0i}}_{i=0}^\ell,\tcb{\varphi_{0i}}_{i=0}^\ell)$.  We begin with $D_1\Upupsilon(\bf{q}_0) : \R^3 \to \Y_{b-3} \times \W_{b-3}$.  The inclusion~\eqref{most banking institutions offer} and the definition of strong $3-$tame smoothness (see Definition~\ref{defn of smooth tame maps}) requires that
    \begin{equation}
        \tnorm{D_1\Upupsilon(\bf{q}_0)(\gam,\mu,\sig)}_{\Y_{t-3}\times\W_{t-3}}
        =
        \tnorm{D\Upupsilon(\bf{q}_0)(\gam,\mu,\sig,0,0,0,0,0)}_{\Y_{t-3}\times\W_{t-3}}
        \lesssim
        \tbr{\tnorm{\bf{q}_0}_{\R^3\times\W_{t}\times\X_{t}}} \norm{\gam,\mu,\sig}_{\R^3}
    \end{equation}
    for every $t \ge b$, and so we actually have that 
   \begin{equation}\label{comer el pulpo}
       D_1\Upupsilon(\bf{q}_0)(\gam,\mu,\sig) \in \Y_\infty \times \W_{\infty}
   \end{equation}
   for all $(\gamma,\mu,\sig) \in \R^3$.

    Next, we turn our attention to $D_2\Upupsilon(\bf{q}_0) :\X_{b}\times\W_{b} \to \Y_{b-3}\times \W_{b-3}$.  By assumption, we have that $s \ge b$, and so  ${_{v_0,\eta_0,\be_0}^{\gam_0,\mu_0,\sig_0}}\X_s \hookrightarrow \X_{b}$, which means that the space ${_{v_0,\eta_0,\be_0}^{\gam_0,\mu_0,\sig_0}}\X_s \times \W_s$ lies within the domain of  $D_2\Upupsilon(\bf{q}_0)$.  We claim that for $s \ge b$ and $\ep\le\rho_{\m{S}s}$ we have the inclusion $D_2\Upupsilon(\bf{q}_0)({_{v_0,\eta_0,\be_0}^{\gam_0,\mu_0,\sig_0}}\X_s\times\W_s) \subseteq \Y_s\times\W_s$ and that the induced map     \begin{equation}\label{comer un sandwich}
        D_2\Upupsilon(\bf{q}_0): {_{v_0,\eta_0,\be_0}^{\gam_0,\mu_0,\sig_0}}\X_s\times\W_s
        \to \Y_s\times\W_s.
    \end{equation}
        is a bounded isomorphism obeying the tame bound
    \begin{multline}\label{bound for step 1}
        \tnorm{v,\eta,\tcb{\tau_i}_{i=0}^\ell,\tcb{\varphi_i}_{i=0}^\ell}_{{_{\mu_0,\sig_0}}\X_s\times\W_s}\lesssim\tnorm{D_2\Upupsilon(\bf{q}_0)[v,\eta,\tcb{\tau_i}_{i=0}^\ell,\tcb{\varphi_i}_{i=0}^\ell]}_{\Y_s\times\W_s}\\+\tnorm{v_0,\eta_0,\tcb{\tau_{0i}}_{i=0}^\ell,\tcb{\varphi_{0i}}_{i=0}^\ell}_{\X_{3+s}\times\W_{3+s}}\tnorm{D_2\Upupsilon(\bf{q}_0)[v,\eta,\tcb{\tau_i}_{i=0}^\ell,\tcb{\varphi_i}_{i=0}^\ell]}_{\Y_b\times\W_b}.
    \end{multline}

    We begin the proof of the claim by establishing the stated inclusion.  Suppose that 
    \begin{equation}\label{comer las mandarinas}
    D_2\Upupsilon(\bf{q}_0)[v,\eta,\tcb{\tau_i}_{i=0}^\ell,\tcb{\varphi_i}_{i=0}^\ell]=(h,f,\tcb{\tilde{\tau}_i}_{i=0}^\ell,\tcb{\tilde{\varphi}_i}_{i=0}^\ell)    
    \end{equation}
    for $(v,\eta,\tcb{\tau_{i}}_{i=0}^\ell,\tcb{\varphi_{i}}_{i=0}^\ell) \in  {_{v_0,\eta_0,\be_0}^{\gam_0,\mu_0,\sig_0}}\X_s\times\W_s$.
    Since $\Upupsilon$ acts as the identity map on the $\W_s$ factor, we have that 
    \begin{equation}\label{comer atun}
    (\tcb{\tilde{\tau}_{i}}_{i=0}^\ell,\tcb{\tilde{\varphi}_i}_{i=0}^\ell)= (\tcb{\tau_i}_{i=0}^\ell,\tcb{\varphi_i}_{i=0}^\ell) \in \W_s.    
    \end{equation}
    The remaining piece of \eqref{comer las mandarinas} manifests as the equation
    \begin{multline}\label{identity smells funny}
        D_2\Uppsi(\gam_0,\mu_0,\sig_0,v_0,\eta_0)[v,\eta]+\sum_{i=0}^\ell\tp{0,\eta_0^i\tp{\m{Tr}\tau_{0i}I+(\mu_0+\sig_0)\tau_{0i}}\grad\eta+i\eta_0^{i-1}\tp{\tau_{0i}\grad\eta_0+\varphi_{0i}}\eta}\\=(h,f)-\sum_{i=0}^\ell\eta_0^i(0,(\m{Tr}\tau_i I+(\mu_0+\sig_0)\tau_i)\grad\eta_0+\varphi_i),
    \end{multline}
    which, after defining $\uptau_0$, $\be_0$, and $\upvarphi_0$ as in~\eqref{dictionary} is equivalent to
    \begin{multline}\label{comer los pollos}
        L^{\gam,\mu,\sig}_{v_0,\eta_0,\be_0}(v,\eta)+(0,\upvarphi_0\eta+(\mu_0+\sig_0)\uptau_0\grad\eta)+(0,R^{\gam,\mu,\sig}_{v_0,\eta_0}[v,\eta])\\=(h,f)-\sum_{i=0}^\ell\eta_0^i(0,(\m{Tr}\tau_i I+(\mu_0+\sig_0)\tau_i)\grad\eta_0+\varphi_i),
    \end{multline}
    thanks to the the first item of Proposition~\ref{prop on derivative splitting} and Definition~\ref{definition of operators and adapted domains I}.  Corollary~\ref{coro thm on synthesis 1.5 omnisonic case} then guarantees that $(h,f)-\sum_{i=0}^\ell\eta_0^i(0,(\m{Tr}\tau_i I+(\mu_0+\sig_0)\tau_i)\grad\eta_0+\varphi_i) \in \Y_s$, from which it immediately follows that $(h,f) \in \Y_s$.  Hence, $D_2\Upupsilon(\bf{q}_0)({_{v_0,\eta_0,\be_0}^{\gam_0,\mu_0,\sig_0}}\X_s\times\W_s) \subseteq \Y_s\times\W_s$.

    To complete the proof of the claim, it remains to show that \eqref{comer un sandwich} is an isomorphism obeying the estimate \eqref{bound for step 1}.  Once more we consider the equation \eqref{comer las mandarinas}, but now with the aim of solving it for given data $(h,f,\tcb{\tilde{\tau}_i}_{i=0}^\ell,\tcb{\tilde{\varphi}_i}_{i=0}^\ell) \in \Y_s \times \W_s$.  The identity \eqref{comer atun} remains true, and so again we reduce to \eqref{comer los pollos}.   Applying Corollary~\ref{coro thm on synthesis 1.5 omnisonic case}, we obtain the existence and uniqueness of $(v,\eta)\in{^{\gam_0,\mu_0,\sig_0}_{v_0,\eta_0,\be_0}}\X_s$ satisfying~\eqref{comer los pollos}. Furthermore,  the proposition guarantees that the solution obeys the bound
    \begin{multline}\label{initial bound on the solution}
        \tnorm{v,\eta}_{{_{\mu_0,\sig_0}}\X_s}\lesssim\bnorm{(h,f)-\sum_{i=0}^\ell\eta_0^i(0,(\m{Tr}\tau_i I+(\mu_0+\sig_0)\tau_i)\grad\eta_0+\varphi_i)}_{\Y_s}\\+\tnorm{v_0,\eta_0,\tcb{\tau_{0i}}_{i=0}^\ell,\tcb{\varphi_{0i}}_{i=0}^\ell}_{\X_{s+3}\times\W_{s+3}}\bnorm{(h,f)-\sum_{i=0}^\ell\eta_0^i(0,(\m{Tr}\tau_i I+(\mu_0+\sig_0)\tau_i)\grad\eta_0+\varphi_i)}_{\Y_b},
    \end{multline}
    but thanks to Corollary~\ref{coro on more algebra properties} and Proposition~\ref{corollary on tame estimates on simple multipliers}, estimate~\eqref{initial bound on the solution} immediately provides the bound
    \begin{multline}\label{comer camarones}
        \tnorm{v,\eta}_{{_{\mu_0,\sig_0}}\X_s}\lesssim\tnorm{h,f,\tcb{\tau_i}_{i=0}^\ell,\tcb{\varphi_i}_{i=0}^\ell}_{\Y_s\times\W_s}\\+\tnorm{v_0,\eta_0,\tcb{\tau_{0i}}_{i=0}^\ell,\tcb{\varphi_{0i}}_{i=0}^\ell}_{\X_{s+3}\times\W_{s+3}}\tnorm{h,f,\tau,\tcb{\varphi_i}_{i=0}^\ell,\tcb{\psi_i}_{i=0}^\ell}_{\Y_b\times\W_b}.
    \end{multline}
    As usual, the implicit constants here depend only on $s$, $d$, $I$, $J$, and $K$.  Upon combining the above existence and uniqueness results, \eqref{comer atun}, and estimate \eqref{comer camarones} we then readily deduce that \eqref{comer un sandwich} is an isomorphism obeying the estimate \eqref{bound for step 1}, which completes the proof of the claim and concludes the first step of the proof.
      
    We now move on to the second step of the proof, in which the goal is to attain the theorem statement by building on the results of the first step.   Let $\bf{p}_0$ be as in \eqref{thm on synthesis, II p0} and let $\bf{q}_0$ be as in the first step.  The system 
    \begin{equation}\label{the og equation}
        D\Bar{\Uppsi}(\bf{p}_0)(\gam,\mu,\sig,\tcb{\tau_i}_{i=0}^\ell,\tcb{\varphi_i}_{i=0}^\ell,w,\xi,v,\eta)=(\tilde{\gam},\tilde{\mu},\tilde{\sig},h,f,\tcb{\tilde{\tau}_i}_{i=0}^\ell,\tcb{\tilde{\varphi}_i}_{i=0}^\ell,u,\zeta)
    \end{equation}
    is equivalent to the following system of four equations:
    \begin{equation}\label{remaining equation 1}
        (\gam,\mu,\sig) = (\tilde{\gam},\tilde{\mu},\tilde{\sig}),
    \end{equation}
    \begin{equation}\label{remaining equation 2}
       D_1\Upupsilon(\bf{q}_0)(\gam,\mu,\sig) +  D_2\Upupsilon(\bf{q}_0)(v,\eta,\tcb{\tau_i}_{i=0}^\ell,\tcb{\varphi_i}_{i=0}^\ell)= (h,f,\tcb{\tilde{\tau}_i}_{i=0}^\ell,\tcb{\tilde{\varphi}_i}_{i=0}^\ell),
    \end{equation}
     \begin{equation}\label{remaining equation 3}
        w=\tbr{\mu_0\grad}^2v-u+2\mu\mu_0|\grad|^2v_0,
    \end{equation}
    and
    \begin{equation}\label{remaining equation 4}
        \xi=(1+(\mu_0+\sig_0)|\grad|)\tbr{\sig_0\grad}^2\eta-\zeta+(\mu+\sig)|\grad|\tbr{\sig_0\grad}^2\eta_0+(1+(\mu_0+\sig_0)|\grad|)\sig\sig_0|\grad|^2\eta_0.
    \end{equation}
    For brevity we denote in what follows
    \begin{equation}
        \bf{e}=(\gam,\mu,\sig,\tcb{\tau_i}_{i=0}^\ell,\tcb{\varphi_i}_{i=0}^\ell,w,\xi,v,\eta)
        \text{ and }
        \bf{f}=(\tilde{\gam},\tilde{\mu},\tilde{\sig},h,f,\tcb{\tilde{\tau}_i}_{i=0}^\ell,\tcb{\tilde{\varphi}_i}_{i=0}^\ell,u,\zeta).
    \end{equation}

    If $\bf{e} \in \R^3\times \W_s\times \X_s\times[{_{v_0,\be_0,\eta_0}^{\gam_0,\mu_0,\sig_0}}\X_s]$, then we use the first step on \eqref{remaining equation 2} and trivial estimates on \eqref{remaining equation 1}, \eqref{remaining equation 3}, and \eqref{remaining equation 4} to see that $\bf{f} \in \F_s$ and 
    \begin{equation}
        \norm{\bf{f}}_{\F_s} \lesssim \norm{\bf{e} }_{\W_s\times \X_s\times[{_{v_0,\be_0,\eta_0}^{\gam_0,\mu_0,\sig_0}}\X_s]}.
    \end{equation}
    This shows that the linear map \eqref{adapted isomorphism of the nonlinear operators derivative} is well-defined and bounded.  

    Now suppose that  $\bf{f} \in \F_s$.  We seek to find a unique $\bf{e} \in \W_s\times \X_s\times[{_{v_0,\be_0,\eta_0}^{\gam_0,\mu_0,\sig_0}}\X_s]$ solving \eqref{the og equation}.  The identity \eqref{remaining equation 1} uniquely selects $(\gam,\mu,\sig)$, and then \eqref{remaining equation 2} and the work from the previous step uniquely selects $(v,\eta,\tcb{\tau_i}_{i=0}^\ell,\tcb{\varphi_i}_{i=0}^\ell) \in {_{v_0,\eta_0,\be_0}^{\gam_0,\mu_0,\sig_0}}\X_s\times\W_s$.  From here it's a simple matter to use \eqref{remaining equation 3} and \eqref{remaining equation 4} to uniquely select $(w,\xi) \in \X_s$.  This process yields the desired $\bf{e}$ and completes the proof of the first item.  For the second item we employ~\eqref{bound for step 1}  to see that the pair $(v,\eta)$ determined by this process obeys the bound  
    \begin{multline}\label{along the way}
        \tnorm{v,\eta}_{{_{\mu_0,\sig_0}}\X_s}\lesssim\tnorm{\gam,\mu,\sig,h,f,\tcb{\tau_i}_{i=0}^\ell,\tcb{\varphi_i}_{i=0}^\ell}_{\R^3\times\Y_s\times\W_s}\\+\tbr{\tnorm{\bf{p}_0}_{\E_{s+3}}}\tnorm{\gam,\mu,\sig,h,f,\tcb{\tau_i}_{i=0}^\ell,\tcb{\varphi_i}_{i=0}^\ell}_{\R^3\times\Y_b\times\W_b},
    \end{multline}
    which, together with \eqref{remaining equation 3} and \eqref{remaining equation 4},  then provides the estimate  
    \begin{equation}\label{w and xi estimates}
        \tnorm{w,\xi}_{\X_s}\lesssim\tnorm{\bf{f}}_{\F_s}+\tbr{\tnorm{\bf{p}_0}_{\E_{3+s}}}\tnorm{\bf{f}}_{\F_b}.
    \end{equation}
    Combining these two estimates with the trivial bounds obtained from \eqref{remaining equation 1} then completes the proof of the second item.
\end{proof}

The remainder of this subsection is meant to provide subsonic analogs of the previous results. In the following definition, we recall that the space ${_{\mu,\sig}}\Xs_s$ is defined in~\eqref{sybil} and the operator $P^{\gam,\mu,\sig}_{v_0,\eta_0}$ is given in the first item of Proposition~\ref{prop on derivative splitting}.

\begin{defn}[Operators and adapted domains, subsonic case]\label{definition of operators and adapted domains II}
Given $\gam$, $\mu$, $\sig$, $v_0$, and $\eta_0$ as in Definition~\ref{parameters definition} and $\N\ni s\ge 3$ we define the space ${^{\gam,\mu,\sig}_{v_0,\eta_0}}\Xs_s=\tcb{(v,\eta)\in{{_{\mu,\sig}}\Xs}\;:\;P^{\gam,\mu,\sig}_{v_0,\eta_0}[v,\eta]\in\Y_s}$ and equip it with the graph norm
\begin{equation}\label{subsonic space graph norm}
    \tnorm{v,\eta}_{{^{\gam,\mu,\sig}_{v_0,\eta_0}}\Xs_s}=\tnorm{v,\eta}_{{_{\mu,\sig}}\Xs_s}+\tnorm{P^{\gam,\mu,\sig}_{v_0,\eta_0}[v,\eta]}_{\Y_s}.
\end{equation}
\end{defn}

The following result is the subsonic refinement of Proposition~\ref{prop on the first part of the synthesis}, as such the subscript $\m{SS}$ is an acronym meaning `subsonic synthesis'.

\begin{prop}[Synthesis I, subsonic case]\label{prop on syn 1 sub}
    Let $(\gam, \mu, \sig) \in \mathcal{Q}$ with $\mathcal{Q}=I\times J\times K$, as in Definition~\ref{parameters definition}, but suppose additionally that $I\Subset(0,1)$. For each $\N\ni s\ge\max\tcb{3,2+\tfloor{d/2}}$ there exists $\R^+\ni \tilde{\rho}_{\m{SS}s}\le\rho_{\m{SAP}s}$, depending only on $s$ and $\mathcal{Q}$, such that if $\R^+\ni\ep\in(0,\tilde{\rho}_{\m{SS}s}]$,
    \begin{equation}\label{subsonic data special inclusion}
        (v_0,\eta_0,\uptau_0,\upvarphi_0)\in B_{\Xs_r\times H^r\times H^r}(0,\ep)\cap\tp{\X_\infty\times H^\infty(\R^d;\R^{d\times d})\times H^\infty(\R^d;\R^d)},
    \end{equation}
    with $r=\max\tcb{6,5+\tfloor{d/2}}$, then the following hold.
    \begin{enumerate}
        \item The following linear map is well-defined and bounded:
        \begin{equation}\label{dies illa sub}
            {^{\gam,\mu,\sig}_{v_0,\eta_0}}\Xs_s\ni(v,\eta)\mapsto P^{\gam,\mu,\sig}_{v_0,\eta_0}[v,\eta]+(0,\upvarphi_0\eta+\uptau_0\grad\eta)+(0,R^{\gam,\mu,\sig}_{v_0,\eta_0}[v,\eta])\in\Y_s.
        \end{equation}
        Moreover, this map is an isomorphism.
        \item  For all $(h,f)\in\Y_s$ the unique $(v,\eta)\in{{_{v_0,\eta_0}^{\gam,\mu,\sig}}\Xs_s}$ satisfying the system
    \begin{equation}\label{equations of the first synthesis sub}
    P^{\gam,\mu,\sig}_{v_0,\eta_0}[v,\eta]+(0,\upvarphi_0\eta+\uptau_0\grad\eta)+(0,R^{\gam,\mu,\sig}_{v_0,\eta_0}[v,\eta])=(h,f)
    \end{equation}
    obeys the tame estimate
    \begin{equation}\label{estimate of the first synthesis sub}
        \tnorm{v,\eta}_{{_{\mu,\sig}}\Xs_s}\lesssim\tnorm{h,f}_{\Y_s}+\tnorm{v_0,\eta_0,\uptau_0,\upvarphi_0}_{\Xs_{2+s}\times H^{2+s}\times H^{2+s}}\tnorm{h,f}_{\Y_{\max\tcb{3,2+\tfloor{d/2}}}},
    \end{equation}
    where the implicit constant depends only on $s$, $d$, and $\mathcal{Q}$. 
    \end{enumerate}
\end{prop}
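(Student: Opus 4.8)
The plan is to mirror, in the subsonic regime, the two-step method-of-continuity argument used in the omnisonic case of Proposition~\ref{prop on the first part of the synthesis}, but replacing the omnisonic principal-part existence and estimates from Theorem~\ref{fourth theorem on a priori estimates} and Theorem~\ref{main theorem of all of math in the math document where is it} with their subsonic counterparts, namely Theorem~\ref{a priori estimates subsonic principal part} and the second item of Theorem~\ref{main theorem of all of math in the math document where is it}. The key structural simplification over the omnisonic proof is that the subsonic analysis of~\eqref{reduced eta equation}'s linearization works directly with $P^{\gam,\mu,\sig}_{v_0,\eta_0}$ rather than first passing through the change of unknowns $v \mapsto v/(1+\eta_0)$ and the intermediate operator $Q^{\gam,\mu,\sig}_{v_0,\eta_0}$. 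This is because in the subsonic case the required space ${_{\mu,\sig}}\Xs_s$ only asks for one extra derivative on $\eta$ (and the $\sig$-weighted derivatives), which is precisely what the subsonic a priori estimate delivers without the derivative-loss complications that forced the two-layer homotopy in the omnisonic case. Consequently the proof should be genuinely shorter, and I expect the authors to present it in abbreviated form.

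First I would verify that the map~\eqref{dies illa sub} is well-defined and bounded. By the definition of the graph norm~\eqref{subsonic space graph norm} and the embedding ${^{\gam,\mu,\sig}_{v_0,\eta_0}}\Xs_s \emb {_{\mu,\sig}}\Xs_s$, it suffices to show that $(v,\eta) \mapsto \upvarphi_0\eta + \uptau_0\grad\eta + R^{\gam,\mu,\sig}_{v_0,\eta_0}[v,\eta]$ is bounded from ${_{\mu,\sig}}\Xs_s$ into $H^s(\R^d;\R^d)$; this follows from Corollary~\ref{coro on more algebra properties} for the first two terms and from estimate~\eqref{is this estimate actually useful} in the first item of Proposition~\ref{prop on remainder estimates} for the remainder term. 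Note that in the subsonic forcing there is no $(\mu+\sig)$ prefactor on the $\uptau_0\grad\eta$ term, matching the subsonic case of~\eqref{conditions on the forcing}; this is a minor bookkeeping difference from~\eqref{dies illa} but does not affect the estimates, since $\uptau_0 \in B_{H^r}(0,\ep)$ is already small.

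Next I would run the method of continuity on the homotopy $L_t(v,\eta) = P^{\gam,\mu,\sig}_{v_0,\eta_0}[v,\eta] + t(0,\upvarphi_0\eta + \uptau_0\grad\eta) + t(0,R^{\gam,\mu,\sig}_{v_0,\eta_0}[v,\eta])$ on ${^{\gam,\mu,\sig}_{v_0,\eta_0}}\Xs_s$. The endpoint $L_0$ is an isomorphism onto $\Y_s$ by the second item of Theorem~\ref{main theorem of all of math in the math document where is it} (subsonic existence) combined with estimate~\eqref{final a priori estimate question mark subsonic}. For the uniform estimate, I would combine~\eqref{final a priori estimate question mark subsonic} with the remainder bound~\eqref{is this estimate actually useful} and the product estimates to get, at a base level $b = \max\{3, 2+\tfloor{d/2}\}$, a bound of the form $\tnorm{v,\eta}_{{_{\mu,\sig}}\Xs_b} \lesssim \tnorm{L_t(v,\eta)}_{\Y_b} + \ep\tnorm{v,\eta}_{{_{\mu,\sig}}\Xs_b}$, so that choosing $\ep \le \tilde{\rho}_{\m{SS}b}$ small allows an absorbing argument; then bootstrap to general $s \ge b$ with the tame splitting $\tnorm{R^{\gam,\mu,\sig}_{v_0,\eta_0}[v,\eta]}_{H^s} \lesssim \ep\tnorm{v,\eta}_{{_{\mu,\sig}}\Xs_s} + \tnorm{v_0,\eta_0}_{\Xs_{s+2}}\tnorm{v,\eta}_{\X_{2+\tfloor{d/2}}}$ and the analogous split for $\upvarphi_0\eta + \uptau_0\grad\eta$, followed by a second absorption. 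This yields the uniform closed-range bound $\tnorm{v,\eta}_{{^{\gam,\mu,\sig}_{v_0,\eta_0}}\Xs_s} \le C\tnorm{L_t(v,\eta)}_{\Y_s}$ with $C$ depending on the (smooth) data $(v_0,\eta_0,\uptau_0,\upvarphi_0)$ but not on $(v,\eta)$ nor $t$. The method of continuity then promotes the isomorphism from $L_0$ to $L_1$, giving the first item, and estimate~\eqref{estimate of the first synthesis sub} is read off from the $t=1$ instance of the bootstrapped bound (which naturally produces $\X_{2+s}$-type norms on the data — note the slightly lower regularity demand $\Xs_{2+s}\times H^{2+s}\times H^{2+s}$ here, versus $\X_{3+s}$ in~\eqref{estimate of the first synthesis}, reflecting the absence of the extra layer of homotopy). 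The main obstacle, and the only place real care is needed, is ensuring the remainder estimate~\eqref{is this estimate actually useful} interacts correctly with the subsonic norm ${_{\mu,\sig}}\Xs_s$ — in particular that the $\mu^2\grad\cdot(\eta\S v_0)$ term is controlled, which works because $_{\mu,\sig}\tnorm{\cdot}_{\Xs_s}$ supplies the factor $|\mu|\norm{\cdot}_{\mathcal{H}^{s}}$ needed (here $\eta$ already lives one derivative higher in $\Xs_s$ than in $\X_s$), exactly as noted in the proof of Proposition~\ref{prop on remainder estimates}.
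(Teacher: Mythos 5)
There is a genuine gap at the base point of your single homotopy. You claim that $L_0 = P^{\gam,\mu,\sig}_{v_0,\eta_0}$ is an isomorphism ``by the second item of Theorem~\ref{main theorem of all of math in the math document where is it} combined with estimate~\eqref{final a priori estimate question mark subsonic},'' but that theorem (and Theorem~\ref{a priori estimates subsonic principal part}) addresses system~\eqref{linear equations to be studied as part of the linear analysis}, whose first equation reads $\grad\cdot v+\grad\cdot(X\eta)=h$ and whose second equation has the coefficient-free terms $X\cdot\grad v$ and $-\mu^2\grad\cdot\S v$. That is the operator $Q^{\gam,\mu,\sig}_{v_0,\eta_0}$ of~\eqref{second principal part equations} (plus $(0,\be_0\grad\eta)$), not $P^{\gam,\mu,\sig}_{v_0,\eta_0}$ of~\eqref{first version of the principal part equations}, which instead carries $\grad\cdot((1+\eta_0)v)$ in the continuity equation and the factors $(1+\eta_0)$ on the advection and viscous stress. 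No result in the paper establishes invertibility of $P^{\gam,\mu,\sig}_{v_0,\eta_0}$ directly; the only bridge is the identity $P^{\gam,\mu,\sig}_{v_0,\eta_0}[v/(1+\eta_0),\eta]=Q^{\gam,\mu,\sig}_{v_0,\eta_0}[v,\eta]+(0,S^{\gam,\mu,\sig}_{v_0,\eta_0}[v])$ from the second item of Proposition~\ref{prop on derivative splitting}, and the remainder $S^{\gam,\mu,\sig}_{v_0,\eta_0}$ is only perturbative, so it must itself be absorbed by a continuity (or Neumann-series) argument before you can even speak of inverting $P$. In other words, the ``structural simplification'' you propose --- skipping the change of unknowns $v\mapsto(1+\eta_0)v$ and the intermediate operator $Q$ --- removes exactly the step that makes the base point of the homotopy invertible. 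The paper's proof of Proposition~\ref{prop on syn 1 sub} accordingly keeps the same double method of continuity as the omnisonic case: a first homotopy from $Q^{\gam,\mu,\sig}_{v_0,\eta_0}$ to $Q^{\gam,\mu,\sig}_{v_0,\eta_0}+(0,\upvarphi_0\eta+\uptau_0\grad\eta)+(0,S^{\gam,\mu,\sig}_{v_0,\eta_0}[v])$ on the adapted space built from $Q$, followed by a second homotopy (after undoing the change of unknowns) that switches on $R^{\gam,\mu,\sig}_{v_0,\eta_0}$.

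The rest of your outline (well-definedness via Proposition~\ref{prop on remainder estimates} and Corollary~\ref{coro on more algebra properties}; base-case absorption at $b=\max\{3,2+\tfloor{d/2}\}$ followed by a tame bootstrap; the observation about how $\mu^2\grad\cdot(\eta\,\S v_0)$ is controlled by the subsonic norm) is consistent with the paper and would carry over once the two-step structure is restored. One smaller correction: the appearance of $\Xs_{2+s}$ rather than $\X_{3+s}$ in~\eqref{estimate of the first synthesis sub} is not due to there being ``one fewer layer of homotopy'' (both proofs use two); it comes from the subsonic remainder estimate~\eqref{is this estimate actually useful}, which costs only $\tnorm{v_0,\eta_0}_{\Xs_{s+2}}$ because $\Xs_s$ already carries an extra derivative on the free surface.
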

\begin{proof}
    The proof exactly follows the strategy we employed to prove Proposition~\ref{prop on the first part of the synthesis}, namely a double use of the method of continuity.  As such, we will only provide a sketch of the argument here.

    It is routine to verify that the map~\eqref{dies illa sub} is well-defined and bounded. In order to see that this map is an isomorphism obeying estimate~\eqref{estimate of the first synthesis sub}, we again work in two steps.  In the first step we apply the method continuity to an auxiliary homotopy of operators, which are defined as follows. For $\N\ni s\ge 3$ we define the space ${^{\gam,\mu,\sig}_{v_0,\eta_0}}\tilde{\Xs_s}=\tcb{(v,\eta)\in{_{\mu,\sig}}\Xs_s\;:\;Q^{\gam,\mu,\sig}_{v_0,\eta_0}[v,\eta]\in\Y_s}$, where $Q^{\gam,\mu,\sig}_{v_0,\eta_0}$ is the operator from the second item of Proposition~\ref{prop on derivative splitting}, and equip it with the graph norm 
    \begin{equation}
        \tnorm{v,\eta}_{{_{v_0,\eta_0}^{\gam,\mu,\sig}}\tilde{\Xs_s}}=\tnorm{v,\eta}_{{_{\mu,\sig}}\Xs_s}+\tnorm{Q^{\gam,\mu,\sig}_{v_0,\eta_0}[v,\eta]}_{\Y_s}.
    \end{equation}
    The auxiliary mappings are then defined, for $t\in[0,1]$, to be 
    \begin{equation}\label{well-defined and bounded here are the mappings ive never defined a mapping thats not well defined because tautology}
        {^{\gam,\mu,\sig}_{v_0,\eta_0}}\tilde{\Xs_s}\ni(v,\eta)\mapsto Q^{\gam,\mu,\sig}_{v_0,\eta_0}[v,\eta]+t\tp{0,\upvarphi_0\eta+\uptau_0\grad\eta}+t(0,S^{\gam,\mu,\sig}_{v_0,\eta_0}[v])\in\Y_s.
    \end{equation}
    By combining the a priori estimates of Theorem~\ref{a priori estimates subsonic principal part}, the existence of Theorem~\ref{main theorem of all of math in the math document where is it}, and the remainder estimates of Proposition~\ref{prop on remainder estimates} we verify that the mappings~\eqref{well-defined and bounded here are the mappings ive never defined a mapping thats not well defined because tautology} are well-defined and bounded and find $\tilde{\rho}_{\m{SS}s}\in\R^+$ as in the proposition statement such that for $\ep\le\tilde{\rho}_{\m{SS}s}$ the maps~\eqref{well-defined and bounded here are the mappings ive never defined a mapping thats not well defined because tautology} obey the requisite tame closed range estimates uniformly in $t\in[0,1]$.  The method of continuity then applies, and we may indeed argue as in the first step of the proof of Proposition~\ref{prop on the first part of the synthesis}.

    The second step of the proof builds on the previous step by considering a different operator homotopy, namely
    \begin{equation}\label{Stricte discussurus}
        {^{\gam,\mu,\sig}_{v_0,\eta_0}}\Xs_s\ni(v,\eta)\mapsto P^{\gam,\mu,\sig}_{v_0,\eta_0}[v,\eta]+(0,\upvarphi_0\eta+\uptau_0\grad\eta)+t(0,R^{\gam,\mu,\sig}_{v_0,\eta_0}[v,\eta])\in\Y_s,\quad t\in[0,1].
    \end{equation}
    Supplementing the previous step with more remainder estimates from Proposition~\ref{prop on remainder estimates} allows us to take $\uprho_{\m{SS}s}$ smaller, if necessary, and deduce the claimed bounds~\eqref{estimate of the first synthesis sub} and also invoke the method of continuity on~\eqref{Stricte discussurus}, leading us to a complete proof of the first and second items.
\end{proof}

\begin{coro}[Synthesis II, subsonic case]\label{coro 1.5 synthesis subsonic case}
    Let $(\gam, \mu, \sig) \in \mathcal{Q}$ with $\mathcal{Q}=I\times J\times K$, as in Definition~\ref{parameters definition}, but suppose additionally that $I\Subset(0,1)$.  For each $\N\ni s\ge\max\tcb{3,2+\tfloor{d/2}}$ there exists $\R^+\ni \rho_{\m{SS}s}\le\tilde{\rho}_{\m{SS}s}$, depending only on $s$ and $\mathcal{Q}$, such that if $\R^+\ni\ep\in(0,\rho_{\m{SS}s}]$ and $(v_0,\eta_0,\tcb{\tau_{0i}}_{i=0}^\ell,\tcb{\varphi_{0i}}_{i=0}^\ell)$ satisfy~\eqref{a new condition for tuba mirum} then the following hold.
    \begin{enumerate}
        \item Upon defining $\uptau_0$, $\be_0$, and $\upvarphi_0$ as in~\eqref{dictionary}, we have that the linear map~\eqref{dies illa sub} is well-defined, bounded, and an isomorphism.
        \item  For all $(h,f)\in\Y_s$ the unique $(v,\eta)\in{{_{v_0,\eta_0}^{\gam,\mu,\sig}}\Xs_s}$ satisfying~\eqref{equations of the first synthesis sub} obeys the tame estimate
    \begin{equation}\label{estimate of the first synthesis sub x}
        \tnorm{v,\eta}_{{_{\mu,\sig}}\Xs_s}\lesssim\tnorm{h,f}_{\Y_s}+\tnorm{v_0,\eta_0,\tcb{\tau_{0i}}_{i=0}^\ell,\tcb{\varphi_{0i}}_{i=0}^\ell}_{\Xs_{2+s}\times\W_{2+s}}\tnorm{h,f}_{\Y_{\max\tcb{3,2+\tfloor{d/2}}}},
    \end{equation}
    where the implicit constant depends only on $s$, $d$, and $\mathcal{Q}$. 
    \end{enumerate}
\end{coro}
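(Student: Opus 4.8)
The plan is to mirror, in abbreviated form, the two-step method-of-continuity argument used in Corollary~\ref{coro thm on synthesis 1.5 omnisonic case}, but now starting from Proposition~\ref{prop on syn 1 sub} rather than Proposition~\ref{prop on the first part of the synthesis}. The essential point is that the passage from the ``raw'' hypotheses of Proposition~\ref{prop on syn 1 sub}---in which the forcing data is encoded through the auxiliary tensor $\uptau_0$, trace $\be_0$, and vector $\upvarphi_0$---to the ``packaged'' hypotheses here---in which the data is the tuple $(v_0,\eta_0,\tcb{\tau_{0i}}_{i=0}^\ell,\tcb{\varphi_{0i}}_{i=0}^\ell)$ satisfying the smallness condition~\eqref{a new condition for tuba mirum}---is purely a matter of the algebra and tame multiplier estimates of Corollary~\ref{coro on more algebra properties} and Proposition~\ref{corollary on tame estimates on simple multipliers}.

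First I would fix $s\ge\max\tcb{3,2+\tfloor{d/2}}$ and invoke Corollary~\ref{coro on more algebra properties} together with Proposition~\ref{corollary on tame estimates on simple multipliers} to show that the map
\begin{equation}
    (v_0,\eta_0,\tcb{\tau_{0i}}_{i=0}^\ell,\tcb{\varphi_{0i}}_{i=0}^\ell)\mapsto(\uptau_0,\be_0,\upvarphi_0)=\bp{\ssum{i=0}{\ell}\eta_0^i\tau_{0i},\;\m{Tr}\,\uptau_0,\;\ssum{i=1}{\ell}i\eta_0^{i-1}\tp{\tau_{0i}\grad\eta_0+\varphi_{0i}}}
\end{equation}
is continuous from $\Xs_r\times\W_r$ into $H^r\times H^r\times H^r$ and vanishes at the origin. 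Consequently there exists $\rho_{\m{SS}s}\le\tilde{\rho}_{\m{SS}s}$ (with $\tilde{\rho}_{\m{SS}s}$ from Proposition~\ref{prop on syn 1 sub}) such that $\ep\le\rho_{\m{SS}s}$ and~\eqref{a new condition for tuba mirum} force $\uptau_0,\upvarphi_0\in B_{H^r}(0,\tilde{\rho}_{\m{SS}s})\cap H^\infty$. Since $(v_0,\eta_0)\in B_{\Xs_r}(0,\ep)$ already, the inclusion~\eqref{subsonic data special inclusion} of Proposition~\ref{prop on syn 1 sub} is satisfied, and the first item of that proposition immediately yields that~\eqref{dies illa sub} is a bounded linear isomorphism, which is the first item here. (Note we must observe that $\be_0$ plays no explicit role in Definition~\ref{definition of operators and adapted domains II}, unlike in the omnisonic case; the subsonic operator $P^{\gam,\mu,\sig}_{v_0,\eta_0}$ does not carry a $\be_0\grad\eta$ term, so there is nothing to check there.)

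For the second item I would start from estimate~\eqref{estimate of the first synthesis sub} of Proposition~\ref{prop on syn 1 sub}, which reads
\begin{equation}
    \tnorm{v,\eta}_{{_{\mu,\sig}}\Xs_s}\lesssim\tnorm{h,f}_{\Y_s}+\tnorm{v_0,\eta_0,\uptau_0,\upvarphi_0}_{\Xs_{2+s}\times H^{2+s}\times H^{2+s}}\tnorm{h,f}_{\Y_{\max\tcb{3,2+\tfloor{d/2}}}},
\end{equation}
and then bound the $\Xs_{2+s}\times H^{2+s}\times H^{2+s}$-norm of $(v_0,\eta_0,\uptau_0,\upvarphi_0)$ by $\tnorm{v_0,\eta_0,\tcb{\tau_{0i}}_{i=0}^\ell,\tcb{\varphi_{0i}}_{i=0}^\ell}_{\Xs_{2+s}\times\W_{2+s}}$; this is exactly the tame product/composition estimate from Corollary~\ref{coro on more algebra properties} and Proposition~\ref{corollary on tame estimates on simple multipliers} applied to the polynomial expressions for $\uptau_0$ and $\upvarphi_0$, precisely as in the concluding line of the proof of Corollary~\ref{coro thm on synthesis 1.5 omnisonic case}. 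This delivers~\eqref{estimate of the first synthesis sub x}. I do not anticipate a genuine obstacle: the only thing requiring mild care is confirming that the derivative counts match---the loss of $2$ derivatives in the data tuple $\Xs_{2+s}\times\W_{2+s}$ in~\eqref{estimate of the first synthesis sub x} is consistent with Proposition~\ref{prop on syn 1 sub}, in contrast to the loss of $3$ derivatives in the omnisonic Corollary~\ref{coro thm on synthesis 1.5 omnisonic case}, and this difference is exactly the point of the subsonic refinement. One then, as in Remark~\ref{rmk on nondecreasing}, is free to arrange $\tcb{\rho_{\m{SS}s}}_s$ to be nonincreasing.
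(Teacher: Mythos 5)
Your proposal is correct and is essentially the paper's own proof: the paper likewise derives the corollary from Proposition~\ref{prop on syn 1 sub} by repeating the argument of the omnisonic Corollary~\ref{coro thm on synthesis 1.5 omnisonic case} — shrink $\ep$ so that Corollary~\ref{coro on more algebra properties} and Proposition~\ref{corollary on tame estimates on simple multipliers} place $(\uptau_0,\be_0,\upvarphi_0)$ in the small $H^r$ balls required by~\eqref{subsonic data special inclusion}, invoke the proposition, and then convert~\eqref{estimate of the first synthesis sub} into~\eqref{estimate of the first synthesis sub x} with the same product estimates. The one point you gloss over (hypothesis~\eqref{a new condition for tuba mirum} controls $\eta_0$ only in $\X_r$, whereas~\eqref{subsonic data special inclusion} nominally asks for the stronger $\Xs_r$ smallness) is an imprecision already present in the paper's statement and proof, so it is not a defect introduced by your argument.
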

\begin{proof}
    We derive these claims from Proposition~\eqref{prop on syn 1 sub} by arguing as in the proof of Corollary~\ref{coro thm on synthesis 1.5 omnisonic case}.  
\end{proof}

\begin{thm}[Synthesis III, subsonic case]\label{thm on syn 2 sub}
    Set $b=\max\tcb{3,2+\tfloor{d/2}}$, and let $\Bar{\Uppsi}^{\m{sub}} : \E_{b} \to \F_{b-2}$ be the map given in~\eqref{uppsi bar sub}, which is well-defined and smooth in light of the second item of Theorem~\ref{thm on smooth-tameness}. Let $\N\ni s\ge b$ and $\rho_{\m{SS}s}\in \R^+$ be given by Corollary~\ref{coro 1.5 synthesis subsonic case}.  Suppose that  $0<\ep\le \min\{\rho_{\m{SS}s},\rho_{\m{WD}} \}$ (recall that the latter is defined in Lemma~\ref{the preliminary lemma for you and I}),  $I\Subset(0,1)$ and $J,K\Subset\R$ are bounded open intervals, and
    \begin{equation}\label{thm on synthesis, II p0 sub}
        \bf{p}_0=(\gam_0,\mu_0,\sig_0,\tcb{\tau_{0i}}_{i=0}^\ell,\tcb{\varphi_{0i}}_{i=0}^\ell,w_0,\xi_0,v_0,\eta_0)\in [I\times J\times K\times B_{\W_r}(0,\ep)\times B_{\Xs_r\times\Xs_r}(0,\ep)]\cap\Es_\infty,
    \end{equation}
    where $r=\max\tcb{6,5+\tfloor{d/2}}$. The following hold.
    \begin{enumerate}
        \item  The restriction of $D\Bar{\Uppsi}^{\m{sub}}(\bf{p}_0) $ to $\R^3\times \W_s\times \Xs_s\times[{_{v_0,\eta_0}^{\gam_0,\mu_0,\sig_0}}\Xs_s]$ is well-defined and takes values in $\Fs_s$, and the induced linear map
        \begin{equation}\label{adapted isomorphism of the nonlinear operators derivative sub}
            D\Bar{\Uppsi}^{\m{sub}}(\bf{p}_0):\R^3\times \W_s\times \Xs_s\times[{_{v_0,\eta_0}^{\gam_0,\mu_0,\sig_0}}\Xs_s]\to\Fs_s
        \end{equation}
        is  an isomorphism. 
        \item Assume that $\bf{f}\in\Fs_s$ and $\bf{e}\in \R^3\times \W_s\times \Xs_s\times[{_{v_0,\eta_0}^{\gam_0,\mu_0,\sig_0}}\Xs_s]$ is defined via $\bf{e}=(D\Bar{\Uppsi}^{\m{sub}}(\bf{p}_0))^{-1}\bf{f}$. We then have the tame estimate
        \begin{equation}\label{fly me to the moon sub}
            \tnorm{\bf{e}}_{\Es_s}\lesssim\tnorm{\bf{f}}_{\Fs_s}+\tbr{\tnorm{\bf{p}_0}_{\Es_{s+2}}}\tnorm{\bf{f}}_{\Fs_{b}}
        \end{equation}
        for an implicit constant depending only on $s$, $d$, $I$, $J$, and $K$.
    \end{enumerate}
\end{thm}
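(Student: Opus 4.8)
The plan is to transcribe, nearly line for line, the two-step argument that proved Theorem~\ref{thm on synthesis, II}, feeding in the subsonic ingredients at each juncture; the only genuinely new bookkeeping is that the forcing $\Upphi^{\m{sub}}$ and the last two components of $\Bar{\Uppsi}^{\m{sub}}$ are simpler than their omnisonic counterparts. \emph{Step 1 (derivative of $\Upupsilon^{\m{sub}}$).} The second item of Theorem~\ref{thm on smooth-tameness} yields, in the same way that~\eqref{most banking institutions offer} followed from its first item, the strong tameness of $\Upupsilon^{\m{sub}}$ viewed as a smooth map of the two factors $\R^3$ and $\pmb{\X}^{\m{sub}}\times\pmb{\W}$ into $\pmb{\Y}\times\pmb{\W}$. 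Evaluating the partials at $\bf{q}_0=(\gam_0,\mu_0,\sig_0,v_0,\eta_0,\tcb{\tau_{0i}}_{i=0}^\ell,\tcb{\varphi_{0i}}_{i=0}^\ell)$, strong tameness forces $D_1\Upupsilon^{\m{sub}}(\bf{q}_0)(\gam,\mu,\sig)\in\Y_\infty\times\W_\infty$, exactly as in~\eqref{comer el pulpo}. For $D_2\Upupsilon^{\m{sub}}(\bf{q}_0)$ I would show that its restriction to the adapted domain ${^{\gam_0,\mu_0,\sig_0}_{v_0,\eta_0}}\Xs_s\times\W_s$ takes values in $\Y_s\times\W_s$, is an isomorphism there, and obeys the tame bound with the two-derivative background loss appearing in~\eqref{estimate of the first synthesis sub x}. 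Since $\Upupsilon^{\m{sub}}$ acts as the identity on the $\W$-factor, only the $\Y$-component equation is nontrivial; linearizing $\Upphi^{\m{sub}}$ (which carries no trace term and no $(\mu+\sig)$ prefactor) and invoking the first item of Proposition~\ref{prop on derivative splitting} to write $D_2\Uppsi(\gam_0,\mu_0,\sig_0,v_0,\eta_0)[v,\eta]=P^{\gam,\mu,\sig}_{v_0,\eta_0}[v,\eta]+(0,R^{\gam,\mu,\sig}_{v_0,\eta_0}[v,\eta])$, this equation becomes, after setting $\uptau_0=\sum_i\eta_0^i\tau_{0i}$ and $\upvarphi_0=\sum_i i\eta_0^{i-1}(\tau_{0i}\grad\eta_0+\varphi_{0i})$ as in~\eqref{dictionary}, precisely the system~\eqref{equations of the first synthesis sub} with right-hand side $(h,f)-\sum_i\eta_0^i(0,\tau_i\grad\eta_0+\varphi_i)\in\Y_s$. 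Corollary~\ref{coro 1.5 synthesis subsonic case} (applicable since $\bf{p}_0\in\Es_\infty$ and $\ep\le\rho_{\m{SS}s}$) then supplies existence, uniqueness, and estimate~\eqref{estimate of the first synthesis sub x}, and hence, combined with the trivial identity on $\W_s$ and the product estimates of Corollary~\ref{coro on more algebra properties}, the full claim of Step~1; the control of $R^{\gam,\mu,\sig}_{v_0,\eta_0}$ in the subsonic norm $\tnorm{\cdot}_{{_{\mu,\sig}}\Xs_s}$ uses estimate~\eqref{is this estimate actually useful} of Proposition~\ref{prop on remainder estimates} in place of its omnisonic sibling.

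\emph{Step 2 (assembling the full differential).} The equation $D\Bar{\Uppsi}^{\m{sub}}(\bf{p}_0)\bf{e}=\bf{f}$ decouples into four pieces: the direct analogs of~\eqref{remaining equation 1}, \eqref{remaining equation 2}, and~\eqref{remaining equation 3} (the first component and the $w$-component of $\Bar{\Uppsi}^{\m{sub}}$ coincide verbatim with those of $\Bar{\Uppsi}$), together with a $\xi$-equation that is the simplification of~\eqref{remaining equation 4} obtained by deleting the factor $1+(\mu+\sig)|\grad|$, namely $\xi=\tbr{\sig_0\grad}^2\eta-\zeta+2\sig\sig_0|\grad|^2\eta_0$. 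Solving these in order --- the first identity fixes $(\gam,\mu,\sig)$; Step~1 then fixes $(v,\eta,\tcb{\tau_i}_{i=0}^\ell,\tcb{\varphi_i}_{i=0}^\ell)\in{^{\gam_0,\mu_0,\sig_0}_{v_0,\eta_0}}\Xs_s\times\W_s$; and the last two equations read off $(w,\xi)\in\Xs_s$ --- shows that the linear map~\eqref{adapted isomorphism of the nonlinear operators derivative sub} is a well-defined bounded bijection. Feeding the Step~1 bound into the $w$- and $\xi$-equations, just as~\eqref{w and xi estimates} was obtained, produces~\eqref{fly me to the moon sub}; the background loss is only two derivatives here, rather than the three of the omnisonic case, because the last two components of $\Bar{\Uppsi}^{\m{sub}}$ involve only second-order symbols in $\mu\grad$ and $\sig\grad$ with no auxiliary first-order $(\mu+\sig)|\grad|$ factor, and because~\eqref{estimate of the first synthesis sub x} itself costs only two derivatives.

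I expect the main (though still routine) obstacle to be the algebraic reorganization in Step~1: one must verify that, with~\eqref{dictionary} in force, the three groups of linearized forcing terms collapse into the exact shape $(0,\upvarphi_0\eta+\uptau_0\grad\eta)$ plus a datum lying in $\Y_s$, as demanded by~\eqref{equations of the first synthesis sub}, and that all the resulting products land in the claimed spaces. Beyond this, the argument is a mechanical substitution of subsonic objects for omnisonic ones: the container space $\Xs_s$ for $\X_s$, the radius $\rho_{\m{SS}s}$ for $\rho_{\m{S}s}$, the scales $\pmb{\E}^{\m{sub}}$, $\pmb{\F}^{\m{sub}}$ for $\pmb{\E}$, $\pmb{\F}$, and the use of Theorem~\ref{a priori estimates subsonic principal part}, Corollary~\ref{coro 1.5 synthesis subsonic case}, and Definition~\ref{definition of operators and adapted domains II} in place of Theorem~\ref{fourth theorem on a priori estimates}, Corollary~\ref{coro thm on synthesis 1.5 omnisonic case}, and Definition~\ref{definition of operators and adapted domains I}.
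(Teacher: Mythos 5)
Your proposal is correct and follows exactly the route the paper takes: the paper's own proof of this theorem is a one-line remark that the argument of Theorem~\ref{thm on synthesis, II} carries over with the subsonic substitutions (Corollary~\ref{coro 1.5 synthesis subsonic case} for Corollary~\ref{coro thm on synthesis 1.5 omnisonic case}, $\Xs_s$ for $\X_s$, the simplified $\xi$-equation, and the two-derivative background loss from~\eqref{estimate of the first synthesis sub x}), which is precisely what you have written out in detail.
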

\begin{proof}
    The same strategy used in the proof of Theorem~\ref{thm on synthesis, II} applies here, with minor modifications; for instance, instead of the omnisonic results such as Corollary~\ref{coro thm on synthesis 1.5 omnisonic case}, we instead invoke the corresponding subsonic result from Corollary~\ref{coro 1.5 synthesis subsonic case}.  In the interest of brevity, we omit further details.
\end{proof}

\section{Conclusion}\label{section on conclusion}

In this section we piece together the nonlinear analysis of Section~\ref{section on nonlinear analysis} and the linear analysis of Section~\ref{section on linear analysis} along with the Nash-Moser inverse function theorem of Appendix~\ref{subsection on a NMIFT} to prove Theorems~\ref{main theorem no. 2} and~\ref{main theorem no. 3} along with Corollaries~\ref{coro on fixed physical parameters} and~\ref{coro on limits}. 

\subsection{Abstract construction}

In the following first formulations of the main theorem for the omnisonic and subsonic cases, we will utilize the following variations on the main nonlinear operators $\Bar{\Uppsi}$ and $\Bar{\Uppsi}^{\m{sub}}$, which we recall are defined in~\eqref{definition of uppsi bar} and~\eqref{uppsi bar sub}, respectively.  For $b = \max\tcb{3,2+\tfloor{d/2}} \in \N$ and any fixed $\mathfrak{q}=(\Bar{\gam},\Bar{\mu},\Bar{\sig})\in\R^+\times\R^2$ we define the operators $\Bar{\Uppsi}_\mathfrak{q}:\E_{b}\to\F_{b-3}$ and $\Bar{\Uppsi}_{\mathfrak{q}}^{\m{sub}}:\Es_b\to\Fs_{b-2}$ via the assignments
\begin{equation}\label{the fraky q dood}
    \Bar{\Uppsi}_{\mathfrak{q}}(\gam,\mu,\sig,\tcb{\tau_i}_{i=0}^\ell,\tcb{\varphi_i}_{i=0}^\ell,w,\xi,v,\eta)=\Bar{\Uppsi}(\gam+\Bar{\gam},\mu+\Bar{\mu},\sig+\Bar{\sig},\tcb{\tau_i}_{i=0}^\ell,\tcb{\varphi_i}_{i=0}^\ell,w,\xi,v,\eta)-(\Bar{\gam},\Bar{\mu},\Bar{\sig},0,0,0)
\end{equation}
and 
\begin{equation}\label{the fraky q sub dood}
    \Bar{\Uppsi}^{\m{sub}}_{\mathfrak{q}}(\gam,\mu,\sig,\tcb{\tau_i}_{i=0}^\ell,\tcb{\varphi_i}_{i=0}^\ell,w,\xi,v,\eta)=\Bar{\Uppsi}^{\m{sub}}(\gam+\Bar{\gam},\mu+\Bar{\mu},\sig+\Bar{\sig},\tcb{\tau_i}_{i=0}^\ell,\tcb{\varphi_i}_{i=0}^\ell,w,\xi,v,\eta)-(\Bar{\gam},\Bar{\mu},\Bar{\sig},0,0,0)
\end{equation}
The utility of these definitions is that $\Bar{\Uppsi}_{\mathfrak{q}}(0)=\Bar{\Uppsi}^{\m{sub}}_{\mathfrak{q}}(0)=0$.

\begin{thm}[Abstract formulation, omnisonic case]\label{thm1 on traveling waves} Let $\varsigma=19+2\tfloor{d/2}$. For each $\mathfrak{q}=(\Bar{\gam},\Bar{\mu},\Bar{\sig})\in\R^+\times\R^2$, there exists a nonincreasing sequence $\tcb{\uprho_{s}(\mathfrak{q})}_{s=0}^\infty\subset(0,\min\tcb{\rho_{\m{WD}},\Bar{\gam}/2}]$ and $\upkappa(\mathfrak{q})\in\R^+$
with the following properties.
    \begin{enumerate}
        \item \emph{Existence and uniqueness:} Given $\bf{f}\in B_{\F_\varsigma}(0,\uprho_0(\mathfrak{q}))$ there exists a unique $\bf{e}\in B_{\E_\varsigma}(0,\upkappa(\mathfrak{q})\uprho_0(\mathfrak{q}))$ such that $\Bar{\Uppsi}_{\mathfrak{q}}(\bf{e})=\bf{f}$. This induces the local inverse map
        \begin{equation}\label{the local inverse maps}
            \Bar{\Uppsi}^{-1}_{\mathfrak{q}}:B_{\F_{\varsigma}}(0,\uprho_0(\mathfrak{q})) \to 
            \Bar{\Uppsi}^{-1}_{\mathfrak{q}}(B_{\F_{\varsigma}}(0,\uprho_0(\mathfrak{q}))) \subseteq
            B_{\E_{\varsigma}}(0,\upkappa(\mathfrak{q})\uprho_0(\mathfrak{q})).
        \end{equation}
        \item \emph{Higher regularity, given low norm smallness:} If $s\in\N$ and $\bf{f}\in \F_{s+\varsigma}\cap B_{\F_{\varsigma}}(0,\uprho_s(\mathfrak{q}))$, then we have the inclusion $\Bar{\Uppsi}_{\mathfrak{q}}^{-1}(\bf{f})\in\E_{s+\varsigma}$ and the tame estimate
        \begin{equation}\label{tame estimates on the local inverse map}
        \tnorm{\Bar{\Uppsi}_{\mathfrak{q}}^{-1}(\bf{f})}_{\E_{\varsigma+s}}\lesssim\tnorm{\bf{f}}_{\F_{\varsigma+s}},
        \end{equation}
        for an implicit constant depending only on $s$, the dimension, and $\mathfrak{q}$. 
        
        \item \emph{Continuous dependence:} For every $s\in\N$, the restricted map
        \begin{equation}\label{high regularity local inverse map}
            \Bar{\Uppsi}_{\mathfrak{q}}^{-1}:\F_{s+\varsigma}\cap B_{\F_{\varsigma}}(0,\uprho_s(\mathfrak{q}))\to\E_{\varsigma+s}\cap B_{\E_{\varsigma}}(0,\upkappa(\mathfrak{q})\uprho_0(\mathfrak{q}))
        \end{equation}
        is continuous with respect to the norms on $\F_{s+\varsigma}$ and $\E_{s+\varsigma}$.  
    \end{enumerate}
\end{thm}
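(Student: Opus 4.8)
The plan is to derive Theorem~\ref{thm1 on traveling waves} directly from the Nash-Moser inverse function theorem of Appendix~\ref{subsection on a NMIFT} applied to the map $\Bar{\Uppsi}_{\mathfrak{q}}:\pmb{\E}\to\pmb{\F}$. First I would check that all the structural hypotheses of the abstract theorem are in place: the Banach scales $\pmb{\E}$ and $\pmb{\F}$ are LP-smoothable by Lemma~\ref{lem on LP smoothability of the Banach scales}; the map $\Bar{\Uppsi}$ — hence its shifted and recentered version $\Bar{\Uppsi}_{\mathfrak{q}}$ — is smooth-tame with the loss parameters $(3,\max\tcb{3,1+\tfloor{d/2}})$ by the first item of Theorem~\ref{thm on smooth-tameness}; and $\Bar{\Uppsi}_{\mathfrak{q}}(0)=0$ by construction in~\eqref{the fraky q dood}. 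The genuinely substantive input is the linear hypothesis: one needs a tame family of right (and left) inverses for $D\Bar{\Uppsi}_{\mathfrak{q}}$ on a neighborhood of smooth backgrounds. This is exactly the content of Theorem~\ref{thm on synthesis, II}, provided one first carries out a change of unknowns. Specifically, for a background $\bf{p}_0$ near $0$ in $\E_\infty$, Theorem~\ref{thm on synthesis, II} shows that $D\Bar{\Uppsi}(\bf{p}_0)$ restricted to $\R^3\times\W_s\times\X_s\times[{_{v_0,\eta_0,\be_0}^{\gam_0,\mu_0,\sig_0}}\X_s]$ is an isomorphism onto $\F_s$ with the tame bound~\eqref{fly me to the moon}; the shift by $\mathfrak{q}$ only translates the parameter arguments and does not affect the derivative, so the same holds for $D\Bar{\Uppsi}_{\mathfrak{q}}(\bf{p}_0)$. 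The adapted domain ${_{v_0,\eta_0,\be_0}^{\gam_0,\mu_0,\sig_0}}\X_s$ embeds continuously into $\X_s$, so inverting $D\Bar{\Uppsi}_{\mathfrak{q}}(\bf{p}_0)$ and composing with this embedding produces the desired (two-sided, since the map is bijective) tame inverse $\Phi(\bf{p}_0,\cdot):\F_s\to\E_s$ obeying the loss-$3$ tame estimate required by the Nash-Moser hypothesis, with the background-dependent constant controlled by $\tbr{\tnorm{\bf{p}_0}_{\E_{s+3}}}$ as in~\eqref{fly me to the moon}.

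Once the smooth-tameness and the tame invertibility of the linearization are assembled, the abstract Nash-Moser theorem of Appendix~\ref{subsection on a NMIFT} outputs precisely the three items claimed: it gives a radius $\uprho_0(\mathfrak{q})$ and a constant $\upkappa(\mathfrak{q})$ so that every $\bf{f}\in B_{\F_\varsigma}(0,\uprho_0(\mathfrak{q}))$ has a unique preimage $\bf{e}\in B_{\E_\varsigma}(0,\upkappa(\mathfrak{q})\uprho_0(\mathfrak{q}))$ under $\Bar{\Uppsi}_{\mathfrak{q}}$ — this is item~(1) and defines the local inverse~\eqref{the local inverse maps}; it provides the higher-regularity-with-smallness statement and the tame estimate~\eqref{tame estimates on the local inverse map} — item~(2); and it yields continuity of the inverse between the high-regularity spaces — item~(3). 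The base regularity level $\varsigma=19+2\tfloor{d/2}$ is exactly what one computes by adding to $b=\max\tcb{3,2+\tfloor{d/2}}$ the derivative budget demanded by the Nash-Moser scheme with loss $3$: the abstract theorem requires the base index to exceed $b$ by a fixed amount determined by the loss parameters and the tame-estimate exponents, and $19+2\tfloor{d/2}$ is the resulting threshold. I would also need to invoke that $\tcb{\uprho_s(\mathfrak{q})}$ can be taken nonincreasing — this is standard and can be arranged by replacing each $\uprho_s$ with $\min_{k\le s}\uprho_k$ — and that $\uprho_s(\mathfrak{q})\le\Bar{\gam}/2$, which is imposed so that the shifted wave speed $\gam+\Bar{\gam}$ stays positive (and away from $0$), keeping us inside the admissible parameter set $I$ for the linear theory.

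The main obstacle is bookkeeping rather than conceptual: one must verify that the abstract Nash-Moser hypotheses are stated in a form that matches exactly what Theorems~\ref{thm on smooth-tameness} and~\ref{thm on synthesis, II} deliver — in particular that the class of smooth backgrounds on which the linearized inverse is required is the $\E_\infty$-neighborhood of $0$ used in~\eqref{thm on synthesis, II p0}, that the loss indices and tame exponents line up, and that the domain of the tame inverse is all of the codomain scale $\pmb{\F}$ (which is why the surjectivity half of Theorem~\ref{thm on synthesis, II}, resting on Theorem~\ref{main theorem of all of math in the math document where is it}, is essential, not just the a priori estimate). A subtler point to be careful about: the inverse granted by Theorem~\ref{thm on synthesis, II} a priori takes values in the adapted space ${_{v_0,\eta_0,\be_0}^{\gam_0,\mu_0,\sig_0}}\X_s$, whose graph norm is stronger than the $\X_s$ norm, but what the Nash-Moser machine consumes is a tame map $\pmb{\F}\to\pmb{\E}$; the resolution is that the composition with the embedding ${_{v_0,\eta_0,\be_0}^{\gam_0,\mu_0,\sig_0}}\X_s\hookrightarrow\X_s$ is norm-decreasing, so~\eqref{fly me to the moon}, which is stated in the $\E_s$ norm, is already the estimate one needs and no information is lost. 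With these matchings in hand, the proof is a direct citation of the abstract theorem, and I would keep the written argument short, pointing to the precise hypotheses and the precise earlier results that verify each one.
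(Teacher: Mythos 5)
Your overall strategy is exactly the paper's: verify LP-smoothability of $\pmb{\E}$ and $\pmb{\F}$, cite Theorem~\ref{thm on smooth-tameness} for the tame $C^2$ hypothesis, cite Theorem~\ref{thm on synthesis, II} for the tame two-sided invertibility of the linearization on the $\E_\infty$-neighborhood of $0$, and then invoke Theorem~\ref{thm on nmh}. Your remarks on the recentering by $\mathfrak{q}$, on the role of $\Bar{\gam}/2$ in keeping the wave speed inside an admissible interval $I\Subset\R^+$, and on why the adapted graph-norm space causes no trouble are all correct and match the paper's reasoning (the paper takes $I=(\Bar{\gam}/2,2\Bar{\gam})$, $J=(-2|\Bar{\mu}|,2|\Bar{\mu}|)$, $K=(-2|\Bar{\sig}|,2|\Bar{\sig}|)$, and $\varsigma=2(r+\mu)+1$ with $(\mu,r,R)=(3,6+\tfloor{d/2},33+3\tfloor{d/2})$).

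There is, however, one genuine gap: you assert that a single invocation of the abstract theorem ``outputs precisely the three items claimed,'' including items~(2) and~(3) for every $s\in\N$. It does not. Theorem~\ref{thm on nmh} is stated with a \emph{fixed} upper index $R$, and its regularity-promotion and continuity conclusions are confined to the bounded ranges $\nu\le R+r-2\be$ and $s\in[\be,R+r-\be-\mu)$ respectively. With $R$ fixed you therefore only obtain \eqref{tame estimates on the local inverse map} and \eqref{high regularity local inverse map} for finitely many $s$. The paper's proof closes this by re-applying Theorem~\ref{thm on nmh} for each $s\in\N$ with the parameter triple $(\mu,r,R+s)$ and $\del_{R+s}=\min\tcb{\del_R,\rho_{\m{S}(R+s)}}$ (this is where the hypothesis of Theorem~\ref{thm on synthesis, II} at the level $R+s$, and Remark~\ref{rmk on nondecreasing}, get used), letting $\uprho_s(\mathfrak{q})$ be the smallness radius produced by the $s$-th application, and then identifying the resulting local inverses with the original one via local uniqueness. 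This is also the true origin of the sequence $\tcb{\uprho_s(\mathfrak{q})}_{s=0}^\infty$: each radius comes from a separate application of the theorem, not merely from a cosmetic replacement by $\min_{k\le s}\uprho_k$. Your plan as written has no source for these radii and no mechanism to reach arbitrary $s$; adding the iterated application plus the uniqueness-based gluing fixes it.
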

\begin{proof}
    Our aim is to show that the hypotheses of Theorem~\ref{thm on nmh} are satisfied by the map $\Bar{\Uppsi}_{\mathfrak{q}}$, defined in~\eqref{the fraky q dood}, acting between the Banach scales $\pmb{\E}$ and $\pmb{\F}$, which we recall are defined in~\eqref{the banach scales for our problem}. Evidently, these Banach scales consist of reflexive (in fact, Hilbert) spaces and, thanks to Lemma~\ref{lem on LP smoothability of the Banach scales}, we know that $\pmb{\E}$ and $\pmb{\F}$ are LP-smoothable in the sense of Definition~\ref{defn of smoothable and LP-smoothable Banach scales}.

    We now assess how the triple $(\pmb{\E},\pmb{\F},\Bar{\Uppsi}_{\mathfrak{q}})$ satisfies the LRI mapping hypotheses of Definition~\ref{defn of the mapping hypotheses} with parameters  
    \begin{equation}
        (\mu,r,R)=(3,6+\tfloor{d/2},33+3\tfloor{d/2}),
    \end{equation}
    which obey the inequalities $1\le\mu\le r<R<\infty$ and $2(r+\mu)+1<(r+R)/2$ as well as the identity $\varsigma=2(r+\mu)+1$. We then set $\del_r=\min\tcb{\rho_{\m{WD}},\Bar{\gamma}/2}$. By heeding to these definitions, Theorem~\ref{thm on smooth-tameness} and Corollary~\ref{lem on derivative estimates on tame maps}, we see that the first and second items of Definition~\ref{defn of the mapping hypotheses} are satisfied. The satisfaction of the third and final item of Definition~\ref{defn of the mapping hypotheses} is a consequence of Theorem~\ref{thm on synthesis, II} and Remark~\ref{rmk on nondecreasing} with $\del_R=\min\tcb{\del_r,\rho_{\m{S}R}}$ (the parameter $\rho_{\m{S}R}$ is granted by the theorem) and $I=(\Bar{\gam}/2,2\Bar{\gam})$, $J=(-2|\Bar{\mu}|,2|\Bar{\mu}|)$, and $K=(-2|\Bar{\sig}|,2|\Bar{\sig}|)$.

    Thus, the hypotheses of Theorem~\ref{thm on nmh} are satisfied and we therefore obtain $\ep=\uprho_0(\mathfrak{q})$ such that the first item holds, the map~\eqref{the local inverse maps} is continuous, and we have the estimate $\tnorm{\Bar{\Uppsi}_{\mathfrak{q}}^{-1}(\bf{f})}_{\E_\varsigma}\lesssim\tnorm{\bf{f}}_{\F_\varsigma}$ for $\bf{f}\in B_{\F_\varsigma}(0,\uprho_0(\mathfrak{q}))$.

    Finally, suppose that $s\in\N$. We again apply the Nash-Moser theorem, but this time with the parameter triple $(\mu,r,\tilde{R})=(\mu,r,R+s)$ and $\del_{\tilde{R}}=\min\tcb{\del_R,\rho_{\m{S}\tilde{R}}}$ and let $\uprho_s(\mathfrak{q})\in(0,\rho_{\m{WD}}]$ denote the smallness parameter $\ep$ provided by this application of Theorem~\ref{thm on nmh}. By local uniqueness, we are thus granted the second and third items.  
\end{proof}

We are now interested in the subsonic sharpening of Theorem~\ref{thm1 on traveling waves}, which we state and prove below.

\begin{thm}[Abstract formulation, subsonic case]\label{thm1 on traveling waves subsonic} Let $\varpi=17+2\tfloor{d/2}$. For each $\mathfrak{q}=(\Bar{\gam},\Bar{\mu},\Bar{\sig})\in(0,1)\times\R^2$, there exists nonincreasing $\tcb{\uprho^{\m{sub}}_{s}(\mathfrak{q})}_{s=0}^\infty\subset(0,\min\tcb{\rho_{\m{WD}},\Bar{\gam}/2,(1-\Bar{\gam})2}]$ and $\upkappa^{\m{sub}}(\mathfrak{q})\in\R^+$
with the following properties.
    \begin{enumerate}
        \item \emph{Existence and uniqueness:} For $\bf{f}\in B_{\Fs_\varpi}(0,\uprho^{\m{sub}}_0(\mathfrak{q}))$ there exists a unique $\bf{e}\in B_{\Es_\varpi}(0,\upkappa^{\m{sub}}(\mathfrak{q})\uprho^{\m{sub}}_0(\mathfrak{q}))$ such that $\Bar{\Uppsi}^{\m{sub}}_{\mathfrak{q}}(\bf{e})=\bf{f}$. This induces the local inverse map
        \begin{equation}\label{the local inverse maps sub}
            [\Bar{\Uppsi}^{\m{sub}}_{\mathfrak{q}}]^{-1}:B_{\Fs_{\varpi}}(0,\uprho^{\m{sub}}_0(\mathfrak{q})) \to 
            [\Bar{\Uppsi}^{\m{sub}}_{\mathfrak{q}}]^{-1}(B_{\Fs_{\varpi}}(0,\uprho^{\m{sub}}_0(\mathfrak{q}))) \subseteq
            B_{\Es_{\varpi}}(0,\upkappa^{\m{sub}}(\mathfrak{q})\uprho^{\m{sub}}_0(\mathfrak{q})).
        \end{equation}
        \item \emph{Higher regularity, given low norm smallness:} If $s\in\N$ and $\bf{f}\in \Fs_{s+\varpi}\cap B_{\Fs_{\varpi}}(0,\uprho^{\m{sub}}_s(\mathfrak{q}))$, then we have the inclusion $[\Bar{\Uppsi}^{\m{sub}}_{\mathfrak{q}}]^{-1}(\bf{f})\in\Es_{s+\varpi}$ and the tame estimate
        \begin{equation}\label{tame estimates on the local inverse map sub}
        \tnorm{[\Bar{\Uppsi}^{\m{sub}}_{\mathfrak{q}}]^{-1}(\bf{f})}_{\Es_{\varpi+s}}\lesssim\tnorm{\bf{f}}_{\Fs_{\varpi+s}},
        \end{equation}
        for an implicit constant depending only on $s$, the dimension, and $\mathfrak{q}$. 
        
        \item \emph{Continuous dependence:} For every $s\in\N$, the restricted map
        \begin{equation}\label{high regularity local inverse map sub}
            [\Bar{\Uppsi}_{\mathfrak{q}}^{\m{sub}}]^{-1}:\Fs_{s+\varpi}\cap B_{\Fs_{\varpi}}(0,\uprho^{\m{sub}}_s(\mathfrak{q}))\to\Es_{\varpi+s}\cap B_{\Es_{\varpi}}(0,\upkappa^{\m{sub}}(\mathfrak{q})\uprho^{\m{sub}}_0(\mathfrak{q}))
        \end{equation}
        is continuous with respect to the norms on $\Fs_{s+\varpi}$ and $\Es_{s+\varpi}$.  
    \end{enumerate}
\end{thm}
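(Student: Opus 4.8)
The plan is to mirror, essentially verbatim, the proof of Theorem~\ref{thm1 on traveling waves}, now applying the Nash--Moser inverse function theorem of Appendix~\ref{subsection on a NMIFT} (Theorem~\ref{thm on nmh}) to the shifted operator $\Bar{\Uppsi}^{\m{sub}}_{\mathfrak{q}}$ of~\eqref{the fraky q sub dood} acting between the subsonic Banach scales $\pmb{\E}^{\m{sub}}$ and $\pmb{\F}^{\m{sub}}$ in place of the omnisonic scales. The preliminary structural facts are identical: $\Bar{\Uppsi}^{\m{sub}}_{\mathfrak{q}}(0)=0$ by construction, $\pmb{\E}^{\m{sub}}$ and $\pmb{\F}^{\m{sub}}$ are built from Hilbert (hence reflexive) spaces, and they are LP-smoothable in the sense of Definition~\ref{defn of smoothable and LP-smoothable Banach scales} by Lemma~\ref{lem on LP smoothability of the Banach scales}. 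The one genuine point of difference with the omnisonic case is bookkeeping: $\Bar{\Uppsi}^{\m{sub}}$ loses only \emph{two} derivatives, since it maps $\Es_{s+2}\to\Fs_s$ (Definition~\ref{subsonic nonlinear maps definition}), so in the LRI mapping hypotheses of Definition~\ref{defn of the mapping hypotheses} I would take the parameter triple $(\mu,r,R)=(2,\,6+\tfloor{d/2},\,29+3\tfloor{d/2})$, which satisfies $1\le\mu\le r<R<\infty$ and $2(r+\mu)+1<(r+R)/2$, together with the identity $\varpi=2(r+\mu)+1=17+2\tfloor{d/2}$.

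Next I would check the three items of Definition~\ref{defn of the mapping hypotheses} for the triple $(\pmb{\E}^{\m{sub}},\pmb{\F}^{\m{sub}},\Bar{\Uppsi}^{\m{sub}}_{\mathfrak{q}})$, setting $\del_r=\min\tcb{\rho_{\m{WD}},\Bar{\gam}/2,(1-\Bar{\gam})/2}$. The first two items --- that the forward map is strongly $2$-tame with base $\max\tcb{2,1+\tfloor{d/2}}$, and the tame bounds on its derivatives --- follow from the second item of Theorem~\ref{thm on smooth-tameness} and from Corollary~\ref{lem on derivative estimates on tame maps}; here the last two terms in $\del_r$ encode the requirement that $\Bar{\gam}+\gam$ stay inside $(0,1)$ for $(\gam,\cdot)$ ranging over the base-$r$ ball, which is what allows the subsonic linear theory to apply after the shift. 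The third item --- the existence of tame left and right inverses of $D\Bar{\Uppsi}^{\m{sub}}_{\mathfrak{q}}$ on a neighborhood of smooth backgrounds --- is precisely Theorem~\ref{thm on syn 2 sub}, which I would invoke with $I=(\Bar{\gam}/2,(1+\Bar{\gam})/2)$, with $J,K\Subset\R$ bounded open intervals containing $\Bar{\mu}$ and $\Bar{\sig}$, and with $\del_R=\min\tcb{\del_r,\rho_{\m{SS}R}}$; note that $\Bar{\gam}\in I$ and $\Bar{I}\subset(0,1)$ hold exactly because $\Bar{\gam}\in(0,1)$, so the subsonic hypothesis $I\Subset(0,1)$ of Theorem~\ref{thm on syn 2 sub} is met, and one may assume without loss of generality, as in Remark~\ref{rmk on nondecreasing}, that the sequence $\tcb{\rho_{\m{SS}s}}_s$ from Corollary~\ref{coro 1.5 synthesis subsonic case} is nonincreasing.

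With the hypotheses of Theorem~\ref{thm on nmh} verified, a first application produces $\uprho^{\m{sub}}_0(\mathfrak{q})\in(0,\del_r]$ and $\upkappa^{\m{sub}}(\mathfrak{q})\in\R^+$ for which the first item holds, the local inverse~\eqref{the local inverse maps sub} is continuous, and $\tnorm{[\Bar{\Uppsi}^{\m{sub}}_{\mathfrak{q}}]^{-1}(\bf{f})}_{\Es_\varpi}\lesssim\tnorm{\bf{f}}_{\Fs_\varpi}$ holds on $B_{\Fs_\varpi}(0,\uprho^{\m{sub}}_0(\mathfrak{q}))$. For the higher-regularity assertions I would, for each $s\in\N$, reapply Theorem~\ref{thm on nmh} with the enlarged triple $(\mu,r,R+s)$ and $\del_{R+s}=\min\tcb{\del_R,\rho_{\m{SS}(R+s)}}$, letting $\uprho^{\m{sub}}_s(\mathfrak{q})$ be the smallness radius it returns; local uniqueness of the Nash--Moser inverse then yields the inclusion $[\Bar{\Uppsi}^{\m{sub}}_{\mathfrak{q}}]^{-1}(\bf{f})\in\Es_{s+\varpi}$, the tame estimate~\eqref{tame estimates on the local inverse map sub}, and the continuity of~\eqref{high regularity local inverse map sub}. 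Shrinking if necessary, the radii $\tcb{\uprho^{\m{sub}}_s(\mathfrak{q})}_{s=0}^\infty$ can be taken nonincreasing.

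I expect no real obstacle in this assembly --- it is routine once Theorem~\ref{thm on syn 2 sub} is available --- since all the hard work, and the only place the subsonic restriction $\Bar{\gam}\in(0,1)$ is essential, sits upstream: in the sharpened a priori estimate of Theorem~\ref{a priori estimates subsonic principal part}, which exploits the negative definiteness away from the origin of the symbol of the operator on the left of~\eqref{the eta equation strikes back} when $\gam\in I\Subset(0,1)$, and in its propagation through Proposition~\ref{prop on syn 1 sub} and Theorem~\ref{thm on syn 2 sub}. The one point to verify carefully here is purely arithmetic: that the reduced loss $\mu=2$ is consistent with the smooth-tameness base in the second item of Theorem~\ref{thm on smooth-tameness}, and that the chosen $(\mu,r,R)$ reproduces exactly $\varpi=17+2\tfloor{d/2}=2(r+\mu)+1$ --- both of which hold with $r=6+\tfloor{d/2}$. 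This drop of two derivative-counting units from $\varsigma=19+2\tfloor{d/2}$ to $\varpi=17+2\tfloor{d/2}$ is the entire gain of the theorem.
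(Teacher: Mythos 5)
Your proposal is correct and follows exactly the route the paper takes: the paper's own proof of this theorem is a two-line remark deferring to the omnisonic argument, swapping in the second item of Theorem~\ref{thm on smooth-tameness} and Theorem~\ref{thm on syn 2 sub}, with the drop from $\varsigma$ to $\varpi$ explained by the map being $2$-tame rather than $3$-tame. Your parameter bookkeeping $(\mu,r,R)=(2,6+\tfloor{d/2},29+3\tfloor{d/2})$, the check $\varpi=2(r+\mu)+1$, and the choice of $I\Subset(0,1)$ accommodating the shift in $\gam$ are all consistent with what the paper intends.
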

\begin{proof}
    The proof is much the same as that of Theorem~\ref{thm1 on traveling waves}; the main differences are that we use the second item of Theorem~\ref{thm on smooth-tameness} in place of the first and Theorem~\ref{thm on syn 2 sub} instead of Theorem~\ref{thm on synthesis, II}. The reason for the difference between $\varsigma$ and $\varpi$ is because the map $\Bar{\Uppsi}$ is $3$-tame, while $\Bar{\Uppsi}^{\m{sub}}$ is only $2$-tame. We omit further details.
\end{proof}

\subsection{PDE construction}\label{section on PDE construction}

The goal of this subsection is to take the abstract formulations of Theorems~\ref{thm1 on traveling waves} and~\ref{thm1 on traveling waves subsonic} and transform them into Theorems~\ref{main theorem no. 2} and~\ref{main theorem no. 3} along with a few corollaries.

\begin{proof}[Proof of Theorem~\ref{main theorem no. 2}]
    For each $\mathfrak{q}=(\Bar{\gam},\Bar{\mu},\Bar{\sig})\in\R^+\times[0,\infty)^2$ we may invoke Theorem~\ref{thm1 on traveling waves} and acquire a nonincreasing sequence $\tcb{\uprho_{\mathfrak{s}}(\mathfrak{q})}_{\mathfrak{s}=0}^\infty\subset(0,\min\tcb{\rho_{\m{WD}},\Bar{\gam}/2}]$ and $\upkappa(\mathfrak{q})\in\R^+$ such that the various conclusions of the theorem hold. We then define the open and relatively open sets
    \begin{equation}
        \mathcal{V}(\mathfrak{q})=B_{\X_\varsigma}(0,\upkappa(\mathfrak{q})\uprho_0(\mathfrak{q}))\subset\X_\varsigma.
    \end{equation}
    and, for $\N \ni s \ge \varsigma$,
    \begin{equation}\label{definition of the omnisonic set Us}
        \mathcal{U}_s=\bcb{(\gam,\mu,\sig,\tcb{\tau_i}_{i=0}^\ell,\tcb{\varphi_i}_{i=0}^\ell)\in\bigcup_{\mathfrak{q}\in\R^+\times[0,\infty)^2}B_{\R^3\times\W_\varsigma}((\mathfrak{q},0),\uprho_{s-\varsigma}(\mathfrak{q}))\;:\mu\ge0,\;\sig\ge0}\subset\R^3\times\W_{\varsigma}.
    \end{equation}
    The first properties of $\mathcal{U}_s$ and $\mathcal{V}(\mathfrak{q})$ in~\eqref{thm3 Us} and~\eqref{thm4 Vq} are now clear.

    For the first item, we apply the first conclusion of Theorem~\ref{thm1 on traveling waves}. Indeed, given $(\gam,\mu,\sig,\tcb{\tau_i}_{i=0}^\ell,\tcb{\varphi_i}_{i=0}^\ell)\in\mathcal{U}_\varsigma$ we define $(v,\eta)\in\mathcal{V}(\gam,\mu,\sig)$ to be the last two components of the tuple $\Bar{\Uppsi}_{(\gam,\mu,\sig)}^{-1}(0,0,\tcb{\tau_i}_{i=0}^\ell,\tcb{\varphi_i}_{i=0}^\ell,0,0)$. This grants us existence and uniqueness, and hence the second item is proved.

    It remains to prove the second and third items, which we do in tandem. It is clear from Theorem~\ref{thm1 on traveling waves} that, given $(\gam,\mu,\sig,\tcb{\tau_i}_{i=0}^\ell,\tcb{\varphi_i}_{i=0}^\ell)\in\mathcal{U}_s\cap(\R^3\times\W_s)$, the solution $(v,\eta)$ belongs also to $\X_{s}$ and the map
            \begin{equation}\label{vanilla solution map but thats okay vanilla is my favorite flavor}
            \mathcal{U}_s\cap(\R^3\times\W_s)\ni(\gam,\mu,\sig,\tcb{\tau_i}_{i=0}^\ell,\tcb{\varphi_i}_{i=0}^\ell)\mapsto(v,\eta)\in\X_s
        \end{equation}
     is continuous. It is at this point where we crucially utilize the last two components of the mapping $\Bar{\Uppsi}$ from~\eqref{definition of uppsi bar}.  Indeed, we have that $\Bar{\Uppsi}(\gam,\mu,\sig,\tcb{\tau_i}_{i=0}^\ell,\tcb{\varphi_i}_{i=0}^\ell,w,\xi,v,\eta)=(\gam,\mu,\sig,0,\tcb{\tau_i}_{i=0}^\ell,\tcb{\varphi_i}_{i=0}^\ell,0,0)$ and, by arguments similar to the above, the map
    \begin{equation}
        \mathcal{U}_s\cap(\R^3\times\W_s)\ni(\gam,\mu,\sig,\tcb{\tau_i}_{i=0}^\ell,\tcb{\varphi_i}_{i=0}^\ell)\mapsto(w,\xi)\in\X_s
    \end{equation}
    is also continuous; however, we have the identities
    \begin{equation}
        w=\tbr{\mu\grad}^2v\quad\text{and}\quad\xi=(1+(\mu+\sig)|\grad|)\tbr{\sig\grad}^2\eta
    \end{equation}
    which, in turn, lead to the inclusion $(v,\eta)\in{_{\mu,\sig}}\X_s$ (which implies~\eqref{reg prom estimate}) and the continuity of the maps~\eqref{the super maps}.
\end{proof}

\begin{proof}[Proof of Theorem~\ref{main theorem no. 3}]
    The proof is very similar to the proof of Theorem~\ref{main theorem no. 2}; the main difference is that we use Theorem~\ref{thm1 on traveling waves subsonic} in place of Theorem~\ref{thm1 on traveling waves}.  We omit further details for the sake of brevity.
\end{proof}

We now shall enumerate some important consequences of Theorems~\ref{main theorem no. 2} and~\ref{main theorem no. 3}.

\begin{coro}[Well-posedness for fixed physical parameters]\label{coro on fixed physical parameters}
    Let $(\gam,\mu,\sigma)\in\R^+\times[0,\infty)^2$. The following hold.
    \begin{enumerate}
        \item There exists a nonincreasing sequence of open sets 
        \begin{equation}\label{slice open set}
            \tcb{0}^{\ell+1}\times\tcb{0}^{\ell+1}\subset\mathcal{U}_s(\gam,\mu,\sig)\subset(H^\varsigma(\R^d;\R^{d\times d}))^{\ell+1}\times(H^\varsigma(\R^d;\R^d))^{\ell+1},\quad \N\ni s\ge\varsigma
        \end{equation}
        such that the following hold.
        \begin{enumerate}
            \item For all $(\tcb{\tau_i}_{i=0}^\ell,\tcb{\varphi_i}_{i=0}^\ell)\in\mathcal{U}_\varsigma(\gam,\mu,\sig)$ there exists a unique $(v,\eta)\in\mathcal{V}(\gam,\mu,\sig)$ such that system~\eqref{traveling wave formulation of the equation} is classically satisfied by the latter tuple with physical parameters $(\gam,\mu,\sig)$ and data $\mathcal{F}$ determined from $\eta$, $\mu+\sig$, and $(\tcb{\tau_i}_{i=0}^\ell,\tcb{\varphi_i}_{i=0}^\ell)$ via the omnisonic case of~\eqref{conditions on the forcing}.
            \item If $\N\ni s\ge\varsigma$ and $(\tcb{\tau_i}_{i=0}^\ell,\tcb{\varphi_i}_{i=0}^\ell)\in\mathcal{U}_s\cap\tp{(H^s(\R^d;\R^{d\times d}))^{\ell+1}\times(H^s(\R^d;\R^d))^{\ell+1}}$
        then the corresponding solution $(v,\eta)\in\mathcal{V}(\gam,\mu,\sig)$ produced by the previous item in fact obeys the inclusions $\tbr{\mu\grad}^2 v\in H^{s}(\R^d;\R^d)$ and $(1+(\mu+\sig)|\grad|)\tbr{\sig\grad}^2\eta\in\mathcal{H}^s(\R^d)$; moreover the mapping from the data to the solution is continuous for the spaces determined via this inclusion.
        \end{enumerate}
        \item If we assume additionally that $\gam<1$, then there exists another nonincreasing sequence of open sets
        \begin{equation}\label{subslice open set}
           \tcb{0}^{\ell+1}\times\tcb{0}^{\ell+1}\subset\mathcal{U}^{\m{sub}}_s(\gam,\mu,\sig)\subset(H^\varpi(\R^d;\R^{d\times d}))^{\ell+1}\times(H^\varpi(\R^d;\R^d))^{\ell+1},\quad\N\ni s\ge\varpi
        \end{equation}
        such that the following hold.
        \begin{enumerate}
            \item For all $(\tcb{\tau_i}_{i=0}^\ell,\tcb{\varphi_i}_{i=0}^\ell)\in\mathcal{U}^{\m{sub}}_\varpi$ there exists a unique $(v,\eta)\in\mathcal{V}^{\m{sub}}(\gam,\mu,\sig)$ such that system~\eqref{traveling wave formulation of the equation} is classically satisfied by the latter tuple with physical parameters $(\gam,\mu,\sig)$ and data $\mathcal{F}$ determined from $\eta$ and $(\tcb{\tau_i}_{i=0}^\ell,\tcb{\varphi_i}_{i=0}^\ell)$ via the subsonic case of~\eqref{conditions on the forcing}.
            \item If $\N\ni s\ge\varpi$ and $(\tcb{\tau_i}_{i=0}^\ell,\tcb{\varphi_i}_{i=0}^\ell)\in\mathcal{U}^{\m{sub}}_s\cap\tp{(H^s(\R^d;\R^{d\times d}))^{\ell+1}\times(H^s(\R^d;\R^d))^{\ell+1}}$ then the corresponding solution $(v,\eta)\in\mathcal{V}^{\m{sub}}(\gam,\mu,\sig)$ produced by the previous item in fact obeys the inclusions $\tbr{\mu\grad}^2 v\in H^{s}(\R^d;\R^d)$ and $\tbr{\sig\grad}^2\eta\in\mathcal{H}^{1+s}(\R^d)$; moreover the mapping from the data to the solution is continuous for the spaces determined via this inclusion.
        \end{enumerate}
    \end{enumerate}
\end{coro}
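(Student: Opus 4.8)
The plan is to derive this corollary as a near-immediate consequence of Theorems~\ref{main theorem no. 2} and~\ref{main theorem no. 3}, by freezing the physical parameters and passing to slices of the open data sets produced there.

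First I would fix $(\gam,\mu,\sig)\in\R^+\times[0,\infty)^2$ and, with $\mathcal{U}_s\subset\R^+\times[0,\infty)^2\times\W_\varsigma$ the relatively open sets furnished by Theorem~\ref{main theorem no. 2}, set
\begin{equation}
    \mathcal{U}_s(\gam,\mu,\sig)=\tcb{(\tcb{\tau_i}_{i=0}^\ell,\tcb{\varphi_i}_{i=0}^\ell)\;:\;(\gam,\mu,\sig,\tcb{\tau_i}_{i=0}^\ell,\tcb{\varphi_i}_{i=0}^\ell)\in\mathcal{U}_s}.
\end{equation}
Since the fibre $\tcb{(\gam,\mu,\sig)}\times\W_\varsigma$, equipped with the subspace topology inherited from $\R^+\times[0,\infty)^2\times\W_\varsigma$, is homeomorphic to $\W_\varsigma=(H^\varsigma(\R^d;\R^{d\times d}))^{\ell+1}\times(H^\varsigma(\R^d;\R^d))^{\ell+1}$, relative openness of $\mathcal{U}_s$ forces $\mathcal{U}_s(\gam,\mu,\sig)$ to be open in that space; and since $(\gam,\mu,\sig,0,0)\in\mathcal{U}_s$ by the inclusion in~\eqref{thm3 Us}, the slice contains $\tcb{0}^{\ell+1}\times\tcb{0}^{\ell+1}$, which is~\eqref{slice open set}. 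The nonincreasing property of $\tcb{\mathcal{U}_s(\gam,\mu,\sig)}_{s\ge\varsigma}$ is inherited from that of $\tcb{\mathcal{U}_s}_{s\ge\varsigma}$.

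With these sets in hand, items (a) and (b) of the first part are exactly the restrictions of the three conclusions of Theorem~\ref{main theorem no. 2} to the frozen tuple. Existence and uniqueness of $(v,\eta)\in\mathcal{V}(\gam,\mu,\sig)$ solving~\eqref{traveling wave formulation of the equation} classically, with $\mathcal{F}$ prescribed by the omnisonic case of~\eqref{conditions on the forcing}, is the first item applied to $(\gam,\mu,\sig,\tcb{\tau_i}_{i=0}^\ell,\tcb{\varphi_i}_{i=0}^\ell)\in\mathcal{U}_\varsigma(\gam,\mu,\sig)\subset\mathcal{U}_\varsigma$; the regularity promotion $\tbr{\mu\grad}^2v\in H^s$ and $(1+(\mu+\sig)|\grad|)\tbr{\sig\grad}^2\eta\in\mathcal{H}^s$ is its second item; and the continuity of the data-to-solution map into the corresponding weighted spaces follows by restricting the jointly continuous maps~\eqref{the super maps} to the affine subspace $\tcb{(\gam,\mu,\sig)}\times\W_s$, on which a restriction of a continuous map stays continuous. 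No further work is needed for classical solvability, since this is already part of the statement of Theorem~\ref{main theorem no. 2}.

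For the second part, under the extra hypothesis $\gam<1$ (so $\gam\in(0,1)$), I would repeat the construction verbatim with Theorem~\ref{main theorem no. 3} replacing Theorem~\ref{main theorem no. 2}: slicing the subsonic data sets $\mathcal{U}^{\m{sub}}_s$ at $(\gam,\mu,\sig)$ gives the open sets $\mathcal{U}^{\m{sub}}_s(\gam,\mu,\sig)$ of~\eqref{subslice open set}, and items (a) and (b) are the restrictions of the three items of that theorem, now with the subsonic case of~\eqref{conditions on the forcing}, target regularity $\tbr{\sig\grad}^2\eta\in\mathcal{H}^{1+s}$, and solution set $\mathcal{V}^{\m{sub}}(\gam,\mu,\sig)$. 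There is no genuine obstacle here: the only thing demanding (routine) care is the point-set bookkeeping that slices of relatively open sets are open and that jointly continuous maps restrict to continuous maps, after which the statement is a direct transcription of Theorems~\ref{main theorem no. 2} and~\ref{main theorem no. 3}.
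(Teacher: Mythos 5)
Your proposal is correct and follows essentially the same route as the paper: slice the relatively open sets $\mathcal{U}_s$ and $\mathcal{U}^{\m{sub}}_s$ of Theorems~\ref{main theorem no. 2} and~\ref{main theorem no. 3} at the frozen tuple $(\gam,\mu,\sig)$, check openness and nonemptiness of the slices, and read off items (a) and (b) as restrictions of those theorems' conclusions. The only cosmetic difference is that the paper defines the slices with an explicit topological interior, whereas you observe directly that the slice of a relatively open set along a fixed fibre is already open, which renders that interior redundant.
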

\begin{proof}
    Throughout the proof we shall let $\m{int}$ denote the topological interior. We begin by defining the open sets of~\eqref{slice open set} and~\eqref{subslice open set}; we set:
    \begin{equation}\label{omni for you}
        \mathcal{U}_s(\gam,\mu,\sig)=\m{int}\tcb{(\tcb{\tau_i}_{i=0}^\ell,\tcb{\varphi_i}_{i=0}^\ell)\;:\;(\gam,\mu,\sig,\tcb{\tau_i}_{i=0}^\ell,\tcb{\varphi_i}_{i=0}^\ell)\in\mathcal{U}_s},
    \end{equation}
    and, if $\gam<1$, similarly define
    \begin{equation}\label{sub for you}
        \mathcal{U}^{\m{sub}}_s(\gam,\mu,\sig)=\m{int}\tcb{(\tcb{\tau_i}_{i=0}^\ell,\tcb{\varphi_i}_{i=0}^\ell)\;:\;(\gam,\mu,\sig,\tcb{\tau_i}_{i=0}^\ell,\tcb{\varphi_i}_{i=0}^\ell)\in\mathcal{U}^{\m{sub}}_s}.
    \end{equation}
    The sets in~\eqref{omni for you} and~\eqref{sub for you} are manifestly open and nonempty since due to the fact that $\mathcal{U}_s$ and $\mathcal{U}_s^{\m{sub}}$ are are relatively open and satisfy the nondegeneracy conditions of Theorems~\ref{main theorem no. 2} and~\ref{main theorem no. 3}, respectively. With these open sets in hand, we see that the first and second items of above are just restrictions of the conclusions of Theorems~\ref{main theorem no. 2} and~\ref{main theorem no. 3} to the slices defined above.
\end{proof}

\begin{coro}[On the limits of vanishing viscosity and surface tension]\label{coro on limits}
    Let $\gam,R_{\m{visc}},R_{\m{surf}}\in\R^+$. The following hold.
    \begin{enumerate}
        \item There exists a nonincreasing sequence of open sets
        \begin{equation}
            \tcb{0}^{\ell+1}\times\tcb{0}^{\ell+1}\subset\tilde{\mathcal{U}}_s(\gam,R_{\m{visc}},R_{\m{surf}})\subset(H^\varsigma(\R^d;\R^{d\times d}))^{\ell+1}\times(H^\varsigma(\R^d;\R^d))^{\ell+1},\quad \N\ni s\ge\varsigma
        \end{equation}
        such that the following hold.
        \begin{enumerate}
            \item We have the inclusion $\tilde{\mathcal{U}}_s(\gam,R_{\m{visc}},R_{\m{surf}})\subseteq\bigcap_{0\le\mu\le R_{\m{visc}}}\bigcap_{0\le\sig\le R_{\m{surf}}}\mathcal{U}_s(\gam,\mu,\sig)$.
            \item For each fixed $(\tcb{\tau_i}_{i=0}^\ell,\tcb{\varphi_i}_{i=0}^\ell)\in\tilde{\mathcal{U}}_s(\gam,R_{\m{visc}},R_{\m{surf}})\cap\tp{H^s(\R^d;\R^{d\times d}))^{\ell+1}\times(H^s(\R^d;\R^d))^{\ell+1}}$, where $\N\ni s\ge\varsigma$, there exists a continuous mapping
            \begin{equation}\label{parameter tracing}
                [0,R_{\m{visc}}]\times[0,R_{\m{surf}}]\ni(\mu,\sig)\mapsto(v,\eta)\in H^s(\R^d;\R^d)\times\mathcal{H}^s(\R^d;\R^d)
            \end{equation}
            where $(v,\eta)$ solve~\eqref{traveling wave formulation of the equation} as in Theorem~\ref{main theorem no. 2}.
            \item The following derivatives of each mapping in~\eqref{parameter tracing} are well-defined and continuous between the stated spaces:
            \begin{equation}\label{v boy}
                [0,R_{\m{visc}}]\times[0,R_{\m{surf}}]\ni(\mu,\sigma)\mapsto(\mu\grad v,\mu^2\grad^2v)\in H^{s}(\R^d;\R^{d^2})\times H^s(\R^d;\R^{d^3})
            \end{equation}
            and
            \begin{equation}\label{eta boy}
                [0,R_{\m{visc}}]\times[0,R_{\m{surf}}]\ni(\mu,\sigma)\mapsto(\mu+\sig)(\grad\eta,\sig\grad^2\eta,\sig^2\grad^3\eta)\in H^s(\R^d;\R^d)\times H^s(\R^d;\R^{d^2})\times H^s(\R^d;\R^{d^3}).
            \end{equation}
            
        \end{enumerate}
        \item If we assume additionally that $\gam<1$, then there exists an additional nonincreasing sequence of open sets
                \begin{equation}
            \tcb{0}^{\ell+1}\times\tcb{0}^{\ell+1}\subset\tilde{\mathcal{U}}^{\m{sub}}_s(\gam,R_{\m{visc}},R_{\m{surf}})\subset(H^\varpi(\R^d;\R^{d\times d}))^{\ell+1}\times(H^\varpi(\R^d;\R^d))^{\ell+1},\quad \N\ni s\ge\varpi
        \end{equation}
        such that the following hold.
        \begin{enumerate}
            \item We have the inclusion $\tilde{\mathcal{U}}^{\m{sub}}_s(\gam,R_{\m{visc}},R_{\m{surf}})\subseteq\bigcap_{0\le\mu\le R_{\m{visc}}}\bigcap_{0\le\sig\le R_{\m{surf}}}\mathcal{U}^{\m{sub}}_s(\gam,\mu,\sig)$.
            \item For each $(\tcb{\tau_i}_{i=0}^\ell,\tcb{\varphi_i}_{i=0}^\ell)\in\tilde{\mathcal{U}}^{\m{sub}}_s(\gam,R_{\m{visc}},R_{\m{surf}})\cap\tp{H^s(\R^d;\R^{d\times d}))^{\ell+1}\times(H^s(\R^d;\R^d))^{\ell+1}}$, where $\N\ni s\ge\varpi$, there exists a continuous mapping
            \begin{equation}\label{parameter tracing sub}
                [0,R_{\m{visc}}]\times[0,R_{\m{surf}}]\ni(\mu,\sig)\mapsto(v,\eta)\in H^s(\R^d;\R^d)\times\mathcal{H}^{1+s}(\R^d;\R^d)
            \end{equation}
            where $(v,\eta)$ solve~\eqref{traveling wave formulation of the equation} as in Theorem~\ref{main theorem no. 3}.
            \item The following derivatives of each mapping in~\eqref{parameter tracing sub} are well-defined and continuous between the stated spaces:
            \begin{equation}\label{v boy sub}
                [0,R_{\m{visc}}]\times[0,R_{\m{surf}}]\ni(\mu,\sigma)\mapsto(\mu\grad v,\mu^2\grad^2v)\in H^{s}(\R^d;\R^{d^2})\times H^s(\R^d;\R^{d^3})
            \end{equation}
            and
            \begin{equation}\label{eta boy sub}
            [0,R_{\m{visc}}]\times[0,R_{\m{surf}}]\ni(\mu,\sigma)\mapsto(\sig\grad\eta,\sig^2\grad^2\eta)\in H^{1+s}(\R^d;\R^d)\times H^{1+s}(\R^d;\R^{d^2}).
            \end{equation}
            
        \end{enumerate}
    \end{enumerate}
\end{coro}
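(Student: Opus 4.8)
The plan is to obtain this corollary almost entirely from Theorems~\ref{main theorem no. 2} and~\ref{main theorem no. 3} (equivalently, from Corollary~\ref{coro on fixed physical parameters}); the only genuinely new ingredient is a compactness argument producing a single open neighborhood of the trivial data on which the fixed-parameter theory is available simultaneously for every admissible $(\mu,\sig)$. I will describe the omnisonic assertion (the first item) in detail; the subsonic one (the second item) follows verbatim upon replacing $\mathcal{U}_s$, $\varsigma$, and Theorem~\ref{main theorem no. 2} by $\mathcal{U}^{\m{sub}}_s$, $\varpi$, and Theorem~\ref{main theorem no. 3}, the hypothesis $\gam<1$ ensuring that the relevant parameters lie in $(0,1)\times[0,\infty)^2$.

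\emph{Step 1: construction of $\tilde{\mathcal{U}}_s$.} Fix $\gam,R_{\m{visc}},R_{\m{surf}}\in\R^+$ and let $K=\tcb{\gam}\times[0,R_{\m{visc}}]\times[0,R_{\m{surf}}]\subset\R^+\times[0,\infty)^2$, which is compact. For $\N\ni s\ge\varsigma$ the set $\mathcal{U}_s$ of Theorem~\ref{main theorem no. 2} is relatively open and, by~\eqref{thm3 Us}, contains $K\times\tcb{0}^{\ell+1}\times\tcb{0}^{\ell+1}$; hence to each $\mathfrak{q}\in K$ there is $r_{\mathfrak{q},s}>0$ with $\sp{B_{\R^3}(\mathfrak{q},r_{\mathfrak{q},s})\cap(\R^+\times[0,\infty)^2)}\times B_{\W_\varsigma}(0,r_{\mathfrak{q},s})\subseteq\mathcal{U}_s$. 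Extracting a finite subcover of $K$ from $\tcb{B_{\R^3}(\mathfrak{q},r_{\mathfrak{q},s}/2)}_{\mathfrak{q}\in K}$ and letting $\del_s>0$ be half the least radius appearing in it — then, after replacing $\del_s$ by $\min_{\varsigma\le s'\le s}\del_{s'}$, assuming $\tcb{\del_s}_{s\ge\varsigma}$ nonincreasing — one arrives at
\begin{equation}
    \tcb{\gam}\times[0,R_{\m{visc}}]\times[0,R_{\m{surf}}]\times B_{\W_\varsigma}(0,\del_s)\subseteq\mathcal{U}_s,\quad\N\ni s\ge\varsigma.
\end{equation}
Set $\tilde{\mathcal{U}}_s(\gam,R_{\m{visc}},R_{\m{surf}})=B_{\W_\varsigma}(0,\del_s)$; this is an open, nonincreasing (in $s$) subset of $(H^\varsigma(\R^d;\R^{d\times d}))^{\ell+1}\times(H^\varsigma(\R^d;\R^d))^{\ell+1}$ containing $\tcb{0}^{\ell+1}\times\tcb{0}^{\ell+1}$. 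For each $(\mu,\sig)\in[0,R_{\m{visc}}]\times[0,R_{\m{surf}}]$ the set $\tilde{\mathcal{U}}_s$ is an open set contained in $\tcb{(\tcb{\tau_i}_{i=0}^\ell,\tcb{\varphi_i}_{i=0}^\ell)\;:\;(\gam,\mu,\sig,\tcb{\tau_i}_{i=0}^\ell,\tcb{\varphi_i}_{i=0}^\ell)\in\mathcal{U}_s}$, hence lies in the interior of this set, which is precisely $\mathcal{U}_s(\gam,\mu,\sig)$ from Corollary~\ref{coro on fixed physical parameters}; this yields the inclusion in part (a).

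\emph{Step 2: continuity of the traced maps.} Fix $\N\ni s\ge\varsigma$ and data $(\tcb{\tau_i}_{i=0}^\ell,\tcb{\varphi_i}_{i=0}^\ell)$ in $\tilde{\mathcal{U}}_s(\gam,R_{\m{visc}},R_{\m{surf}})\cap((H^s(\R^d;\R^{d\times d}))^{\ell+1}\times(H^s(\R^d;\R^d))^{\ell+1})$. By Step 1, for every $(\mu,\sig)\in[0,R_{\m{visc}}]\times[0,R_{\m{surf}}]$ the tuple $(\gam,\mu,\sig,\tcb{\tau_i}_{i=0}^\ell,\tcb{\varphi_i}_{i=0}^\ell)$ lies in $\mathcal{U}_s\cap(\R^3\times(H^s(\R^d;\R^{d\times d}))^{\ell+1}\times(H^s(\R^d;\R^d))^{\ell+1})$, and also in $\mathcal{U}_\varsigma$ since $\tilde{\mathcal{U}}_s\subseteq\tilde{\mathcal{U}}_\varsigma$; thus Theorem~\ref{main theorem no. 2} furnishes the unique $(v,\eta)=(v_{\mu,\sig},\eta_{\mu,\sig})$ solving~\eqref{traveling wave formulation of the equation} with these parameters and data. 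The assignment $(\mu,\sig)\mapsto(\gam,\mu,\sig,\tcb{\tau_i}_{i=0}^\ell,\tcb{\varphi_i}_{i=0}^\ell)$ is a continuous map from $[0,R_{\m{visc}}]\times[0,R_{\m{surf}}]$ into $\mathcal{U}_s\cap(\R^3\times(H^s(\R^d;\R^{d\times d}))^{\ell+1}\times(H^s(\R^d;\R^d))^{\ell+1})$, so composing it with each of the three continuous maps of~\eqref{the super maps} gives exactly the continuity of~\eqref{parameter tracing}, \eqref{v boy}, and~\eqref{eta boy}. This establishes parts (b) and (c), completing the first item.

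The second item is obtained identically, using the second item of Theorem~\ref{main theorem no. 3} in place of Theorem~\ref{main theorem no. 2} (so that $\mathcal{U}^{\m{sub}}_s$, $\varpi$, $\mathcal{V}^{\m{sub}}$ replace $\mathcal{U}_s$, $\varsigma$, $\mathcal{V}$) together with~\eqref{parameter tracing sub}, \eqref{v boy sub}, and~\eqref{eta boy sub}; here $\gam<1$ is exactly what places $K$ inside $(0,1)\times[0,\infty)^2$. The only point demanding care is the compactness step: one must extract a single radius $\del_s$ that is valid uniformly over the entire box $K$, rather than merely pointwise in $(\mu,\sig)$. Once $\tilde{\mathcal{U}}_s$ is in hand, the corollary is a formal consequence of the continuous-dependence statements already proved in Theorems~\ref{main theorem no. 2} and~\ref{main theorem no. 3}.
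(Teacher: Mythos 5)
Your proposal is correct and follows essentially the same route as the paper: both reduce the corollary to the continuous-dependence statements of Theorems~\ref{main theorem no. 2} and~\ref{main theorem no. 3}, with the only real work being the extraction of a single open neighborhood of the trivial data valid uniformly over the compact box $\tcb{\gam}\times[0,R_{\m{visc}}]\times[0,R_{\m{surf}}]$. The paper gets this uniform neighborhood by taking the interior of the intersection $\bigcap_{\mu,\sig}\mathcal{U}_s(\gam,\mu,\sig)$ and using the explicit ball-union structure of $\mathcal{U}_s$ in~\eqref{definition of the omnisonic set Us} to see it is nonempty, whereas you run a finite-subcover argument using only the relative openness of $\mathcal{U}_s$; this is a cosmetic difference, and your remaining steps (inclusion into each slice's interior, then composition of $(\mu,\sig)\mapsto(\gam,\mu,\sig,\text{data})$ with the maps~\eqref{the super maps}) match the paper's.
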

\begin{proof}
    Given $\gam$, $R_{\m{visc}}$, and $R_{\m{surf}}$ as in the statement of the corollary, we define
    \begin{equation}\label{you wish}
        \tilde{\mathcal{U}}_s(\gam,R_{\m{visc}},R_{\m{surf}})=\m{int}\bigcap_{0\le\mu\le R_{\m{visc}}}\bigcap_{0\le\sig\le R_{\m{surf}}}\mathcal{U}_s(\gam,\mu,\sig)
    \end{equation}
    and, provided $\gam<1$,
    \begin{equation}\label{you wish sub}
        \tilde{\mathcal{U}}^{\m{sub}}_s(\gam,R_{\m{visc}},R_{\m{surf}})=\m{int}\bigcap_{0\le\mu\le R_{\m{visc}}}\bigcap_{0\le\sig\le R_{\m{surf}}}\mathcal{U}^{\m{sub}}_s(\gam,\mu,\sig),
    \end{equation}
    where again we use $\m{int}$ to denote the topological interior. We note that the set $\tilde{\mathcal{U}}_s(\gam,R_{\m{visc}},R_{\m{surf}})$ is indeed non-empty since inspection of the definition of the set $\mathcal{U}_s$ in~\eqref{definition of the omnisonic set Us} allows one to conclude that
    \begin{equation}
        \m{dist}\bp{\tcb{\gam}\times[0,R_{\m{visc}}]\times[0,R_{\m{surf}}]\times\tcb{0},\pd\bp{\bigcup_{\mathfrak{q}\in\R^+\times[0,\infty)^2}B_{\R^3\times\W_\varsigma}((\mathfrak{q},0),\uprho_{s-\varsigma}(\mathfrak{q}))}}>0.
    \end{equation}
    Hence $\tilde{\mathcal{U}}_s(\gam,R_{\m{visc}},R_{\m{surf}})$ contains balls of positive radii of length at most the above distance. A similar argument verifies that $\tilde{\mathcal{U}}^{\m{sub}}_s(\gam,R_{\m{visc}},R_{\m{surf}})$ is also non-empty. Armed with the open sets of~\eqref{you wish} and~\eqref{you wish sub}, we find that the conclusions stated above are immediate consequences of Theorems~\ref{main theorem no. 2} and~\ref{main theorem no. 3}.
\end{proof}

\appendix
\section{Analysis background}

\subsection{On the anisotropic Sobolev spaces}\label{appendix on the anisobros}

For $\R\ni s\ge 0$ and $d\in\N^+$ we define the anisotropic Sobolev space
	\begin{equation}\label{how has this not yet been labeled}
		\mathcal{H}^s(\R^d)=\tcb{f\in\mathscr{S}^\ast(\R^d;\R)\;:\;\mathscr{F}[f]\in L^1_{\m{loc}}(\R^d;\C),\;\tnorm{f}_{\mathcal{H}^s} <\infty}\index{\textbf{Function spaces}!170@$\mathcal{H}^s$},
	\end{equation}
	equipped with the norm
	\begin{equation}\label{there's one for you nineteen for me}
 \tnorm{f}_{\mathcal{H}^s}
 =\bp{\int_{\R^d}\sp{|\xi|^{-2}(\xi_1^2+|\xi|^4)\mathds{1}_{B(0,1)}(\xi) + \tbr{\xi}^{2s}\mathds{1}_{\R^d\setminus B(0,1)}(\xi)} |\mathscr{F}[f](\xi)|^2\;\m{d}\xi}^{1/2}.
	\end{equation}
 These spaces were introduced in Leoni and Tice~\cite{leoni2019traveling}, where it was shown, in Proposition 5.3 and Theorem 5.6,  that $\mathcal{H}^s(\R^d)$ is a Hilbert space and $H^s(\R^d) \emb \mathcal{H}^s(\R^d) \emb H^s(\R^d) + C^\infty_0(\R^d)$, with equality in the first embedding if and only if $d=1$.

 We shall find the following characterizations useful.
 \begin{prop}[Characterizations of the anisotropic Sobolev spaces]\label{proposition on spatial characterization of anisobros}
     The following hold for $\R\ni s\ge 0$.
     \begin{enumerate}
         \item If $f\in\mathscr{S}^\ast(\R^d;\R)$ satisfies $\grad f\in H^{s-1}(\R^d;\R^d)$ and $\pd_1 f\in\dot{H}^{-1}(\R^d)$, then there exists a constant $c\in\R$ such that $f-c\in\mathcal{H}^s(\R^d)$.
         
        \item  Assume $d \ge 2$ and write $p_d = \frac{2(d+1)}{d-1}>2$. Then we have the equality 
        \begin{equation}\label{ziggy_stardust_1}
            \mathcal{H}^s(\R^d)  
             = \bcb{ f \in L^2(\R^d) + \bigcup_{p_d < p < \infty} L^p(\R^d) \;:\; \nabla f \in H^{s-1}(\R^d;\R^d) \text{ and } \partial_1 f \in \dot{H}^{-1}(\R^d)},
        \end{equation}
        with the equivalence of norms 
         \begin{equation}\label{the norm on the anisotropic Sobolev spaces}
             \tnorm{f}_{\mathcal{H}^s} \asymp \sqrt{\tnorm{\grad f}_{H^{s-1}}^2+\tsb{\pd_1f}_{\dot{H}^{-1}}^2},
         \end{equation}
         where the implied constants depend only on $d$ and $s$.
         \item With $d$ and $p_d$ as in the previous item, we have the equality
         \begin{equation}
             \mathcal{H}^s(\R^d)  
             = \bcb{ f \in L^2(\R^d) + \bigcup_{p_d < p < \infty} L^p(\R^d) \;:\; \nabla f \in H^{s-1}(\R^d;\R^d) \text{ and } \mathcal{R}_1 f \in L^2(\R^d)}
         \end{equation}
         with the equivalence of norms
         \begin{equation}\label{the norm on the anisotropic Sobolev spaces, 2}
             \tnorm{f}_{\mathcal{H}^s} \asymp \sqrt{\tnorm{\grad f}_{H^{s-1}}^2+\tnorm{\mathcal{R}_1f}_{L^2}^2},
         \end{equation}
         where $\mathcal{R}_1$ is the Riesz transform in the $e_1$-direction.
     \end{enumerate}
 \end{prop}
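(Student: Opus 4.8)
\textbf{Proof proposal for Proposition~\ref{proposition on spatial characterization of anisobros}, item (3).}

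The plan is to derive item (3) from item (2), using the fact that the Riesz transform $\mathcal{R}_1$ and the homogeneous operator $\pd_1$ encode the same low-frequency information up to the factor $|\grad|$. First I would recall that $\mathcal{R}_1 = |\grad|^{-1}\pd_1$ as Fourier multipliers, with symbol $\ii |\xi|^{-1}\xi_1$, while the $\dot{H}^{-1}$-seminorm is $[\pd_1 f]_{\dot{H}^{-1}} = \tnorm{|\grad|^{-1}\pd_1 f}_{L^2}$. These are literally the same quantity: $[\pd_1 f]_{\dot{H}^{-1}} = \tnorm{|\grad|^{-1}\pd_1 f}_{L^2} = \tnorm{\mathcal{R}_1 f}_{L^2}$, since both are the $L^2$-norm of the function with Fourier transform $\ii|\xi|^{-1}\xi_1 \mathscr{F}[f](\xi)$. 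Therefore the defining set in the right-hand side of the claimed equality in item (3) is \emph{identical} to the one in item (2): the conditions ``$\grad f \in H^{s-1}$, $\pd_1 f \in \dot{H}^{-1}$'' and ``$\grad f \in H^{s-1}$, $\mathcal{R}_1 f \in L^2$'' are verbatim the same, and the seminorms $[\pd_1 f]_{\dot{H}^{-1}}$ and $\tnorm{\mathcal{R}_1 f}_{L^2}$ coincide exactly (not merely up to constants).

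Consequently, the set equality in~\eqref{the norm on the anisotropic Sobolev spaces, 2} is immediate from item (2), and the norm equivalence~\eqref{the norm on the anisotropic Sobolev spaces, 2} follows from~\eqref{the norm on the anisotropic Sobolev spaces} by substituting the identity $[\pd_1 f]_{\dot{H}^{-1}} = \tnorm{\mathcal{R}_1 f}_{L^2}$ into the right-hand side:
\begin{equation*}
    \tnorm{f}_{\mathcal{H}^s} \asymp \sqrt{\tnorm{\grad f}_{H^{s-1}}^2 + [\pd_1 f]_{\dot{H}^{-1}}^2} = \sqrt{\tnorm{\grad f}_{H^{s-1}}^2 + \tnorm{\mathcal{R}_1 f}_{L^2}^2},
\end{equation*}
with implied constants depending only on $d$ and $s$. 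The only point requiring a word of care is the well-definedness of $\mathcal{R}_1 f$ for the tempered distributions $f$ under consideration: since such $f$ lies in $L^2(\R^d) + \bigcup_{p_d < p < \infty} L^p(\R^d)$ by the characterization, and since $\mathcal{R}_1$ is bounded on each $L^p$ with $1 < p < \infty$, the quantity $\mathcal{R}_1 f$ is an honest $L^p + L^2$ function, and the condition $\mathcal{R}_1 f \in L^2$ is meaningful. This matches exactly the analogous discussion of $\pd_1 f \in \dot{H}^{-1}$ underpinning item (2).

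In summary, item (3) is not a new statement but a restatement of item (2) under the elementary identification $\mathcal{R}_1 = |\grad|^{-1}\pd_1$; there is no genuine obstacle. The one thing I would be careful to state cleanly is precisely which class of distributions $f$ makes $\mathcal{R}_1 f$ well-defined, so that the two sides of the claimed set equality are manifestly the same object before invoking item (2). If one preferred a self-contained argument not routed through item (2), one could instead redo the Fourier-side split into $|\xi| \le 1$ and $|\xi| > 1$ directly: for $|\xi| > 1$ one has $\tbr{\xi}^{2s} \asymp \tbr{\xi}^{2s}$ trivially and $\grad f \in H^{s-1}$ controls the high-frequency piece, while for $|\xi| \le 1$ one has $|\xi|^{-2}(\xi_1^2 + |\xi|^4) \asymp |\xi|^{-2}\xi_1^2 + |\xi|^2$, whose two terms are precisely the symbols (squared) of $\mathcal{R}_1$ and $|\grad|$; the latter is dominated by the $H^{s-1}$ bound on $\grad f$ and the former by the $L^2$ bound on $\mathcal{R}_1 f$. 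But the shortest route is simply to cite item (2).
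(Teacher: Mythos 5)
Your argument for item (3) is correct and is essentially identical to the paper's proof, which cites items (1) and (2) from prior work and then observes that $\pd_1 f\in\dot{H}^{-1}(\R^d)$ is equivalent to $\mathcal{R}_1 f\in L^2(\R^d)$ for $f\in (L^2+L^q)(\R^d)$ with $p_d<q<\infty$. Your additional care about the well-definedness of $\mathcal{R}_1 f$ on $L^2+L^p$ is a reasonable elaboration of the same point.
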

 \begin{proof}
     The first and second items are the content of Proposition B.4 in Stevenson and Tice~\cite{stevenson2023wellposedness}. That the third item holds follows from the fact that $\pd_1f\in\dot{H}^{-1}(\R^d)$ is equivalent to $\mathcal{R}_1 f\in L^2(\R^d)$ for any $f\in (L^2+L^q)(\R^d)$ for $p_d<q<\infty$.
 \end{proof}

The following result is Lemma B.6 in Stevenson and Tice~\cite{stevenson2023wellposedness}.

 \begin{prop}[LP-smoothability]\label{lem on lp smoothability of anisotropic Sobolev spaces}
		The Banach scale $\tcb{\mathcal{H}^s(\R^d)}_{s\in\N}$ is LP-smoothable in the sense of Definition~\ref{defn of smoothable and LP-smoothable Banach scales}.
	\end{prop}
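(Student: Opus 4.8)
The plan is to reduce the claim to a routine Fourier-analytic construction of Littlewood--Paley smoothing operators, exploiting the one structural feature of the spaces $\mathcal{H}^s(\R^d)$ that matters here: the Fourier weight in~\eqref{there's one for you nineteen for me} depends on $s$ only on the high-frequency region $\R^d\setminus B(0,1)$, where it equals $\tbr{\xi}^{2s}$, whereas on $B(0,1)$ it is the fixed, bounded, $s$-independent weight $|\xi|^{-2}(\xi_1^2+|\xi|^4)$. In the case $d=1$ one has $\mathcal{H}^s(\R)=H^s(\R)$ with equivalent norms, so the assertion follows at once from the LP-smoothability of the Euclidean Sobolev scale recorded in Example~\ref{example of Euclidean Soblev spaces}; thus the genuine content is $d\ge 2$, for which Proposition~\ref{proposition on spatial characterization of anisobros} supplies the convenient equivalent norm $\tnorm{f}_{\mathcal{H}^s}^2\asymp\tnorm{\grad f}_{H^{s-1}}^2+\tsb{\pd_1 f}_{\dot{H}^{-1}}^2$.

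First I would fix a cutoff $\chi\in C^\infty_0(\R^d)$ with $\chi\equiv 1$ on $B(0,2)$ and $\supp\chi\subseteq B(0,4)$, and for $\theta\ge 1$ define the smoothing family $S_\theta$ to be the Fourier multiplier with symbol $\xi\mapsto\chi(\xi/\theta)$, together with the dyadic pieces $P_j=S_{2^{j+1}}-S_{2^j}$ for $j\ge 0$ and $P_{-1}=S_1$. For $\theta\ge 1$ the symbol $\chi(\cdot/\theta)$ equals $1$ on $B(0,1)$, so $S_\theta$ fixes the low-frequency block of every element of $\mathcal{H}^s(\R^d)$ (in particular $S_\theta$ does not disturb the $\dot{H}^{-1}$-type behaviour near the origin), and since $\chi(\cdot/\theta)\mathscr{F}[f]$ has compact support one checks directly that $S_\theta f\in\bigcap_{s}\mathcal{H}^s(\R^d)$ for every $f\in\mathcal{H}^0(\R^d)$. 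The defining inequalities of a smoothing family are then verified by splitting the Fourier integral at $|\xi|=1$: on $B(0,1)$ the weight is bounded above and below by absolute constants, is common to every $\mathcal{H}^t$, and is left unchanged by $S_\theta$ while being annihilated by $I-S_\theta$; on $\R^d\setminus B(0,1)$ the weight is $\tbr{\xi}^{2t}$, and the elementary bounds $\tbr{\xi}^{s}\lesssim\theta^{s-r}\tbr{\xi}^{r}$ on $\supp\chi(\cdot/\theta)$ and $\tbr{\xi}^{r}\lesssim\theta^{r-s}\tbr{\xi}^{s}$ on $\supp(1-\chi(\cdot/\theta))$, both valid for $r\le s$, give
\begin{equation}
    \tnorm{S_\theta f}_{\mathcal{H}^s}\lesssim\theta^{(s-r)_+}\tnorm{f}_{\mathcal{H}^r}
    \qquad\text{and}\qquad
    \tnorm{(I-S_\theta)f}_{\mathcal{H}^r}\lesssim\theta^{-(s-r)}\tnorm{f}_{\mathcal{H}^s},
\end{equation}
while $S_\theta f\to f$ in $\mathcal{H}^s$ as $\theta\to\infty$ follows by dominated convergence on the Fourier side. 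The Littlewood--Paley refinement demanded by Definition~\ref{defn of smoothable and LP-smoothable Banach scales} (the square-function equivalence $\tnorm{f}_{\mathcal{H}^s}^2\asymp\sum_{j\ge -1}2^{2js}\tnorm{P_j f}_{\mathcal{H}^0}^2$, with the $j=-1$ term carrying the fixed low-frequency weight) comes from the same splitting together with the finite overlap of the supports of the $P_j$.

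The argument is entirely soft, and I do not expect a real obstacle; the only mild point of care is the low-frequency block $P_{-1}f=S_1f$, where one must confirm that the estimates survive even though its Fourier support $B(0,4)$ straddles the low and high regions — but this is immediate because $S_1$ maps $\mathcal{H}^0(\R^d)$ boundedly into $\mathcal{H}^s(\R^d)$ for every $s$ (the weight being bounded on a fixed ball, with an $s$-dependent but finite constant, which is all the definition requires), while the pieces $P_j$ with $j\ge 0$ are honest frequency-annulus projections at scale $2^j$ and behave exactly as for $H^s(\R^d)$. Alternatively, one may simply invoke Lemma~B.6 of Stevenson and Tice~\cite{stevenson2023wellposedness}, where precisely this statement is proved; I would cite that and include the short self-contained sketch above for the reader's convenience.
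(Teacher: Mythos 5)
Your proposal is correct and, in the end, takes the same route as the paper: the paper's entire proof of this proposition is the citation to Lemma B.6 of Stevenson and Tice~\cite{stevenson2023wellposedness}, which you also supply, and your self-contained sketch is a sound expansion of that reference, since the estimates \eqref{smoothing_1}--\eqref{smoothing_4} and the Littlewood--Paley conditions all reduce to pointwise symbol bounds plus the fact that the low-frequency weight is independent of $s$. One parenthetical claim in your write-up is wrong, though: for $d\ge 2$ the weight $|\xi|^{-2}(\xi_1^2+|\xi|^4)$ on $B(0,1)$ is bounded above but \emph{not} below by a positive constant (it vanishes as $\xi\to0$ along $\xi_1=0$); fortunately only the upper bound and the $s$-independence of this weight enter your argument, so the proof is unaffected.
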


We next discuss a high-low decomposition for which the following notational convention is set. For $\kappa\in\R^+$, we define the linear operators $\Uppi^{\kappa}_{\m{L}}$ and $\Uppi^{\kappa}_{\m{H}}$ on the subspace of $f \in \mathscr{S}^\ast(\R^d;\C)$ such that $\mathscr{F}[f]$ is locally integrable via 
	\begin{equation}\label{notation for the Fourier projection operators}
		\Uppi^{\kappa}_{\m{L}}f=\mathscr{F}^{-1}[\mathds{1}_{B(0,\kappa)}\mathscr{F}[f]]\text{  and 
 }\Uppi^\kappa_{\m{H}}f=(I-\Uppi^\kappa_{\m{L}})f.
	\end{equation}
 \begin{prop}[Frequency splitting for anisotropic Sobolev spaces]\label{proposition on frequency splitting}
		The following hold for $0\le s\in\R$, $f\in\mathcal{H}^s(\R^d)$, and $\kappa\in\R^+$.
		\begin{enumerate}
			\item We have the equivalence $\tnorm{f}_{\mathcal{H}^s}\asymp_{s,\kappa}\sqrt{\tnorm{\Uppi _{\m{L}}^\kappa f}^2_{\mathcal{H}^0}+\tnorm{\Uppi_{\m{H}}^\kappa f}^2_{H^s}}$.
			\item We have that $\Uppi_{\m{L}}^\kappa f\in C^\infty_0(\R^d)$ with the estimates $\tnorm{\Uppi_{\m{L}}^\kappa f}_{W^{k,\infty}}\lesssim_{k,\kappa}\tnorm{\Uppi_{\m{L}}^\kappa f}_{\mathcal{H}^0}$ for every $k \in \N$.
		\end{enumerate}
	\end{prop}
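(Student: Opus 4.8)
The proof is a direct computation on the Fourier side, so the plan is to work throughout with the multiplier weights appearing in~\eqref{there's one for you nineteen for me}. Write $w_s(\xi) = |\xi|^{-2}(\xi_1^2+|\xi|^4)\mathds{1}_{B(0,1)}(\xi) + \tbr{\xi}^{2s}\mathds{1}_{\R^d\setminus B(0,1)}(\xi)$ for the $\mathcal{H}^s$-weight, so that $\tnorm{g}_{\mathcal{H}^s}^2 = \int_{\R^d} w_s\tabs{\mathscr{F}[g]}^2$ and, in particular, $\tnorm{g}_{\mathcal{H}^0}^2 = \int_{\R^d} w_0\tabs{\mathscr{F}[g]}^2$. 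First I would record the setup: since $f\in\mathcal{H}^s(\R^d)$ we have $\mathscr{F}[f]\in L^1_{\loc}(\R^d;\C)$, so $\mathds{1}_{B(0,\kappa)}\mathscr{F}[f]$, being supported in a compact set, lies in $L^1(\R^d)$; hence by~\eqref{notation for the Fourier projection operators} the operators $\Uppi^\kappa_{\m{L}}$ and $\Uppi^\kappa_{\m{H}}$ act on $f$, with $\mathscr{F}[\Uppi^\kappa_{\m{L}}f] = \mathds{1}_{B(0,\kappa)}\mathscr{F}[f]$ and $\mathscr{F}[\Uppi^\kappa_{\m{H}}f] = \mathds{1}_{\R^d\setminus B(0,\kappa)}\mathscr{F}[f]$, two functions with disjoint supports summing to $\mathscr{F}[f]$; consequently $\tabs{\mathscr{F}[f](\xi)}^2 = \tabs{\mathscr{F}[\Uppi^\kappa_{\m{L}}f](\xi)}^2 + \tabs{\mathscr{F}[\Uppi^\kappa_{\m{H}}f](\xi)}^2$ for a.e.\ $\xi\in\R^d$.

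For the first item I would split $\tnorm{f}_{\mathcal{H}^s}^2 = \int_{B(0,\kappa)} w_s\tabs{\mathscr{F}[\Uppi^\kappa_{\m{L}}f]}^2 + \int_{\R^d\setminus B(0,\kappa)} w_s\tabs{\mathscr{F}[\Uppi^\kappa_{\m{H}}f]}^2$ using the previous identity, and then reduce to the pointwise comparisons $w_s(\xi)\asymp_{s,\kappa,d} w_0(\xi)$ on $B(0,\kappa)$ and $w_s(\xi)\asymp_{s,\kappa,d}\tbr{\xi}^{2s}$ on $\R^d\setminus B(0,\kappa)$; recognizing the two resulting integrals as $\tnorm{\Uppi^\kappa_{\m{L}}f}_{\mathcal{H}^0}^2$ and $\tnorm{\Uppi^\kappa_{\m{H}}f}_{H^s}^2$ then finishes item~(1). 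The comparisons come from a short case analysis on whether $|\xi|$ lies inside or outside the transition annulus $\tcb{\min\tcb{1,\kappa}\le|\xi|\le\max\tcb{1,\kappa}}$: away from that annulus the weights in question coincide exactly, while on it --- a set bounded away from $0$ and $\infty$ --- each of $|\xi|^{-2}(\xi_1^2+|\xi|^4)$, $\tbr{\xi}^{2s}$, and $1$ is pinched between two positive constants depending only on $s$, $\kappa$, $d$ (for the first weight one uses $(\min\tcb{1,\kappa})^2\le|\xi|^2\le|\xi|^{-2}(\xi_1^2+|\xi|^4)\le 1+|\xi|^2$).

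For the second item, note that $\mathscr{F}[\Uppi^\kappa_{\m{L}}f]$ is compactly supported and in $L^1(\R^d)$, whence $(2\pi\ii\xi)^\alpha\mathscr{F}[\Uppi^\kappa_{\m{L}}f]\in L^1(\R^d)$ for every multi-index $\alpha$; differentiating under the integral sign in the Fourier inversion formula and applying the Riemann--Lebesgue lemma gives $\Uppi^\kappa_{\m{L}}f\in C^\infty_0(\R^d)$ together with $\tnorm{\pd^\alpha\Uppi^\kappa_{\m{L}}f}_{L^\infty}\le(2\pi\kappa)^{\tabs{\alpha}}\tnorm{\mathscr{F}[\Uppi^\kappa_{\m{L}}f]}_{L^1}$. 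To upgrade this to the stated $W^{k,\infty}$ bound it remains to control $\tnorm{\mathscr{F}[\Uppi^\kappa_{\m{L}}f]}_{L^1}$ by $\tnorm{\Uppi^\kappa_{\m{L}}f}_{\mathcal{H}^0}$, which by Cauchy--Schwarz against $w_0$ reduces to the finiteness of $\int_{B(0,\kappa)}w_0(\xi)^{-1}\,\m{d}\xi$. This is the only step that genuinely exploits the anisotropy and is the point I expect to require the most care: near the origin $w_0(\xi)$ behaves like $|\xi|^{-2}\xi_1^2+|\xi|^2$, which degenerates along the hyperplane $\xi_1=0$, and the crude bound $w_0(\xi)\ge|\xi|^2$ is not enough when $d=2$ since $|\xi|^{-2}$ fails to be integrable near $0$ there. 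The resolution is the elementary estimate $\int_{S^{d-1}}(\omega_1^2+r^2)^{-1}\,\m{d}\sigma(\omega)\lesssim_d r^{-1}$ for $d\ge 2$ (obtained by splitting $S^{d-1}$ into the bands $\tcb{\tabs{\omega_1}\le r}$ and $\tcb{\tabs{\omega_1}\in[2^{j}r,2^{j+1}r]}$ and estimating the spherical measure of each), which after passing to polar coordinates yields $\int_{B(0,\kappa)}w_0^{-1}\lesssim_{d,\kappa}\int_0^\kappa r^{d-2}\,\m{d}r<\infty$; the case $d=1$ is immediate since there $w_0(\xi)=1+\xi^2\ge 1$. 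Assembling the pieces gives $\tnorm{\Uppi^\kappa_{\m{L}}f}_{W^{k,\infty}}\lesssim_{k,\kappa,d}\tnorm{\Uppi^\kappa_{\m{L}}f}_{\mathcal{H}^0}$, completing the proof; apart from this integrability computation, every step is routine bookkeeping with the weights.
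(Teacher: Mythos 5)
Your proof is correct. The paper does not actually prove this proposition — it simply cites Theorem 5.5 of Leoni and Tice — and your Fourier-side argument is the standard one behind that result: the weight comparisons for item (1) are routine, and the only genuinely delicate step, the integrability of the inverse weight $\bigl(|\xi|^{-2}\xi_1^2+|\xi|^2\bigr)^{-1}$ near the origin despite its degeneracy on the hyperplane $\xi_1=0$ (needed to pass from the $\mathcal{H}^0$ control to an $L^1$ bound on the low-frequency Fourier transform), is handled correctly by your spherical band decomposition.
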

	\begin{proof}
		This is Theorem 5.5 in Leoni and Tice~\cite{leoni2019traveling}.
	\end{proof}

 Our next result, which is Proposition B.2 in Stevenson and Tice~\cite{stevenson2023wellposedness}, enumerates important algebra properties of the anisotropic Sobolev spaces. In what follows, we denote
 \begin{equation}
     \mathcal{H}^0_{(\kappa)}(\R^d)=\tcb{f\in\mathcal{H}^0(\R^d),\;\m{supp}\mathscr{F}[f]\subseteq\Bar{B(0,\kappa)}}.
 \end{equation}
 
	\begin{prop}[Algebra properties of anisotropic Sobolev spaces]\label{proposition on algebra properties}
		Suppose that $f_1,f_2\in\mathcal{H}^0_{(\kappa)}(\R^d)$ for some $\kappa\in\R^+$.  		The following hold.
		\begin{enumerate}
			\item The product $f_1f_2$ belongs to $\mathcal{H}_{(2\kappa)}^0(\R^d)$ and satisfies the estimate $\tnorm{f_1f_2}_{\mathcal{H}^0}\lesssim_{\kappa}\tnorm{f_1}_{\mathcal{H}^0}\tnorm{f_2}_{\mathcal{H}^0}$.
			\item Set
			\begin{equation}\label{the definition of rd}
				r_d=\begin{cases}1+\sfloor{\f{d+1}{d-1}},&\text{if }d>1,\\
					1&\text{if }d=1.
				\end{cases}
			\end{equation}
			Assume additionally that $f_3,\dots,f_{r_d}\in\mathcal{H}_{(\kappa)}^0(\R^d)$. Then the pointwise product $\prod_{j=1}^{r_d}f_j$ belongs to $(L^2\cap\mathcal{H}^0_{(r_d \cdot \kappa)})(\R^d)$ and satisfies
			\begin{equation}
				\bnorm{\prod_{j=1}^{r_d}f_j}_{L^2} \lesssim_{\kappa,d} \prod_{j=1}^{r_d}\tnorm{f_j}_{\mathcal{H}^0}.
			\end{equation}
			\item If   $g\in\mathcal{H}^s(\R^d)$, then the pointwise product $f_1g$ belongs to $\mathcal{H}^s(\R^d)$ and satisfies
			\begin{equation}
				\tnorm{f_1g}_{\mathcal{H}^s}\lesssim_{\kappa,s}\tnorm{f_1}_{\mathcal{H}^0}\tnorm{g}_{\mathcal{H}^s}.
			\end{equation}
		\end{enumerate}
	\end{prop}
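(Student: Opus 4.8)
The goal is to establish the three algebra properties for functions in $\mathcal{H}^0_{(\kappa)}(\R^d)$, where the key structural feature is that these functions have Fourier support in $\overline{B(0,\kappa)}$. The overarching idea is that frequency-localized functions behave like smooth, slowly-varying functions, so products stay controllable; the delicate point is that $\mathcal{H}^0$ is \emph{not} an $L^2$-based space at low frequencies — near $\xi = 0$ the weight $|\xi|^{-2}(\xi_1^2 + |\xi|^4)$ degenerates in directions transverse to $e_1$ — so one must track carefully how this anisotropic low-frequency weight interacts with convolution.

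\emph{First item.} I would work on the Fourier side. If $f_1, f_2 \in \mathcal{H}^0_{(\kappa)}$, then $\widehat{f_1 f_2} = \widehat{f_1} * \widehat{f_2}$ is supported in $\overline{B(0,2\kappa)}$, so the claimed membership in $\mathcal{H}^0_{(2\kappa)}$ is a support statement once integrability is shown. For the norm bound, split the integral defining $\|f_1 f_2\|_{\mathcal{H}^0}$ over $B(0,1)$ and its complement. On the high-frequency region one uses that the weight there is $\langle\xi\rangle^0 = 1$, i.e. one is just estimating an $L^2$ norm of a convolution, which is routine via Young or Cauchy–Schwarz exploiting the compact support. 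The genuinely characteristic estimate is on $B(0,1)$: here I would use the third item of Proposition~\ref{proposition on spatial characterization of anisobros} (the $\mathcal{R}_1$ characterization) or work directly with the weight, bounding $|\xi|^{-1}|\mathscr{F}[f_1 f_2](\xi)|$ by controlling the singular factor $|\xi|^{-1}$ via the fact that on the support of the convolution one can shift derivatives — concretely, $\partial_1(f_1 f_2) = (\partial_1 f_1) f_2 + f_1 (\partial_1 f_2)$, and $\partial_1 f_j \in \dot H^{-1}$ while $f_j$ is bounded (by Bernstein, since $f_j$ is frequency-localized and in $\mathcal{H}^0 \hookrightarrow L^2 + L^p$). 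This is exactly the kind of reduction the characterization in Proposition~\ref{proposition on spatial characterization of anisobros} is designed to facilitate.

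\emph{Second item.} Here the product of $r_d$ localized factors lands in $L^2$. The mechanism: once one multiplies enough frequency-localized $\mathcal{H}^0$ functions together, the mild $L^p$ (for large $p$) growth allowed at low frequency for each factor gets ``used up'' by Hölder's inequality. Specifically each $f_j \in \mathcal{H}^0_{(\kappa)} \hookrightarrow L^2(\R^d) + L^{p}(\R^d)$ for every $p > p_d = \tfrac{2(d+1)}{d-1}$; choosing exponents so that $\tfrac{1}{2} = \tfrac{1}{2} + (r_d - 1)\cdot\tfrac{1}{p}$ with $p$ slightly above $p_d$ is possible precisely because $r_d = 1 + \lfloor \tfrac{d+1}{d-1}\rfloor$ is the integer making $\tfrac{r_d - 1}{p_d} \ge \tfrac{1}{2}$ with room to spare (split off one $L^2$ factor and put the remaining $r_d - 1$ factors in $L^{p}$ with $\tfrac{r_d-1}{p} \le \tfrac12$, absorbing slack into $L^2$-interpolation using the frequency localization). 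One must also check the $\mathcal{H}^0_{(r_d\kappa)}$ membership, which is again just a support statement for the iterated convolution. I would phrase the $L^p$ bounds for individual factors via the embedding in Proposition~\ref{proposition on spatial characterization of anisobros} together with Bernstein's inequality for the frequency-localized pieces.

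\emph{Third item.} This is a ``single localized multiplier times a general $\mathcal{H}^s$ function'' statement, which I would prove by writing $\|f_1 g\|_{\mathcal{H}^s}^2$ as the low- and high-frequency split again, using Proposition~\ref{proposition on frequency splitting} on $g$ to write $g = \Uppi^\kappa_{\m{L}} g + \Uppi^\kappa_{\m{H}} g$ with $\Uppi^\kappa_{\m{L}} g \in W^{k,\infty}$ and $\Uppi^\kappa_{\m{H}} g \in H^s$. For the high piece the product $f_1 \Uppi^\kappa_{\m{H}} g$ can be estimated in $H^s$ by the standard algebra/product estimate for $H^s$ (here $f_1$ being frequency-localized gives it all the smoothness needed, via Bernstein), controlled by $\|f_1\|_{\mathcal{H}^0}\|g\|_{H^s} \lesssim \|f_1\|_{\mathcal{H}^0}\|g\|_{\mathcal{H}^s}$. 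For the low piece $f_1 \Uppi^\kappa_{\m{L}} g$, note this is again a product of two frequency-localized $\mathcal{H}^0$ functions, so the first item applies and gives an $\mathcal{H}^0$ bound; one then upgrades from $\mathcal{H}^0_{(2\kappa)}$ to $\mathcal{H}^s$ trivially since on a bounded frequency region all $\mathcal{H}^s$ norms are equivalent to the $\mathcal{H}^0$ norm.

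\emph{Main obstacle.} The technical heart — and where I expect to spend the most care — is the low-frequency part of the first item: one must show that multiplying two functions whose Fourier transforms may be large (like $|\xi|^{-1}$) near the origin in transverse directions does not produce a product whose Fourier transform is too singular to lie in the anisotropic weight space. The resolution is that the degeneracy is genuinely only in the $\partial_1$ direction, so the $\dot H^{-1}$ control of $\partial_1 f_j$ combined with $L^\infty$ (Bernstein) control of $f_j$ is exactly enough to control $\partial_1(f_1f_2)$ in $\dot H^{-1}$, and the gradient-in-$H^{-1}$ (i.e. $L^2$) control is immediate since $\nabla f_j \in H^{-1}$ and $f_j$ bounded. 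Invoking Proposition~\ref{proposition on spatial characterization of anisobros} then converts these two pieces of information back into the $\mathcal{H}^0$ norm. Since this result is quoted from Stevenson–Tice~\cite{stevenson2023wellposedness}, in practice the ``proof'' would simply cite that reference, but the sketch above is the argument underlying it.
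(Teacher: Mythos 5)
The paper does not actually prove this proposition: it is imported verbatim, with the remark that it ``is Proposition B.2 in Stevenson and Tice~\cite{stevenson2023wellposedness},'' so there is no in-paper argument to compare yours against, and a one-line citation (as you note at the end) would have matched the paper exactly. Judged on its own merits, your outline of the second and third items is sound in spirit: the H\"older/Bernstein count is the right mechanism behind the value of $r_d$ in item (2), and the high/low splitting of $g$ via Proposition~\ref{proposition on frequency splitting} in item (3) mirrors how the paper itself deploys the proposition later (see the proof of Corollary~\ref{coro on more algebra properties}).

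Your first item, however, has a genuine gap at precisely the point you flag as the technical heart. The delicate piece of $\tnorm{f_1f_2}_{\mathcal{H}^0}$ near $\xi=0$ is $\tnorm{|\xi|^{-1}\xi_1\mathscr{F}[f_1f_2]}_{L^2}$, i.e. $\tnorm{\partial_1(f_1f_2)}_{\dot{H}^{-1}}$, and you propose to control it term by term through the Leibniz rule using $\partial_1 f_j\in\dot{H}^{-1}$ together with $f_j\in L^\infty$. But multiplication by a bounded (even frequency-localized, $W^{k,\infty}$) function is not bounded on $\dot{H}^{-1}(\R^d)$ in low dimensions: for a tempered distribution with Fourier support in a ball, membership in $\dot{H}^{-1}$ is a vanishing condition at the frequency origin, and $\mathscr{F}[(\partial_1 f_1)f_2](0)=\int_{\R^d}\partial_1 f_1\, f_2$ is in general nonzero. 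Taking $f_1,f_2$ Schwartz with Fourier support in $B(0,\kappa)$ and $\int \partial_1 f_1\, f_2\neq 0$, the function $(\partial_1 f_1)f_2$ has a continuous Fourier transform that does not vanish at $0$, so $\int_{B(0,\epsilon)}|\xi|^{-2}|\mathscr{F}[(\partial_1 f_1)f_2]|^2\,\m{d}\xi=+\infty$ when $d=2$; each Leibniz term individually fails to lie in $\dot{H}^{-1}(\R^2)$ even though the sum does (its Fourier transform carries the explicit factor $\xi_1$). The cancellation between the two Leibniz terms is therefore essential and cannot be discarded; note also that the zeroth-order reduction ``$f_1f_2\in L^2$ suffices'' is unavailable, since for $d\ge 2$ the product of two $\mathcal{H}^0_{(\kappa)}$ functions need not be square integrable (this is the whole point of requiring $r_d>2$ factors in item (2)). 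A correct argument keeps the bilinear structure intact, estimating the convolution $\mathscr{F}[f_1]*\mathscr{F}[f_2]$ directly against the anisotropic weight $|\xi|^{-2}(\xi_1^2+|\xi|^4)$ by means of a pointwise quasi-subadditivity inequality for the weight on the compact region $B(0,2\kappa)$ followed by Cauchy--Schwarz/Schur-type bounds, which is how the cited reference proceeds. Your inhomogeneous-$H^{-1}$ Leibniz argument does correctly handle the $|\xi|^2$ portion of the low-frequency weight, so the gap is confined to, but real for, the $\mathcal{R}_1$ component identified in the third item of Proposition~\ref{proposition on spatial characterization of anisobros}.
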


 \begin{coro}[More algebra properties]\label{coro on more algebra properties}
     The following hold for $s\ge 1+\tfloor{d/2}$, $t\le s$, and $g\in\mathcal{H}^s(\R^d)$.
     \begin{enumerate}
         \item If $f\in H^t(\R^d)$, then the pointwise product $fg$ belongs to $H^t(\R^d)$ and obeys the estimate
         \begin{equation}
             \tnorm{fg}_{H^t}\lesssim\tnorm{f}_{H^t}\tnorm{g}_{\mathcal{H}^{1+\tfloor{d/2}}}+\begin{cases}
                 0&\text{if }t\le1+\tfloor{d/2},\\
                 
                 \tnorm{f}_{H^{1+\tfloor{d/2}}}\tnorm{g}_{\mathcal{H}^t}&\text{if }t>1+\tfloor{d/2}.
             \end{cases}.
         \end{equation}
         \item If $h\in\mathcal{H}^t(\R^d)$, then the pointwise product $gh$ belongs to $\mathcal{H}^t(\R^d)$ and obeys the estimate
         \begin{equation}
             \tnorm{hg}_{\mathcal{H}^t}\lesssim\tnorm{h}_{\mathcal{H}^t}\tnorm{g}_{\mathcal{H}^{1+\tfloor{d/2}}}+\begin{cases}
                 0&\text{if }t\le1+\tfloor{d/2},\\
                 \tnorm{h}_{\mathcal{H}^{1+\tfloor{d/2}}}\tnorm{g}_{\mathcal{H}^{t}}&\text{if }t>1+\tfloor{d/2}
             \end{cases}
         \end{equation}
         \item If $g\in\mathcal{H}^s(\R^d)$, then $g^{r_d}$, for $r_d$ as in~\eqref{the definition of rd}, belongs to $H^s(\R^d)$ and obeys the estimate
         \begin{equation}
             \tnorm{g^{r_d}}_{H^s}\lesssim\tnorm{g}_{\mathcal{H}^s}\tnorm{g}_{\mathcal{H}^{1+\tfloor{d/2}}}^{r_d-1}.
         \end{equation}
     \end{enumerate}
 \end{coro}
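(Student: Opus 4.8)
The plan is to reduce all three items to the frequency splitting $g=\Uppi^1_{\m{L}}g+\Uppi^1_{\m{H}}g$ provided by Proposition~\ref{proposition on frequency splitting}: the low-frequency piece $\Uppi^1_{\m{L}}g$ is smooth with $\tnorm{\Uppi^1_{\m{L}}g}_{W^{k,\infty}}\lesssim_k\tnorm{\Uppi^1_{\m{L}}g}_{\mathcal{H}^0}\lesssim\tnorm{g}_{\mathcal{H}^{1+\tfloor{d/2}}}$ for every $k\in\N$, while the high-frequency piece satisfies $\tnorm{\Uppi^1_{\m{H}}g}_{H^\sigma}\lesssim\tnorm{g}_{\mathcal{H}^\sigma}$ for all $\sigma\ge 0$, and moreover $\Uppi^1_{\m{L}}g\in\mathcal{H}^0_{(1)}$. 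I would freely use: the Sobolev embedding $H^{1+\tfloor{d/2}}(\R^d)\emb L^\infty(\R^d)$, valid since $1+\tfloor{d/2}>d/2$; the classical tame product estimate $\tnorm{f_1\cdots f_k}_{H^\sigma}\lesssim\sum_i\tnorm{f_i}_{H^\sigma}\prod_{j\ne i}\tnorm{f_j}_{H^{1+\tfloor{d/2}}}$ for $\sigma\ge 0$; its bilinear consequence that multiplication by an $H^{1+\tfloor{d/2}}$ function is bounded on $H^t$ for $0\le t\le 1+\tfloor{d/2}$; the algebra facts of Proposition~\ref{proposition on algebra properties}; and the embedding $H^t(\R^d)\emb\mathcal{H}^t(\R^d)$.

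For the first item I would expand $fg=f\,\Uppi^1_{\m{L}}g+f\,\Uppi^1_{\m{H}}g$. Multiplication by $\Uppi^1_{\m{L}}g$ is bounded on $H^t$ with operator norm $\lesssim\tnorm{\Uppi^1_{\m{L}}g}_{W^{\lceil t\rceil,\infty}}\lesssim\tnorm{g}_{\mathcal{H}^{1+\tfloor{d/2}}}$, feeding only the first term in the claimed bound. For $f\,\Uppi^1_{\m{H}}g$, since $\Uppi^1_{\m{H}}g\in H^s$ with $s>d/2$, I apply the bilinear/tame $H^t$ product law: when $t\le 1+\tfloor{d/2}$ it yields $\tnorm{f}_{H^t}\tnorm{\Uppi^1_{\m{H}}g}_{H^{1+\tfloor{d/2}}}$, and when $t>1+\tfloor{d/2}$ the Kato--Ponce version additionally contributes $\tnorm{f}_{H^{1+\tfloor{d/2}}}\tnorm{\Uppi^1_{\m{H}}g}_{H^t}$; in either case one bounds $\tnorm{\Uppi^1_{\m{H}}g}_{H^\sigma}$ by $\tnorm{g}_{\mathcal{H}^\sigma}$ to finish. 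The second item is obtained by splitting both factors. The term $(\Uppi^1_{\m{L}}g)h$ lies in $\mathcal{H}^t$ by the third item of Proposition~\ref{proposition on algebra properties}, with bound $\tnorm{\Uppi^1_{\m{L}}g}_{\mathcal{H}^0}\tnorm{h}_{\mathcal{H}^t}\lesssim\tnorm{g}_{\mathcal{H}^{1+\tfloor{d/2}}}\tnorm{h}_{\mathcal{H}^t}$. The term $(\Uppi^1_{\m{H}}g)(\Uppi^1_{\m{L}}h)$ is estimated in $H^t\emb\mathcal{H}^t$ via boundedness of multiplication by $\Uppi^1_{\m{L}}h$ on $H^t$, and $(\Uppi^1_{\m{H}}g)(\Uppi^1_{\m{H}}h)$ by the tame bilinear $H^t$ law; in both cases one distributes the regularity between $g$ and $h$ and uses $\tnorm{\cdot}_{\mathcal{H}^0}\le\tnorm{\cdot}_{\mathcal{H}^t}$ and $\tnorm{\cdot}_{\mathcal{H}^{1+\tfloor{d/2}}}\le\tnorm{\cdot}_{\mathcal{H}^{t}}$ when $t\ge 1+\tfloor{d/2}$, producing exactly the two terms in the statement.

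For the third item I would expand $g^{r_d}=\sum_{k=0}^{r_d}\binom{r_d}{k}(\Uppi^1_{\m{L}}g)^{r_d-k}(\Uppi^1_{\m{H}}g)^{k}$. The purely low-frequency term $(\Uppi^1_{\m{L}}g)^{r_d}$ has Fourier support in $B(0,r_d)$, so $\tnorm{(\Uppi^1_{\m{L}}g)^{r_d}}_{H^s}\lesssim_{s}\tnorm{(\Uppi^1_{\m{L}}g)^{r_d}}_{L^2}\lesssim\tnorm{\Uppi^1_{\m{L}}g}_{\mathcal{H}^0}^{r_d}$ by the second item of Proposition~\ref{proposition on algebra properties}, and this is $\lesssim\tnorm{g}_{\mathcal{H}^{1+\tfloor{d/2}}}^{r_d}\le\tnorm{g}_{\mathcal{H}^s}\tnorm{g}_{\mathcal{H}^{1+\tfloor{d/2}}}^{r_d-1}$ using $\tnorm{g}_{\mathcal{H}^{1+\tfloor{d/2}}}\le\tnorm{g}_{\mathcal{H}^s}$. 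Each term with $k\ge 1$ is the product of the smooth multiplier $(\Uppi^1_{\m{L}}g)^{r_d-k}$, all of whose derivatives are bounded and whose multiplication operator on $H^s$ has norm $\lesssim\tnorm{g}_{\mathcal{H}^{1+\tfloor{d/2}}}^{r_d-k}$, with $(\Uppi^1_{\m{H}}g)^k$, for which the tame algebra estimate gives $\tnorm{(\Uppi^1_{\m{H}}g)^k}_{H^s}\lesssim\tnorm{\Uppi^1_{\m{H}}g}_{H^s}\tnorm{\Uppi^1_{\m{H}}g}_{H^{1+\tfloor{d/2}}}^{k-1}\lesssim\tnorm{g}_{\mathcal{H}^s}\tnorm{g}_{\mathcal{H}^{1+\tfloor{d/2}}}^{k-1}$; multiplying the two exponent counts yields $\tnorm{g}_{\mathcal{H}^s}\tnorm{g}_{\mathcal{H}^{1+\tfloor{d/2}}}^{r_d-1}$ for every $k$, and summing over $k$ completes the estimate.

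I do not anticipate a genuine obstacle: the corollary is essentially bookkeeping built on the frequency decomposition of Proposition~\ref{proposition on frequency splitting}, the hard algebra facts already packaged in Proposition~\ref{proposition on algebra properties}, and the standard tame Sobolev product and composition machinery on $\R^d$. The one point warranting care is that the low-frequency parts of anisotropic Sobolev functions are controlled in $L^\infty$ together with all their derivatives but need not belong to $L^2$ when $d\ge 2$; this is precisely why the low-frequency multiplications above are handled by operator-norm bounds on $H^\bullet$ rather than by direct product estimates, and why the purely low-frequency power $(\Uppi^1_{\m{L}}g)^{r_d}$ is treated by combining the $L^2$ algebra property of Proposition~\ref{proposition on algebra properties} with its Fourier-support localization.
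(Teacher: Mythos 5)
Your proof is correct and follows essentially the same route as the paper's: frequency splitting via Proposition~\ref{proposition on frequency splitting}, the algebra facts of Proposition~\ref{proposition on algebra properties}, the tame multiplier estimates of Proposition~\ref{corollary on tame estimates on simple multipliers}, and the binomial expansion for the third item. The only differences are cosmetic bookkeeping (you group $(\Uppi^1_{\m{L}}g)h$ as a single term rather than splitting $h$ there, and you handle the mixed binomial terms directly rather than citing the first two items of the corollary), and your explicit remark that the purely low-frequency power is upgraded from $L^2$ to $H^s$ via its compact Fourier support is a useful detail the paper leaves implicit.
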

 \begin{proof}
 For the first item we decompose $g=\Uppi^1_{\m{L}}g+\Uppi^1_{\m{H}}g$. For the low piece $f\Uppi^1_{\m{L}}g$, we use the second item of Proposition~\ref{proposition on frequency splitting} along with the fourth item of Proposition~\ref{corollary on tame estimates on simple multipliers}. For the high piece $f\Uppi^1_{\m{H}}g$, we instead use the first and second items of the two aforementioned propositions, respectively.

 For the second item, we decompose both $g$ and $h$ into high and low pieces via $\Uppi^1_{\m{L}}$ and $\Uppi^1_{\m{H}}$ and inspect the four terms in the expanded product individually. For either of the mixed high-low terms, we invoke the third item of Proposition~\ref{proposition on algebra properties}. For the low-low term, we instead use the first item. Finally, for the high-high term, we use the first item of Proposition~\ref{proposition on frequency splitting} along with the second item of Proposition~\ref{corollary on tame estimates on simple multipliers}.

 For the third item, we use the binomial theorem to write
 \begin{equation}
     g^{r_d}=\sum_{\ell=0}^{r_d}\bn{r_d}{\ell}(\Uppi^1_\m{L}g)^\ell(\Uppi^1_{\m{H}}g)^{r_d-\ell}.
 \end{equation}
 The terms indexed with $\ell<r_d$ are handled via the first and second items of this result. The $\ell=r_d$ term is handled via the second item of Proposition~\ref{proposition on algebra properties}.
 \end{proof}

\subsection{Tame structure abstraction}\label{tame structure abs}

The goal of this sub-appendix is to review notions of Banach scales and tameness of mappings between them. This is a rapid treatment of the thorough development of Section 2.1 in Stevenson and Tice~\cite{stevenson2023wellposedness}.

\begin{defn}\label{defn on subsets of N}
		Given $N\in\N\cup\tcb{\infty}$ we define $\j{N}\subseteq\N$ to be the set $\j{N}=\tcb{n\in\N\;:\;n\le N}$\index{\textbf{Miscellaneous}!10@$\j{N}$}.
	\end{defn} 
 	
  \begin{defn}[Banach scales]\label{definition of Banach scales}
		Let $N\in\N^+ \cup\tcb{\infty}$ and let $\bf{E} = \{E^s\}_{s\in \j{N}}$ be a collection of Banach spaces over a common field, either $\R$ or $\C$.
		\begin{enumerate}
			\item We say that $\bf{E}$ is a Banach scale if for each $s \in \j{N-1}$ we have the non-expansive inclusion $E^{s+1} \hookrightarrow E^s$, i.e. $\tnorm{\cdot}_{E^s}\le\tnorm{\cdot}_{E^{s+1}}$.
			\item If $\bf{E}$ is a Banach scale, then we define the scale's terminal space to be $E^N = \bigcap_{s \in \j{N}} E^s$ and endow it with the Fr\'{e}chet topology induced by the collection of norms $\{\norm{\cdot}_{E^{s}}\}_{s\in \j{N}}$. Note that if $N<\infty$, then $E^N$ has the standard Banach topology from its norm.
			\item If $\bf{E}$ is a Banach scale, then we write $B_{E^r}(u,\delta) \subseteq E^r$ for the $E^r$-open ball of radius $\delta>0$, centered at $u \in E^r$.
			\item If $\bf{E}$ is a Banach scale and $E^N$ is dense in $E^s$ for each $s \in \j{N}$, then we say that $\bf{E}$ is terminally dense.
		\end{enumerate}
	\end{defn}

 	\begin{rmk}[Products of Banach scales]\label{scale_prod_rmk}
	Suppose that $\bf{E}_i = \{E^s_i\}_{s\in \j{N}}$ is a Banach scale for $i \in \{1,\dotsc,n\}$, each over the same field. Then $\bf{F} = \prod_{i=1}^n \bf{E}_i = \{ \prod_{i=1}^n E^s_i \}_{s\in \j{N}}$ is a Banach scale when each $\prod_{i=1}^n E^s_i$ is endowed with the norm
		\begin{equation}
			\norm{u_1,\dotsc,u_n}_{\prod_{i=1}^n E^s_i} = 
				\bp{\sum_{i=1}^n \norm{u_i}_{E^s_i}^2}^{1/2}.
		\end{equation}
	\end{rmk} 
	
	\begin{defn}[Tame maps]\label{defn of smooth tame maps}
		Let $\bf{E}=\tcb{E^s}_{s\in\j{N}}$ and $\bf{F}=\tcb{F^s}_{s\in\j{N}}$ be Banach scales over the same field, $\mu,r\in\j{N}$ with $\mu\le r$, $U\subseteq E^r$ be an open set, and $P:U\to F^0$.
		\begin{enumerate}
			\item We say that $P$ satisfies tame estimates of order $\mu$ and base $r$ (with respect to the Banach scales $\bf{E}$ and $\bf{F}$) if for all $\j{N}\ni s\ge r$, there exists a constant $C_s\in\R^+$ such that for all $f\in U\cap E^s$ we have the inclusion $P(f)\in F^{s-\mu}$ as well as the estimate
			\begin{equation}
				\tnorm{P(f)}_{F^{s-\mu}}\le C_s\tbr{\tnorm{f}_{E^{s}}}.
			\end{equation}
			
			\item We say that $P$ is $\mu$-tame with base $r$ if for each $f\in U$, there exists an $E^r$-open set $V\subseteq U$ such that $f \in V$ and the restricted map $P\res V:V \to F^0$ satisfies tame estimates of order $\mu$ and base $r$.
			
			\item We say that $P$ is strongly $\mu$-tame with base $r$ if on every $E^r$-open and bounded subset $V\subseteq U$, the restricted map $P\res V$ satisfies tame estimates of order $\mu$ and base $r$.
			
			\item We say that $P$ is (strongly) $\mu$-tamely $C^0$ with base $r$ if $P$ is (strongly) $\mu$-tame with base $r$ and if for every $\j{N}\ni s\ge r$ we have that $P:U\cap E^s\to F^{s-\mu}$
			is continuous as a map from $E^s$ to $F^{s-\mu}$. The collections of such maps will be denoted by $T^0_{\mu,r}(U,\bf{E};\bf{F})$ and ${_{\m{s}}}T^0_{\mu,r}(U,\bf{E};\bf{F})$, respectively.
			
			\item For $k\in\N^+$ we say that $P$ is (strongly) $\mu$-tamely $C^k$ with base $r$ if for all $\j{N}\ni s\ge r$ the map $P:U\cap E^{s}\to F^{s-\mu}$ is  $C^k$ and for all $j\in\tcb{0,1,\dots,k}$ the $j^{\m{th}}$ derivative map, thought of as mapping $D^jP:U\times\prod_{\ell=1}^jE^r \to F^0$, is (strongly) $\mu$-tame with base $r$ with respect to the Banach scales $\bf{E}^{1+j}$ and $\bf{F}$. In other words, $D^jP\in T^0_{\mu,r}(U\times\prod_{\ell=1}^jE^r,\bf{E}^{1+j};\bf{F})$ or, in the strong case, $D^jP\in {_{\m{s}}}T^0_{\mu,r}(U\times\prod_{\ell=1}^jE^r,\bf{E}^{1+j};\bf{F})$. The collections of such maps will be denoted by $T^k_{\mu,r}(U,\bf{E};\bf{F})$\index{\textbf{Function spaces}!65@$T^k_{\mu,r}(U,\bf{E};\bf{F})$} and ${_{\m{s}}}T^k_{\mu,r}(U,\bf{E};\bf{F})$\index{\textbf{Function spaces}!66@${_{\m{s}}}T^k_{\mu,r}(U,\bf{E};\bf{F})$}, respectively.
			
			\item The collections of (strongly) $\mu$-tamely smooth with base $r$ maps are $T^\infty_{\mu,r}(U,\bf{E};\bf{F})=\bigcap_{k\in\N} T^k_{\mu,r}(U,\bf{E};\bf{F})$ and ${_{\m{s}}}T^\infty_{\mu,r}(U,\bf{E};\bf{F})=\bigcap_{k\in\N} {_{\m{s}}}T^k_{\mu,r}(U,\bf{E};\bf{F})$.
			
			\item    When $U= E^r$ we will often use the abbreviated notation $T^k_{\mu,r}(\bf{E},\bf{F})$ and ${_{\m{s}}}T^k_{\mu,r}(\bf{E};\bf{F})$ in place of $T^k_{\mu,r}(E^r,\bf{E};\bf{F})$ and ${_{\m{s}}}T^k_{\mu,r}(U,\bf{E};\bf{F})$, respectively.
		\end{enumerate}
	\end{defn}

	The following result, which is proved in Lemma 2.5 in Stevenson and Tice~\cite{stevenson2023wellposedness}, is a useful characterization of tameness when there is multilinear structure present in the map.
	
	\begin{lem}[Multilinearity and tame maps]\label{lem on multilinear tame maps} Let $k\in\N^+$ and  $r,\mu\in\j{N}$ with $\mu\le r$.  Let $\bf{F}=\tcb{F^s}_{s\in\j{N}}$, $\bf{G}=\tcb{G^s}_{s\in\j{N}}$, and $\bf{E}_j=\tcb{E^s_j}_{s\in\j{N}}$, for $j\in\tcb{1,\dots,k}$, be Banach scales over the same field.  Let $U\subseteq G^r$ be an open set.  Then the following are equivalent for all maps $P:U\times\prod_{j=1}^kE^r_j\to F^0$ such that $P(g,\cdot)$ is $k$-multilinear for all $g\in U$.
		\begin{enumerate}
			\item $P\in T^0_{\mu,r}(U\times\prod_{j=1}^kE^r_j;\bf{G}\times\prod_{j=1}^k\bf{E}_j;\bf{F})$.
			\item The restriction of $P$ to $(U\cap G^s)\times\prod_{j=1}^kE^s_j$ is continuous as a map into $F^{s-\mu}$ for all $\j{N}\ni s\ge r$, and for all $g\in U$ there exists a $G^r$-open subset $V\subseteq U$ such that $g \in V$ and whenever $\j{N}\ni s\ge r$, $f\in V\cap G^s$, and $h_i\in E_i^s$ for $i\in\tcb{1,\dots,k}$ we have the estimate
			\begin{equation}\label{which he ate and donated to the national trust}
				\tnorm{P(f,h_1,\dots,h_k)}_{F^{s-\mu}}\lesssim\tbr{\tnorm{f}_{G^s}}\prod_{i=1}^k\tnorm{h_i}_{E_i^r}+\sum_{j=1}^k\tnorm{h_j}_{E_j^s}\prod_{i\neq j}\tnorm{h_i}_{E_i^r}.
			\end{equation}
		\end{enumerate}
		A similar equivalence holds for maps $P\in{_{\m{s}}}T^0_{\mu,r}(U\times\prod_{j=1}^k E^r_j;\bf{G}\times\prod_{j=1}^k\bf{E}_j;\bf{F})$ if we change the space in the first item to the space of strongly tame maps and we change the second item's  quantification of $V$ to `for all bounded $G^r$-open sets $V \subseteq U$.'
	\end{lem}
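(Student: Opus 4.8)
\textbf{Proof proposal for Lemma~\ref{lem on multilinear tame maps}.}

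The plan is to prove the two implications separately, exploiting the multilinear structure of $P(g,\cdot)$ to convert the abstract tameness conditions of Definition~\ref{defn of smooth tame maps} into the concrete estimate~\eqref{which he ate and donated to the national trust}. The key observation driving both directions is that a $k$-multilinear map $Q : \prod_{j=1}^k E_j^r \to F^0$ which is bounded in the base norms is automatically smooth, with all higher-order (i.e.\ order $\ge k+1$) derivatives vanishing; its derivatives are themselves multilinear in a controlled way. Concretely, writing $P$ as a map on the product scale $\bf{G}\times\prod_{j=1}^k\bf{E}_j$, the $(1+k)$-fold structure means that $P$ is $\mu$-tamely $C^0$ with base $r$ precisely when the restriction to each $E^s$-level is continuous into $F^{s-\mu}$ and the tame estimate holds on an appropriate open (or, in the strong case, bounded open) neighborhood $V$ of each background point $g$. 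The only subtlety is the precise shape of the right-hand side of~\eqref{which he ate and donated to the national trust}: the first term $\tbr{\tnorm{f}_{G^s}}\prod_i \tnorm{h_i}_{E_i^r}$ absorbs the dependence on the non-multilinear variable $f$ at high regularity while keeping the $h_i$ at base level, and the sum $\sum_j \tnorm{h_j}_{E_j^s}\prod_{i\ne j}\tnorm{h_i}_{E_i^r}$ lets exactly one multilinear slot carry a high norm at a time. This is exactly the ``tame multilinear'' product structure, and it is the correct generalization of the bilinear estimates one uses throughout the paper (cf.\ Lemmas~\ref{lem on smooth tameness of products 2} and~\ref{lem on smooth tameness of products 3}).

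For the implication (1) $\Rightarrow$ (2): assume $P \in T^0_{\mu,r}$. Continuity of the restriction to each $E^s$-level into $F^{s-\mu}$ is immediate from the definition of $\mu$-tamely $C^0$. For the estimate, fix $g \in U$ and use the definition of $\mu$-tameness to obtain a $G^r$-open $V \ni g$ on which $P\res(V\times\prod_j E_j^r)$ satisfies tame estimates of order $\mu$ and base $r$, so for $\j{N}\ni s\ge r$, $f \in V\cap G^s$, and $h_i \in E_i^s$ we have $\tnorm{P(f,h_1,\dots,h_k)}_{F^{s-\mu}}\lesssim \tbr{\tnorm{(f,h_1,\dots,h_k)}_{G^s\times\prod_j E_j^s}}$. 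This is not yet~\eqref{which he ate and donated to the national trust}, because it does not exhibit multilinear homogeneity in the $h_i$. To fix this, I would use the $k$-multilinearity of $P(f,\cdot)$ together with a scaling/polarization argument: for $\lm_1,\dots,\lm_k > 0$ we have $P(f,\lm_1 h_1,\dots,\lm_k h_k) = (\prod_i \lm_i) P(f,h_1,\dots,h_k)$, so applying the crude bound to $(f, \lm_1 h_1,\dots,\lm_k h_k)$ and optimizing the $\lm_i$ (choosing $\lm_j \asymp \tnorm{h_i}_{E_i^r}/\tnorm{h_j}_{E_j^r}$ type ratios, or more simply normalizing each $h_i$ to unit base norm and then rescaling) converts the sum-of-norms bound into the product-plus-sum form of~\eqref{which he ate and donated to the national trust}. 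One must handle the degenerate case $\tnorm{h_i}_{E_i^r}=0$ separately (then $h_i = 0$ in $E_i^s$ too, since $E_i^s \hookrightarrow E_i^r$, and both sides vanish). The strong case is identical, replacing ``there exists $V$'' with ``for all bounded open $V$'' throughout.

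For the implication (2) $\Rightarrow$ (1): assume the estimate~\eqref{which he ate and donated to the national trust} and the stated continuity. I first need to check that $P$ is smooth on each $E^s$-level as a map into $F^{s-\mu}$, and then that every derivative $D^j P$ is $\mu$-tame with base $r$. Smoothness follows from the multilinear structure: for fixed $f$, $P(f,\cdot)$ is a bounded $k$-multilinear map hence a polynomial of degree $k$, and the full $f$-dependence can be differentiated formally because $P$ is linear in each of the $k$ multilinear slots — so $D^j P$ is a finite sum of terms each of which is either (a) a derivative falling on the $f$-slot producing a map still multilinear in the $h$'s, or (b) a ``reshuffling'' of multilinear slots into argument slots, and all derivatives of order exceeding $k+1$ in appropriate combinations vanish. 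The tameness of each $D^j P$ then reduces to~\eqref{which he ate and donated to the national trust} applied to $P$ itself with the roles of the variables permuted, because differentiating a multilinear form in its linear slots just relabels which vectors sit in which slot. The continuity hypothesis in (2) supplies the base-level continuity needed for $C^0$; the product-plus-sum estimate upgrades this to the tame estimates required by Definition~\ref{defn of smooth tame maps}(1) for $D^j P$ on the scales $\bf{E}^{1+j}$ and $\bf{F}$. I would invoke Remark~\ref{remark on purely multlinear maps} (on the automatic promotion of tameness for purely multilinear maps) to streamline the derivative bookkeeping.

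The main obstacle I anticipate is the derivative bookkeeping in the (2) $\Rightarrow$ (1) direction: writing out $D^j P$ explicitly and verifying that each summand, when tested against vectors at base regularity except one at high regularity, is controlled by the right-hand side of the tame estimate for the product scale $\bf{E}^{1+j}$. This is conceptually routine — it is just the Leibniz/chain-rule structure for a map that is polynomial (degree $k$) in $k$ of its arguments and arbitrary-but-tame in one more — but it requires care to set up the indexing so that ``exactly one slot at high norm'' is preserved under differentiation, and to confirm the base $r$ and order $\mu$ are unchanged. Since the paper already develops this machinery in Section 2.1 of~\cite{stevenson2023wellposedness} and uses the analogous bilinear statements repeatedly, I would lean on those references and present this step at the level of indicating the structure of $D^j P$ rather than enumerating all terms.
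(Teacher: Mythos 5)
First, note that the paper does not prove this lemma: it is imported verbatim from Lemma 2.5 of the authors' earlier work \cite{stevenson2023wellposedness}, so your proposal can only be judged against the definitions in Section 2.1 / Appendix B and the standard argument.

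Your direction (1) $\Rightarrow$ (2) is essentially right. The one point to make explicit is that the neighborhood furnished by Definition~\ref{defn of smooth tame maps} lives in the \emph{product} space $G^r\times\prod_j E_j^r$; you should take the base point $(g,0,\dots,0)$, shrink the neighborhood to a product $V\times\prod_j B_{E_j^r}(0,\delta)$, and normalize each $h_i$ to norm $\delta/2$ (not to unit norm, which may leave the neighborhood) before rescaling by multilinearity. With that adjustment the computation produces exactly \eqref{which he ate and donated to the national trust}, and your treatment of the degenerate case $\tnorm{h_i}_{E_i^r}=0$ is correct.

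The genuine problem is in (2) $\Rightarrow$ (1). The class $T^0_{\mu,r}$ consists of $\mu$-\emph{tamely $C^0$} maps: by items (2) and (4) of Definition~\ref{defn of smooth tame maps} you must verify only (a) continuity of $P$ at each level $s\ge r$, which is a hypothesis of item (2), and (b) that each point $(g,e_1,\dots,e_k)$ of the domain has a neighborhood on which the crude tame estimate $\tnorm{P(f,h)}_{F^{s-\mu}}\lesssim\tbr{\tnorm{(f,h)}_{G^s\times\prod_j E_j^s}}$ holds. No derivatives of $P$ enter at all. Your entire second half — establishing smoothness of $P$, controlling $D^jP$, invoking the Leibniz structure, and flagging the "derivative bookkeeping" as the main obstacle — is aimed at membership in $T^k$ or $T^\infty$, which is not what the lemma asserts, and would in any case require differentiability of $P$ in the $f$-variable, which is not among the hypotheses. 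Meanwhile the step actually needed is omitted, though it is immediate: on the neighborhood $V\times\prod_j B_{E_j^r}(e_j,1)$ the base norms $\tnorm{h_i}_{E_i^r}$ are bounded, so the right-hand side of \eqref{which he ate and donated to the national trust} is controlled by $\tbr{\tnorm{f}_{G^s}}+\sum_j\tnorm{h_j}_{E_j^s}\lesssim\tbr{\tnorm{(f,h)}_{G^s\times\prod_j E_j^s}}$, which is precisely the required tame estimate; the strong variant follows the same way since bounded open subsets of the product have bounded projections onto each $E_j^r$. You should replace the derivative discussion with this two-line argument.
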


 \begin{rmk}\label{remark on purely multlinear maps}
		If $P\in T^0_{\mu,r}(\prod_{j=1}^k\bf{E}_j;\bf{F})$ is a $k$-multilinear mapping, then a simple modification of Lemma~\ref{lem on multilinear tame maps} shows that $P$ is actually strongly tame.  Moreover, by multilinearity, we have that $P$ is automatically a smooth function whose derivatives are also multilinear maps. This combines with the previous fact to show that actually $P\in {_{\m{s}}}T^\infty_{\mu,r}(\prod_{j=1}^k\bf{E}_j;\bf{F})$.
	\end{rmk}

 \begin{coro}[Derivative estimates on tame maps]\label{lem on derivative estimates on tame maps}
		Let $\bf{E}=\tcb{E^s}_{s\in\j{N}}$ and $\bf{F}=\tcb{F^s}_{s\in\j{N}}$ be Banach scales over a common field. Let $r,\mu\in\j{N}$ with $\mu\le r$, $U\subseteq E^r$ be an open set, and $P\in T^k_{\mu,r}(U,\bf{E};\bf{F})$.  Then for each $f_0\in U$, there exists an open set $f_0\in V\subseteq U$ with the property that for every $\j{N}\ni s\ge r$ there exists a constant $C_{s,V} \in \R^+$, depending on $s$ and $V$, such that for all $f\in V\cap E^s$ and all $g_1,\dots,g_k\in E^s$ we have the estimate
		\begin{equation}
			\tnorm{D^kP(f)[g_1,\dots,g_k]}_{F^{s-\mu}}\le C_{s,V} \sum_{\ell=1}^k\tnorm{g_\ell}_{E^s}\prod_{j\neq\ell}\tnorm{g_j}_{E^r}
			+ C_{s,V} \tbr{\tnorm{f}_{E^s}}\prod_{j=1}^k\tnorm{g_j}_{E^r}.
		\end{equation}
		Moreover, if $P\in{_{\m{s}}}T^k_{\mu,r}(U,\bf{E};\bf{F})$, then a similar assertion holds for every bounded and open subset $V \subseteq U$.
	\end{coro}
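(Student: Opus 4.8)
\textbf{Proof proposal for Corollary~\ref{lem on derivative estimates on tame maps}.}

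The plan is to invoke Lemma~\ref{lem on multilinear tame maps} with the $k^{\mathrm{th}}$-derivative map playing the role of the multilinear mapping. By definition of $T^k_{\mu,r}(U,\bf{E};\bf{F})$ (the sixth item of Definition~\ref{defn of smooth tame maps}), the derivative $D^kP$, viewed as a map $D^kP:U\times\prod_{\ell=1}^k E^r\to F^0$, lies in $T^0_{\mu,r}(U\times\prod_{\ell=1}^k E^r,\bf{E}^{1+k};\bf{F})$. For each fixed $f$ in the base, the assignment $(g_1,\dots,g_k)\mapsto D^kP(f)[g_1,\dots,g_k]$ is $k$-multilinear, so $D^kP$ satisfies exactly the structural hypothesis of Lemma~\ref{lem on multilinear tame maps} with $\bf{G}=\bf{E}$, $\bf{E}_j=\bf{E}$ for all $j$, and the open set $U\subseteq E^r=G^r$. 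Therefore the characterization in the second item of that lemma applies: there is, for each $f_0\in U$, an $E^r$-open neighborhood $f_0\in V\subseteq U$ such that for all $\j{N}\ni s\ge r$, all $f\in V\cap E^s$, and all $g_1,\dots,g_k\in E^s$, the estimate
\begin{equation}
    \tnorm{D^kP(f)[g_1,\dots,g_k]}_{F^{s-\mu}}\lesssim\tbr{\tnorm{f}_{E^s}}\prod_{i=1}^k\tnorm{g_i}_{E^r}+\sum_{j=1}^k\tnorm{g_j}_{E^s}\prod_{i\neq j}\tnorm{g_i}_{E^r}
\end{equation}
holds, where the implicit constant depends on $s$ and $V$. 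Renaming this implicit constant $C_{s,V}$ and distributing it over the two summands yields precisely the claimed bound.

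For the strong case, when $P\in{_{\m{s}}}T^k_{\mu,r}(U,\bf{E};\bf{F})$, we instead use the final sentence of Lemma~\ref{lem on multilinear tame maps}, which upgrades the conclusion so that $V$ may be taken to be any bounded $E^r$-open subset of $U$ (with the constant then depending on this $V$). This gives the "moreover" assertion verbatim.

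There is essentially no obstacle here: the content of the corollary is entirely a repackaging of the multilinear characterization of tameness applied to the derivative maps, which is already supplied by Lemma~\ref{lem on multilinear tame maps} together with the definition of $T^k_{\mu,r}$. The only minor point to be careful about is the bookkeeping of which Banach scale the $k^{\mathrm{th}}$ derivative is tame with respect to---namely $\bf{E}^{1+k}$ in the domain and $\bf{F}$ in the codomain---but this is exactly the indexing built into the sixth item of Definition~\ref{defn of smooth tame maps}, so no additional work is required. Hence the proof is a direct citation of the two results with the identifications described above.
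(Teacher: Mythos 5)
Your proposal is correct and matches the paper's argument exactly: the paper's proof is the one-line observation that the corollary is a direct application of Lemma~\ref{lem on multilinear tame maps} to the derivative map $D^kP$, which is precisely the reduction you carry out (including the use of the lemma's final sentence for the strong case). No issues.
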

	\begin{proof}
		This is a direct application of Lemma~\ref{lem on multilinear tame maps}.
	\end{proof}

 \begin{prop}[Composition of tame maps]\label{lem on composition of tame maps}
		Let  $\bf{E}=\tcb{E^s}_{s\in\j{N}}$, $\bf{F}=\tcb{F^s}_{s\in\j{N}}$, and $\bf{G}=\tcb{G^s}_{s\in\j{N}}$ be a triple of Banach scales over a common field ($\R$ or $\C$), and let $k\in\N$ and $\mu,\mu', r,r'\in\j{N}$ be such that $\mu\le r$, $\mu'\le r'$, $r'+\mu\in\j{N}$.  Suppose that $U\subseteq E^r$, $U'\subseteq F^{r'}$ are open sets,  $P\in T^k_{\mu,r}(U,\bf{E};\bf{F})$, and $Q\in T^k_{\mu',r'}(U',\bf{F};\bf{G})$. Set $V=U\cap E^{\max\tcb{r,r'+\mu}}$.   If $P(V)\subseteq U'$, then $Q\circ P\in T^k_{\mu+\mu',\max\tcb{r,\mu+r'}}(V,\bf{E};\bf{G})$. A similar assertion holds for the composition of strongly tame maps.
	\end{prop}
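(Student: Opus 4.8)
The statement to be proved is the composition result: given $P\in T^k_{\mu,r}(U,\bf{E};\bf{F})$ and $Q\in T^k_{\mu',r'}(U',\bf{F};\bf{G})$ with $P(V)\subseteq U'$ for $V=U\cap E^{\max\tcb{r,r'+\mu}}$, we must show $Q\circ P\in T^k_{\mu+\mu',\max\tcb{r,\mu+r'}}(V,\bf{E};\bf{G})$. The plan is to proceed by induction on $k$. For the base case $k=0$ I would argue directly from the definition: fix $f\in V$; since $P$ is $\mu$-tame with base $r$ and $Q$ is $\mu'$-tame with base $r'$, there are neighborhoods on which tame estimates hold, and by continuity of $P$ (as a map between the appropriate scale levels) the preimage under $P$ of $Q$'s good neighborhood is open; intersecting, one obtains a neighborhood $W$ of $f$ in $E^{\max\tcb{r,\mu+r'}}$ on which, for $\j{N}\ni s\ge\max\tcb{r,\mu+r'}$ and $g\in W\cap E^s$, one has $P(g)\in F^{s-\mu}$ with $\tnorm{P(g)}_{F^{s-\mu}}\le C_s\tbr{\tnorm{g}_{E^s}}$, and since $s-\mu\ge r'$ one may feed this into $Q$'s tame estimate to get $Q(P(g))\in G^{s-\mu-\mu'}$ with $\tnorm{Q(P(g))}_{G^{s-\mu-\mu'}}\le C'_{s-\mu}\tbr{\tnorm{P(g)}_{F^{s-\mu}}}\lesssim\tbr{\tnorm{g}_{E^s}}$. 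The condition $r'+\mu\in\j{N}$ guarantees the intermediate index is legitimate. Continuity of the composite at level $s$ follows from continuity of $P$ from $E^s$ to $F^{s-\mu}$ composed with continuity of $Q$ from $F^{s-\mu}$ to $G^{s-\mu-\mu'}$ (valid since $s-\mu\ge r'$).

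\textbf{Inductive step.} Assuming the result for $k-1$, I would invoke the chain rule: for $P$ and $Q$ that are $C^k$ between the relevant scale levels, $Q\circ P$ is $C^k$ and $D^j(Q\circ P)(f)$ is, by the Fa\`a di Bruno / multivariate chain rule, a finite sum of terms of the shape $D^iQ(P(f))\bsb{D^{a_1}P(f)[\cdots],\dotsc,D^{a_i}P(f)[\cdots]}$ with $a_1+\dotsb+a_i=j$, $i\le j\le k$. Each factor $D^{a_m}P$ is, by hypothesis, in $T^k_{\mu,r}(U\times\prod E^r,\bf{E}^{1+a_m};\bf{F})$ hence $(k-\text{something})$-fold differentiable and tame; and $D^iQ$ is tame of order $\mu'$ and base $r'$ with respect to $\bf{F}^{1+i}$. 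The key mechanism is then: apply the $k=0$-type argument (now to the derivative maps, whose \emph{values} at a point are multilinear, so Lemma~\ref{lem on multilinear tame maps} and Remark~\ref{remark on purely multlinear maps} apply) to see that each such composite term is tame of the correct combined order $\mu+\mu'$ and base $\max\tcb{r,\mu+r'}$ with respect to the product scales, and finally use that a finite sum of tame maps is tame (with the constant being the sum of the individual constants). One also needs that $D^jP$ carries the bounded/open sets into the domain of $D^iQ$, which follows from $P(V)\subseteq U'$ together with the tameness (hence local boundedness) of the lower derivatives. For the strongly tame variant, the only change is that one replaces `there exists an open neighborhood' by `on every bounded open subset,' and the same chain-rule bookkeeping goes through verbatim since all the constituent estimates hold on bounded sets.

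\textbf{Main obstacle.} The genuinely delicate point is the index bookkeeping in the chain rule: one must verify that every intermediate scale index appearing — $s-\mu$ fed into $Q$, and the various mixed indices $E^s$ versus $E^r$ in the multilinear estimate \eqref{which he ate and donated to the national trust} — stays within $\j{N}$ and respects the base/order inequalities $\mu\le r$, $\mu'\le r'$ after composition, so that the combined base is exactly $\max\tcb{r,\mu+r'}$ and combined order exactly $\mu+\mu'$ rather than something larger. This is where the hypothesis $r'+\mu\in\j{N}$ and the definition $V=U\cap E^{\max\tcb{r,r'+\mu}}$ are used, and where care is needed to see that the `loss' in the tame estimate for $Q\circ P$ genuinely adds rather than multiplies. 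Everything else — the chain rule itself, continuity of composites, summation of tame maps — is routine once the indices are tracked, and I expect the proof to be short, essentially a careful unwinding of Definition~\ref{defn of smooth tame maps} combined with Lemma~\ref{lem on multilinear tame maps}.
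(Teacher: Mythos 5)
The paper does not actually prove this proposition — like the neighboring results in Appendix~\ref{tame structure abs}, it is imported from the authors' earlier work~\cite{stevenson2023wellposedness} and stated here without proof, so there is no in-paper argument to compare against. Your proposal is the standard and correct argument: the base case composes the two raw tame estimates using $\tbr{C\tbr{x}}\lesssim\tbr{x}$ and uses continuity of $P$ into $F^{r'}$ (which is exactly what forces the base up to $\max\tcb{r,\mu+r'}$ and the domain down to $V$), and the inductive step is the chain-rule bookkeeping you describe, with Lemma~\ref{lem on multilinear tame maps} upgrading the raw estimate on each Fa\`a di Bruno term to the multilinear form; I see no gap.
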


	\begin{coro}[Tame products of tame $C^k$ maps]\label{lem on product of smooth tame maps} Let  $\ell,k\in\N$ with $\ell\ge1$.  For $j\in\tcb{1,\dots,\ell}$ let  $\bf{E}_j=\tcb{E^s_j}_{s\in\j{N}}$ and $\bf{F}_j=\tcb{E^s_j}_{s\in\j{N}}$ be Banach scales over a common field, and let $\bf{G}=\tcb{G^s}_{s\in\j{N}}$ be a Banach scale over the same field.  Let $r_1,\dots,r_\ell\in\j{N}$, $\mu_j\in\j{r_j}$ for $j\in\tcb{1,\dots,\ell}$, $r'\in\j{N}$, and $\mu'\in\j{r'}$.  Finally, suppose that $U_j\subseteq E^{r_j}_j$ is open, $P_j\in T^k_{\mu_j,r_j}(U_j,\bf{E}_j;\bf{F}_j)$, and $B\in T^0_{\mu',r'}(\prod_{j=1}^\ell\bf{F}_j;\bf{G})$ is $k$-multilinear. If we set
		\begin{equation}
			\mu=\mu'+\max\tcb{\mu_1,\dots,\mu_\ell}
			\text{ and }
			r=\max\tcb{r_1,\dots,r_\ell,\mu_1+r',\dots,\mu_\ell+r'}
		\end{equation}
		and assume that $r\in\j{N}$, then the product map satisfies the inclusion
		\begin{equation}
			B(P_1,\dots,P_\ell)\in T^k_{\mu,r}\bp{\prod_{j=1}^\ell U_j\cap E^{r}_j,\prod_{j=1}^\ell\bf{E}_j;\bf{G}}.
		\end{equation}
		A similar assertion holds for products of strongly tame maps.
	\end{coro}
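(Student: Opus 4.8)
The plan is to derive this from the composition result, Proposition~\ref{lem on composition of tame maps}, after rewriting $B(P_1,\dots,P_\ell)$ as a bona fide composition of two tame maps. First I would deal with the outer factor: since $B$ is multilinear and lies in $T^0_{\mu',r'}(\prod_{j=1}^\ell\bf{F}_j;\bf{G})$, Remark~\ref{remark on purely multlinear maps} promotes it to $B\in{_{\m{s}}}T^\infty_{\mu',r'}(\prod_{j=1}^\ell\bf{F}_j;\bf{G})$, so in particular $B\in T^k_{\mu',r'}$ with domain all of $\prod_{j=1}^\ell F^{r'}_j$.

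The core of the argument is to show that the Cartesian product map
\[
\bf{P}=(P_1\circ\pi_1,\dots,P_\ell\circ\pi_\ell)\colon\ \textstyle\prod_{j=1}^\ell U_j\ \longrightarrow\ \textstyle\prod_{j=1}^\ell F^0_j
\]
(with $\pi_j$ the $j$th coordinate projection, a bounded linear and hence tame map) satisfies $\bf{P}\in T^k_{\bar\mu,\bar r}(\prod_{j=1}^\ell U_j\cap E^{\bar r}_j,\,\prod_{j=1}^\ell\bf{E}_j;\,\prod_{j=1}^\ell\bf{F}_j)$ with $\bar\mu=\max_{j}\mu_j$ and $\bar r=\max_{j}r_j$. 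I expect this step to be the main obstacle, not because it is deep but because it carries all the bookkeeping. Concretely: $\bf{P}$ is $C^k$ and its derivatives $D^m\bf{P}$ act coordinatewise, so each component of $D^m\bf{P}$ is a derivative of some $P_j$; these obey tame estimates of order $\mu_j\le\bar\mu$ and base $r_j\le\bar r$, and composing the output with the non-expansive inclusions $F^{s-\mu_j}_j\hookrightarrow F^{s-\bar\mu}_j$ lands every component in $F^{s-\bar\mu}_j$. Since a finite product of Banach scales carries the $\ell^2$ product norm, a finite sum of tame bounds (for $\bf{P}$ itself via Definition~\ref{defn of smooth tame maps}, and for $D^m\bf{P}$ via Corollary~\ref{lem on derivative estimates on tame maps}) is again a tame bound, and continuity in each $\prod_j E^s_j$-topology is coordinatewise. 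For the neighborhoods demanded by Definition~\ref{defn of smooth tame maps}: in the tame case take $V=\prod_j V_j$ with $V_j$ furnished by the tameness of $P_j$; in the strongly tame case, a bounded open $V\subseteq\prod_j(U_j\cap E^{\bar r}_j)$ has each projection $\pi_j(V)$ bounded in $E^{r_j}_j$, hence contained in a bounded open set $W_j=U_j\cap B_{E^{r_j}_j}(0,M_j)$, and the strong tame estimate for $P_j$ on $W_j$ assembles into the strong tame estimate for $\bf{P}$ on $V$.

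Finally I would apply Proposition~\ref{lem on composition of tame maps} with inner map $P=\bf{P}$ (order $\bar\mu$, base $\bar r$) and outer map $Q=B$ (order $\mu'$, base $r'$). The compatibility hypothesis $r'+\bar\mu\in\j{N}$ holds since $r'+\bar\mu=\max_j(r'+\mu_j)\le r\in\j{N}$ by assumption, and $\bf{P}$ maps into $\prod_j F^{r'}_j$, which is the entire domain of $B$. The proposition then yields $B(P_1,\dots,P_\ell)=B\circ\bf{P}\in T^k_{\bar\mu+\mu',\,\max\{\bar r,\bar\mu+r'\}}(\prod_j U_j\cap E^{\max\{\bar r,r'+\bar\mu\}}_j,\,\prod_j\bf{E}_j;\,\bf{G})$, and one verifies the bookkeeping identities $\bar\mu+\mu'=\mu$, $\max\{\bar r,\bar\mu+r'\}=\max\{r_1,\dots,r_\ell,\mu_1+r',\dots,\mu_\ell+r'\}=r$, and $\prod_j U_j\cap E^{\max\{\bar r,r'+\bar\mu\}}_j=\prod_j(U_j\cap E^r_j)$, which is exactly the claimed conclusion. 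The strongly tame variant runs identically, invoking throughout the strong versions of Remark~\ref{remark on purely multlinear maps}, the product-map step above, and Proposition~\ref{lem on composition of tame maps}.
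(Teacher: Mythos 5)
Your argument is correct and is essentially the intended route: the paper presents this as a corollary of Proposition~\ref{lem on composition of tame maps} (deferring details to the authors' earlier work), and your decomposition into the Cartesian product map $\bf{P}$ of order $\max_j\mu_j$ and base $\max_j r_j$ followed by the multilinear outer map $B$ (promoted via Remark~\ref{remark on purely multlinear maps}) is exactly that argument. The bookkeeping identities $\bar\mu+\mu'=\mu$, $\max\{\bar r,\bar\mu+r'\}=r$, and the verification that $r'+\bar\mu\in\j{N}$ all check out.
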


 \begin{defn}[Smoothable and LP-smoothable Banach scales]\label{defn of smoothable and LP-smoothable Banach scales}
		Let $\bf{E} = \{E^s\}_{s\in \j{N}}$ be a Banach scale.
		\begin{enumerate}
			\item We say that $\bf{E}$ is smoothable if for each $j \in \N^+$ there exists a linear map $S_j : E^0 \to E^N$\index{\textbf{Linear maps}!100@$S_j$} satisfying the following smoothing conditions for every $u \in E^0$:
			\begin{equation}\label{smoothing_1}
				\norm{S_j u}_{E^s} \lesssim \norm{u}_{E^s} \text{ for all }s \in \j{N},
			\end{equation}
			\begin{equation}\label{smoothing_2}
				\norm{S_j u}_{E^t} \lesssim 2^{j(t-s)} \norm{S_j u}_{E^s}  \text{ for all } s,t \in \j{N} \text{ with } s < t,  
			\end{equation}
			\begin{equation}\label{smoothing_3}
				\norm{(I-S_j)u}_{E^s} \lesssim 2^{-j(t-s)} \norm{(I-S_j) u}_{E^t} \text{ for all } s,t \in \j{N} \text{ with } s < t,
			\end{equation}
			\begin{equation}\label{smoothing_4}
				\norm{(S_{j+1}-S_j)u}_{E^t} \lesssim 2^{j(t-s)}    \norm{(S_{j+1}-S_j)u}_{E^s} \text{ for all }s,t \in \j{N},
			\end{equation}
			where the implicit constants are independent of $j$ and are increasing in $s$ and $t$.

			\item We say that $\bf{E}$ is LP-smoothable if $\bf{E}$ is smoothable and the smoothing operators satisfy the following Littlewood-Paley condition:   for $s \in \j{N}$ and $u \in E^s$ we have that
			\begin{equation}\label{smoothing_LP_1}
				\lim_{j\to \infty} \tnorm{(I-S_j)u}_{E^s} =0, 
			\end{equation}
			and there exists a constant $A >0$, possibly depending on $s$, such that 
			\begin{equation}\label{smoothing_LP_2}
				A^{-1} \norm{u}_{E^s} \le \bp{ \sum_{j=0}^\infty \norm{\Updelta_j u}_{E^s}^2 }^{1/2} \le A \norm{u}_{E^s},
			\end{equation}
			where the operators $\{\Updelta_j\}_{j=0}^\infty$ are defined via $\Updelta_j=S_{j+1}-S_j$\index{\textbf{Linear maps}!101@$\Updelta_j$} with the convention that $S_0=0$.
		\end{enumerate}
	\end{defn}  
	
	\begin{exa}[A fixed Banach space]\label{example of a fixed Banach space}
		Suppose that $X$ is a Banach space and $N \in \N \cup \{\infty\}$. Then the scale $\tcb{X}_{s \in \j{N}}$, generated by $X$ alone, is LP-smoothable in the sense of Definition~\ref{defn of smoothable and LP-smoothable Banach scales}, provided we take $S_j=I$ for all $j\in\N^+$.
	\end{exa}
	
	\begin{exa}[Sobolev spaces on $\R^d$]\label{example of Euclidean Soblev spaces}
		Let $V$ be a finite dimensional real vector space and $N \in \N \cup \{\infty\}$.  The real Banach scales of Sobolev spaces $\tcb{H^s(\R^d;V)}_{s\in \j{N}}$ and $\tcb{\tp{\dot{H}^{-1}\cap H^s}(\R^d;V)}_{s\in\j{N}}$ are LP-smoothable for the smoothing operators $\tcb{S_j}_{j=0}^\infty$ given by $S_j=\mathds{1}_{B(0,2^j)}(\grad/2\pi\ii)$.
	\end{exa}

\subsection{A variation on the Nash-Moser inverse function theorem}\label{subsection on a NMIFT}

The following definition gives us a sense of the mapping hypotheses one needs to verify to invoke a Nash-Moser inverse function theorem.

	\begin{defn}[Mapping hypotheses]\label{defn of the mapping hypotheses}
		We say that a triple $(\bf{E},\bf{F},\Psi)$ satisfies the RI mapping hypotheses with parameters $(\mu,r,R) \in \j{N}^3$ if $\bf{E} = \{E^s\}_{s\in \j{N}}$ and $\bf{F} = \{F^s\}_{s\in \j{N}}$ are Banach scales over a common field, $1 \le \mu \le r < R < \infty$, $R+\mu \in \j{N}$, and there exists $0 < \delta_r \in \R$ such that  $\Psi : B_{E^r}(0,\delta_r) \to F^{r-\mu}$ is a map satisfying the following.
		\begin{enumerate}
			\item $\Psi(0)=0$.
			
			\item $\mu$-tamely $C^2$: For every $r-\mu \le s \in \j{N-\mu}$ we have that $\Psi : B_{E^r}(0,\delta_r) \cap E^{s+\mu} \to F^s$ is $C^2$, and for every $u_0 \in B_{E^r}(0,\delta_r) \cap E^{s+\mu}$ we have the tame estimate 
			\begin{equation}\label{C2 tame estimates}
				\norm{D^2 \Psi(u_0)[v,w] }_{F^s} \le C_1(s)\tp{\norm{v}_{E^{s+\mu}} \norm{w}_{E^r} 
					+ \norm{v}_{E^r} \norm{w}_{E^{s+\mu}}  
					+  \br{\norm{u_0}_{E^{s+\mu}} } \norm{v}_{E^r} \norm{w}_{E^r} }.
			\end{equation}
			Here the constant $C_1(s)$ is increasing in $s$. In other words $\Psi \in{_{\m{s}}}T^2_{\mu,r}(B_{E^r}(0,\del_r),\bf{E};\bf{F})$ according to the notation from Definition~\ref{defn of smooth tame maps}.
			
			\item Derivative inversion: There exists $\delta_R \in \R$ satisfying $0 < \delta_R \le \delta_r$ such that for every $u_0 \in B_{E^r}(0,\delta_R) \cap E^{N}$ there exists a bounded linear operator $L(u_0) : F^r \to E^r$ satisfying the following three conditions.
			\begin{enumerate}
				\item For every $s \in \N \cap [r,R]$  we have that the restriction of $L(u_0)$ to $F^s$ defines a bounded linear operator with values in $E^s$, i.e. $L(u_0) \in \mathcal{L}(F^s; E^s)$.
				\item $D\Psi(u_0) L(u_0) f = f$ for every $f \in F^r$.

				\item We have the tame estimate 
				\begin{equation}\label{tame estimates on the right inverse}
					\norm{L(u_0) f}_{E^s} \le C_2(s) \tp{\norm{f}_{F^s} + \br{\norm{u_0}_{E^{s+\mu}}} \norm{f}_{F^r}}
				\end{equation}
				for every $f \in F^s$ and $r \le s \le R$, where again the constant $C_2(s)$ is increasing in $s$.
			\end{enumerate}
		\end{enumerate}
		Here the use of the prefix RI- is meant to indicate that the maps $L(u_0)$ are only required to be right inverses.  We say that the triple $(\bf{E},\bf{F},\Psi)$ satisfies the LRI mapping hypotheses if condition $(b)$ in the third item is augmented by the left-inverse condition  $(b'):$  
		\begin{equation}
			L(u_0) D\Psi(u_0) v = v \text{ for every }v \in E^{r+\mu}.
		\end{equation}
	\end{defn}

We now state a slightly specified version of the Nash-Moser inverse function theorem. For a proof and a more general formulation, the reader is referred to Section 2 of Stevenson and Tice~\cite{stevenson2023wellposedness}.
	
	\begin{thm}[Inverse function theorem]\label{thm on nmh}
		Let $\bf{E} = \{E^s\}_{s\in \j{N}}$ and $\bf{F} = \{F^s\}_{s\in \j{N}}$ be Banach scales over the same field consisting of reflexive spaces, and assume that $\bf{E}$ and $\bf{F}$ are LP-smoothable (see Definition~\ref{defn of smoothable and LP-smoothable Banach scales}). Assume the triple $(\bf{E},\bf{F},\Psi)$ satisfies the LRI mapping hypotheses of Definition \ref{defn of the mapping hypotheses} with parameters $(\mu,r,R)$ satisfying $2(r+\mu) + 1 < (r+R)/2$, and set $\be=2(r+\mu)+1\in\j{N}$.   Then there exist $\ep,\kappa_1,\kappa_2 >0$ such that the following hold.
		\begin{enumerate}
			\item Existence of local inverse: For every $g \in B_{F^\beta}(0,\ep)$ there exists a unique $u \in B_{E^\beta}(0,\kappa_1 \ep)$ such that $\Psi(u) =g$. 

			\item Estimates of local inverse:  The induced bijection $\Psi^{-1} :  B_{F^\beta}(0,\ep) \to \Psi^{-1}(B_{F^\beta}(0,\ep)) \cap  B_{E^\beta}(0,\kappa_1 \ep)$ obeys the estimate
			\begin{equation}\label{nmh_01}
				\tnorm{\Psi^{-1}(g)}_{E^\beta} \le \kappa_1 \tnorm{g}_{F^\beta} \text{ for all }g \in B_{F^\beta}(0,\ep).
			\end{equation}
			Moreover, if $\N\ni\nu\le R+r-2\be$, then we have that
			\begin{equation}\label{derivatives_gain}
				\Psi^{-1}:B_{F^\be}(0,\ep)\cap F^{\be+\nu}\to E^{\be+\nu}
			\end{equation}
			with the estimate
			\begin{equation}\label{when the rain comes they run and hide}
				\tnorm{\Psi^{-1}(g)}_{E^{\be+\nu}}\lesssim\tnorm{g}_{F^{\be+\nu}},
			\end{equation}
			where the implied constant is independent of $g$.
			
			\item Continuity: For $s\in[\be,R+r-\be-\mu)\cap\N$ the map
			\begin{equation}
				\Psi^{-1}:B_{F^\be}(0,\ep)\cap F^s\to E^s
			\end{equation}
			is continuous.
		\end{enumerate}
	\end{thm}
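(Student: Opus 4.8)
The plan is to prove Theorem~\ref{thm on nmh} by the Nash--Moser--H\"ormander scheme: a damped Newton iteration in which each step uses the approximate right inverse $L(u_0)$ furnished by the third item of the LRI mapping hypotheses (Definition~\ref{defn of the mapping hypotheses}), with the systematic loss of $\mu$ derivatives repaired at every stage by the smoothing operators $S_j$ coming from LP-smoothability of the scales $\mathbf{E}$ and $\mathbf{F}$. Fix a target $g \in B_{F^\beta}(0,\ep)$, with $\ep>0$ to be chosen, a dyadic base $\theta \in (1,2)$, indices $\theta_n = \theta^n$, and write $S_{(n)}$ for the smoothing operator with parameter $2^{j}\asymp\theta_n$. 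Set $u_0 = 0$ and
\begin{equation*}
    u_{n+1} = u_n + v_n, \qquad v_n = L\!\left(S_{(n)}u_n\right) w_n,
\end{equation*}
where the smoothed residuals $w_n\in F^{N}$ are defined so that their partial sums telescope to $S_{(n+1)}g$; concretely $w_n$ is assembled from the truncated defect $S_{(n+1)}(g-\Psi(u_n))$ together with the smoothing increments $(S_{(n+1)}-S_{(n)})g$. Using $L$ at the smoothed background point $S_{(n)}u_n$ (which lies in the terminal space $E^{N}$) is what makes the derivative-inversion hypothesis applicable, and truncating the residual by $S_{(n+1)}$ is what lets the derivative loss in $L$ be absorbed.

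The core of the argument is the a priori estimate for the iteration. Expanding $\Psi$ to second order and invoking the right-inverse identity $D\Psi(S_{(n)}u_n)v_n = w_n$, the new residual $g - \Psi(u_{n+1})$ splits into a high-frequency ``smoothing defect'' of the form $(I-S_{(n+1)})(\,\cdot\,)$, a term measuring the discrepancy $D\Psi(S_{(n)}u_n) - D\Psi(u_n)$, and the quadratic Taylor remainder $\int_0^1 (1-t)\,D^2\Psi(u_n + t v_n)[v_n,v_n]\,\mathrm{d}t$. Bounding these with the $C^2$ tame estimate~\eqref{C2 tame estimates}, the tame right-inverse estimate~\eqref{tame estimates on the right inverse}, and the smoothing estimates~\eqref{smoothing_1}--\eqref{smoothing_4} (which, together with reflexivity, also yield the usual convexity interpolation between scales), one arrives --- after fixing a low ``energy'' exponent $a$ and a high ``regularity'' exponent $A$ with $r \le a < \beta < A \le R$ --- at a closed recursion: the low-norm correction $\|v_n\|_{E^{a}}$ decays like $\theta_n^{-\lambda}$ times the residual size, the high-norm correction $\|v_n\|_{E^{A}}$ grows at most polynomially in $\theta_n$, and the residual contracts by a factor $\theta_n^{-\rho}$ with $\lambda,\rho>0$, all provided $\ep$ is small enough. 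The inequality $2(r+\mu)+1 < (r+R)/2$ is exactly the arithmetic condition under which these exponent balances can be met simultaneously. Consequently $\sum_n \|v_n\|_{E^{\beta}} < \infty$ and $u_n \to u$ in $E^{\beta}$ with $\Psi(u) = g$ and $\|u\|_{E^{\beta}} \le \kappa_1\|g\|_{F^{\beta}}$, which is item (1) and the estimate~\eqref{nmh_01}.

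The remaining items follow by further mileage from the same machinery. Uniqueness of $u$ in $B_{E^\beta}(0,\kappa_1\ep)$ uses the left-inverse condition $(b')$: writing $\Psi(u) - \Psi(\tilde u)$ with a first-order Taylor remainder and applying $L$ at a smoothed background yields, for $\ep$ small, $\|u - \tilde u\|_{E^r} \lesssim \ep\,\|u-\tilde u\|_{E^r}$, hence $u = \tilde u$. The higher-regularity statement~\eqref{derivatives_gain}--\eqref{when the rain comes they run and hide} is obtained by rerunning the iteration while propagating the $E^{\beta+\nu}$ norms in place of the $E^\beta$ ones; the tame bounds hold at all scales up to $R$, and $\nu \le R + r - 2\beta$ is precisely the condition guaranteeing that the high-norm bookkeeping still closes, after which local uniqueness identifies the limit with $\Psi^{-1}(g)$. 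For the continuity statement of item (3), I would observe that for each fixed $n$ the map $g \mapsto u_n$ is continuous on $F^s$ (a finite composition of bounded linear operators, smoothings, and $\Psi$, the latter being $C^1$ --- hence locally Lipschitz --- on each fixed scale), and that the decay estimates of the previous paragraph are uniform for $g$ in a neighborhood of any fixed target in $F^s$ with $s \in [\beta, R+r-\beta-\mu)$; therefore $u_n \to \Psi^{-1}(g)$ locally uniformly, so $\Psi^{-1}$ is continuous in the $F^s$ topology.

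The main obstacle is the exponent bookkeeping in the second paragraph: one must choose the dyadic rate $\theta$ and the auxiliary exponents $a$ and $A$ so that the $\mu$ derivatives lost per Newton step and the derivatives sacrificed when passing residuals through $(I - S_{(n+1)})$ are both strictly dominated by the smoothing gain, while the low-norm contraction remains summable with room to spare and the high-norm growth stays at most polynomial in $\theta_n$; keeping these inequalities compatible is exactly where the hypothesis $2(r+\mu)+1 < (r+R)/2$ is indispensable, and where reflexivity of the scales is used to extract weak limits when verifying that the high-scale bounds persist along the iteration.
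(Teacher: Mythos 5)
A preliminary remark: the paper does not prove Theorem~\ref{thm on nmh} at all — it is stated as imported machinery, with the proof explicitly deferred to Section 2 of the authors' earlier work \cite{stevenson2023wellposedness}. That said, your outline of items (1) and (2) — the H\"ormander-style iteration $u_{n+1}=u_n+L(S_{(n)}u_n)w_n$ with telescoping smoothed residuals, the three-term decomposition of the new defect (smoothing error, linearization discrepancy, quadratic Taylor remainder), the low/high-norm recursion closed by \eqref{C2 tame estimates}, \eqref{tame estimates on the right inverse}, and \eqref{smoothing_1}--\eqref{smoothing_4}, the left-inverse uniqueness argument, and the re-run of the scheme at level $\beta+\nu$ for regularity promotion — is exactly the standard scheme used in that reference, and at the level of detail you give these parts are sound (the exponent bookkeeping is asserted rather than verified, but that is acceptable for an outline).

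The continuity claim (item 3) has a genuine gap. You argue that for each fixed $n$ the map $g\mapsto u_n$ is continuous on $F^s$ because it is ``a finite composition of bounded linear operators, smoothings, and $\Psi$.'' But $u_n$ also involves $L(S_{(n)}u_{n-1})$, and Definition~\ref{defn of the mapping hypotheses} asserts only the \emph{existence} of some operator $L(u_0)$ with tame bounds for each smooth background; it provides no continuity (nor even a canonical choice) of the assignment $u_0\mapsto L(u_0)$. Hence continuity of the finite-stage iterates in $g$ does not follow from the hypotheses. Likewise, ``locally uniform convergence'' of $u_n\to\Psi^{-1}(g)$ in $E^s$ is unsubstantiated: your contraction estimates are established at the scale $\beta$, and for data merely convergent (hence bounded) in $F^s$ the high-norm bookkeeping cannot simply be re-run uniformly at scale $s$. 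Two features of the statement point to the intended mechanism and are invisible to your argument: the reflexivity hypothesis on the scales (which you invoke only in passing) and the strict loss of $\mu$ in the continuity range $s<R+r-\beta-\mu$ relative to the regularity range $\beta+\nu\le R+r-\beta$ from \eqref{when the rain comes they run and hide}. These indicate that continuity is obtained not from continuity of the iterates but from a compactness-plus-uniqueness argument — extracting weak limits of the bounded family of solutions, identifying them via local uniqueness and a low-norm stability estimate coming from the left inverse, and upgrading to strong $E^s$ convergence by interpolation — and this portion of your proof needs to be replaced accordingly.
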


\subsection{Toolbox for smooth-tameness verification}\label{toolbox for smooth-tameness}

We begin this appendix by recording some facts about products. In the subsequent three results, the following notation for specific Banach scales are used:
\begin{equation}\label{atomic Banach scale notation}
    \bf{H}(W)=\tcb{H^s(\R^d;W)}_{s\in\N},\quad\bf{W}(W)=\tcb{W^{s,\infty}(\R^d;W)}_{s\in\N},\quad\bm{\mathcal{H}}=\tcb{\mathcal{H}^s(\R^d)}_{s\in\N},
\end{equation}
for $W$ a finite dimensional real vector space. The following is  Lemma 3.3 from Stevenson and Tice~\cite{stevenson2023wellposedness}.

\begin{lem}[Smooth tameness of products, 1]\label{lemma on tameness of products 1}
		Let $V_1,V_2$ and $W$ be real finite dimensional vector spaces, $B:V_1\times V_2 \to W$ be a bilinear map. The following inclusions hold when viewing $B$ as a product for functions taking values in $V_1$ and $V_2$:  $B\in {_{\m{s}}}T^\infty_{0,1+\tfloor{d/2}}(\bf{H}(V_1)\times\bf{H}(V_2);\bf{H}(W))$, and $B\in {_{\m{s}}}T^\infty_{0,0}(\bf{H}(V_1)\times\bf{W}(V_2);\bf{H}(W))$.
	\end{lem}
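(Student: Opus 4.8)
The statement to prove is Lemma~\ref{lemma on tameness of products 1}, which asserts smooth-tameness of bilinear products viewed between the relevant Banach scales. The plan is to reduce everything to the characterization of tameness for multilinear maps provided by Lemma~\ref{lem on multilinear tame maps} (together with Remark~\ref{remark on purely multlinear maps}), so that it suffices to establish the two bilinear estimates together with continuity. Since $B$ is a fixed bilinear map of finite-dimensional vector spaces, the composition $(f_1, f_2) \mapsto B(f_1, f_2)$ is itself bilinear in $(f_1, f_2)$, hence automatically smooth with multilinear derivatives; thus by Remark~\ref{remark on purely multlinear maps} it is enough to verify the hypotheses of Lemma~\ref{lem on multilinear tame maps} at order $\mu = 0$ with the stated base, i.e.\ to produce the appropriate product estimates and the continuity statement.

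First I would handle the inclusion $B\in {_{\m{s}}}T^\infty_{0,1+\tfloor{d/2}}(\bf{H}(V_1)\times\bf{H}(V_2);\bf{H}(W))$. Here the key input is the standard tame Sobolev product estimate: for $s \ge r := 1 + \tfloor{d/2}$ one has $\tnorm{fg}_{H^s} \lesssim \tnorm{f}_{H^s}\tnorm{g}_{H^r} + \tnorm{f}_{H^r}\tnorm{g}_{H^s}$, which is exactly of the form~\eqref{which he ate and donated to the national trust} with $k = 2$ and no dependence on an external parameter $f$ (the "$g$" argument of Lemma~\ref{lem on multilinear tame maps} is absent here, so the $\tbr{\tnorm{f}_{G^s}}$ prefactor is just a constant). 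Finiteness of $\dim V_i$ and boundedness of $B$ let us pass from scalar products to the vector-valued product $B(f_1,f_2)$ componentwise. Continuity of $(f_1,f_2)\mapsto B(f_1,f_2)$ as a map $H^s \times H^s \to H^s$ for each $s \ge r$ follows from the same product estimate and bilinearity.

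Next I would handle $B\in {_{\m{s}}}T^\infty_{0,0}(\bf{H}(V_1)\times\bf{W}(V_2);\bf{H}(W))$, where now the base is $r = 0$ and the second factor lives in $W^{s,\infty}$. The relevant estimate is the Moser-type bound $\tnorm{fg}_{H^s} \lesssim \tnorm{f}_{H^s}\tnorm{g}_{L^\infty} + \tnorm{f}_{L^2}\tnorm{g}_{W^{s,\infty}}$, valid for all $s \in \N$; this again matches~\eqref{which he ate and donated to the national trust} with base $0$. As above, finite-dimensionality and bilinearity of $B$ reduce the vector-valued case to this scalar estimate, and continuity on each scale is immediate. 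Both estimates here are uniform on all bounded subsets of the domain (indeed they are global), which is what is needed for the \emph{strong} tameness assertion; combined with Remark~\ref{remark on purely multlinear maps} this upgrades to ${_{\m{s}}}T^\infty_{0,r}$.

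The argument is essentially bookkeeping once the two product inequalities are in hand, so I do not anticipate a genuine obstacle; the only point requiring a little care is matching the precise form of the tame estimate~\eqref{which he ate and donated to the national trust} in Lemma~\ref{lem on multilinear tame maps}, in particular confirming that when there is no "background" argument the characterization still applies (the set $V$ in that lemma can be taken to be the whole space, and the $\tbr{\tnorm{f}_{G^s}}$ factor is absorbed into the implicit constant). This is exactly the situation covered by Remark~\ref{remark on purely multlinear maps}, so appealing to it closes the proof.
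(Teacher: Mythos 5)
Your proposal is correct, and in fact the paper offers no proof of this lemma to compare against: it is simply imported as Lemma 3.3 of the earlier Stevenson--Tice work \cite{stevenson2023wellposedness}. Your route — invoke Remark~\ref{remark on purely multlinear maps} to reduce strong smooth-tameness of a multilinear map to the single tame estimate of Lemma~\ref{lem on multilinear tame maps}, then supply the tame Sobolev product bound $\tnorm{fg}_{H^s}\lesssim\tnorm{f}_{H^s}\tnorm{g}_{H^{1+\tfloor{d/2}}}+\tnorm{f}_{H^{1+\tfloor{d/2}}}\tnorm{g}_{H^s}$ for the first inclusion and the Moser-type bound $\tnorm{fg}_{H^s}\lesssim\tnorm{f}_{H^s}\tnorm{g}_{L^\infty}+\tnorm{f}_{L^2}\tnorm{g}_{W^{s,\infty}}$ for the second — is the standard argument, and both estimates are in fact recorded in this paper as items (2) and (4) of Proposition~\ref{corollary on tame estimates on simple multipliers}. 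The only caveats you flag (absorbing the absent background argument into the constant, and noting the estimates are global so strong tameness holds) are handled exactly as you say.
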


 Our next product result is proved in Lemma 3.4 in Stevenson and Tice~\cite{stevenson2023wellposedness}.

 \begin{lem}[Smooth tameness of products, 2]\label{lem on smooth tameness of products 2}
		Let $m\in\N^+$, $V$, $W$, and $V_1,\dots,V_m$ be real finite dimensional vector spaces, $B:V_1\times\cdots\times V_m\times V\to W$ be $(m+1)$-linear. The $(m+1)$-linear map $P_B$ defined via $P_B\tp{(g_j,\psi_j)_{j=1}^m,\varphi}=B(g_1+\psi_1,\dots,g_m+\psi_m,\varphi)$ satisfies the inclusion
		\begin{equation}
			P_B\in{_{\m{s}}}T^\infty_{0,1+\tfloor{d/2}}\bp{\prod_{j=1}^m\tp{\bf{W}(V_j)\times\bf{H}(V_j)}\times\bf{H}(V);\bf{H}(W)}.
		\end{equation}
	\end{lem}

 \begin{lem}[Smooth tameness of products, 3]\label{lem on smooth tameness of products 3}
     Let $W$ be a real finite dimensional vector space. Then the product map $(\eta,u)\mapsto \eta u$ belongs to ${_{\m{s}}}T^\infty_{0,1+\tfloor{d/2}}(\bm{\mathcal{H}}\times \bf{H}(W);\bf{H}(W))$.  Also, the product map $(\eta,\xi)\mapsto\eta\xi$ belongs to ${_{\m{s}}}T^\infty_{0,1+\tfloor{d/2}}\tp{\bm{\mathcal{H}}\times\bm{\mathcal{H}};\bm{\mathcal{H}}}$.
 \end{lem}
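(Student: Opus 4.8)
\textbf{Proof proposal for Lemma~\ref{lem on smooth tameness of products 3}.} The plan is to reduce both claims to the multilinear characterization of tameness (Lemma~\ref{lem on multilinear tame maps} together with Remark~\ref{remark on purely multlinear maps}), so that it suffices to verify a single product estimate with the structure \eqref{which he ate and donated to the national trust} and to check continuity between the scaled spaces. Since the bilinear maps $(\eta,u)\mapsto\eta u$ and $(\eta,\xi)\mapsto\eta\xi$ are purely bilinear (no background variable), Remark~\ref{remark on purely multlinear maps} tells us that strong tameness and smoothness are automatic once the bilinear tame estimate is in hand; thus the entire content of the lemma is the inequality
\begin{equation}
    \tnorm{\eta u}_{H^s}\lesssim\tnorm{\eta}_{\mathcal{H}^s}\tnorm{u}_{H^{1+\tfloor{d/2}}}+\tnorm{\eta}_{\mathcal{H}^{1+\tfloor{d/2}}}\tnorm{u}_{H^s}
\end{equation}
for $s\ge 1+\tfloor{d/2}$, and the analogous bound with $H$ replaced by $\mathcal{H}$ in the target and second factor, plus continuity.

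First I would dispatch the two product estimates: the $\mathcal{H}\times\bf{H}\to\bf{H}$ estimate is precisely the first item of Corollary~\ref{coro on more algebra properties} with the roles of $f$ and $g$ exchanged (there $f\in H^t$, $g\in\mathcal{H}^s$, and $fg\in H^t$; here $u$ plays the role of $f$ and $\eta$ the role of $g$), taking $t=s$. Likewise the $\mathcal{H}\times\mathcal{H}\to\mathcal{H}$ estimate is exactly the second item of Corollary~\ref{coro on more algebra properties} with $t=s$. So no new computation is needed for the estimates themselves — the work was already done in the appendix on anisotropic Sobolev spaces. This also automatically gives the base $r=1+\tfloor{d/2}$ and order $\mu=0$ claimed in the statement, matching the subscripts in ${_{\m{s}}}T^\infty_{0,1+\tfloor{d/2}}$.

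Next I would address continuity, which is the one genuine (if mild) obstacle, since a bilinear map need not be jointly continuous into $H^s$ merely from being bounded in the tame sense; one needs the estimate to close. For each fixed $s\ge 1+\tfloor{d/2}$ the bilinear map $(\eta,u)\mapsto\eta u$ is bounded from $\mathcal{H}^s\times H^s$ into $H^s$ by the above estimate (bounding $\tnorm{u}_{H^{1+\tfloor{d/2}}}\le\tnorm{u}_{H^s}$ and $\tnorm{\eta}_{\mathcal{H}^{1+\tfloor{d/2}}}\le\tnorm{\eta}_{\mathcal{H}^s}$), and a bounded bilinear map between Banach spaces is automatically jointly continuous; the same reasoning applies verbatim to $(\eta,\xi)\mapsto\eta\xi$ on $\mathcal{H}^s\times\mathcal{H}^s\to\mathcal{H}^s$. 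Thus every hypothesis of the strong-tameness characterization is met, and the conclusion ${_{\m{s}}}T^\infty_{0,1+\tfloor{d/2}}$ follows for both products. I expect the writeup to be short — essentially a citation of Corollary~\ref{coro on more algebra properties}, Lemma~\ref{lem on multilinear tame maps}, and Remark~\ref{remark on purely multlinear maps}, with the one-line observation that boundedness upgrades to joint continuity — and the only point requiring the slightest care is making sure the exchange-of-roles in Corollary~\ref{coro on more algebra properties} is stated cleanly, since that corollary is phrased with the $H^t$ factor written first.
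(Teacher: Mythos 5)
Your proposal is correct and matches the paper's proof, which is a one-line citation of Remark~\ref{remark on purely multlinear maps} and Corollary~\ref{coro on more algebra properties}. The only difference is that you spell out the continuity and role-exchange details that the paper leaves implicit.
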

 \begin{proof}
     The claimed mapping properties follow from Remark~\ref{remark on purely multlinear maps} and Corollary~\ref{coro on more algebra properties}.
 \end{proof}

 The remaining results of this sub-appendix are slightly more quantitative tame estimates.

 \begin{prop}[Tame estimates on simple multipliers]\label{corollary on tame estimates on simple multipliers}
		Let $k\in\N$ and $\varphi\in H^k(\R^d)$. The following hold.
		\begin{enumerate}
			\item Suppose that $\al,\be\in\N^d$ are such that $|\al|+|\be|=k$ and $\psi\in H^{\max\tcb{1+\tfloor{d/2},k}}(\R^d)$. Then the product $\pd^\al\psi\pd^\be\varphi$ belongs to $L^2(\R^d)$ and obeys the bound
			\begin{equation}\label{a bound for me and you}
				\tnorm{\pd^\al\psi\pd^\be\varphi}_{L^2}\lesssim\tnorm{\psi}_{H^{1+\tfloor{d/2}}}\tnorm{\psi}_{H^k}+\begin{cases}
					0&\text{if }k\le\tfloor{d/2},\\
					\tnorm{\psi}_{H^k}\tnorm{\varphi}_{H^{1+\tfloor{d/2}}}&\text{if }1+\tfloor{d/2}<k.
				\end{cases}
			\end{equation}
			\item Suppose that $\psi\in H^{\max\tcb{1+\tfloor{d/2},k}}(\R^d)$. Then the product $\psi\varphi$ belongs to $H^k(\R^d)$ and obeys the bound
			\begin{equation}
				\tnorm{\psi\varphi}_{H^k}\lesssim\tnorm{\psi}_{H^{1+\tfloor{d/2}}}\tnorm{\psi}_{H^k}+\begin{cases}
					0&\text{if }k\le1+\tfloor{d/2},\\
					\tnorm{\psi}_{H^k}\tnorm{\varphi}_{H^{1+\tfloor{d/2}}}&\text{if }1+\tfloor{d/2}<k.
				\end{cases}
			\end{equation}
			\item Suppose that $\al,\be\in\N^d$ are such that $|\al|+|\be|=k$ and $\psi\in W^{k,\infty}(\R^d)$. Then the product $\pd^\al\psi\pd^\be\varphi$ belongs to $L^2(\R^d)$ and obeys the bound
			\begin{equation}
				\tnorm{\pd^\al\psi\pd^\be\varphi}_{L^2}\lesssim\tnorm{\psi}_{L^\infty}\tnorm{\varphi}_{H^k}+\tnorm{\psi}_{W^{k,\infty}}\tnorm{\varphi}_{L^2}.
			\end{equation}
			\item Suppose that $\psi\in W^{k,\infty}(\R^d)$. Then the product $\psi\varphi$ belongs to $H^k(\R^d)$ and obeys the bound
			\begin{equation}
				\tnorm{\psi\varphi}_{H^k}\lesssim\tnorm{\psi}_{L^\infty}\tnorm{\varphi}_{H^k}+\tnorm{\psi}_{W^{k,\infty}}\tnorm{\varphi}_{L^2}.
			\end{equation}
		\end{enumerate}
	\end{prop}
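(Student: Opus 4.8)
\textbf{Proof plan for Proposition~\ref{corollary on tame estimates on simple multipliers}.}
The plan is to reduce the entire statement to the standard product and commutator estimates in Sobolev spaces, organized by a Leibniz expansion together with a high--low frequency split. The four items naturally pair up: items (1) and (3) are the ``homogeneous'' versions (controlling a single product of derivatives $\pd^\al\psi\,\pd^\be\varphi$ with $|\al|+|\be|=k$), while items (2) and (4) are the ``full norm'' versions ($\tnorm{\psi\varphi}_{H^k}$). Since $\tnorm{\psi\varphi}_{H^k}^2 \asymp \sum_{|\gam|\le k}\tnorm{\pd^\gam(\psi\varphi)}_{L^2}^2$ and the Leibniz rule gives $\pd^\gam(\psi\varphi)=\sum_{\al+\be=\gam}\binom{\gam}{\al}\pd^\al\psi\,\pd^\be\varphi$, items (2) and (4) follow immediately from (1) and (3) respectively by summing finitely many terms; so the real content is in items (1) and (3).

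First I would prove item (3), which is the simplest. Here $\psi\in W^{k,\infty}$, so for each split $|\al|+|\be|=k$ we have $\pd^\al\psi\in W^{k-|\al|,\infty}$. Using the Gagliardo--Nirenberg interpolation inequality $\tnorm{\pd^\al\psi}_{L^\infty}\lesssim \tnorm{\psi}_{L^\infty}^{1-|\al|/k}\tnorm{\psi}_{W^{k,\infty}}^{|\al|/k}$ and $\tnorm{\pd^\be\varphi}_{L^2}\lesssim \tnorm{\varphi}_{L^2}^{1-|\be|/k}\tnorm{\varphi}_{H^k}^{|\be|/k}$, H\"older's inequality in the product $\pd^\al\psi\,\pd^\be\varphi$ and Young's inequality for the resulting product of powers (with exponents $k/|\al|$ and $k/|\be|$, or the obvious degenerate cases when $\al=0$ or $\be=0$) yield precisely $\tnorm{\pd^\al\psi\,\pd^\be\varphi}_{L^2}\lesssim \tnorm{\psi}_{L^\infty}\tnorm{\varphi}_{H^k}+\tnorm{\psi}_{W^{k,\infty}}\tnorm{\varphi}_{L^2}$. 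Item (4) then follows by the Leibniz sum described above. This also shows item (4) is exactly the classical Moser/Kato--Ponce product estimate.

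For item (1), the approach is to split $\varphi = \Uppi^1_{\m{L}}\varphi + \Uppi^1_{\m{H}}\varphi$ using the frequency projectors of~\eqref{notation for the Fourier projection operators}. On the low-frequency piece, Bernstein's inequality gives $\tnorm{\Uppi^1_{\m{L}}\varphi}_{W^{j,\infty}}\lesssim \tnorm{\Uppi^1_{\m{L}}\varphi}_{L^2}\lesssim\tnorm{\varphi}_{H^k}$ for all $j$, reducing the contribution of $\pd^\al\psi\,\pd^\be(\Uppi^1_{\m{L}}\varphi)$ to the bound $\tnorm{\pd^\al\psi}_{L^2}\tnorm{\pd^\be\Uppi^1_{\m{L}}\varphi}_{L^\infty}\lesssim\tnorm{\psi}_{H^{k}}\tnorm{\varphi}_{H^k}$, which after interpolating $\tnorm{\psi}_{H^{k}}$ against $\tnorm{\psi}_{H^{1+\tfloor{d/2}}}$ (only needed when $|\al|<k$; when $|\al|=k$ one keeps $\tnorm{\psi}_{H^k}$ but then $|\be|=0$ and $\Uppi^1_{\m{L}}\varphi\in L^\infty$ with $\tnorm{\Uppi^1_{\m{L}}\varphi}_{L^\infty}\lesssim\tnorm{\varphi}_{H^{1+\tfloor{d/2}}}$) matches the right-hand side. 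On the high-frequency piece $\Uppi^1_{\m{H}}\varphi$, one uses the standard fractional Leibniz estimate in $H^k$: since the frequencies are bounded below, $\tnorm{\pd^\be\Uppi^1_{\m{H}}\varphi}_{H^{k-|\be|}}\asymp \tnorm{\Uppi^1_{\m{H}}\varphi}_{H^{k}}$, and one applies the Kato--Ponce inequality with the splitting of $L^\infty$ endpoints handled by the Sobolev embedding $H^{1+\tfloor{d/2}}\emb L^\infty$. Again the two cases $k\le\tfloor{d/2}$ and $k>1+\tfloor{d/2}$ arise from whether all the top-order derivatives must land on $\psi$ or can be distributed. The main obstacle --- really a bookkeeping obstacle rather than a conceptual one --- is tracking the endpoint cases $\al=0$, $\be=0$, and the transition at $k\approx\tfloor{d/2}$ so that the dichotomy in~\eqref{a bound for me and you} comes out exactly as stated; everything else is a direct invocation of Bernstein, Gagliardo--Nirenberg interpolation, and the Kato--Ponce product rule. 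Item (2) then follows from item (1) by the Leibniz sum.
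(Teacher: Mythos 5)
The paper does not prove this proposition in-house: its ``proof'' is a one-line citation of Corollary D.7 in the authors' earlier work \cite{stevenson2023wellposedness}, so there is no internal argument to compare against, and what you have written is essentially the standard Moser-type proof. Your treatment of items (3) and (4) is correct and complete as sketched: Gagliardo--Nirenberg in $L^\infty$ and in $L^2$, H\"older, and Young give (3), and (4) (and likewise (2)) follows from the single-split estimate by the finite Leibniz expansion. One remark on the target itself: the right-hand sides of items (1) and (2) as printed read $\tnorm{\psi}_{H^{1+\tfloor{d/2}}}\tnorm{\psi}_{H^k}$, which cannot be literally correct since it is independent of $\varphi$; you are implicitly (and correctly) proving the intended bound with $\tnorm{\varphi}_{H^k}$ in the second factor.

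The one genuine soft spot is the high-frequency half of your argument for item (1) in the regime $k\le\tfloor{d/2}$. There you propose to close via Kato--Ponce ``with the $L^\infty$ endpoints handled by $H^{1+\tfloor{d/2}}\emb L^\infty$,'' but in that regime $\varphi$ only lies in $H^k$ with $k\le d/2$, so neither $\varphi$ nor $\Uppi^1_{\m{H}}\varphi$ can be placed in $L^\infty$ (the high-frequency truncation does not improve integrability), and the Kato--Ponce term $\tnorm{\psi}_{H^k}\tnorm{\varphi}_{L^\infty}$ is simply unavailable; this is not bookkeeping but a different mechanism. The correct tool is H\"older with non-endpoint Sobolev exponents, i.e.\ the bilinear estimate $\tnorm{fg}_{L^2}\lesssim\tnorm{f}_{H^{s_1}}\tnorm{g}_{H^{s_2}}$ for $s_1,s_2\ge0$ with $s_1+s_2>d/2$: place $\pd^\al\psi\in H^{1+\tfloor{d/2}-|\al|}$ and $\pd^\be\varphi\in H^{k-|\be|}=H^{|\al|}$, whose regularities sum to $1+\tfloor{d/2}>d/2$, so the product lands in $L^2$ with constant $\tnorm{\psi}_{H^{1+\tfloor{d/2}}}\tnorm{\varphi}_{H^k}$ and no $L^\infty$ control of $\varphi$ is ever needed. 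With that substitution your plan closes. Note also that the $\Uppi^1_{\m{L}}/\Uppi^1_{\m{H}}$ split of $\varphi$ is superfluous for this proposition: it is needed elsewhere in the paper only because elements of the anisotropic spaces $\mathcal{H}^s$ need not be square integrable, whereas here $\varphi\in H^k$ outright, so the direct H\"older/Sobolev/interpolation argument applies to $\varphi$ whole.
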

	\begin{proof}
This is Corollary D.7 in Stevenson and Tice~\cite{stevenson2023wellposedness}.
 \end{proof}

\begin{prop}[Tame estimates on commutators]\label{prop tame commutator estimates}
		Let $k,m\in\N$ with $m\ge 1$, $j\in\tcb{1,\dots,d}$, and $\varphi\in H^{k+m-1}(\R^d)$. The following hold.
		\begin{enumerate}
			\item Suppose that $\psi\in H^{\max\tcb{2+\tfloor{d/2},k+m}}_{\loc}(\R^d)$ satisfies $\pd_j\psi\in H^{\max\tcb{1+\tfloor{d/2},k+m-1}}(\R^d)$. Then the commutator $[\pd_j^m,\psi]\varphi$ belongs to $H^{k}(\R^d)$ and obeys the bound
			\begin{equation}
				\tnorm{[\pd_j^m,\psi]\varphi}_{H^k}\lesssim\tnorm{\pd_j\psi}_{H^{1+\tfloor{d/2}}}\tnorm{\varphi}_{H^{k+m-1}}+\begin{cases}
					0&\text{if }k+m\le2+\tfloor{d/2},\\
					\tnorm{\pd_j\psi}_{H^{k+m-1}}\tnorm{\varphi}_{H^{1+\tfloor{d/2}}}&\text{if }2+\tfloor{d/2}<k+m.
				\end{cases}
			\end{equation}
			\item Suppose that $\psi\in W^{k+m,\infty}_{\loc}(\R^d)$ satisfies $\pd_j\psi\in W^{k+m-1,\infty}(\R^d)$. Then the commutator $[\pd_j^m,\psi]\varphi$ belongs to $H^k(\R^d)$ and obeys the bound
			\begin{equation}
				\tnorm{[\pd_j^m,\psi]\varphi}_{H^k}\lesssim\tnorm{\pd_j\psi}_{L^\infty}\tnorm{\varphi}_{H^{k+m-1}}+\tnorm{\pd_j\psi}_{W^{k+m-1,\infty}}\tnorm{\varphi}_{L^2}.
			\end{equation}
		\end{enumerate}
\end{prop}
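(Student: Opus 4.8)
The statement to be proved is Proposition~\ref{prop tame commutator estimates}, the tame commutator estimates for $[\pd_j^m,\psi]$. The strategy is to reduce the commutator to a finite sum of lower-order products and then invoke the multiplicative estimates of Proposition~\ref{corollary on tame estimates on simple multipliers}. First I would record the Leibniz-type identity for iterated commutators: for $m\ge 1$ one has
\begin{equation}
    [\pd_j^m,\psi]\varphi=\sum_{p=1}^{m}\binom{m}{p}(\pd_j^p\psi)(\pd_j^{m-p}\varphi),
\end{equation}
which is proved by a one-line induction on $m$ (the $m=1$ case being the ordinary product rule, and the inductive step following by applying $\pd_j$ and rearranging). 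The crucial structural observation is that every term on the right carries \emph{at least one} $\pd_j$ on $\psi$, so the estimate may be phrased entirely in terms of $\pd_j\psi$ (rather than $\psi$ itself), which is exactly what the statement asks for; write $\chi=\pd_j\psi$, so that $\pd_j^p\psi=\pd_j^{p-1}\chi$ with $p-1\in\{0,\dots,m-1\}$.

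\textbf{Key steps.} With the identity in hand, the proof splits into the two items. For the first item, I would estimate $\tnorm{[\pd_j^m,\psi]\varphi}_{H^k}$ by summing $\tnorm{(\pd_j^{p-1}\chi)(\pd_j^{m-p}\varphi)}_{H^k}$ over $1\le p\le m$. Each summand is a product of a derivative of $\chi$ of order $\le m-1$ with a derivative of $\varphi$ of order $\le m-1$; after expanding the $H^k$ norm via Leibniz, each resulting term is $\pd^\al\chi\,\pd^\be\varphi$ with $|\al|\le k+m-1$ and $|\be|\le k+m-1$ and $|\al|+|\be|\le k+m-1$, so $\chi$ needs $\le k+m-1$ derivatives (matching the hypothesis $\pd_j\psi\in H^{\max\{1+\tfloor{d/2},k+m-1\}}$) and $\varphi$ needs $\le k+m-1$ derivatives. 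I would then apply the first item of Proposition~\ref{corollary on tame estimates on simple multipliers} (with its parameter playing the role of $k+m-1$ here), which gives precisely the split $\tnorm{\chi}_{H^{1+\tfloor{d/2}}}\tnorm{\varphi}_{H^{k+m-1}}$ plus, when $k+m-1>1+\tfloor{d/2}$, the term $\tnorm{\chi}_{H^{k+m-1}}\tnorm{\varphi}_{H^{1+\tfloor{d/2}}}$; summing over $p$ absorbs the binomial constants into the implicit constant and yields the claimed bound, with the dichotomy threshold $k+m\le 2+\tfloor{d/2}$ versus $2+\tfloor{d/2}<k+m$ coming out correctly since $k+m-1\le 1+\tfloor{d/2}$ iff $k+m\le 2+\tfloor{d/2}$. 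For the second item, I would run the identical argument but invoke the third and fourth items of Proposition~\ref{corollary on tame estimates on simple multipliers} (the $W^{k,\infty}$-based estimates) in place of the first item, which gives $\tnorm{\chi}_{L^\infty}\tnorm{\varphi}_{H^{k+m-1}}+\tnorm{\chi}_{W^{k+m-1,\infty}}\tnorm{\varphi}_{L^2}$ with no dichotomy, exactly as stated.

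\textbf{Main obstacle.} This proposition is genuinely routine given the tools already established, so there is no serious analytic obstacle; the only point requiring a little care is bookkeeping the derivative counts when one expands both the iterated commutator and the outer $H^k$ norm, to confirm that no term ever demands more than $k+m-1$ derivatives on $\varphi$ or more than $\max\{1+\tfloor{d/2},k+m-1\}$ derivatives on $\pd_j\psi$, and that the "low-high/high-low" split emitted by Proposition~\ref{corollary on tame estimates on simple multipliers} assembles into the stated form after summation. One should also note at the start that the local-Sobolev hypothesis $\psi\in H^{\max\{2+\tfloor{d/2},k+m\}}_{\loc}$ (respectively $W^{k+m,\infty}_{\loc}$) is only used to make sense of the products pointwise and to justify the commutator identity; all quantitative bounds use only $\pd_j\psi$ in the global space. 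I expect the write-up to be short: the combinatorial identity, one invocation of the product estimates per item, and a summation.
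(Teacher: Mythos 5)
Your argument is correct: the Leibniz expansion $[\pd_j^m,\psi]\varphi=\sum_{p=1}^{m}\binom{m}{p}(\pd_j^p\psi)(\pd_j^{m-p}\varphi)$ puts at least one $\pd_j$ on $\psi$ in every term, and your careful choice to expand the outer $H^k$ norm and apply the first (resp.\ third and fourth) item of Proposition~\ref{corollary on tame estimates on simple multipliers} termwise to $\pd^\al(\pd_j\psi)\,\pd^\be\varphi$ with $|\al|+|\be|\le k+m-1$ --- rather than applying the $H^k$-product estimate of item 2 directly, which would overload the low-norm factor --- is exactly the right bookkeeping, and the dichotomy threshold comes out as stated. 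The paper itself gives no argument here; it simply cites Corollary D.8 of the authors' earlier work \cite{stevenson2023wellposedness}, and your write-up is the standard self-contained proof of that result.
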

 \begin{proof}
     This is Corollary D.8 in Stevenson and Tice~\cite{stevenson2023wellposedness}.
 \end{proof}


\section{The shallow water system}\label{appendix on the shallow water system}

\subsection{Derivation of the forced-traveling model}\label{Ian's appendix post appendectomy}

In this subsection we present a derivation of the forced-traveling formulation of the shallow water equations, which is system~\eqref{traveling wave formulation of the equation}. While there are numerous derivations of the damped shallow water equations in the literature (see, for example, Bresch~\cite{MR2562163}), to best of the authors' knowledge, there does not exist a derivation with the goal of determining the correct or physically relevant form of the forcing, nor is there a derivation  for which the starting point is the traveling free boundary incompressible Navier-Stokes equations, rather than the dynamic equations. What we present here is a derivation, largely inspired by the one of Bresch~\cite{MR2562163}, in which these two gaps are filled.  The resulting derivation then informs the structure of the system we study throughout the main body of the paper.

\subsubsection{Free boundary Navier-Stokes in traveling wave form}

We assume a viscous incompressible fluid with viscosity $\Bar{\mu}>0$ fills a layer-like domain in $\R^{n}$, where $n = d+1$ for $d \ge 1$ the dimension of the fluid's cross-section.   We posit a traveling wave ansatz  moving in the $e_1$ direction with speed $\bar{\gamma} \in \R$, 
and we assume that in the moving coordinate system the fluid occupies the stationary domain $\Omega_{\bar{\eta}} = \tcb{(x,y) \in \R^{n} \;:\: 0 < y <\bar{\eta}(x) }$ with free boundary $\Sigma_{\bar{\eta}} = \tcb{(x,y) \in \R^n \;:\; y = \bar{\eta}(x)}$ for an unknown free surface function $\bar{\eta} : \R^d \to \R^+$.  We assume the equilibrium height of the fluid is $H >0$ with the corresponding equilibrium domain $\Omega_H= \R^d \times (0,H)$, flat upper boundary $\Sigma_H$, and flat lower boundary $\Sigma_0$.  The equilibrium domain can then be conveniently mapped to the moving one via $\mathfrak{F} : \Omega \to \Omega_{\bar{\eta}}$ given by $\mathfrak{F}(x,y) = (x,\bar{\eta}(x) y/H)$.  Following the derivation in Section 1 of Leoni and Tice~\cite{leoni2019traveling} (see also \cite{koganemaru_tice_23}) with minor modification, we may use $\mathfrak{F}$ to pose the traveling wave free boundary Navier-Stokes system in the fixed domain $\Omega_H$:
\begin{equation}\label{flattened_system}
\begin{cases}
(\bar{V}-\bar{\gamma} e_1) \cdot \grad_{\mathcal{A}} \bar{V}  - \bar{\mu} \Delta_{\mathcal{A}} \bar{V} + \grad_{\mathcal{A}} \bar{p}  = \bar{f} \circ \mathfrak{F}  & \text{in } \Omega_H \\
 \Div_{\mathcal{A}}{\bar{V}}=0 & \text{in } \Omega_H \\
 (\bar{p}I-  \bar{\mu} \mathbb{D}_{\mathcal{A}} \bar{V}) \mathcal{N} = (\bar{g} \bar{\eta} -\bar{\sigma} \mathcal{H}(\bar{\eta}) )\mathcal{N}  + \bar{\Xi}\circ \mathfrak{F} \mathcal{N} & \text{on } \Sigma_{H} \\
  \bar{V}\cdot \mathcal{N} + \bar{\gamma} \pd_1 \bar{\eta} = 0 &\text{on } \Sigma_{H} \\
 \bar{V} \cdot e_{n} =0 &\text{on } \Sigma_0 \\
 (\bar{\mu} \mathbb{D}_{\mathcal{A}} \bar{V} e_{n} )'=\bar{\alpha} \bar{V}' &\text{on } \Sigma_0.
\end{cases}
\end{equation}
Here, in addition to $\bar{\eta}$, the unknowns are the velocity field $\bar{V} : \Omega_H \to \R^n$ and the pressure $\bar{p}: \Omega_H \to \R$.  In these equations and in what follows we employ the standard notation of $'$ to indicate the first $d=n-1$ components of a vector in $\R^{n}$.  The vector field $\bar{f}: \R^n \to \R^n$ is a given applied force, and $\bar{\Xi} : \R^{n} \to \R^{n \times n}_{\text{sym}}$ is a given applied  symmetric stress tensor. $\mathcal{N}=(-\grad'\bar{\eta},1)$ is the non-unit normal vector. The surface tension coefficient is $\Bar{\sig}\ge0$ and $\mathcal{H}$ refers to the following mean curvature
\begin{equation}
    \mathcal{H}(\Bar{\eta})=\m{div}'\tp{\grad'\Bar{\eta}\tp{1+|\grad'\Bar{\eta}|^2}^{-1/2}}.
\end{equation}
The map $\mathcal{A} = (\grad \mathcal{F})^{-\m{t}}: \Omega_H \to \R^{n \times n}$ is 
\begin{equation} 
 \mathcal{A}(x,y) =  
\begin{pmatrix}
 I_{d \times d} & - y \grad' \bar{\eta}(x) /  \bar{\eta}(x) \\
0_{1 \times d} & H/\bar{\eta}(x)
\end{pmatrix},
\end{equation}
and the associated $\mathcal{A}-$differential operators are given via 
\begin{equation}
(\grad_{\mathcal{A}} \bar{p})_i  = \sum_{j=1}^n \mathcal{A}_{ij} \pd_j \bar{p}, \;   
\Div_{\mathcal{A}} \bar{V} = \sum_{i,j=1}^n \mathcal{A}_{ij}\pd_j \bar{V}_i, 
\text{ and } (\Delta_{\mathcal{A}} \bar{V})_i = \sum_{j=1}^n \sum_{k=1}^n \sum_{m=1}^n \mathcal{A}_{jk}\pd_k \tp{\mathcal{A}_{jm} \pd_m \bar{V}_i } 
\end{equation}
and (for a given vector field  $X$)   
\begin{equation}
(X \cdot \grad_{\mathcal{A}} \bar{V})_i = \sum_{j,k=1}^n X_j \mathcal{A}_{jk} \pd_k \bar{V}_i \text{ and } (\mathbb{D}_{\mathcal{A}} \bar{V})_{ij} =\sum_{k=1}^n \tp{ \mathcal{A}_{ik} \pd_k \bar{V}_j + \mathcal{A}_{jk} \pd_k \bar{V}_i }.
\end{equation}

\subsubsection{Scaling  }

We introduce the scale $\ep>0$ and introduce the rescaled functions
\begin{equation}
\begin{array}{ccc}
   \bar{V}'(x,y) = \ep u(\ep x,y),  & \bar{V}_n(x,y) = \ep^2 v(\ep x,y), &
  \bar{p}(x,y) = \ep^2 p(\ep x, y), \\
  \bar{\eta}(x) = \eta(\ep x),   & 
   \bar{f}'(x,y) = \ep^3 f'(\ep x,y), &  \bar{f}_n(x,y) = \ep^{4} f_n(\ep x,y). 
\end{array}    
\end{equation}
This handles everything except for $\bar{\Xi}$, which we further decompose as 
\begin{equation}
\bar{\Xi} = 
\begin{pmatrix}
\bar{\Xi}' & \bar{\xi} \\
\bar{\xi} & \bar{\Xi}_{nn}
\end{pmatrix}
\text{ for }
\bar{\Xi}' : \R^{n} \to \R^{d \times d} \text{ and }\bar{\xi} : \R^{n} \to \R^d,
\end{equation}
and then rescale according to
\begin{equation}
\bar{\Xi}'(x,y)  = \ep^2 \Xi'(\ep x,y), \;
\bar{\xi}(x,y)   = \ep^3 \xi(\ep x,y), \;
\bar{\Xi}_{nn}(x,y)  = \ep^2 \Xi_{nn}(\ep x,y).
\end{equation}
Finally, we rescale the physical parameters via
\begin{equation}
     \bar{\gamma} = \ep \gamma, \;
 \bar{g} = \ep^2 g, \; 
 \bar{\mu} =  \mu, \;
 \bar{\sigma} = \sigma, \;
 \bar{\alpha} = \ep^2 \alpha.
\end{equation}

Plugging in these rescaled quantities yields the following. The momentum equation's horizontal part is
\begin{multline}\label{scaled_mom_h}
\ep^3 f'(\cdot,y \eta/H) = \ep^3 \grad' p -  \ep^3 (y/\eta) \grad'  \eta \pd_n p + 
\ep^3( u -  \gamma e_1) \cdot   \grad' u  +  \ep^3\tp{ (-y/\eta)\grad'\eta \cdot (u - \gamma e_1) (H/\eta )v} \pd_n v \\
- \mu \tp{\ep^3 \Delta' u + \ep ((H/\eta)^2 +\ep^2 (y/\eta)^2 \tabs{\grad' \eta}^2)  \pd_n^2 u- \ep^3 2(y/\eta) \grad \eta\cdot\grad \pd_n u 
- \ep^3 (y/\eta^2)( \eta  \Delta' \eta - 2\tabs{\grad'\eta}^2) \pd_n u },
\end{multline}
and its vertical part is
\begin{multline}\label{scaled_mom_v}
\ep^4 f_n (\cdot,y \eta/H) =  \ep^2 (H/\eta) \pd_n p + 
\ep^4 (u - \gamma e_1) \cdot \grad' v  
+ \ep^4 \tp{(-y/\eta)  \grad'\eta \cdot ( u -  \gamma e_1) +  (H/\eta) v }\pd_n v  \\
- \mu \tp{\ep^4 \Delta' v + \ep^2 ((H/\eta)^2 + \ep^2 (y/\eta)^2 \tabs{\grad' \eta}^2) \pd_n^2 v 
- \ep^4 2(y/\eta) \grad \eta\cdot\grad\pd_n v - \ep^4 (y/\eta^2)( \eta  \Delta'\eta -  2\tabs{\grad'\eta}^2) \pd_n v } . 
\end{multline}
The balance of mass becomes
\begin{equation}\label{scaled_bom}
\ep^2 \Div' u - \ep^2 (y/\eta)  \grad' \eta \cdot  \pd_n u + \ep^2 (H/\eta) \pd_n v  = 0.
\end{equation}
The dynamic boundary condition's normal part gives 
\begin{multline}\label{scaled_dyn_norm}
 \ep^2 p - \ep^2 g \eta +  \sigma \bp{ \frac{\ep^2 \Delta' \eta}{(1+ \ep^2 \tabs{\grad'\eta}^2)^{1/2}}  - \frac{\ep^4 (\grad')^2\eta \grad'\eta \cdot \grad'\eta  }{(1+\ep^2\tabs{\grad'\eta}^2)^{3/2}}  } 
- \mu \tp{ 
\ep^4 \mathbb{D}' u \grad'\eta \cdot \grad'\eta  - 2  \ep^4 \grad'\eta \cdot \grad' v 
\\
- \ep^4 2(y/\eta)\tabs{\grad' \eta }^2 \grad'\eta \cdot  \pd_n u 
- \ep^2 (H/\eta) \grad'\eta  \cdot \pd_n u
+  2 \ep^4 (y/\eta)  \tabs{\grad' \eta}^2 \pd_n v 
+ 2\ep^2 (H/\eta)\pd_n v
} \tp{1+\ep^2 \tabs{\grad'\eta}^2}^{-1} 
\\  
- 
\tp{    \ep^4 \Xi'(\cdot,\eta) \grad'\eta \cdot \grad'\eta -  \ep^4 2 \xi(\cdot,\eta) \cdot \grad'\eta    + \ep^2 \Xi_{nn} (\cdot,\eta) }
\tp{1+ \ep^2 \tabs{\grad'\eta}^2}^{-1} 
=0,
\end{multline}
while its tangential part yields (suppressing higher order terms)
\begin{multline}\label{scaled_dyn_tan}
\ep \mu(H/\eta) \pd_n u + \ep^3 \mu \tp{- \mathbb{D}' u \grad' \eta + 2(H/\eta) \pd_n u \tabs{\grad' \eta}^2      +  \grad' v   + (H/\eta) \partial_n v \grad' \eta  - (H/\eta) \grad' \eta \cdot \partial_n u \grad' \eta  } \\
+\ep^3\tp{- \Xi'(\cdot,\eta) \grad' \eta  +  \xi(\cdot,\eta) +   \Xi_{nn}(\cdot,\eta) \grad' \eta} = O(\ep^4)   
\end{multline}
and 
\begin{equation}
     \ep^2 \mu (H/\eta) \grad' \eta \cdot \pd_n u  = O(\ep^4).
\end{equation}
The kinematic boundary condition becomes
\begin{equation}\label{scaled_kin}
\ep^2 \gamma \pd_1 \eta - \ep^2 \grad' \eta \cdot u + \ep^2 v = 0. 
\end{equation}
The Navier-slip boundary conditions on the bottom (where $y=0$) read
\begin{equation}\label{scaled_navier}
 v = 0 
\text{ and } \ep \mu (H/\eta)\pd_n u = \ep^3 \alpha  u.
\end{equation}

\subsubsection{Formal expansion and equations of motion }

We now make the assumption that for $F \in \{\eta,u,v,p,f,\Xi\}$ we have a formal expansion in powers of $\ep^2$:  $F = \sum_{k=0}^\infty \ep^{2k} F_k$. These expansions can then be plugged into the equations \eqref{scaled_mom_h}--\eqref{scaled_navier} to derive a hierarchy of equations.  As it turns out, for our purposes here we only need to track $\eta_0,u_0, v_0,p_0,f_0,\Xi_0$ and $u_1$.  In order to simplify notation we will drop the zero subscripts and denote $u_1$ by $\bar{u}$.

We begin by recording the lowest order terms.  The order $\ep^2$ terms in the equations \eqref{scaled_mom_h}--\eqref{scaled_bom} yield the following in $\Omega_H$:
\begin{equation}\label{lowest_mom_1}
 (H/\eta)^2 \pd_n^2 u =0, \;
 (H/\eta)^2 \pd_n^2 v + (H/\eta) \pd_n p =0,
 \text{ and }
 \Div' u +  (H/\eta) \pd_n v - (y/\eta) \grad' \eta \cdot \pd_n u = 0.
\end{equation}
On $\Sigma_H$ the $\ep^2$ part of \eqref{scaled_dyn_norm} and \eqref{scaled_dyn_tan} yield
\begin{equation}\label{lowest_dyn_n}
  p -  g \eta +  \sigma \Delta' \eta  -  \Xi_{nn}(\cdot,\eta)
- \mu(H/\eta)\tp{ -    \grad'\eta  \cdot \pd_n u
+   2\pd_n v}
=0,
\text{ and } 
\mu (H/\eta) \pd_n u =0.\end{equation}
The $\ep^2$ terms in \eqref{scaled_kin} and \eqref{scaled_navier} show that 
\begin{equation}\label{lowest_kin_ns}
 \gamma \pd_1 \eta -  \grad' \eta \cdot u +  v = 0 \text{ on } \Sigma_H, \text{ and } v=0 \text{ and }\mu(H/\eta) \pd_n u =0 \text{ on } \Sigma_0.
\end{equation}

The equation \eqref{lowest_mom_1}, together with \eqref{lowest_dyn_n} and \eqref{lowest_kin_ns} imply that $\pd_n u=0$, or in other words, $u = u(x)$.  We then integrate the third equation in \eqref{lowest_mom_1} vertically and use \eqref{lowest_kin_ns} to compute $(H/\eta) v  = -y  \Div' u$. In turn, this implies that $\pd_n^2 v =0$, and so \eqref{lowest_mom_1} implies that $\pd_n p =0$, so 
\begin{equation}\label{zero_p}
 p = p(\cdot,H) = g \eta - \sigma \Delta' \eta + \Xi_{nn}(\cdot,\eta) + 2\mu (H/\eta)\pd_n u = g \eta - \sigma \Delta' \eta + \Xi_{nn}(\cdot,\eta) - 2 \mu \Div' u.
\end{equation}
Finally, plugging into \eqref{lowest_kin_ns} shows that $\gamma \pd_1 \eta - \grad' \eta \cdot u = -v(\cdot,H) = \eta \Div' u$, and hence 
\begin{equation}\label{zero_eta}
\Div'( \eta(u-\gamma e_1)) =  \gamma \pd_1 \eta - \Div'(\eta u) =0.
\end{equation}
This is all we learn at the lowest level of the expansion in $\ep$, so we now proceed to the next.  

At order $\ep^3$ \eqref{scaled_mom_h} gives an equation for $u$, coupled to $\bar{u}$:
\begin{multline}\label{first_horiz}
f'(\cdot,y \eta/H) =
( u -  \gamma e_1) \cdot   \grad' u   
- \mu  \Delta' u   - \mu (H/\eta)^2 \pd_n^2 \bar{u} 
+ \grad' p  
\\
=
( u -  \gamma e_1) \cdot   \grad' u   
- \mu  \Delta' u   - \mu (H/\eta)^2 \pd_n^2 \bar{u} 
+ \grad'(g \eta - \sigma \Delta' \eta + \Xi_{nn}(\cdot,\eta) - 2\mu \Div' u). \end{multline}
At order $\ep^3$ \eqref{scaled_dyn_tan} yields
\begin{multline}\label{first_tangent}
   \Xi'(\cdot,\eta) \grad' \eta  -  \xi(\cdot,\eta) -   \Xi_{nn}(\cdot,\eta) \grad' \eta  
   = 
\mu (H/\eta) \pd_n \bar{u} + \mu \tp{- \mathbb{D}' u \grad' \eta + (H/\eta)\pd_n v \grad' \eta   +  \grad' v     } \\
=
 \mu (H/\eta) \pd_n \bar{u} - \mu  \mathbb{D}' u \grad' \eta  -  \mu \grad' ( \eta \Div'{u}  ) -\mu (\Div'{u}) \grad' \eta,
\end{multline} and \eqref{scaled_navier} yields $\mu (H/\eta)\pd_n \bar{u} =  \alpha  u$, which together show that
\begin{multline}\label{first_integrated}
 \int_0^H \mu (H/\eta)^2 \pd_n^2 \bar{u}(\cdot,y) \;\m{d}y =  (H/\eta)\tp{ 
 \Xi'(\cdot,\eta) \grad' \eta  -  \xi(\cdot,\eta) -   \Xi_{nn}(\cdot,\eta) \grad' \eta \\
+ \mu  \mathbb{D}' u \grad' \eta  +  \mu \grad' (\eta \Div'{u}  ) +\mu (\Div'{u}) \grad' \eta - \alpha u }. 
\end{multline}
Thus, upon averaging \eqref{first_horiz} over $y \in (0,H)$ and using \eqref{first_integrated}, we eliminate $\bar{u}$ and get 
\begin{multline}\label{second_integrated}
 \frac{1}{H} \int_0^H f'(\cdot,y \eta/H) \;\m{d}y    =( u -  \gamma e_1) \cdot   \grad' u   
- \mu  \Delta' u   - \tp{  \mu  \mathbb{D}' u \grad' \eta  +  \mu \grad' (\eta \Div'{u}  ) +\mu (\Div'{u}) \grad' \eta - \alpha u  }/\eta\\
-\tp{
\Xi'(\cdot,\eta) \grad' \eta  -  \xi(\cdot,\eta) -   \Xi_{nn}(\cdot,\eta) \grad' \eta
}/\eta
+ \grad'(g \eta - \sigma \Delta' \eta + \Xi_{nn} (\cdot,\eta)- 2\mu \Div' u).
\end{multline}
Note that
\begin{equation}
    \frac{1}{H}  \int_0^H f'(\cdot,y \eta/H) \;\m{d}y  =  \frac{1}{\eta} \int_0^\eta f'(\cdot,y) \;\m{d}y, 
\end{equation}
so \eqref{second_integrated} can be rewritten as
\begin{multline}
  \int_0^\eta f'(\cdot,y )\;\m{d}y    =\eta ( u -  \gamma e_1) \cdot   \grad' u   
- \mu \eta\Delta' u   - \tp{  \mu  \mathbb{D}' u \grad' \eta  +  \mu \grad' (\eta \Div'{u}  ) + \mu (\Div'{u}) \grad' \eta - \alpha u  } \\
- \tp{
\Xi'(\cdot,\eta) \grad' \eta  -  \xi(\cdot,\eta) -   \Xi_{nn}(\cdot,\eta) \grad' \eta
}
+ \eta \grad'(g \eta - \sigma \Delta' \eta + \Xi_{nn} (\cdot,\eta)- 2\mu \Div' u).
\end{multline}
Upon computing
\begin{multline}
    \Div'( \eta \mathbb{D}' u + 2 \eta (\Div' u) I  ) = \mathbb{D}' u \grad' \eta + \eta \Delta' u + 2 \Div'{u} \grad' \eta + 3 \eta \grad' \Div' u \\
    = \mathbb{D}' u \grad' \eta + \eta \Delta' u + \grad'(\eta \Div' u) + \Div'{u} \grad' \eta + 2 \eta \grad' \Div' u,
\end{multline}
we may rewrite the prior equation as
\begin{multline}\label{final_mom}
\eta ( u -  \gamma e_1) \cdot   \grad' u   
  + \alpha  u    
+ \eta \grad'(g \eta - \sigma \Delta' \eta)
- \mu \Div'( \eta \mathbb{D}' u + 2 \eta (\Div' u) I  ) \\
=  \int_0^\eta f'(\cdot,y )\;\m{d}y + 
\Xi'(\cdot,\eta) \grad' \eta  -  \xi(\cdot,\eta) -   \Xi_{nn}(\cdot,\eta) \grad' \eta
- \eta \grad\tp{  \Xi_{nn}(\cdot,\eta) }.
\end{multline}

We now combine \eqref{zero_eta} and \eqref{final_mom} and drop the primes on the differential operators to arrive at the forced-traveling shallow water system:
\begin{equation}\label{derived traveling shallow water system}
\begin{cases}
 \gamma \pd_1 \eta - \Div(\eta u) =0, \\
 \eta ( u -  \gamma e_1) \cdot   \grad u   
  + \alpha  u    
 + \eta \grad(g \eta - \sigma \Delta \eta)
- \mu\Div( \eta \mathbb{D} u + 2 \eta (\Div u) I  ) \\
\quad =\int_0^\eta f'(\cdot,y )\;\m{d}y + 
\Xi'(\cdot,\eta) \grad \eta  -  \xi(\cdot,\eta) -   \Xi_{nn}(\cdot,\eta) \grad \eta
- \eta \grad\tp{ \Xi_{nn}(\cdot,\eta) }.
\end{cases}
\end{equation}

\subsection{Dissipation calculation}\label{appendix on dissipation calculation}

The goal of this subsection is to justify the role of the forcing term $\mathcal{F}$ in the generation of nontrivial traveling wave solutions to the shallow water system.

\begin{prop}[Power-dissipation of the shallow water system]\label{prop on PD calculation}
    Suppose that $v\in H^2(\R^d;\R^d)$, $\eta\in\mathcal{H}^{3}(\R^d)$, and $\mathcal{F}\in L^2(\R^d;\R^d)$ obey system~\eqref{traveling wave formulation of the equation} for some choice of $\gam\in\R^+$, $\mu,\sig\in[0,\infty)$. Then identity~\eqref{power dissipation identity} is valid.
\end{prop}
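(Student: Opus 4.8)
The plan is to derive the identity~\eqref{power dissipation identity} by testing the second equation of~\eqref{traveling wave formulation of the equation} against $v$ and integrating over $\R^d$, then simplifying each resulting term using the first equation together with several integrations by parts. First I would verify that all the integrals make sense: the hypotheses $v\in H^2$, $\eta\in\mathcal H^3$, and $\mathcal F\in L^2$, combined with the algebra and multiplier estimates of Corollary~\ref{coro on more algebra properties} and Proposition~\ref{corollary on tame estimates on simple multipliers} (as in the proof of Proposition~\ref{prop on smoothness of the forward mapping}), guarantee that $v$ decays, that $(1+\eta)(v-\gam e_1)\cdot\grad v$, $\grad\cdot((1+\eta)\S v)$, and $(1+\eta)(I-\sig^2\Delta)\grad\eta$ all lie in $L^2$, and hence that $\int_{\R^d}(\text{second equation})\cdot v$ is a legitimate absolutely convergent integral equal to zero.

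Next I would treat the four groups of terms separately. For the advection term, $\int (1+\eta)(v-\gam e_1)\cdot\grad v\cdot v = \tfrac12\int(1+\eta)(v-\gam e_1)\cdot\grad|v|^2$; integrating by parts this equals $-\tfrac12\int \grad\cdot((1+\eta)(v-\gam e_1))\,|v|^2$, which vanishes identically by the first equation of~\eqref{traveling wave formulation of the equation}. The zeroth-order term contributes $\int|v|^2$ directly. For the viscous term, $-\mu^2\int\grad\cdot((1+\eta)\S v)\cdot v = \mu^2\int(1+\eta)\,\S v:\grad v$; here I would use the symmetry of $\S v$ to rewrite $\S v:\grad v = \S v:(\tfrac12(\grad v+\grad v^{\m t})) $ and then decompose $\S v = \mathbb D^0 v + 2(1+\tfrac1d)(\grad\cdot v)I$ where $\mathbb D^0 v = \grad v+\grad v^{\m t}-\tfrac2d(\grad\cdot v)I$ is trace-free, so that the cross term drops and we are left with $\mu^2\int(1+\eta)\big(\tfrac12|\mathbb D^0 v|^2 + 2(1+\tfrac1d)|\grad\cdot v|^2\big)$ — this is the second term in~\eqref{power dissipation identity}. (The precise bookkeeping of the constants, matching the definition $\S v = \grad v+\grad v^{\m t}+2(\grad\cdot v)I$ from~\eqref{viscous stress tensor like object}, is the one place where care is needed: one must check that $\tfrac12|\mathbb D^0v|^2 + 2(1+\tfrac1d)|\grad\cdot v|^2$ really equals $\tfrac12\S v:(\grad v+\grad v^{\m t})$ pointwise.)

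For the gravity-capillary term I would show $\int(1+\eta)(I-\sig^2\Delta)\grad\eta\cdot v = 0$. Integrating by parts moves the gradient onto $(1+\eta)v$, giving $-\int (I-\sig^2\Delta)\eta\,\grad\cdot((1+\eta)v)$; but $\grad\cdot((1+\eta)v) = \grad\cdot((1+\eta)(v-\gam e_1)) + \gam\pd_1(1+\eta) = \gam\pd_1\eta$ by the first equation, so this reduces to $-\gam\int(I-\sig^2\Delta)\eta\,\pd_1\eta = -\tfrac{\gam}{2}\int \pd_1\big(|\eta|^2 + \sig^2|\grad\eta|^2\big) = 0$. (Here one uses $\eta\in\mathcal H^3$ together with the spatial characterization of the anisotropic spaces in Proposition~\ref{proposition on spatial characterization of anisobros} to justify the integration by parts with $\grad\eta\in L^2$ and $\eta\pd_1\eta$ integrable.) Finally $\int\mathcal F\cdot v$ is left as is. Summing the four contributions yields exactly~\eqref{power dissipation identity}.

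\textbf{Main obstacle.} The only genuinely delicate point is the justification of integrating by parts and discarding boundary-at-infinity terms when $\eta$ is only in the anisotropic space $\mathcal H^3$ rather than a standard Sobolev space — in particular $\eta$ itself need not decay, only $\grad\eta$ does. I would handle this by writing the gravity-capillary computation purely in terms of $\grad\eta$ and $\pd_1\eta$ (both in $L^2$ by Proposition~\ref{proposition on spatial characterization of anisobros}) and of $v\in H^2\hookrightarrow$ (decaying), never in terms of $\eta$ in isolation, possibly passing through a density/frequency-truncation argument ($\eta = \Uppi^\kappa_{\m L}\eta + \Uppi^\kappa_{\m H}\eta$ as in Proposition~\ref{proposition on frequency splitting}, handling the low part by hand and sending $\kappa\to0$) to make the integrations by parts rigorous. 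Everything else is routine pointwise algebra and standard integration by parts for functions with enough decay.
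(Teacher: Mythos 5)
Your proposal is correct and follows essentially the same route as the paper: test the momentum equation against $v$, kill the advection term via the continuity equation, extract the dissipation from $\S v$ exactly as you describe, and handle the gravity-capillary term by moving the gradient onto $\grad\cdot((1+\eta)v)$ and substituting the first equation. The "main obstacle" you identify (lack of decay of $\eta$ itself) is precisely the delicate point, and your proposed fix — the frequency splitting $\eta=\Uppi^\kappa_{\m L}\eta+\Uppi^\kappa_{\m H}\eta$ with $\kappa\to0$ — is exactly what the paper does.
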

\begin{proof}
    The strategy is to test the second equation in~\eqref{traveling wave formulation of the equation} with $v$ and integrate by parts.  The term involving $\int_{\R^d} \mathcal{F} \cdot v$ is left alone, but we recompute the rest.  First, for the dissipation term we readily obtain
    \begin{equation}\label{calculation for the dissipation term}
        \int_{\R^d}\tp{v-\mu^2\grad\cdot((1+\eta)\S v)}\cdot v=\int_{\R^d}|v|^2+\mu^2\bp{\f12|\mathbb{D}^0v|^2+2\bp{1+\f{1}{d}}|\grad\cdot v|^2}.
    \end{equation}
    Next, consider the advective derivative term:
    \begin{equation}\label{calculation for the advective derivative}
        \int_{\R^d}(1+\eta)\tp{(v-\gam e_1)\cdot\grad v}\cdot v=-\f12\int_{\R^d}\grad\cdot((1+\eta)(v-\gam e_1))|v|^2=0,
    \end{equation}
    where the final equality follows via applying the first equation in~\eqref{traveling wave formulation of the equation}. Finally, we consider the gravity capillary term. Fix a frequency cut-off parameter $\kappa\in(0,1)$ and split $\eta=\Uppi^{\kappa}_{\m{L}}\eta+\Uppi^\kappa_{\m{H}}\eta$ according to~\eqref{notation for the Fourier projection operators}. We then have the identities
    \begin{equation}\label{calculation for the grav cap 1}
        \int_{\R^d}(1+\eta)(I-\sig^2\Delta)\grad\eta\cdot v=\int_{\R^d}(1+\eta)(I-\sig^2\Delta)\grad\Uppi^\kappa_{\m{L}}\eta\cdot v-\int_{\R^d}(1-\sig^2\Delta)\Uppi^\kappa_{\m{H}}\eta\grad\cdot((1+\eta)v)
    \end{equation}
    and, thanks again to the first equation in~\eqref{traveling wave formulation of the equation} and orthogonality considerations, 
    \begin{equation}\label{calculation for the grav cap 2}
    \int_{\R^d}(1-\sig^2\Delta)\Uppi^\kappa_{\m{H}}\eta\grad\cdot((1+\eta)v)=\gam\int_{\R^d}(1-\sig^2\Delta)\Uppi^\kappa_{\m{H}}\eta\pd_1\Uppi^{\kappa}_{\m{H}}\eta=0.
    \end{equation}
    We combine~\eqref{calculation for the grav cap 1} and~\eqref{calculation for the grav cap 2} and then take the limit as $\kappa\to0$. The first term on the right hand side of~\eqref{calculation for the grav cap 1} vanishes and we are left with
    \begin{equation}\label{calculation for the gravity capillary term}
        \int_{\R^d}(1+\eta)(I-\sig^2\Delta)\grad\eta\cdot v=0.
    \end{equation}
    By synthesizing~\eqref{calculation for the dissipation term}, \eqref{calculation for the advective derivative}, and~\eqref{calculation for the gravity capillary term}, we obtain~\eqref{power dissipation identity}.
\end{proof}

\begin{coro}[On the uniqueness of the trivial solution]\label{corollary on uniqueness of the trivial solution}
    Suppose that $v\in H^2(\R^d;\R^d)$, $\eta\in\mathcal{H}^{3}(\R^d)$, and $\mathcal{F}\in L^2(\R^d;\R^d)$ obey system~\eqref{traveling wave formulation of the equation} for some choice of $\gam\in\R^+$, $\mu,\sig\in[0,\infty)$. If  $\mathcal{F}=0$ and $1+\eta\ge0$ a.e., then $v=0$ and $\eta=0$.
\end{coro}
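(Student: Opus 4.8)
The plan is to invoke the power--dissipation identity \eqref{power dissipation identity} from Proposition~\ref{prop on PD calculation}, which applies precisely under the regularity hypotheses $v\in H^2(\R^d;\R^d)$, $\eta\in\mathcal{H}^3(\R^d)$, $\mathcal{F}\in L^2(\R^d;\R^d)$ stated in the corollary. Setting $\mathcal{F}=0$ in that identity, we are left with
\begin{equation}
    \int_{\R^d}|v|^2+\mu^2(1+\eta)\bp{\f12|\mathbb{D}^0v|^2+2\bp{1+\f1d}|\grad\cdot v|^2}=0.
\end{equation}
Using $1+\eta\ge0$ a.e., both terms of the integrand are pointwise nonnegative, so each must vanish a.e.; in particular $\int_{\R^d}|v|^2=0$, which forces $v=0$.

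With $v=0$ in hand, I would return to the second equation of \eqref{traveling wave formulation of the equation}. Every term involving $v$ drops out, as does the forcing since $\mathcal{F}=0$, leaving $(1+\eta)(I-\sig^2\Delta)\grad\eta=0$ as an identity in $L^2(\R^d;\R^d)$ (or in the appropriate distributional sense given $\eta\in\mathcal{H}^3$). The goal is then to conclude $\grad\eta=0$ and hence, since $\eta\in\mathcal{H}^3(\R^d)$ must decay at infinity in the sense captured by the anisotropic Sobolev space, that $\eta=0$. One clean route: test the identity against $\grad\eta$ itself and integrate by parts to obtain
\begin{equation}
    \int_{\R^d}(1+\eta)\tp{|\grad\eta|^2+\sig^2|\grad^2\eta|^2}=0
\end{equation}
after moving the $-\sig^2\Delta$ onto $\grad\eta$ and using that the cross terms involving $\grad\eta\cdot\grad^2\eta$ can be absorbed or regrouped (more carefully, write $(1+\eta)\grad\eta\cdot(I-\sig^2\Delta)\grad\eta$ and integrate by parts, picking up a term $\sig^2\grad(1+\eta)\cdot\grad^2\eta\,\grad\eta$ which is controlled by the pointwise-nonnegative quadratic form up to the coercivity afforded by $1+\eta\ge0$—one may need to first regularize or localize to justify the integration by parts for the anisotropic-space element $\eta$). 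Again nonnegativity of the integrand (using $1+\eta\ge0$) forces $\grad\eta=0$ a.e.

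The main obstacle is the final step: deducing $\eta\equiv0$ from $\grad\eta=0$. Since $\eta$ is only assumed to lie in $\mathcal{H}^3(\R^d)$, which contains constant-like tails (recall $\mathcal{H}^s(\R^d)\emb H^s(\R^d)+C^\infty_0(\R^d)$ and the strictness when $d\ge2$), vanishing gradient a priori only gives $\eta=$ const; one must argue that the constant is zero. For $d=1$ this is immediate from $\mathcal{H}^s(\R)=H^s(\R)$. For $d\ge2$, I would use the low-frequency structure built into the norm \eqref{some norm on the anisobros}: the condition $\grad\eta\in L^2$ together with $\pd_1\eta\in\dot H^{-1}$ (equivalently $\mathcal{R}_1\eta\in L^2$, by the third item of Proposition~\ref{proposition on spatial characterization of anisobros}) is exactly what pins down the representative; $\grad\eta=0$ then forces $\mathscr{F}[\eta]$ to be supported at the origin, and the finiteness of $\tnorm{\eta}_{\mathcal{H}^s}$ (which weights $|\xi|\to0$ by $|\xi|^{-2}(\xi_1^2+|\xi|^4)$, a factor vanishing at $0$) rules out a nonzero constant, giving $\eta=0$. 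Alternatively, and more simply, once $v=0$ and $\grad\eta=0$ the equations are satisfied with $\eta$ any constant $c$ with $1+c\ge0$; but the uniqueness is meant within the class $\mathcal{H}^3(\R^d)$, and the only constant belonging to $\mathcal{H}^3(\R^d)$ is $0$, since a nonzero constant has Fourier transform a nonzero multiple of $\delta_0$, which is not locally integrable, violating the defining condition $\mathscr{F}[\eta]\in L^1_{\loc}$ in \eqref{some definition of the anisobros}. This last observation is the cleanest way to close the argument and I would present it that way.
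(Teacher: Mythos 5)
Your overall architecture is exactly the paper's: the power–dissipation identity with $\mathcal{F}=0$ and $1+\eta\ge0$ forces $v=0$; the momentum equation then forces $\eta$ to be constant; and the only constant in $\mathcal{H}^3(\R^d)$ is $0$. Your first and last steps are correct — in particular, your justification that a nonzero constant has Fourier transform a nonzero multiple of $\delta_0\notin L^1_{\loc}$, violating \eqref{some definition of the anisobros}, is precisely the reason behind the paper's terser assertion that $\mathcal{H}^3$ "does not contain nontrivial constants."

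The middle step, however, does not work as you have written it. Testing $(1+\eta)(I-\sig^2\Delta)\grad\eta=0$ against $\grad\eta$ and integrating by parts does \emph{not} yield $\int_{\R^d}(1+\eta)\tp{|\grad\eta|^2+\sig^2|\grad^2\eta|^2}=0$: moving one derivative off $\Delta\grad\eta$ produces the additional term $\sig^2\int_{\R^d}\grad\eta\otimes\grad\eta:\grad^2\eta=-\tfrac{\sig^2}{2}\int_{\R^d}\Delta\eta\,|\grad\eta|^2$, which has no sign and cannot be absorbed using only $1+\eta\ge0$ (there is no smallness hypothesis on $\eta$ in this corollary). The regularity $\eta\in\mathcal{H}^3$ is ample to justify the integration by parts, so the obstruction is the indefinite cross term itself, not a need to regularize. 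The paper avoids this entirely: from $v=0$ and $\mathcal{F}=0$ the momentum equation reads $(1+\eta)(I-\sig^2\Delta)\grad\eta=0$, one drops the nonnegative prefactor to get $(I-\sig^2\Delta)\grad\eta=0$, and since the Fourier multiplier $\xi\mapsto1+4\pi^2\sig^2|\xi|^2$ is bounded below, this gives $\grad\eta=0$ directly, i.e. $\eta$ is constant. Replacing your energy argument with that one-line multiplier argument repairs the proof and lands you exactly on the paper's route.
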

\begin{proof}
    We apply Proposition~\ref{prop on PD calculation} and invoke the additional hypotheses to see that $\int_{\R^d}|v|^2\le 0$, and hence $v=0$. We then use the second equation in~\eqref{traveling wave formulation of the equation} to deduce that $(I-\sig^2\Delta)\grad\eta=0$,
    which implies that $\eta$ is a constant. By hypothesis, $\eta\in\mathcal{H}^3(\R^d)$, and this space does not contain nontrivial constants; therefore, $\eta=0$ as well.
\end{proof}
\section*{Statements and Declarations}

\small{\textbf{Conflict of interest:} There does not exist conflict of interest in this document.}

\vspace{0.25cm}

\small{\textbf{Data availability:} We do not analyze or generate any datasets, because our work proceeds within a theoretical and mathematical approach.}


\bibliographystyle{abbrv}
\bibliography{bib.bib}
\end{document}